\documentclass[11pt]{article}
\usepackage[margin = 1in]{geometry}

\usepackage[affil-it]{authblk}
\usepackage{amsfonts,bbm,enumerate}
\usepackage{graphicx}
\graphicspath{ {images/} }
\usepackage{epstopdf}
\usepackage{mathtools}
\usepackage{amsfonts}
\usepackage{amsthm,amsmath,amstext,amssymb}
\usepackage{subcaption}

\usepackage{comment}
\usepackage[font=footnotesize,labelfont=bf]{caption}
\usepackage{makecell}

\usepackage{url}
\usepackage{longtable}
\usepackage{mathtools}
\usepackage{multirow}
\usepackage{array}
\usepackage{tabularx}
\usepackage[table]{xcolor}
\usepackage{booktabs}

\usepackage{pgfplots}
\usepackage{pgfplotstable}
\pgfplotsset{compat=1.18}
\usetikzlibrary{arrows.meta,calc,decorations.pathreplacing,positioning}

\usepackage{hyperref}
\usepackage{natbib}
\usepackage{booktabs}
\usepackage{algorithm}
\usepackage{algpseudocode}
\usepackage{xcolor}
\usepackage[utf8]{inputenc} 
\usepackage[T1]{fontenc}

\makeatletter
\newcommand{\ostar}{\mathbin{\mathpalette\make@circled\star}}
\newcommand{\make@circled}[2]{%
  \ooalign{$\m@th#1\smallbigcirc{#1}$\cr\hidewidth$\m@th#1#2$\hidewidth\cr}%
}
\newcommand{\smallbigcirc}[1]{%
  \vcenter{\hbox{\scalebox{0.77778}{$\m@th#1\bigcirc$}}}%
}
\makeatother

\ifpdf
  \DeclareGraphicsExtensions{.eps,.pdf,.png,.jpg}
\else
  \DeclareGraphicsExtensions{.eps}
\fi
\renewcommand{\d}{\mathrm{d}}

\newcommand{\x}{{\mathbf{x}}}

\newcommand{\U}{{\mathbf{U}}}

\renewcommand{\exp}{{\rm{exp}}}

\DeclareMathOperator*{\argmin}{argmin}

\DeclareMathOperator*{\sargmin}{sargmin}

\newcommand{\indi}{{\mathbbm{1}}}

\let\narrowhat\hat
\let\hat\widehat

\def\mnar{\mathsf{MNAR}}

\newcommand{\indep}{\perp \!\!\! \perp}

\newtheorem{Theorem}{Theorem}

\newtheorem{Lemma}[Theorem]{Lemma}

\newtheorem{Corollary}[Theorem]{Corollary}

\newtheorem{Proposition}[Theorem]{Proposition}

\DeclareMathAlphabet\mathbfcal{OMS}{cmsy}{b}{n}

\hypersetup{
    colorlinks=true,
    linkcolor=red!90!black,
    filecolor=blue!95!black,      
    urlcolor=cyan,
    citecolor=blue!95!black
}

\makeatletter
\newcommand*{\rom}[1]{\expandafter\@slowromancap\romannumeral #1@}
\makeatother

\allowdisplaybreaks

\begin{document}
	\title{Adaptive confidence intervals with missing data}
	
	\author{Tianyi Ma$^*$, Kabir A. Verchand$^\circ$, Chao Gao$^\dagger$ and Richard J. Samworth$^*$}
	\affil{$^*$ Statistical Laboratory, University of Cambridge\\
    $^\circ$ Department of Data Sciences and Operations, University of Southern California\\
    $^\dagger$ Department of Statistics, University of Chicago}
	\date{}
	\maketitle

\begin{abstract}
We consider the construction of confidence intervals for population means when observations are subject to missingness. To accommodate more general missingness mechanisms than missing completely at random (MCAR), we adopt a reparametrised version of the realisable contamination model of \citet{ma2024estimation}, which is a mixture of an MCAR version and a missing not at random version of the same base distribution~$P$. We characterise the minimax length of confidence intervals that can adapt to potentially unknown parameters of the model, including the contamination fraction, for Gaussian base distributions and for nonparametric classes satisfying certain tail or symmetry assumptions. In all of these settings, we provide explicit constructions of simple, practical and finite-sample valid adaptive confidence intervals that attain the corresponding minimax rates. Finally, we provide implications of our results for causal inference. 
\end{abstract}


\begin{sloppypar}
\section{Introduction} \label{sec: introduction}

Missing values represent one of the most common challenges faced by statistical practitioners.  The classical theory of missing data \citep{rubin1976inference,little2019statistical} is mainly built on the assumptions of missing completely at random (MCAR) and missing at random (MAR).  MCAR refers to the assumption that the missingness and data generating mechanisms are independent, while MAR postulates that, roughly speaking\footnote{See \citet{seaman2013what} for a rigorous definition.}, the missingness depends only on the observed entries. The MCAR assumption is widely adopted in nonparametric or high-dimensional statistical problems, e.g. \citet{loh2012high}, \citet{belloni2017linear}, \citet{loh2018high}, \citet{follain2022high}, \citet{zhu2022high}, both to motivate methodological development and establish theoretical guarantees. On the other hand, the MAR assumption has been studied primarily in the context of parametric maximum likelihood estimation, where standard regularity conditions typically yield parametric rates of convergence \citep{takai2013asymptotic}; however, we highlight the recent work of \citet{cherief2026asymptotics} and \citet{zou2026asymptotics}, who studied nonparametric Bayesian estimation and nonparametric maximum likelihood estimation respectively under MAR. 

It has long been recognised that the MCAR assumption is often overly idealised and can be unrealistic in many practical applications \citep{mccaffrey2011missing,little2012prevention,kennedy2018evaluation,ma2024estimation}. Although the MAR assumption is weaker than MCAR, it may still be difficult to justify in practice \citep{robins1997non,malinsky2022semiparametric,ren2023multiple,cherief2025parametric}, and tractable theory and methodology beyond maximum likelihood estimation remain limited.  Another fundamental issue is that the binary nature of these assumptions means that they are brittle: theoretical guarantees under MCAR or MAR assumptions are no longer assured as soon as there is even a small violation.  Ideally, one would like to be able to quantify the extent of departures from these assumptions and their consequential effects on the performance of inferential procedures.

To address these issues, \citet{ma2024estimation} propose two contamination models to study estimation problems beyond the idealised MCAR setting. The first, called the \emph{arbitrary contamination model}, is where an MCAR version of a \emph{base distribution}~$P$ is contaminated by a missing not at random (MNAR) version of an \emph{arbitrary distribution}~$Q$, where MNAR permits arbitrary dependence between the missingness and data generating mechanisms; this can be viewed as the missing data analogue of the Huber contamination model \citep{huber1964robust}. The second, called the \emph{realisable contamination model}, occurs when an MCAR version of~$P$ is contaminated by an MNAR version of the \emph{same} distribution $P$. The realisable contamination model is flexible enough to capture any missingness mechanism, while \citet{ma2024estimation} showed that it permits faster rates of convergence for mean estimation compared to the arbitrary contamination model. Moreover, minimax rates in the realisable contamination model can converge to zero with the sample size even if the contamination fraction converges to one, whereas these rates become infinite in the arbitrary contamination model as soon as half of the data are contaminated.  

In order to describe our contributions in this work, we now introduce our notation for missing data and the statistical model we consider.
We use the symbol~$\star$ to denote missing entries and let\footnote{See Appendix~\ref{sec:properties-of-R-star} for the properties of the space~$\mathbb{R}_{\star}$, including its topology and Borel $\sigma$-algebra.} $\mathbb{R}_{\star} \coloneqq \mathbb{R} \cup \{\star\}$. Define the operator $\ostar : \mathbb{R} \times \{0,1\} \to \mathbb{R}_{\star}$ by $x\ostar\omega \coloneqq x$ if $\omega=1$ and $x\ostar\omega \coloneqq \star$ if $\omega=0$, so $x\ostar\omega$ represents what is seen by the statistician, along with $\omega$, which indicates whether~$x$ is observed or not. For a distribution~$P$ on $\mathbb{R}$, let
\begin{align}
    \mnar_P \coloneqq \Bigl \{ \mathsf{Law}\bigl(X \ostar \Omega): \, X \sim P \text{ and } \Omega \in \{0, 1\}\Bigr\}, \label{eq:mnar-def}
\end{align}
where in the definition of $\mnar_P$, we allow arbitrary dependence between $X$ and $\Omega$. Thus, $\mnar_P$ is the set of all MNAR versions of the base distribution~$P$ and is a subset of distributions on~$\mathbb{R}_{\star}$. If $X$ is independent of $\Omega$, then we say $X\ostar\Omega$ is MCAR. For $\epsilon_1,\epsilon_2\in[0,1]$ with $\epsilon_1 + \epsilon_2 \leq 1$, define
\begin{equation}
\mathcal{M}(P,\epsilon_1,\epsilon_2) \coloneqq \Bigl\{
(1-\epsilon_1-\epsilon_2)P+\epsilon_1 \delta_{\{\star\}}+\epsilon_2 Q : Q\in\mathsf{MNAR}_{P} \Bigr\},\label{eq:model-mcar-mnar}
\end{equation}
where $\delta_{\{\star\}}$ denotes a Dirac point mass on $\star$. Here, $\epsilon_1$ and $\epsilon_2$ represent the proportions of MCAR and MNAR missingness, respectively. This is a reparametrisation of the realisable contamination model: in particular, $\mathcal{M}(P,\epsilon_1,\epsilon_2)$ coincides with the one-dimensional realisable contamination model $\mathcal{R}\bigl(P,\epsilon_2,\frac{1-\epsilon_1-\epsilon_2}{1-\epsilon_2}\bigr)$ of~\citet{ma2024estimation}. Equivalently, writing $Z \coloneqq X\ostar\Omega$, we have $\mathsf{Law}(Z)\in\mathcal{M}(P,\epsilon_1,\epsilon_2)$ if and only if
\begin{align}
    1-\epsilon_1-\epsilon_2 \leq \mathbb{P}(Z\neq\star \,|\, X) \leq 1-\epsilon_1 \quad\text{almost surely;} \label{eq:realisability-ineq}
\end{align}
see \citet[Eq.~(12)]{ma2024estimation}.
Finally, we remark that since MCAR is a special case of MNAR, we have $\mathcal{M}(P,\epsilon_1,\epsilon_2) \subseteq \mathcal{M}(P,\epsilon_1',\epsilon_2')$ if $\epsilon_2'\geq \epsilon_2$ and $\epsilon_1'+\epsilon_2' \geq \epsilon_1+\epsilon_2$. 

When all model parameters are known, non-adaptive confidence intervals can be constructed by adding and subtracting high-probability error bounds from the minimax optimal point estimators of \citet{ma2024estimation}. 
In this paper, we seek to make inference about the mean of the base distribution~$P$ on~$\mathbb{R}$ under model~\eqref{eq:model-mcar-mnar} via confidence intervals that can adapt to unknown parameters. Initially, in Section~\ref{sec:gaussian}, we consider the case where the base distribution~$P$ is Gaussian and determine the minimax length of adaptive confidence intervals in four regimes according to whether the standard deviation~$\sigma$ and the contamination parameters~$(\epsilon_1,\epsilon_2)$ are known or unknown.  These ideas are further developed in the nonparametric base distribution settings of Section~\ref{sec:nonparametric}, where we characterise the minimax length of confidence intervals that can adapt to the unknown contamination parameters~$(\epsilon_1,\epsilon_2)$ over classes of base distributions satisfying one of three tail conditions or a symmetry constraint. In all of these settings, we construct simple, practical and finite-sample valid adaptive confidence intervals that attain the corresponding minimax rates; in fact, in some cases, we can show that, asymptotically, the confidence interval lengths are sharp even at the level of constants. All proofs are deferred to the appendix.

\subsection{Related work}

As mentioned above, \citet{ma2024estimation} proposed two contamination models that interpolate between the idealised MCAR (or MAR) model and general MNAR models. In particular, they derived minimax rates of mean estimation under both arbitrary and realisable contamination models, and extended these ideas to regression settings with missing responses.  See also \citet{horowitz1995identification} for related models.  \citet{cherief2025parametric} considered parametric maximum mean discrepancy estimation under missing data and showed its robustness to missingness and model misspecification; part of their analysis adopted the arbitrary contamination framework of \citet{ma2024estimation}. \citet{diakonikolas2026high} and \citet{verchand2026high} also studied the statistical and computational tradeoff in the realisable contamination model. Other works that have considered problems without MCAR or MAR assumptions include, for example, \citet{imbens2004confidence} for partial identification intervals, \citet{sell2024nonparametric} for nonparametric classification and \citet{ma2025deep} for nonparametric regression.

Another line of related work concerns adaptive confidence intervals for contaminated, but fully observed, data. \citet{luo2024adaptive} initiated this direction by considering inference for a location parameter under the Huber contamination model. 
This was subsequently extended to adaptive confidence intervals for binomial proportions \citep{cho2025robust} and linear regression coefficients \citep{xie2025confidence} under Huber contamination, while \citet{wang2026adaptive} considered inference problems under Efron's two-group model.\footnote{Here, the data are drawn from the mixture distribution $(1-\epsilon)N(\theta,\sigma^2) + \epsilon \int_{\mathbb{R}} N(\eta,\sigma^2) \,\mathrm{d}Q(\eta)$ for some unknown $\epsilon \in [0,1/2)$, $\theta \in \mathbb{R}$ and distribution $Q$ on $\mathbb{R}$; in general $\sigma$ may be known or unknown but we take $\sigma = 1$ in this discussion.} To facilitate comparison, let us focus on the case where the base distribution is $P=N(\theta,1)$ for some $\theta\in\mathbb{R}$, and view the miscoverage level as constant. Under Huber contamination with contamination proportion~$\epsilon$, \citet{luo2024adaptive} showed that the minimax length for adaptive confidence intervals is proportional to
\begin{align}
    \frac{1}{\sqrt{\log{n}}} + \frac{1}{\sqrt{\log(1/\epsilon)}} \qquad\text{for } \epsilon\leq 1/20. \label{eq:intro-huber-rate}
\end{align}
This reveals an exponential adaptation cost when there is no contamination: the minimax length of any adaptive confidence interval is at least of order $\frac{1}{\sqrt{\log n}}$ when $\epsilon=0$, whereas the standard (non-adaptive) confidence interval for a Gaussian mean has length of order $\frac{1}{\sqrt{n}}$ when $\epsilon=0$. On the other hand, \citet{wang2026adaptive} proved that the minimax length under Efron's two-group model scales as
\begin{align}
    \frac{1}{n^{1/4}} + \frac{\sqrt{\epsilon}}{\sqrt{1 \vee \log(n\epsilon^2)}} \qquad\text{for } \epsilon\leq\epsilon_{\max} < \frac{1}{2}. \label{eq:intro-efron-rate}
\end{align}
In this case, we see a polynomial adaptation cost compared with what is achievable when $\epsilon$ is known to be zero. For our realisable contamination model~\eqref{eq:model-mcar-mnar}, writing $\epsilon_+ \coloneqq \epsilon_1+\epsilon_2$, we show in Theorem~\ref{thm:gaussian-minimax-rates}\emph{(c)} that the minimax length for adaptive confidence intervals has rate
\begin{align}
    \frac{1}{\sqrt{n}} + \frac{\log(\frac{1}{1 - \epsilon_+})}{\sqrt{1 \vee \log(n\epsilon_+^2)}} \qquad\text{for } \epsilon_+ \leq 1-\frac{C}{n}, \label{eq:intro-gaussian-rate}
\end{align}
where $C>0$ depends only on the miscoverage level. Thus, if $\epsilon_+=0$, then we obtain the parametric rate $\frac{1}{\sqrt{n}}$, which is intuitive since $\epsilon_+=0$ means that there is no missingness and we are able to infer this from our data.  At first glance,~\eqref{eq:intro-gaussian-rate} may suggest that there is no adaptation cost (when the data is uncontaminated) in our realisable model.  However, in the context of missing data, `no contamination' should be interpreted as MCAR, rather than $\epsilon_+ = 0$ (which corresponds to no missingness). Thus, if we consider MCAR data with constant $\epsilon_1>0$ and $\epsilon_2=0$, then the rate in~\eqref{eq:intro-gaussian-rate} becomes $\frac{1}{\sqrt{\log n}}$, so in fact the adaptation cost under MCAR data is exponential.  On the positive side, the fact that the rate in~\eqref{eq:intro-gaussian-rate} only depends on $(\epsilon_1,\epsilon_2)$ through $\epsilon_+$ means that even when $\epsilon_1=0$ and $\epsilon_2>0$ is constant (i.e.~we have a constant proportion of MNAR contamination), the problem is no more difficult, in the sense that the rate in~\eqref{eq:intro-gaussian-rate} is still $\frac{1}{\sqrt{\log n}}$.  This is the minimax optimal non-adaptive rate for the length of a confidence interval for the mean with a constant proportion of MNAR data, so there is no adaptation cost in this case; see Table~\ref{table:gaussian-rates}.  Finally, another distinction from~\eqref{eq:intro-huber-rate} and~\eqref{eq:intro-efron-rate} is that the minimax length remains finite even if most data are contaminated in our model, as long as $\epsilon_+\leq 1-\frac{C}{n}$.

\subsection{Notation}
For an arbitrary index set $I$ and functions $f,g:I\to\mathbb{R}$, we write $f\lesssim g$ if there exists a universal constant $C>0$ such that $f(i) \leq Cg(i)$ for all $i\in I$. We write $f\gtrsim g$ if $g\lesssim f$ and write $f\asymp g$ if both $f\lesssim g$ and $f\gtrsim g$. Similarly, the notation $\lesssim_{\alpha}$, $\gtrsim_{\alpha}$ and $\asymp_{\alpha}$ indicates that the implicit constant $C$ may depend on~$\alpha$. For a measurable space $\mathcal{X}$, we write $\mathcal{P}(\mathcal{X})$ for the set of all probability measures on~$\mathcal{X}$. For a measure $\mu$ that is absolutely continuous with respect to another measure~$\pi$, we write $\frac{\mathrm{d}\mu}{\mathrm{d}\pi}$ for the Radon--Nikodym derivative. For $\theta\in\mathbb{R}$ and $\sigma>0$, let $\phi_{(\theta,\sigma)}$ denote the probability density function of $N(\theta,\sigma^2)$ and let $\Phi_{(\theta,\sigma)}$ denote the distribution function of $N(\theta,\sigma^2)$; we write $\phi\coloneqq \phi_{(0,1)}$ and $\Phi\coloneqq\Phi_{(0,1)}$. We also adopt the conventions that $\star \cdot 0 \coloneqq 0 \eqqcolon 0 \cdot \star$, that $x \cdot \star \coloneqq \star \eqqcolon \star \cdot x$ for $x\in\mathbb{R} \setminus \{0\}$ and that $\star + x \coloneqq \star$ for $x \in \mathbb{R}_\star$.

\section{The Gaussian setting} \label{sec:gaussian}

In this section, we will consider observations from the realisable Gaussian model $\mathcal{M}\bigl(N(\theta,\sigma^2), \epsilon_1, \epsilon_2\bigr)$; see~\eqref{eq:model-mcar-mnar}.  We will construct confidence intervals for $\theta$ from observations $Z_1, \ldots, Z_n \overset{\mathrm{iid}}{\sim} M \in \mathcal{M}\bigl(N(\theta,\sigma^2), \epsilon_1, \epsilon_2\bigr)$, and study the minimax length of confidence intervals that adapt to various parameters.  

Let $n\in\mathbb{N}$ and $\alpha\in(0,1)$.
For $\mathcal{S} \subseteq (0,\infty)$ and $\mathcal{E} \subseteq \{(\epsilon_1,\epsilon_2) \in [0,1]^2 : \epsilon_1 + \epsilon_2 \leq 1\}$, we define $\widehat{\mathcal{CI}}(\mathcal{S}, \mathcal{E}) \equiv \widehat{\mathcal{CI}}_{n,\alpha}(\mathcal{S}, \mathcal{E})$ to be the set of all confidence intervals for $\theta$ based on $Z_1, \ldots Z_n$, with uniform coverage at least $1-\alpha$ and that adapt to $\sigma\in\mathcal{S}$ and $(\epsilon_1,\epsilon_2) \in \mathcal{E}$, i.e.~the set of all intervals $\widehat{\mathrm{CI}}$ such that
\[
 \inf_{\substack{\theta\in\mathbb{R},\, \sigma \in \mathcal{S} \\ (\epsilon_1, \epsilon_2) \in \mathcal{E}}}\; \inf_{M\in\mathcal{M}(N(\theta,\sigma^2),\epsilon_1,\epsilon_2)}\; \mathbb{P}_M\bigl(\theta \in \widehat{\mathrm{CI}}(Z_1,\ldots,Z_n)\bigr) \geq 1 - \alpha,
\]
where the subscript in $\mathbb{P}_M$ indicates that $Z_1,\ldots,Z_n \overset{\mathrm{iid}}{\sim} M$.
When $\mathcal{S}=\{\sigma\}$ is a  singleton set, we write $\widehat{\mathcal{CI}}(\sigma,\mathcal{E}) \equiv \widehat{\mathcal{CI}}\bigl(\{\sigma\},\mathcal{E}\bigr)$, and the confidence intervals in this set are allowed to depend on $\sigma,\mathcal{E}$. Similarly, when $\mathcal{E} = \{(\epsilon_1,\epsilon_2)\}$, the confidence intervals in $\widehat{\mathcal{CI}}(\mathcal{S},\epsilon_1,\epsilon_2) \equiv \widehat{\mathcal{CI}}\bigl(\mathcal{S},\{(\epsilon_1,\epsilon_2)\}\bigr)$ are allowed to depend on $\mathcal{S},  (\epsilon_1,\epsilon_2)$. For $\sigma\in\mathcal{S}$, $(\epsilon_1,\epsilon_2) \in \mathcal{E}$ and a class of confidence intervals $\widehat{\mathcal{CI}}$, we define
\begin{align}
L_{n, \sigma, \epsilon_1, \epsilon_2}(\widehat{\mathcal{CI}}) \coloneqq \inf\Bigl\{r>0 : \inf_{\hat{\mathrm{CI}} \in \hat{\mathcal{CI}}}\; \sup_{\theta\in\mathbb{R}}\; \sup_{M \in \mathcal{M}(N(\theta,\sigma^2), \epsilon_1, \epsilon_2)} \mathbb{P}_M \bigl(|\hat{\mathrm{CI}}(Z_1,\ldots,Z_n)| \geq r\bigr) \leq \alpha \Bigr\} \label{eq:minimax-quantile-gaussian}
\end{align}
to be the \emph{minimax length}\footnote{Strictly speaking, this is the \emph{lower minimax quantile} of the length of confidence intervals; see \citet{ma2024high}. However, this agrees with the minimax quantile at all but at most countably many values of $\alpha$, and moreover, we view $\alpha$ as constant and do not aim to derive sharp dependence on this value.} of confidence intervals in $\widehat{\mathcal{CI}}$ based on $Z_1,\ldots,Z_n \overset{\mathrm{iid}}{\sim} M \in \mathcal{M}\bigl(N(\theta,\sigma^2), \epsilon_1, \epsilon_2\bigr)$. Thus, when both $\mathcal{S}$ and $\mathcal{E}$ are singleton sets, $L_{n, \sigma, \epsilon_1, \epsilon_2}\bigl(\widehat{\mathcal{CI}}(\mathcal{S},\mathcal{E})\bigr)$ has the same order as the minimax point estimation rate; when at least one of $\mathcal{S}$ and $\mathcal{E}$ is not a singleton, it represents the minimax length of the corresponding adaptive confidence intervals.

\subsection{Minimax rates} \label{sec:gaussian-minimax-rates}
The following theorem provides upper and lower bounds on the minimax length of confidence intervals for $\theta$ when different parameters are known or unknown. 

\begin{Theorem} \label{thm:gaussian-minimax-rates}
    Let $n\in\mathbb{N}$ and $\alpha\in(0,1)$. Define $\mathcal{E}_{n,\alpha} \coloneqq \{(\epsilon_1,\epsilon_2) \in [0,1]^2 : \epsilon_1+\epsilon_2 \leq 1-\frac{4\log_2(24/\alpha)}{n}\}$ and $\mathcal{E}_n' \coloneqq \{(\epsilon_1,\epsilon_2) \in [0,1]^2 : \epsilon_1+\epsilon_2 \leq 1-\frac{1}{2n}\}$. There exists a universal constant $C>0$ such that the following statements hold.
        
    \begin{itemize}
       \item[(a)] \emph{(Known $\sigma,\epsilon_1,\epsilon_2$)} If $\alpha\in[\frac{C}{n(1-\epsilon_1)}, \frac{1}{4}]$, $\sigma>0$ and $(\epsilon_1,\epsilon_2) \in \mathcal{E}_{n,\alpha}$, then
        \begin{align*}
            L_{n,\sigma,\epsilon_1,\epsilon_2} \bigl(\widehat{\mathcal{CI}}(\sigma,\epsilon_1,\epsilon_2)\bigr) \asymp_{\alpha} \sigma \biggl\{\frac{1}{\sqrt{n(1-\epsilon_1)}} + \frac{\log\bigl(\frac{1-\epsilon_1}{1-\epsilon_1-\epsilon_2}\bigr)}{\sqrt{1\vee \log\{n\epsilon_2^2/(1-\epsilon_1)\}}}\biggr\}.
        \end{align*}
        \item[(b)] \emph{(Unknown $\sigma$, known $\epsilon_1,\epsilon_2$)} If $\alpha\in[\frac{C}{n(1-\epsilon_1)}, \frac{1}{4}]$, $\sigma\in (0,\infty) \eqqcolon \mathcal{S}$ and $(\epsilon_1,\epsilon_2) \in \mathcal{E}_{n,\alpha}$, then
        \begin{align*}
            L_{n,\sigma,\epsilon_1,\epsilon_2} \bigl(\widehat{\mathcal{CI}}(\mathcal{S},\epsilon_1,\epsilon_2)\bigr) \asymp_{\alpha} \sigma \biggl\{\frac{1}{\sqrt{n(1-\epsilon_1)}} + \frac{\log\bigl(\frac{1-\epsilon_1}{1-\epsilon_1-\epsilon_2}\bigr)}{\sqrt{1\vee \log\{n\epsilon_2^2/(1-\epsilon_1)\}}}\biggr\}.
        \end{align*}
        \item[(c)] \emph{(Known $\sigma$, unknown $\epsilon_1,\epsilon_2$)} If $\alpha\in(\frac{C}{n}, \frac{1}{9})$, $\sigma>0$ and $(\epsilon_1,\epsilon_2) \in \mathcal{E}_{n,\alpha}$, then
        \begin{align*}
            L_{n,\sigma,\epsilon_1,\epsilon_2}\bigl( \widehat{\mathcal{CI}}(\sigma,\mathcal{E}_{n,\alpha}) \bigr) \lesssim_{\alpha} \sigma\biggl\{\frac{1}{\sqrt{n}} + \frac{\log\bigl(\frac{1}{1-\epsilon_1-\epsilon_2}\bigr)}{\sqrt{1\vee \log\{n(\epsilon_1+\epsilon_2)^2\}}}\biggr\} \lesssim L_{n,\sigma,\epsilon_1,\epsilon_2}\bigl( \widehat{\mathcal{CI}}(\sigma,\mathcal{E}_{n}') \bigr).
        \end{align*}
        \item[(d)] \emph{(Unknown $\sigma,\epsilon_1,\epsilon_2$)} Let $\sigma_{\max}>0$ and $\mathcal{S}\coloneqq (0,\sigma_{\max}]$. If $\alpha\in(\frac{C}{n}, \frac{1}{9})$, $\sigma\in\mathcal{S}$ and $(\epsilon_1,\epsilon_2) \in \mathcal{E}_{n,\alpha}$, then
        \begin{align*}
            &L_{n,\sigma,\epsilon_1,\epsilon_2}\bigl( \widehat{\mathcal{CI}}(\mathcal{S},\mathcal{E}_{n,\alpha}) \bigr) \lesssim_{\alpha,\sigma_{\max}} \biggl(\frac{1}{\sqrt{n}} + \epsilon_1 + \epsilon_2 \biggr) \vee \biggl(\sqrt{\log\Bigl(\frac{1}{1-\epsilon_1 - \epsilon_2}\Bigr)} - 1 \biggr)\\
            &\hspace{10cm} \lesssim_{\sigma} L_{n,\sigma,\epsilon_1,\epsilon_2}\bigl( \widehat{\mathcal{CI}}(\mathcal{S},\mathcal{E}_{n}') \bigr).
        \end{align*}
    \end{itemize}
\end{Theorem}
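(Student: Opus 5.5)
Each of the four parts splits into an upper bound, obtained by constructing an explicit interval, and a lower bound, obtained by a two-point (or fuzzy-hypothesis) testing argument. The lower-bound reduction we use throughout is standard. If an interval has coverage at least $1-\alpha$ under two distributions $M_0\in\mathcal{M}(N(\theta_0,\sigma_0^2),\cdot)$ and $M_1\in\mathcal{M}(N(\theta_1,\sigma_1^2),\cdot)$, and $\mathrm{TV}(M_0^{\otimes n},M_1^{\otimes n})$ is bounded away from $1-2\alpha$, then its length must exceed $|\theta_0-\theta_1|$ with probability greater than $\alpha$ under one of them.

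\textbf{Parts (a) and (b).} For the lower bound in (a) we take the least favourable pairs from \citet{ma2024estimation}, rewritten via $\mathcal{R}(P,\epsilon_2,\frac{1-\epsilon_1-\epsilon_2}{1-\epsilon_2})=\mathcal{M}(P,\epsilon_1,\epsilon_2)$.
\begin{itemize}
\item The parametric term comes from pure MCAR data with about $n(1-\epsilon_1)$ effective observations.
\item The second term comes from two Gaussians whose observed sub-densities coincide after tilting. We need $(1-\epsilon_1-\epsilon_2)\phi_{\theta_0}\le f\le(1-\epsilon_1)\phi_{\theta_j}$, with the observed mass matched only up to a $1/\sqrt{n}$ discrepancy. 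This gives separations of order $\log(\frac{1-\epsilon_1}{1-\epsilon_1-\epsilon_2})/\sqrt{\log(n\epsilon_2^2/(1-\epsilon_1))}$.
\end{itemize}
The lower bound for (b) follows immediately, since knowing $\sigma$ can only help. For the upper bounds, we take a quantile-based estimator on the observed points. The observed empirical distribution function $\hat F(t)=n^{-1}\sum_i\indi\{Z_i\neq\star,Z_i\le t\}$ satisfies $(1-\epsilon_1-\epsilon_2)\Phi_{(\theta,\sigma)}(t)\le \mathbb{E}\hat F(t)\le (1-\epsilon_1)\Phi_{(\theta,\sigma)}(t)$, and analogous bounds hold for the upper tail. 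Inverting these inequalities at two suitably chosen levels, together with DKW-type concentration, yields an interval for $\theta$ that is valid for every $\theta$.
\begin{itemize}
\item In (a), the levels sit in the tail at $z\asymp\sqrt{\log(n\epsilon_2^2/(1-\epsilon_1))}$, where the Gaussian ratio $\Phi(z+\delta)/\Phi(z)\approx e^{z\delta}$ converts the multiplicative factor $\frac{1-\epsilon_1}{1-\epsilon_1-\epsilon_2}$ into a shift $\delta\approx\log(\cdot)/z$.
\item In (b), $\sigma$ is unknown, so we additionally estimate a scale from the spread between two observed quantiles. These quantiles are confined to a band of width $O(\sigma)$ times the same rate, so plugging in a conservative upper confidence bound for $\sigma$ costs only constants.
\end{itemize}

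\textbf{Part (c).} When $(\epsilon_1,\epsilon_2)$ is unknown, the observed fraction $\hat p=n^{-1}\sum_i\indi\{Z_i\neq\star\}$ gives a one-sided bound $\epsilon_1+\epsilon_2\le 1-\hat p+O(\sqrt{\log(1/\alpha)/n})$, and hence an upper bound on $\epsilon_+$. Since $\mathcal{M}(P,\epsilon_1,\epsilon_2)\subseteq\mathcal{M}(P,0,\epsilon_+)$, we can apply the (a)-construction with the worst case $(0,\hat\epsilon_+)$. This gives length $\sigma\{n^{-1/2}+\log(\frac{1}{1-\epsilon_+})/\sqrt{1\vee\log(n\epsilon_+^2)}\}$ once we check that replacing $\epsilon_+$ by its upper bound changes the rate only by constants. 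That check needs care when $1-\epsilon_+\asymp 1/n$, which is the reason for the constraint $\mathcal{E}_{n,\alpha}$.

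For the lower bound, adaptivity forces the interval to cover under every pair $(\epsilon_1',\epsilon_2')$ compatible with the observed fraction $1-\epsilon_+$, including $(0,\epsilon_+)$. We therefore run the (a) construction with $\epsilon_1'=0,\epsilon_2'=\epsilon_+$ against a pure-MCAR alternative $(\epsilon_+,0)$ at $\theta_0$. Both hypotheses have the same missing fraction, so by a triangle inequality the interval must have the (a)-length under the MCAR truth.

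\textbf{Part (d) and the main obstacle.} The hardest part is (d), and within it the lower bound. The plan is to show that an MNAR mechanism can make $N(\theta_1,\sigma_1^2)$ observed-look like $N(\theta_0,\sigma_0^2)$ with a different variance. Specifically, we take $\sigma_1>\sigma_0$ and choose $\theta_1,\sigma_1$ so that $(1-\epsilon_+)\phi_{(\theta_1,\sigma_1)}\le c\,\phi_{(\theta_0,\sigma_0)}\le\phi_{(\theta_1,\sigma_1)}$ pointwise. Optimising the ratio of Gaussian densities gives the feasible shift $\sigma\sqrt{\log(1/(1-\epsilon_+))}$, reduced by the variance mismatch, which is what produces the "$\sqrt{\log(\cdot)}-1$" form. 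In the regime of small $\epsilon_+$, a linear-in-$\epsilon_+$ shift is achievable because the scale is unconstrained. For the upper bound we first take an upper confidence bound on $\sigma$ from the observed interquantile range; this is possible since $\sigma\le\sigma_{\max}$ and the observed data contain an honest fraction. We then form the interval from observed quantiles at levels near $1-\epsilon_+$ and $\epsilon_+$, whose width is $\sigma_{\max}\{\Phi^{-1}(1-\epsilon_+)$-type terms$\}$. I expect matching the exact form $\bigl(n^{-1/2}+\epsilon_+\bigr)\vee\bigl(\sqrt{\log\frac{1}{1-\epsilon_+}}-1\bigr)$ between the two bounds to be the main technical difficulty.
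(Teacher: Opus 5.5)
Your plans for (a) and (b) broadly match the paper. One caveat: to obtain $1/\sqrt{n(1-\epsilon_1)}$ rather than $1/\{(1-\epsilon_1)\sqrt n\}$, you need a DKW bound with variance proxy $1-\epsilon_1$ (Lemma~\ref{lemma:dkw-missing}). Parts (c) and (d), however, have genuine gaps in both the lower and the upper bounds.

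\textbf{Lower bounds.} Pitting an MCAR law against a competitor at the \emph{same} contamination level cannot give more than the parametric term. Take $\sigma=1$.
When $\epsilon_2=0$, the class $\mathcal{M}(N(\theta,1),\epsilon_1,0)$ contains only the law with observed density $(1-\epsilon_+)\phi(\cdot-\theta)$. Any competitor in $\mathcal{M}(N(\theta',1),0,\epsilon_+)$ has observed density at least $(1-\epsilon_+)\phi(\cdot-\theta')$. On $\{\phi(\cdot-\theta')>\phi(\cdot-\theta)\}$ the competitor's density therefore exceeds the MCAR one. So the squared Hellinger distance per observation is $\gtrsim(1-\epsilon_+)\min\{1,(\theta-\theta')^2\}$, and indistinguishability forces $|\theta-\theta'|\lesssim\{n(1-\epsilon_+)\}^{-1/2}$. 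Your triangle-inequality argument thus cannot produce the $1/\sqrt{\log n}$ rate for MCAR data.

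The missing idea is that the competitor must have strictly larger contamination. This is allowed because the interval must adapt over all of $\mathcal{E}_n'$. The paper, with $b=1-\epsilon_+$, tests the MCAR law with observed density $b\phi(\cdot-r)$ against a law centred at $0$ with observed density $b^2\phi\vee b\phi(\cdot-r)\wedge\phi$, which lies in $\mathcal{M}(N(0,1),0,1-b^2)$. When $\epsilon_+\ge1/2$ it instead uses a version with floor $\phi/(2n)$ in $\mathcal{M}(N(0,1),0,1-\frac{1}{2n})$. The two densities differ only beyond crossing points at distance $\asymp\sqrt{\log(n\epsilon_+^2)}$, giving $r\asymp\log(1/b)/\sqrt{\log(n\epsilon_+^2)}$.

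Your (d) sketch fails for the same reason and is also infeasible as written: for $\sigma_1>\sigma_0$, the inequality $(1-\epsilon_+)\phi_{(\theta_1,\sigma_1)}\le c\,\phi_{(\theta_0,\sigma_0)}$ is violated in the tails. The paper reverses the roles. The \emph{narrow} Gaussian $N(r,\gamma^2)$ is the MCAR truth. The wide $N(0,\sigma^2)$ is the competitor at contamination $1-\frac{1}{2n}$, with observed density $\frac{1}{2n}\phi_{(0,\sigma)}\vee(1-\epsilon_+)\phi_{(r,\gamma)}$. This is admissible iff $r^2\le2(\sigma^2-\gamma^2)\log\{\gamma/(\sigma(1-\epsilon_+))\}$, at a total-variation cost of at most $\frac{1}{2n}$. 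Choosing $\gamma=0.9\sigma$ or $\gamma=(1-\epsilon_+/2)\sigma$ gives the two branches.

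\textbf{Upper bounds.} Your inequality $\epsilon_1+\epsilon_2\le1-\hat p+O(n^{-1/2})$ is backwards. Since $\mathbb{P}(Z\neq\star)\in[1-\epsilon_+,1-\epsilon_1]$, the quantity $1-\hat p$ only lower-bounds $\epsilon_+$. An MNAR component that hides only far-tail values leaves $1-\hat p\approx\epsilon_1$ however large $\epsilon_2$ is. The data law therefore need not lie in $\mathcal{M}(P,0,\hat\epsilon_+)$, and your length analysis at $(0,\hat\epsilon_+)$ has no basis.

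No estimate of $\epsilon_+$ is needed. Coverage of $\widehat{\mathrm{CI}}^{(1)}_{n,\sigma,\alpha}(t)$ uses only $\mathrm{d}M/\mathrm{d}P_\star\le1$, together with $Z_{\min}<\theta<Z_{\max}$ with high probability. The latter is what $\mathcal{E}_{n,\alpha}$ is for, not a delicate plug-in near $1-\epsilon_+\asymp1/n$. Hence coverage holds simultaneously over $t\ge0$ at every contamination level. Choosing $t$ to minimise the length then gives an interval no longer than the one at the oracle $t_{n,\epsilon_+,\alpha}$.

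For (d), your levels ``near $\epsilon_+$'' again need the unknown $\epsilon_+$. An upper confidence bound on $\sigma$ from observed interquantile spreads is also not uniformly valid: with contamination near $1$, the observed bulk can be a bump of width $\delta\sigma$ for arbitrarily small $\delta$. The paper simply plugs $\sigma_{\max}$ into the same simultaneous family and minimises over $t$, which is why the constant depends on $\sigma_{\max}$.
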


We provide explicit constructions of confidence intervals that achieve the length guarantees in each of the four cases (see Section~\ref{sec:gaussian-CI-construction}). In case~\emph{(c)}, there is a slight discrepancy between the parameter spaces appearing in the upper and lower bounds: the upper bound on the minimax length is established for confidence intervals that adapt to $(\epsilon_1,\epsilon_2)\in\mathcal{E}_{n,\alpha} \subseteq \mathcal{E}_n'$, whereas the lower bound holds for confidence intervals that adapt to $(\epsilon_1,\epsilon_2)\in\mathcal{E}_n'$.  The small difference in the two sets concerns the extent to which  $\epsilon_1 + \epsilon_2$ is bounded away from $1$, but both restrictions are of the form $\epsilon_1 + \epsilon_2 \leq 1 - C/n$, where $C > 0$ depends only on $\alpha$ for the upper bound and is a universal constant for the lower bound.  A similar remark applies to case~\emph{(d)}, except that the confidence intervals are additionally required to adapt to the unknown $\sigma\in(0,\sigma_{\max}]$.  The fact that the bounds in Theorem~\ref{thm:gaussian-minimax-rates} hold for contamination fractions close to $1$ is in sharp contrast with the Huber contamination model, where the minimax rate becomes infinite as soon as half of the data are contaminated \citep[e.g.][]{ma2024estimation}.

\begin{table}[ht]
\centering

\renewcommand{\arraystretch}{1.35}
\setlength{\tabcolsep}{8pt}

\newcommand{\thickrule}{\arrayrulecolor{black}\specialrule{1.4pt}{0pt}{0pt}}
\newcommand{\headrule}{\arrayrulecolor{black}\specialrule{0.7pt}{0pt}{0pt}}
\newcommand{\lightrowrule}{\arrayrulecolor{black!20}\specialrule{0.35pt}{0pt}{0pt}}

{
\begin{tabularx}{0.75\textwidth}{@{}
  >{\hsize=1.5\hsize\raggedright\arraybackslash}X
  >{\hsize=0.75\hsize\centering\arraybackslash}X
  >{\hsize=0.75\hsize\centering\arraybackslash}X
@{}}

\thickrule

\rule[-1.2em]{0pt}{3em} &
\makecell{MCAR \\ ($\epsilon_1>0$, $\epsilon_2=0$)} &
\makecell{MNAR \\ ($\epsilon_1=0$, $\epsilon_2>0$)} \\

\headrule

\rule[-1.2em]{0pt}{3em} \emph{(a)} (Known $\sigma,\epsilon_1,\epsilon_2$) & $\dfrac{1}{\sqrt{n}}$ & $\dfrac{1}{\sqrt{\log n}}$ \\
\lightrowrule

\rule[-1.2em]{0pt}{3em} \emph{(b)} (Unknown $\sigma$, known $\epsilon_1,\epsilon_2$) & $\dfrac{1}{\sqrt{n}}$ & $\dfrac{1}{\sqrt{\log n}}$ \\
\lightrowrule

\rule[-1.2em]{0pt}{3em} \emph{(c)} (Known $\sigma$, unknown $\epsilon_1,\epsilon_2$) & $\dfrac{1}{\sqrt{\log n}}$ & $\dfrac{1}{\sqrt{\log n}}$ \\
\lightrowrule

\rule[-1.2em]{0pt}{3em} \emph{(d)} (Unknown $\sigma,\epsilon_1,\epsilon_2$) & $1$ & $1$ \\

\thickrule

\end{tabularx}
}

\arrayrulecolor{black}
\caption{Comparison of the rates in the four different cases of Theorem~\ref{thm:gaussian-minimax-rates} under MCAR and MNAR data; all problem parameters except $n$ are viewed as constants in this table.}
\label{table:gaussian-rates}
\end{table}

We now discuss the implications of the rates in the four different cases of Theorem~\ref{thm:gaussian-minimax-rates}. First suppose that the true data generating mechanism is MCAR with $\epsilon_1>0$ and $\epsilon_2=0$; for simplicity, we view all problem parameters except $n$ as constants. Then the rates when $\epsilon_1,\epsilon_2$ are known (cases~\emph{(a)} and~\emph{(b)}) are parametric, whereas the rate for unknown $\epsilon_1,\epsilon_2$ is $\frac{1}{\sqrt{\log n}}$ when $\sigma$ is known (case~\emph{(c)}) and constant when $\sigma$ is also unknown (case~\emph{(d)}).  Informally then, for MCAR data, the price of adaptation to unknown $\epsilon_1,\epsilon_2$ is heavy, whereas the cost of adapting to unknown $\sigma$ is relatively mild.

Next, consider the case where the true data generating mechanism is MNAR with $\epsilon_1=0$ and $\epsilon_2 > 0$ (again regarded as a constant). In this case, the rates for cases~\emph{(a)}, \emph{(b)} and~\emph{(c)} are $\frac{1}{\sqrt{\log n}}$, while the rate for case~\emph{(d)} is still constant. Thus, for MNAR data, adaptation only incurs a cost when all of the parameters $\sigma,\epsilon_1,\epsilon_2$ are unknown. We refer the readers to Table~\ref{table:gaussian-rates} for a comparison of the different rates under MCAR and MNAR settings.

\subsection{Constructions of the confidence intervals} \label{sec:gaussian-CI-construction}

The goal of this subsection is to describe the construction of the confidence intervals that attain the upper bounds in the four cases of Theorem~\ref{thm:gaussian-minimax-rates}.  We first define the minimum and maximum observed entries as 
\[
Z_{\min} \coloneqq \min\{Z_i : i\in[n],\, Z_i \neq \star\} \quad \text{ and } \quad Z_{\max} \coloneqq \max\{Z_i : i\in[n],\, Z_i \neq \star\}.
\]
If $Z_i=\star$ for all $i\in[n]$, then we define $Z_{\min}\coloneqq -\infty$ and $Z_{\max} \coloneqq \infty$.  Next, we define the empirical fraction of observed entries
\begin{align}
\narrowhat{p}_n \coloneqq \frac{1}{n} \sum_{i=1}^{n} \mathbbm{1}_{\{Z_i \neq \star\}}. \label{eq:hat-pn}
\end{align}
Under the convention that $\indi_{\{\star\leq x\}} = \indi_{\{\star> x\}} = 0$ for all $x\in\mathbb{R}$, for $\alpha\in(0,1)$, we define the \emph{lower empirical quantile function} $Q_n^-: (0,1) \to (-\infty,\infty]$ by
\begin{subequations}
\begin{align}
    Q_n^-(p) \coloneqq 
    \begin{cases}
        \inf\bigl\{x \in \mathbb{R} : \frac{1}{n} \sum_{i=1}^n \indi_{\{Z_i \leq x\}} \geq p\bigr\} \quad&\text{if } p \leq \narrowhat{p}_n \\
        Z_{\max} &\text{otherwise},
    \end{cases}
\end{align}
and the \emph{upper empirical quantile function} $Q_n^+: (0,1) \to [-\infty,\infty)$ by
\begin{align}
    Q_n^+(p) \coloneqq 
    \begin{cases}
        \inf\bigl\{x \in \mathbb{R} : \frac{1}{n} \sum_{i=1}^n \indi_{\{Z_i > x\}} \leq p\bigr\} \quad&\text{if } p < \narrowhat{p}_n \\
        Z_{\min} &\text{otherwise}.
    \end{cases}
\end{align}
\end{subequations}

\subsubsection{Warm-up: A simplified contamination model} \label{sec:warm-up}
Consider the simplified Gaussian contamination model
\begin{equation} 
R \coloneqq (1-\epsilon)P+\epsilon Q, \quad \text{where } Q\in\mathsf{MNAR}_P,\,P=N(\theta,\sigma^2). \label{eq:model-mnar}
\end{equation}
This corresponds to our model $\mathcal{M}\bigl(N(\theta,\sigma^2),0,\epsilon\bigr)$.  The key relation is that the distribution $R \in  \mathcal{P}(\mathbb{R}_{\star})$ admits the representation in~\eqref{eq:model-mnar} if and only if
\begin{align} \label{ineq:sandwich-q-1}
1 - \epsilon \leq \frac{\mathrm{d}R}{\mathrm{d}P_{\star}}(x) \leq 1, 
\end{align}
for $P$-almost all $x\in\mathbb{R}$, where $P_\star$ is the extension of $P$ to $\mathbb{R}_\star$; see Appendix~\ref{sec:properties-of-R-star} for further details.

\begin{figure}[ht]
    \centering
    \begin{tikzpicture}[
    x=0.9cm,y=0.9cm,
    >=Latex,
    font=\small,
    every node/.style={inner sep=2pt}
]

\def\xmin{0.4}
\def\xmax{12.0}
\def\yz{3.0}
\def\ytheta{0}
\def\qminus{3.3}   
\def\qplus{8.5}    
\def\L{5.0}        
\def\U{6.8}        
\def\thetatrue{5.9}

\fill[blue!10] (\xmin,\yz-0.2) rectangle (\qminus,\yz+0.2);
\fill[orange!12] (\qplus,\yz-0.2) rectangle (\xmax,\yz+0.2);

\draw[->,semithick] (\xmin,\yz) -- (\xmax,\yz);
\node[above=7pt] at (6.2,\yz) {Observations $Z_1,\dots,Z_n$};

\foreach \x in {0.8,1.2,1.8,2.3,2.8,3.3,4.0,4.6,5.1,5.8,6.4,7.0,7.5,8.0,8.5,9.1,9.8,10.7,11.3}
  \fill[black] (\x,\yz) circle (1.8pt);

\draw[thick,blue!70!black] (\qminus,\yz-0.28) -- (\qminus,\yz+0.28);
\draw[thick,orange!80!black] (\qplus,\yz-0.28) -- (\qplus,\yz+0.28);

\node[below=8pt,anchor=north east,align=right,text=blue!70!black,font=\scriptsize]
  at (3.14,\yz)
  {$Q_n^{-}(q_t)$\\empirical CDF};
\node[below=8pt,anchor=north west,align=left,text=orange!80!black,font=\scriptsize]
  at (8.66,\yz)
  {$Q_n^{+}(q_t)$\\empirical survival function};

\draw[densely dotted,blue!70!black] (\qminus,\yz-0.28) -- (\qminus,1.60);
\draw[densely dotted,orange!80!black] (\qplus,\yz-0.28) -- (\qplus,1.08);

\draw[->,thick,orange!80!black] (\qplus,1.08) -- (\L,1.08)
  node[midway,above] {$-\sigma t$};
\draw[->,thick,blue!70!black] (\qminus,1.60) -- (\U,1.60)
  node[pos=0.70,above] {$+\sigma t$};

\draw[->,semithick] (\xmin,\ytheta) -- (\xmax,\ytheta);
\draw[very thick] (\L,\ytheta) -- (\U,\ytheta);

\draw[thick] (\L,\ytheta-0.25) -- (\L,\ytheta+0.25);
\draw[thick] (\U,\ytheta-0.25) -- (\U,\ytheta+0.25);
\draw[densely dotted,orange!80!black] (\L,1.08) -- (\L,\ytheta+0.24);
\draw[densely dotted,blue!70!black] (\U,1.60) -- (\U,\ytheta+0.24);

\node[below=8pt,anchor=north east,align=right,font=\scriptsize,xshift=-5pt] at (\L,\ytheta)
{$Q_n^+(q_t)-\sigma t$\\(lower endpoint)};
\node[below=8pt,anchor=north west,align=left,font=\scriptsize,xshift=5pt] at (\U,\ytheta)
{$Q_n^{-}(q_t)+\sigma t$\\(upper endpoint)};

\draw[thick,dashed] (\thetatrue+0.3,\ytheta-0.35) -- (\thetatrue+0.3,\ytheta+0.4);
\node[above=4pt] at (\thetatrue+0.3,\ytheta + 0.3) {$\theta$};

\draw[decorate,decoration={brace,mirror,amplitude=5pt},thick]
  (\L,\ytheta-0.9) -- (\U,\ytheta-0.9)
  node[midway,below=8pt] {$\widehat{\mathrm{CI}}_{n,\sigma,\alpha}^{(1)}(t)$};

\end{tikzpicture}
    \caption{An illustration of $\widehat{\mathrm{CI}}_{n,\sigma,\alpha}^{(1)}(t)$ in~\eqref{eq:GST-warmup}, where $q_t \coloneqq 1 - \Phi(t) + \sqrt{\log(4/\alpha)/(2n)}$.}
    \label{fig:gaussian-TS}
\end{figure}

First, assume that $\sigma$ and $\epsilon$ are both known. We define a family of confidence intervals $\{\widehat{\mathrm{CI}}_{n,\sigma,\alpha}^{(1)}(t):t \geq 0\}$ for $\theta$, where 
\begin{align}
\widehat{\mathrm{CI}}_{n,\sigma,\alpha}^{(1)}(t) \coloneqq \Biggl[Q_n^+\biggl(1-\Phi(t) + \sqrt{\frac{\log(4/\alpha)}{2n}}\biggr)-\sigma t,\, Q_n^-\biggl(1-\Phi(t)+\sqrt{\frac{\log(4/\alpha)}{2n}}\biggr) + \sigma t \Biggr]; \label{eq:GST-warmup}
\end{align}
see Figure~\ref{fig:gaussian-TS} for an illustration.
In Appendix~\ref{sec:all-known} we prove that, for $\epsilon \in \bigl[0,1-\frac{4\log_2(8/\alpha)}{n}\bigr]$, the confidence intervals $\{\widehat{\mathrm{CI}}_{n,\sigma,\alpha}^{(1)}(t):t \geq 0\}$ have simultaneous coverage for $\theta$; i.e.
\begin{align}
    \mathbb{P}_R\biggl( \theta\in \bigcap_{t\geq 0} \widehat{\mathrm{CI}}^{(1)}_{n,\sigma,\alpha}(t) \biggr) \geq 1-\alpha. \label{eq:warmup-coverage-all-known}
\end{align}
To see the intuition behind this result, we focus on the event that $E_0 \coloneqq \bigcup_{i=1}^n \{Z_i \neq \star\}$, which has high probability.  Since
\begin{align*}
    \{Q_n^-(p) \leq a\} \cap E_0 = \biggl\{\frac{1}{n} \sum_{i=1}^n \indi_{\{Z_i \leq a\}} \geq p\biggr\} \cap E_0,
\end{align*}
we have
\begin{align}
    &\bigcap_{t \geq 0} \biggl\{\theta \geq Q_n^-\biggl(1-\Phi(t)+\sqrt{\frac{\log(4/\alpha)}{2n}}\biggr) + \sigma t\biggr\} \cap E_0\nonumber\\
    &\hspace{4cm} = \bigcap_{t \geq 0} \biggl\{\frac{1}{n} \sum_{i=1}^n \indi_{\{Z_i -\theta \leq \sigma t\}} \geq 1-\Phi(t)+\sqrt{\frac{\log(4/\alpha)}{2n}}\biggr\} \cap E_0. \label{eq:warmup-DKW}
\end{align}
Moreover, the event~\eqref{eq:warmup-DKW} has low probability by the (one-sided) Dvoretzky--Kiefer--Wolfowitz--Masssart--Reeve (DKWMR) inequality \citep{reeve2024short}. A similar argument can be applied to the left endpoint of the confidence interval. Thus, we may show that $\theta \in \bigcap_{t\geq 0} \widehat{\mathrm{CI}}^{(1)}_{n,\sigma,\alpha}(t)$ with high probability.  In the special case $t=0$ this is an interval centred at the median, which was proposed by \citet{scheffe1945non} for the setting of no missingness $(\epsilon = 0)$.  To obtain a confidence interval whose length achieves the minimax rate in our missing data context, we take $t=t_{n,\epsilon,\alpha} \coloneqq \sqrt{0\vee\{\log(n\epsilon^2)-\log\log(4/\alpha)\}/2}$.  We show in Proposition~\ref{prop:all-param-known}\emph{(b)} that with this choice of~$t$, the length of our confidence interval satisfies with probability at least $1-\alpha/2$ that
\begin{align}
    \bigl| \hat{\mathrm{CI}}^{(1)}_{n,\sigma,\alpha}(t_{n,\epsilon,\alpha}) \bigr| \lesssim_{\alpha} \sigma\biggl\{\frac{1}{\sqrt{n}} + \frac{\log\bigl(1/(1-\epsilon)\bigr)}{\sqrt{1\vee \log(n\epsilon^2)}}\biggr\}, \label{eq:warmup-all-known-rate}
\end{align}
under the simplified model~\eqref{eq:model-mnar}.

Next, suppose that both $\sigma\in(0,\sigma_{\max}]$ and $\epsilon\in[0,1]$ are unknown, where $\sigma_{\max}>0$ is a known upper bound on $\sigma$. Replacing $\sigma$ by $\sigma_{\max}$ in~\eqref{eq:GST-warmup} yields a family of confidence intervals that no longer depends on the unknown $\sigma$. Moreover, by a similar argument as above (using the DKWMR inequality), these confidence intervals continue to enjoy simultaneous coverage over all $t\geq 0$. In the previous paragraph, we chose $t$ explicitly as a function of $\epsilon$. When $\epsilon$ is also unknown, however, the simultaneous coverage over all $t\geq 0$ allows us to choose $t$ adaptively, by minimising the confidence interval length over $t\geq 0$, or by taking the intersection of all the confidence intervals over $t\geq 0$. This then yields an adaptive confidence interval that does not depend on either $\sigma$ or $\epsilon$. We refer the readers to Appendix~\ref{sec:proofs-for-warmup} for further details, where we consider four cases of the simplified contamination model~\eqref{eq:model-mnar}, according to whether $\sigma$ and $\epsilon$ are known or unknown, and construct confidence intervals with coverage and length guarantees in each case.

\subsubsection{The general setting} \label{sec:gaussian-CI-construction-general}
We return to the model $\mathcal{M}\bigl(N(\theta,\sigma),\epsilon_1,\epsilon_2\bigr)$ considered in Theorem~\ref{thm:gaussian-minimax-rates}. For $t \geq 0$ and $\lambda_1,\lambda_2,\beta,\bar{\sigma} > 0$, define generalised Gaussian Scheff{\'e}--Tukey intervals as
\begin{align}
    &\widehat{\mathrm{CI}}_{n}^{(\mathrm{GST})}(t; \lambda_1,\lambda_2,\beta,\bar{\sigma})\nonumber\\
    &\coloneqq \Biggl[Q_n^+\biggl(\lambda_1\bigl\{1-\Phi(t)\bigr\} + \sqrt{\frac{\lambda_2\log(4/\beta)}{2n}}\biggr)-\bar{\sigma} t,\, Q_n^-\biggl(\lambda_1\bigl\{1-\Phi(t)\bigr\} + \sqrt{\frac{\lambda_2\log(4/\beta)}{2n}}\biggr) + \bar{\sigma} t \Biggr]. \label{eq:GST}
\end{align}
When $\lambda_1=\lambda_2=1$, $\beta=\alpha$ and $\bar{\sigma}=\sigma$, we recover the confidence interval~\eqref{eq:GST-warmup}. In what follows, we will make specific choices of $\lambda_1,\lambda_2,\beta$ and $\bar{\sigma}$ in the four different cases of Theorem~\ref{thm:gaussian-minimax-rates}, so that the confidence intervals have coverage at least~$1 - \alpha$ and the desired length guarantees.

\begin{itemize}
    \item[\emph{(a)}]  Known $\sigma,\epsilon_1,\epsilon_2$: We take $\lambda_1=1-\epsilon_1$, $\lambda_2=18(1-\epsilon_1)$, $\beta=\alpha/4$ and $\bar{\sigma}=\sigma$. Using a missing data version of the DKWMR inequality (Lemma~\ref{lemma:dkw-missing}), we show that $\widehat{\mathrm{CI}}_{n}^{(\mathrm{GST})}(t; \lambda_1,\lambda_2,\beta,\bar{\sigma})$ has simultaneous coverage for all $t\geq 0$.  The desired length guarantee is obtained with the choice $t = t_{n,\epsilon_1,\epsilon_2,\alpha} \coloneqq \sqrt{0\vee\bigl\{\log\bigl(n\epsilon_2^2/(1-\epsilon_1)\bigr)-\log\log(16/\alpha)\bigr\}/2}$.
    \item[\emph{(b)}] Unknown $\sigma$, known $\epsilon_1,\epsilon_2$: Let $r_{n,\epsilon_1,\epsilon_2,\alpha}$ be defined as~\eqref{eq:r-n-eps12-alpha}, which depends only on $n,\epsilon_1,\epsilon_2$ and~$\alpha$, and represents a bound on the relative error of an estimate of $\sigma$. When $r_{n,\epsilon_1,\epsilon_2,\alpha} \leq 0.9$, we take $\lambda_1=1-\epsilon_1$, $\lambda_2 = 18(1-\epsilon_1)$ and $\beta=\alpha/12$, similarly to~\emph{(a)}. However, as we do not have access to the true $\sigma$, we take $\bar{\sigma}$ to be an estimate~$\hat{\sigma}_{\mathrm{R}}$ (defined in~\eqref{eq:hat-sigma-R}) that majorises $\sigma$ with high probability, and choose $t=t_{n,\epsilon_1,\epsilon_2,\alpha}' \coloneqq \sqrt{1\vee\bigl\{\log\bigl(n\epsilon_2^2/(1-\epsilon_1)\bigr)-\log\log(48/\alpha)\bigr\}/2}$. Surprisingly, although the true~$\sigma$ is replaced by a data-dependent high-probability upper bound $\hat{\sigma}_{\mathrm{R}}$, the estimation error in $\hat{\sigma}_{\mathrm{R}}$ does not affect the resulting rate of the confidence interval length. Finally, when $r_{n,\epsilon_1,\epsilon_2,\alpha}>0.9$, we take our confidence interval to be $[Z_{\min},Z_{\max}]$; see the definition of $\hat{\mathrm{CI}}^{(4)}_{n,\epsilon_1,\epsilon_2,\alpha}$ in~\eqref{eq:gaussian-general-CI-b}.
    \item[\emph{(c)}] Known $\sigma$, unknown $\epsilon_1,\epsilon_2$: We take $\lambda_1=\lambda_2=1$, $\beta=\alpha$ and $\bar{\sigma}=\sigma$.  Since $\mathcal{M}(P,\epsilon_1,\epsilon_2) \subseteq \mathcal{M}(P,0,\epsilon_1+\epsilon_2)$, the argument outlined in Section~\ref{sec:warm-up} can be employed to show that $\widehat{\mathrm{CI}}_{n}^{(\mathrm{GST})}(t; \lambda_1,\lambda_2,\beta,\bar{\sigma})$ has simultaneous coverage for all $t\geq 0$. Thus, we may choose $t$ to minimise the length of $\widehat{\mathrm{CI}}_{n}^{(\mathrm{GST})}(t; \lambda_1,\lambda_2,\beta,\bar{\sigma})$ over $t\geq 0$. 
    \item[\emph{(d)}] Unknown $\sigma,\epsilon_1,\epsilon_2$: In this case, we assume that $\sigma\in(0,\sigma_{\max}]$ is unknown, but $\sigma_{\max}$ is known. We thus take $\lambda_1=\lambda_2=1$, $\beta=\alpha$ and $\bar{\sigma}=\sigma_{\max}$. Again, $\widehat{\mathrm{CI}}_{n}^{(\mathrm{GST})}(t; \lambda_1,\lambda_2,\beta,\bar{\sigma})$ has simultaneous coverage for all $t\geq 0$, and we choose $t$ to minimise the length of the confidence interval. 
\end{itemize}

Finally, we note that an immediate extension of the methodology developed here holds for location--scale families with common distribution function $F$, at least in cases~\emph{(a)},~\emph{(c)} and~\emph{(d)}.  Indeed, replacing the Gaussian distribution function $\Phi$ with the distribution function $F$ in~\eqref{eq:GST} yields valid (adaptive) confidence intervals in these cases, with identical proofs.  On the other hand, in case~\emph{(b)}, a new estimate $\hat{\sigma}_{\mathrm{R}}$ would need to be constructed on a case-by-case basis.

\section{The nonparametric setting} \label{sec:nonparametric}

Our aim in this section is to study adaptive confidence intervals under nonparametric assumptions on the base distribution $P$.  Specifically, in Section~\ref{sec:adaptation-tail}, we impose tail conditions on $P$, while in Section~\ref{sec:adaptation-shape}, we consider a symmetry assumption.  

Let $\mathcal{P}$ be a class of distributions on $\mathbb{R}$ and let $\mathcal{E} \subseteq \{(\epsilon_1,\epsilon_2) \in [0,1]^2 : \epsilon_1 + \epsilon_2 \leq 1\}$.  We define $\widehat{\mathcal{CI}}(\mathcal{P},\mathcal{E})$ as the set of all confidence intervals for the mean of $P \in \mathcal{P}$ based on $Z_1,\ldots,Z_n\overset{\mathrm{iid}}{\sim} M \in \mathcal{M}(P,\epsilon_1,\epsilon_2)$, with uniform coverage at least $1-\alpha$ and that adapt to $(\epsilon_1,\epsilon_2) \in \mathcal{E}$, i.e.~the set of all intervals $\widehat{\mathrm{CI}}$ such that
\begin{align*}
    \inf_{P\in\mathcal{P}}\; \inf_{(\epsilon_1,\epsilon_2) \in \mathcal{E}}\; \inf_{M\in\mathcal{M}(P,\epsilon_1,\epsilon_2)}\; \mathbb{P}_M\bigl(\theta\in\hat{\mathrm{CI}}(Z_1,\ldots,Z_n)\bigr) \geq 1-\alpha.
\end{align*}
For $(\epsilon_1,\epsilon_2) \in \mathcal{E}$, we define
\begin{align*}
    L_{n,\epsilon_1,\epsilon_2}\bigl(\hat{\mathcal{CI}}(\mathcal{P},\mathcal{E})\bigr) \coloneqq \inf\Bigl\{r>0 : \inf_{\hat{\mathrm{CI}} \in \hat{\mathcal{CI}}(\mathcal{P},\mathcal{E})}\; \sup_{P\in\mathcal{P}}\; \sup_{M \in \mathcal{M}(P, \epsilon_1, \epsilon_2)} \mathbb{P}_M \bigl(|\hat{\mathrm{CI}}(Z_1,\ldots,Z_n)| \geq r\bigr) \leq \alpha \Bigr\}
\end{align*}
to be the minimax length of confidence intervals in $\hat{\mathcal{CI}}(\mathcal{P},\mathcal{E})$ based on $Z_1,\ldots,Z_n\overset{\mathrm{iid}}{\sim} M \in \mathcal{M}(P,\epsilon_1,\epsilon_2)$.

\subsection{Confidence intervals under tail conditions} \label{sec:adaptation-tail}

\subsubsection{Minimax rates}
For $a<b$, let $\mathcal{P}_{\mathrm{BD}}(\theta,[a,b])$ denote the set of distributions supported on $[a,b]$ with mean $\theta$. For $\sigma>0$, we define $\mathcal{P}_{\mathrm{SG}}(\theta,\sigma)$ to be the set of all distributions~$P$ on $\mathbb{R}$ such that if $X\sim P$, then $\mathbb{E}(X)=\theta$ and $\mathbb{E}\, \exp\bigl((X-\theta)^2/\sigma^2\bigr) \leq 2$. Finally, let $\mathcal{P}_{\mathrm{FV}}(\theta,\sigma)$ be the set of all distributions~$P$ on~$\mathbb{R}$ such that $\mathbb{E}_P(X)=\theta$ and $\mathrm{Var}_P(X) \leq \sigma^2$.

\begin{Theorem}
\label{thm:nonparametric-minimax-rates}
    Let $n\in\mathbb{N}$ and $\alpha\in(0,1)$. Define $\mathcal{E} \coloneqq \{(\epsilon_1,\epsilon_2)\in[0,1]^2 : \epsilon_1+\epsilon_2 \leq 1\}$, and let $(\epsilon_1,\epsilon_2)\in\mathcal{E}$ be such that $\epsilon_+ \coloneqq\epsilon_1 + \epsilon_2 \leq \frac{27\log(12/\alpha)}{n}$. The following statements hold.
    \begin{itemize}
        \item[(a)] (Bounded random variables) Let $a<b$ and $\mathcal{P}_{\mathrm{BD}}\coloneqq \bigcup_{\theta\in[a,b]} \mathcal{P}_{\mathrm{BD}}(\theta,[a,b])$. If $\alpha\in(0,\frac{1}{4}]$, then
        \begin{align*}
            L_{n,\epsilon_1,\epsilon_2} \bigl(\hat{\mathcal{CI}}(\mathcal{P}_{\mathrm{BD}},\mathcal{E})\bigr) \asymp_{\alpha} (b-a) \Bigl(\frac{1}{\sqrt{n}} + \epsilon_+\Bigr).
        \end{align*}
        \item[(b)] (Sub-Gaussian random variables) Let $\sigma_{\max}>0$ and $\mathcal{P}_{\mathrm{SG}} \coloneqq \bigcup_{\theta\in\mathbb{R}} \mathcal{P}_{\mathrm{SG}}(\theta,\sigma_{\max})$. There exists a universal constant $c>0$ such that if $\alpha\in(6e^{-cn},\frac{1}{4}]$, then
        \begin{align*}
            L_{n,\epsilon_1,\epsilon_2} \bigl(\hat{\mathcal{CI}}(\mathcal{P}_{\mathrm{SG}},\mathcal{E})\bigr) \asymp_{\alpha} \sigma_{\max} \Biggl\{ \frac{1}{\sqrt{n}} + \frac{\epsilon_+}{1-\epsilon_+} \sqrt{\log\biggl(\frac{1}{\epsilon_+}\biggr)} \wedge \sqrt{\log\biggl(\frac{1}{1-\epsilon_+}\biggr)} \Biggr\}.
        \end{align*}
        \item[(c)] (Finite-variance random variables) Let $\sigma_{\max}>0$ and $\mathcal{P}_{\mathrm{FV}} \coloneqq \bigcup_{\theta\in\mathbb{R}} \mathcal{P}_{\mathrm{FV}}(\theta,\sigma_{\max})$. If $\alpha\in(0,\frac{1}{4}]$, then
        \begin{align*}
            L_{n,\epsilon_1,\epsilon_2}\bigl(\hat{\mathcal{CI}}(\mathcal{P}_{\mathrm{FV}},\mathcal{E})\bigr) \asymp_{\alpha} \sigma_{\max} \biggl\{ \frac{1}{\sqrt{n}} + \sqrt{\frac{\epsilon_+}{1-\epsilon_+}}\biggr\}.
        \end{align*}
    \end{itemize}
\end{Theorem}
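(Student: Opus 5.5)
Throughout I read the condition on $\epsilon_+$ as $\epsilon_+ \leq 1-\frac{27\log(12/\alpha)}{n}$, in line with Theorem~\ref{thm:gaussian-minimax-rates}. I would prove each part by an upper bound coming from one generic, $\epsilon$-free construction, and a lower bound coming from an indistinguishability argument. For the upper bound I reduce to the single-parameter model via the inclusion $\mathcal{M}(P,\epsilon_1,\epsilon_2)\subseteq\mathcal{M}(P,0,\epsilon_+)$, as in case~(c) of Section~\ref{sec:gaussian-CI-construction-general}. Writing $\pi(x)\coloneqq\mathbb{P}(Z\neq\star\mid X=x)$, I would use the observed sub-distribution functions $G(x)\coloneqq\mathbb{E}\{\pi(X)\indi_{\{X\le x\}}\}$ and $\bar G(x)\coloneqq\mathbb{E}\{\pi(X)\indi_{\{X> x\}}\}$. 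By~\eqref{eq:realisability-ineq}, these always satisfy $G\le F$ and $\bar G\le 1-F$, where $F$ is the distribution function of $P$; these two constraints do not involve $\epsilon$.

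\textbf{Upper bound.} By the one-sided DKWMR inequality (used exactly as in~\eqref{eq:warmup-DKW}), with probability at least $1-\alpha$ we have, simultaneously in $x$,
\[
\tfrac1n\textstyle\sum_i\indi_{\{Z_i\le x\}}\le F(x)+\delta_n
\quad\text{and}\quad
\tfrac1n\sum_i\indi_{\{Z_i> x\}}\le 1-F(x)+\delta_n,
\qquad \delta_n\coloneqq\sqrt{\log(4/\alpha)/(2n)}.
\]
I then take the confidence set to be the set of all $\vartheta$ that are the mean of some $P'$ in the class (e.g.\ $\mathcal{P}_{\mathrm{BD}}$) whose distribution function satisfies both band constraints. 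This set is an interval, and its coverage is immediate on the DKWMR event, uniformly over $\mathcal{E}$.

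For the length, any two admissible $P',P''$ must both dominate the empirical observed measure up to $\delta_n$. They can therefore differ only through the unconstrained mass $1-\hat p_n+O(\delta_n)$. Since $\hat p_n\ge 1-\epsilon_+-O(\sqrt{\log(1/\alpha)/n})$ with high probability (Bernstein applied to $\hat p_n$), this mass is at most $\epsilon_+ + O(\delta_n)$. I would then bound how far a mass-$\eta$ reallocation can move the mean within each class.
\begin{itemize}
\item[(a)] For $\mathcal{P}_{\mathrm{BD}}$, the shift is at most $(b-a)\eta$ trivially.
\item[(c)] For $\mathcal{P}_{\mathrm{FV}}$, the admissible $P'$ has density ratio with respect to the normalised observed law lying in $[1,1/(1-\eta)]$ up to the band error. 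Cauchy--Schwarz with the variance bound then gives a shift of order $\sigma_{\max}\sqrt{\eta/(1-\eta)}$.
\item[(b)] For $\mathcal{P}_{\mathrm{SG}}$, the same ratio argument combined with the Orlicz constraint gives a shift of order $\sigma_{\max}\frac{\eta}{1-\eta}\sqrt{\log(1/\eta)}$, by placing the extra mass at the tail level where $\mathbb{P}(|X-\theta|>u)\approx\eta$. The alternative bound $\sqrt{\log(1/(1-\eta))}$ comes from a tilt in which the observed law is the conditional law on a set of probability $1-\eta$.
\end{itemize}
Adding the $O(\delta_n)$ band error, converted into a mean error using the tail assumption, contributes the $1/\sqrt n$ term.

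\textbf{Lower bound.} The $1/\sqrt n$ term follows from the standard two-point argument at $\epsilon_+=0$. For the $\epsilon$ term I would construct $P_0$ and $P_1$ in the class with $P_0\in\mathcal{M}$-compatible laws as follows. Take $M\in\mathcal{M}(P_1,\epsilon_1,\epsilon_2)$ with observation probability $\pi(x)\in[1-\epsilon_+,1-\epsilon_1]$, and set $\mathrm{d}P_0\propto\pi\,\mathrm{d}P_1$. Then $M$ also equals an MCAR version of $P_0$ with observation fraction $\int\pi\,\mathrm{d}P_1$, so $M\in\mathcal{M}(P_0,\epsilon_1',0)$ for some $(\epsilon_1',0)\in\mathcal{E}$. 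Consequently any adaptive interval must contain both $\theta(P_0)$ and $\theta(P_1)$ with probability at least $1-2\alpha$ under the \emph{same} $M$, which forces its length to be at least $|\theta(P_0)-\theta(P_1)|$. I would then choose $\pi$ (an indicator-type tilt on a tail set) and $P_1$ extremal in each class so that this mean gap matches the three upper bounds, mirroring the extremal configurations used above.

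\textbf{Main obstacle.} The hardest part is the sub-Gaussian case~(b): showing that the admissible mean shift, and the tilt construction in the lower bound, both yield exactly $\frac{\epsilon_+}{1-\epsilon_+}\sqrt{\log(1/\epsilon_+)}\wedge\sqrt{\log(1/(1-\epsilon_+))}$ while keeping $\mathbb{E}\exp((X-\theta)^2/\sigma_{\max}^2)\le2$ after re-centring. In particular, the regime where $\epsilon_+$ is close to $1$ will require careful tracking of constants.
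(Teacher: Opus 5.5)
There are two genuine gaps: your lower-bound construction cannot reach the $\epsilon_+$-dependent rates, and your upper-bound construction loses the parametric term in (b) and (c). Your band construction does prove the upper bound in (a), since on $[a,b]$ the mean is $(b-a)$-Lipschitz in the sup-norm distance between distribution functions.

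\textbf{Lower bound.} You impose $\pi\in[1-\epsilon_+,1-\epsilon_1]$ on the law $P_1$ under which you measure length, and you invoke coverage at the MCAR law $P_0\propto\pi\,\mathrm{d}P_1$. Since $\pi$ then ranges over an interval of length $\epsilon_2$,
\[
|\theta(P_0)-\theta(P_1)|=\frac{|\mathrm{Cov}_{P_1}(X,\pi(X))|}{\int\pi\,\mathrm{d}P_1}\le\frac{\epsilon_2\,\mathbb{E}_{P_1}|X-\theta(P_1)|}{2(1-\epsilon_+)}.
\]
This vanishes in the MCAR case $\epsilon_2=0$, where the theorem still asserts, for example, $(b-a)\epsilon_+$. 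For small $\epsilon_+$ it is only $O(\sigma_{\max}\epsilon_+)$, which falls short of $\epsilon_+\sqrt{\log(1/\epsilon_+)}$ in (b) and $\sqrt{\epsilon_+}$ in (c). No choice of tilt fixes this, because you are only exploiting the known-$\epsilon$ constraint.

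The missing idea is to use the corner $(0,1)\in\mathcal{E}$, where $\mathcal{M}(P,0,1)=\mathsf{MNAR}_P$ puts no lower bound on the observation probability, and to swap the roles. Measure length under the MCAR law $M=(1-\epsilon_+)P_2+\epsilon_+\delta_{\{\star\}}\in\mathcal{M}(P_2,\epsilon_1,\epsilon_2)$. Then invoke coverage at any $P_1=(1-\epsilon_+)P_2+\epsilon_+R$ in the class, for which $M\in\mathsf{MNAR}_{P_1}$. The paper takes $P_2=\delta_a$ and $R=\delta_b$. In (a), $b$ is the upper endpoint of $[a,b]$. In (b) and (c), $b=-\frac{1-\epsilon_+}{\epsilon_+}a$, with $a$ the largest value keeping the two-point law in the relevant Orlicz ball (Proposition~\ref{prop:tail-adapt}, Lemma~\ref{lemma:a1-a2}). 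Placing the unobserved mass $\epsilon_+$ at a far point is impossible under your constraint $\pi\ge1-\epsilon_+$, and it is exactly what produces the rates $\sqrt{\epsilon_+/(1-\epsilon_+)}$ and $\epsilon_+\sqrt{\log(1/\epsilon_+)}$.

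\textbf{Upper bound in (b) and (c).} Half-line constraints control $P'$ only in Kolmogorov distance, so the density-ratio step is not available. As a result, the DKW error $\delta_n$ enters the mean modulus exactly as $\epsilon_+$ does, not additively.

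A concrete counterexample: take $P=\delta_0$ and $\epsilon_+=0$, so every $Z_i=0$. Let $L=\sigma_{\max}/\sqrt{\delta_n(1-\delta_n)}$. Then $(1-\delta_n)\delta_0+\delta_n\delta_{\pm L}$ satisfies both band constraints and has variance $\sigma_{\max}^2$. Hence your set contains $\pm\sigma_{\max}\sqrt{\delta_n/(1-\delta_n)}$, and its length is $\gtrsim_\alpha\sigma_{\max}n^{-1/4}$ rather than $\sigma_{\max}n^{-1/2}$. The analogous sub-Gaussian perturbation gives length of order $\sigma_{\max}\sqrt{\log n/n}$.

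There is a second failure: because $\delta_n$ is not relative to $p$, when $1-\epsilon_+\lesssim n^{-1/2}$ the band width $1-\narrowhat{p}_n+2\delta_n$ can exceed $1$. The set is then all of $\mathbb{R}$, although the target rate is finite there.

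The paper avoids both problems by separating two sources of error:
\begin{itemize}
\item[(i)] a deterministic bias bound $|\mathbb{E}(Z\mid Z\neq\star)-\theta|\le\sigma r(p)$, with $r=r^{(\mathrm{SG})}$ or $r^{(\mathrm{FV})}$ decreasing (Lemmas~\ref{lemma:sub-gaussian-bias} and~\ref{lemma:finite-variance-bias});
\item[(ii)] the sampling error of the observed-data mean $\bar Z_n$, controlled conditionally on $n\narrowhat{p}_n$ via the inflated $\psi_2$-norm $4\sigma\sqrt{\log(2/p)}$ or the inflated variance $\sigma^2/p$.
\end{itemize}
It then plugs in the empirical-Bernstein lower bound $\narrowhat{p}^-_{n,\alpha}\le p$ for the unknown $p$.
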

We provide explicit constructions of confidence intervals that achieve the length guarantees in each of the three cases in Section~\ref{sec:construct-CI-nonparam}; the corresponding coverage guarantees hold for all $(\epsilon_1,\epsilon_2) \in\mathcal{E}$.  One important feature of the rates in Theorem~\ref{thm:nonparametric-minimax-rates} is that they only depend on $(\epsilon_1,\epsilon_2)$ through $\epsilon_+ = \epsilon_1+\epsilon_2$. In other words, from a minimax length rate perspective, there is no difference between an MCAR model and an MNAR model with the same value of~$\epsilon_+$ when $\epsilon_1,\epsilon_2$ are unknown.  Indeed, from Theorem~\ref{thm:nonparametric-minimax-rates} we see that when the true data generating mechanism is MCAR with constant $\epsilon_1>0$, any confidence interval that adapts to $(\epsilon_1,\epsilon_2) \in \mathcal{E}$ must have a minimax length of constant order.  This is in stark contrast with the case of non-adaptive confidence intervals, where applications of standard sub-Gaussian tail bounds or Markov's inequality (in the finite variance case) yield valid intervals of parametric length centred at the sample mean under MCAR data.

Each of the minimax length rates in Theorem~\ref{thm:nonparametric-minimax-rates} decomposes as a sum of a parametric term and a contamination term, where the latter depends only on $\epsilon_+$ and reflects the different tail conditions imposed.  For small $\epsilon_+$, the contamination term is proportional to $\epsilon_+$, $\epsilon_+\sqrt{\log(1/\epsilon_+)}$ and $\epsilon_+^{1/2}$ in the bounded, sub-Gaussian and finite variance cases respectively, meaning that it dominates the parametric term when $\epsilon_+ \gg n^{-1/2}$, $\epsilon_+ \gg (n\log n)^{-1/2}$ and $\epsilon_+ \gg 1/n$ in the three cases.  The transition point between the two terms in the minimum in the sub-Gaussian setting occurs when $\epsilon_+$ is of constant order, so as $\epsilon_+ \to 1$, the minimax length grows proportionally to $\sqrt{\log\bigl(1/(1-\epsilon_+)\bigr)}$, while for the class of finite-variance distributions, it grows proportionally to $\sqrt{1/(1-\epsilon_+)}$.

As a final remark on Theorem~\ref{thm:nonparametric-minimax-rates}, as shown in Appendix~\ref{sec:proof-of-nonparametric-rates}, for any fixed $\epsilon_+ < 1$, our confidence interval $\hat{\mathrm{CI}}_{n,a,b,\alpha}^{(\mathrm{BD})}$ satisfies 
\begin{align*}
 \sup_{P\in\mathcal{P}_{\mathrm{BD}}} \sup_{M\in\mathcal{M}(P,\epsilon_1,\epsilon_2)} \mathbb{P}_M\bigl(|\hat{\mathrm{CI}}_{n,a,b,\alpha}^{(\mathrm{BD})}| > (b-a)\epsilon_+ \bigr) \rightarrow 0
\end{align*}
as $n \rightarrow \infty$, and our lower bound satisfies $\liminf_{n\to\infty} L_{n,\epsilon_1,\epsilon_2}\bigl(\hat{\mathcal{CI}}(\mathcal{P}_{\mathrm{BD}},\mathcal{E})\bigr) \geq (b-a)\epsilon_+$.  In other words, asymptotically, the interval $\hat{\mathrm{CI}}_{n,a,b,\alpha}^{(\mathrm{BD})}$ attains the minimax length even at the level of constants.  Similarly, in case~\emph{(c)}, for any fixed $\epsilon_+ < 1$, our construction $\hat{\mathrm{CI}}_{n,\sigma_{\max},\alpha}^{(\mathrm{FV})}$ satisfies 
\begin{align*}
    \sup_{P\in\mathcal{P}_{\mathrm{FV}}} \sup_{M\in\mathcal{M}(P,\epsilon_1,\epsilon_2)} \mathbb{P}_M\biggl(|\hat{\mathrm{CI}}_{n,\sigma_{\max},\alpha}^{(\mathrm{FV})}| > \sigma_{\max}\sqrt{\frac{\epsilon_+}{1-\epsilon_+}} \biggr) \rightarrow 0
\end{align*}
as $n \rightarrow \infty$, and our lower bound satisfies $\liminf_{n\to\infty} L_{n,\epsilon_1,\epsilon_2}\bigl(\hat{\mathcal{CI}}(\mathcal{P}_{\mathrm{FV}},\mathcal{E})\bigr) \geq \sigma_{\max}\sqrt{\epsilon_+/(1-\epsilon_+)}$.

\subsubsection{Constructions of the confidence intervals} \label{sec:construct-CI-nonparam}
For $Z_1,\ldots,Z_n \overset{\mathrm{iid}}{\sim} M \in \mathcal{M}(P,\epsilon_1,\epsilon_2)$, let\footnote{with the convention that $0/0\coloneqq 0$.} 
\[
\bar{Z}_n \coloneqq \frac{1}{n\narrowhat{p}_n}\sum_{i=1}^n Z_i \indi_{\{Z_i \neq \star\}},
\]
where we recall the definition of $\narrowhat{p}_n$ from~\eqref{eq:hat-pn} as the empirical fraction of observed entries.  We will also make use of 
\begin{align}
    \narrowhat{p}_{n,\alpha}^- \coloneqq \biggl(\narrowhat{p}_n - \sqrt{\frac{2\narrowhat{p}_n(1-\narrowhat{p}_n)\log(6/\alpha)}{n}} - \frac{3\log(6/\alpha)}{n}\biggr)\vee 0, \label{eq:p-lb}
\end{align}
which satisfies $\mathbb{P}(\narrowhat{p}_{n,\alpha}^- \leq p) \geq 1 - \alpha/2$, where $p\coloneqq \mathbb{P}(Z_1\neq\star)$,  by the empirical Bernstein inequality \citep[Theorem~1]{audibert2009exploration}.  We consider the family of confidence intervals
\[
\widehat{\mathrm{CI}}(s, \Delta_{\mathrm{L}}, \Delta_{\mathrm{U}}) \coloneqq \bigl[s \bar{Z}_n + \Delta_{\mathrm{L}},\, s \bar{Z}_n + \Delta_{\mathrm{U}}\bigr],
\]
and specify the values of $s$, $\Delta_{\mathrm{L}}$ and $\Delta_{\mathrm{U}}$ in each case.

\begin{itemize}
    \item[\emph{(a)}] (Bounded random variables)  We take $s = \narrowhat{p}_{n, \alpha}^-$, $\Delta_{\mathrm{L}} = \Delta_{\mathrm{BD},\mathrm{L}}$ and $\Delta_{\mathrm{U}} = \Delta_{\mathrm{BD},\mathrm{U}}$, where
    \begin{align*}
     \Delta_{\mathrm{BD},\mathrm{L}} \coloneqq (1-\narrowhat{p}_{n,\alpha}^-)a - (b-a)\sqrt{\frac{\log(4/\alpha)}{2n\narrowhat{p}_n}}  \quad \text{and} \quad \Delta_{\mathrm{BD},\mathrm{U}} \coloneqq (1-\narrowhat{p}_{n,\alpha}^-)b + (b-a)\sqrt{\frac{\log(4/\alpha)}{2n\narrowhat{p}_n}},
    \end{align*}
    and construct the confidence interval
    \begin{align}
        \hat{\mathrm{CI}}^{(\mathrm{BD})}_{n,a,b,\alpha} \coloneqq \widehat{\mathrm{CI}}(s, \Delta_{\mathrm{BD}, \mathrm{L}}, \Delta_{\mathrm{BD}, \mathrm{U}}). \label{eq:CI-BD}
    \end{align}
    
    \item[\emph{(b)}] (sub-Gaussian random variables) For $p\in[0,1]$, we let
    \begin{align*}
        r^{(\mathrm{SG})}(p) \coloneqq \sqrt{\log4 \cdot \log(1/p)} \wedge  \frac{11(1-p)}{5p} \log^{1/2}\Bigl(\frac{2}{1-p}\Bigr),
    \end{align*}
    and set
    \[
    \Delta_{\mathrm{SG}} \coloneqq \sigma_{\max} \Biggl(r^{(\mathrm{SG})}(\narrowhat{p}_{n,\alpha}^-) + 4\sqrt{\frac{\log(4)\log(2/\narrowhat{p}_{n,\alpha}^-)\log(4/\alpha)}{n \narrowhat{p}_n}} \Biggr).
    \]
    We define our sub-Gaussian confidence interval as
    \begin{align}
    \widehat{\mathrm{CI}}_{n, \sigma_{\max}, \alpha}^{(\mathrm{SG})} \coloneqq \widehat{\mathrm{CI}}(1, -\Delta_{\mathrm{SG}}, \Delta_{\mathrm{SG}}). \label{eq:CI-SG}
    \end{align}

    \item[\emph{(c)}] (Finite-variance random variables) For $p\in[0,1]$, let
    \begin{align*}
        r^{(\mathrm{FV})}(p) \coloneqq \sqrt{\frac{1-p}{p}},
    \end{align*}
    and set
    \[
    \Delta_{\mathrm{FV}} \coloneqq \sigma_{\max}r^{(\mathrm{FV})}(\narrowhat{p}_{n,\alpha}^-) + \sigma_{\max} \sqrt{\frac{2}{n\narrowhat{p}_n\narrowhat{p}_{n,\alpha}^-\alpha}}.
    \]
    Our finite-variance confidence interval is
    \begin{align}
    \widehat{\mathrm{CI}}_{n, \sigma_{\max}, \alpha}^{(\mathrm{FV})} \coloneqq \widehat{\mathrm{CI}}(1, -\Delta_{\mathrm{FV}}, \Delta_{\mathrm{FV}}). \label{eq:CI-FV}
    \end{align}
\end{itemize}
The reason that, in the bounded random variables case, our confidence interval is not centred at $\bar{Z}_n$ is that, unlike the sub-Gaussian and finite-variance classes, the class $\mathcal{P}_{\mathrm{BD}}(\theta,[a,b])$ is not translation invariant ($a$ and $b$ are assumed known).  The intuition is that, if $P\in \mathcal{P}_{\mathrm{BD}}(\theta,[a,b])$, $Z \sim M \in \mathsf{MNAR}_P$ and $p\coloneqq \mathbb{P}(Z \neq \star)$, then by Lemma~\ref{lemma:bounded-rv-bias},
\begin{align*}
    p\mathbb{E}(Z\,|\,Z\neq\star) + (1-p)a \leq \theta \leq p\mathbb{E}(Z\,|\,Z\neq\star) + (1-p)b,
\end{align*}
so our interval is obtained by replacing $p$ with its high-probability lower bound $\narrowhat{p}_{n,\alpha}^-$, and replacing $\mathbb{E}(Z\,|\,Z\neq\star)$ with high-probability upper and lower bounds.  On the other hand, in the translation invariant sub-Gaussian and finite-variance cases, our constructions begin with the bounds $|\mathbb{E}(Z\,|\,Z\neq\star) - \theta| \leq r^{(\mathrm{SG})}(p)$ and $|\mathbb{E}(Z\,|\,Z\neq\star) - \theta| \leq r^{(\mathrm{FV})}(p)$ respectively; see Lemmas~\ref{lemma:sub-gaussian-bias} and~\ref{lemma:finite-variance-bias}.  Again, we may then replace $p$ with $\narrowhat{p}_{n,\alpha}^-$ and control the sub-Gaussian norm (respectively variance) inflation in replacing $\mathbb{E}(Z\,|\,Z\neq\star)$ with $\bar{Z}_n$.

\subsubsection{Impossibility of adaptation to tail condition}

The optimal confidence intervals in Section~\ref{sec:construct-CI-nonparam} require knowledge of the tail condition of the underlying distribution. A natural question to ask is whether it is possible to further adapt to the tail conditions. Proposition~\ref{thm:finite-variance-lb} below provides a negative answer to this question.   For $\sigma\in(0,\sigma_{\max}]$, define $\mathcal{P}_{\mathrm{SG},\sigma} \coloneqq \bigcup_{\theta\in\mathbb{R}}\mathcal{P}_{\mathrm{SG}}(\theta,\sigma) \subseteq \mathcal{P}_{\mathrm{FV}}$. 

\begin{Proposition}\label{thm:finite-variance-lb}
    Let $\hat{\mathcal{CI}} = \hat{\mathcal{CI}}(\mathcal{P}_{\mathrm{FV}},\mathcal{E})$. For $\sigma\in(0,\sigma_{\max}]$, let 
    \begin{align*}
        L(n,\sigma,\epsilon_1,\epsilon_2) \coloneqq \inf \biggl\{r>0 :  \inf_{\hat{\mathrm{CI}} \in \hat{\mathcal{CI}}}\; \sup_{P\in\mathcal{P}_{\mathrm{SG},\sigma}} \sup_{M\in\mathcal{M}(P,\epsilon_1,\epsilon_2)} \mathbb{P}_M\bigl(|\hat{\mathrm{CI}}(Z_1,\ldots,Z_n)| \geq r\bigr) \leq \alpha \biggr\}.
    \end{align*}
    If $\alpha\in(0,\frac{1}{4}]$, $\sigma\in(0,\sigma_{\max}]$ and $\epsilon_+ \leq 1$, then 
    \begin{align*}
        L(n,\sigma,\epsilon_1,\epsilon_2) \geq \frac{c\sigma}{\sqrt{n}} \vee \sigma_{\max} \sqrt{\frac{\epsilon_+}{1-\epsilon_+}},
    \end{align*}
    where $c>0$ is a universal constant.
\end{Proposition}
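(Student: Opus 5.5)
The plan is to prove the two terms in the maximum by separate two-point (Le Cam type) arguments. Both rest on the same elementary observation. Suppose $\hat{\mathrm{CI}}\in\hat{\mathcal{CI}}$, and suppose two data-generating laws $M_0,M_1$ on $\mathbb{R}_\star$ are allowed by the coverage constraint with means $\theta_0\neq\theta_1$. Then
\[
\mathbb{P}_{M_0}\bigl(|\hat{\mathrm{CI}}|\geq |\theta_1-\theta_0|\bigr)\;\geq\; \mathbb{P}_{M_0}\bigl(\theta_0,\theta_1\in\hat{\mathrm{CI}}\bigr)\;\geq\; 1-2\alpha-\mathrm{TV}\bigl(M_0^{\otimes n},M_1^{\otimes n}\bigr).
\]
If $M_0$ lies in the set over which the supremum in $L(n,\sigma,\epsilon_1,\epsilon_2)$ is taken, and the right-hand side exceeds $\alpha$ (true when the TV is at most $1/4$ and $\alpha\leq 1/4$), then $L(n,\sigma,\epsilon_1,\epsilon_2)\geq|\theta_1-\theta_0|$. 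The asymmetry is essential. Length is only measured under $M_0\in\mathcal{M}(P_0,\epsilon_1,\epsilon_2)$ with $P_0\in\mathcal{P}_{\mathrm{SG},\sigma}$. Coverage, however, must hold for $M_1\in\mathcal{M}(P_1,\epsilon_1',\epsilon_2')$ for \emph{any} $P_1\in\mathcal{P}_{\mathrm{FV}}$ and \emph{any} $(\epsilon_1',\epsilon_2')\in\mathcal{E}$.

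\emph{Contamination term.} Take $P_0=\delta_0$, which lies in $\mathcal{P}_{\mathrm{SG}}(0,\sigma)$ for every $\sigma>0$. Let $M_0\coloneqq(1-\epsilon_+)\delta_0+\epsilon_+\delta_{\{\star\}}$. This belongs to $\mathcal{M}(\delta_0,\epsilon_1,\epsilon_2)$ by taking the MNAR component $Q=\delta_{\{\star\}}\in\mathsf{MNAR}_{\delta_0}$. For $\epsilon_+\in(0,1)$, set
\[
x\coloneqq\sigma_{\max}/\sqrt{\epsilon_+(1-\epsilon_+)}\quad\text{and}\quad P_1\coloneqq(1-\epsilon_+)\delta_0+\epsilon_+\delta_x.
\]
Then $P_1$ has mean $\theta_1=\epsilon_+x=\sigma_{\max}\sqrt{\epsilon_+/(1-\epsilon_+)}$ and variance exactly $\sigma_{\max}^2$, so $P_1\in\mathcal{P}_{\mathrm{FV}}$. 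Let $X\sim P_1$ and $\Omega\coloneqq\indi_{\{X=0\}}$, i.e.\ we always observe $0$ and never observe $x$. Then $M_1\coloneqq\mathsf{Law}(X\ostar\Omega)=M_0$, and $M_1\in\mathsf{MNAR}_{P_1}=\mathcal{M}(P_1,0,1)$ with $(0,1)\in\mathcal{E}$. Since $M_0=M_1$ exactly, the TV term vanishes, and the display gives $L\geq\sigma_{\max}\sqrt{\epsilon_+/(1-\epsilon_+)}$ because $1-2\alpha>\alpha$. The boundary cases are immediate: $\epsilon_+=0$ is trivial, and for $\epsilon_+=1$ the same construction with $x\to\infty$ (taking $P_1=(1-\eta)\delta_0+\eta\delta_x$ observed only at $0$, which is compatible with $M_0=\delta_{\{\star\}}$ under $(\epsilon_1',\epsilon_2')=(0,1)$, with variance $\sigma_{\max}^2$) gives an arbitrarily large gap, so $L=\infty$.

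\emph{Parametric term.} Take $P_j=N(\theta_j,\sigma^2)$ with $\theta_0=0$ and $\theta_1=c\sigma/\sqrt{n}$; both lie in $\mathcal{P}_{\mathrm{SG},\sigma}$ after adjusting constants, since $\mathbb{E}\exp\bigl((X-\theta)^2/\sigma'^2\bigr)\leq2$ requires $\sigma'$ to be a constant multiple of the Gaussian standard deviation, so one uses $N(\theta_j,(\sigma/\sqrt{3})^2)$. Let $M_j$ be the corresponding MCAR versions with observation probability $1-\epsilon_+$ (placing all missingness on $\star$ independently of $X$), which lie in $\mathcal{M}(P_j,\epsilon_1,\epsilon_2)$. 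By the data-processing inequality and Pinsker's inequality,
\[
\mathrm{TV}\bigl(M_0^{\otimes n},M_1^{\otimes n}\bigr)\leq\sqrt{n\,\mathrm{KL}(P_0,P_1)/2}\lesssim c,
\]
which is at most $1/4$ for small universal $c$. The display then gives $L\geq c\sigma/\sqrt{n}$.

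I expect no real obstacle. The only genuinely non-obvious step is recognising that the adaptation requirement over all of $\mathcal{E}$ lets the alternative use $(\epsilon_1',\epsilon_2')=(0,1)$, which makes $M_0$ and $M_1$ identical. A fixed contamination level cannot achieve this, since realisability would then force $P_1\leq\frac{1-\epsilon_1}{1-\epsilon_+}P_0$. The remaining care is in the bookkeeping of the sub-Gaussian constant and the $\epsilon_+\in\{0,1\}$ edge cases.
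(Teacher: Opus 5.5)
Your proposal is correct and is essentially the paper's argument: Proposition~\ref{prop:tail-adapt} with $\psi_1(x)=x^2$ uses exactly your pair up to translation and reflection, namely a point mass on the sub-Gaussian side and a two-point law of variance $\sigma_{\max}^2$ whose far atom is hidden under $(\epsilon_1',\epsilon_2')=(0,1)$ so that the observed laws coincide, while your $c\sigma/\sqrt{n}$ term is the standard MCAR Gaussian two-point bound, which the paper cites rather than reproves. There are two small slips, both easily fixed: for $\epsilon_+=1$ you should take $\Omega\equiv 0$, since observing the atom at $0$ gives $(1-\eta)\delta_0+\eta\delta_{\{\star\}}$ rather than $\delta_{\{\star\}}$; and at $\alpha=1/4$ you need $\mathrm{TV}<1/4$ strictly to get a right-hand side exceeding $\alpha$, which just means taking $c$ slightly smaller.
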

We remark that $L_{n,\epsilon_1,\epsilon_2}\bigl(\hat{\mathcal{CI}}(\mathcal{P}_{\mathrm{FV}},\mathcal{E})\bigr) \geq L(n,\sigma,\epsilon_1,\epsilon_2)$ by definition.
Proposition~\ref{thm:finite-variance-lb} reveals that if $\sigma\in(0,\sigma_{\max}]$ and $\hat{\mathrm{CI}} \in \hat{\mathcal{CI}}(\mathcal{P}_{\mathrm{FV}},\mathcal{E})$, then the contamination term in the minimax length of $\hat{\mathrm{CI}}$ over the sub-Gaussian class $\mathcal{P}_{\mathrm{SG},\sigma}$ with sub-Gaussian norm (hence standard deviation) at most~$\sigma$ has the same rate as the minimax length of $\hat{\mathrm{CI}}$ over the set of distributions in $\mathcal{P}_{\mathrm{FV}}$ with variance at most~$\sigma_{\max}^2$. In other words, any adaptive confidence interval that has coverage over the finite-variance class has minimax length in the smaller sub-Gaussian class of the same order as the minimax length in the finite-variance class.

\subsection{Confidence intervals under symmetry constraints}\label{sec:adaptation-shape}

As our next nonparametric class, we consider symmetric distributions, without any tail conditions.  Taking $t=0$ in~\eqref{eq:GST-warmup} yields the following confidence interval
\begin{align*}
    \hat{\mathrm{CI}}^{(\mathrm{Sym})}_{n,\alpha} \coloneqq \Biggl[Q_n^+\biggl(\frac{1}{2} + \sqrt{\frac{\log(4/\alpha)}{2n}}\biggr),\, Q_n^-\biggl(\frac{1}{2}+\sqrt{\frac{\log(4/\alpha)}{2n}}\biggr) \Biggr].
\end{align*}
When the data are fully observed, this coincides with the interval proposed by \citet{scheffe1945non}. 
In this case, the interval does not depend on $\sigma$, $\epsilon_1$ or $\epsilon_2$. The theorem below provides coverage and length guarantees for this interval.

\begin{Theorem}\label{thm:symmetric}
Let $n\in\mathbb{N}$, $\alpha\in(0,1]$, $\epsilon_1,\epsilon_2 \in [0,1]$ be such that $\epsilon_+ \coloneqq \epsilon_1+\epsilon_2 \leq 1 - \frac{4\log_2(8/\alpha)}{n}$. Let~$P$ be a distribution on $\mathbb{R}$ that admits a Lebesgue density that is symmetric about~$\theta\in\mathbb{R}$. Suppose further that $f$ is positive and continuous at~$\theta$ and let $Z_1,\ldots,Z_n \overset{\mathrm{iid}}{\sim} M\in \mathcal{M}(P,\epsilon_1,\epsilon_2)$. Then 
\begin{align*}
    \mathbb{P}\Bigl(\theta \in \hat{\mathrm{CI}}^{(\mathrm{Sym})}_{n,\alpha}(Z_1,\ldots,Z_n)\Bigr) \geq 1-\alpha.
\end{align*}
Moreover, there exist $c\in(0,1/2)$ and $C>0$, depending only on~$f$ and $\alpha$, such that if $\epsilon_+ \leq c$ and $n \geq C$, then
\begin{align*}
    \bigl|\hat{\mathrm{CI}}^{(\mathrm{Sym})}_{n,\alpha}\bigr| \lesssim_{\alpha} \frac{1}{f(\theta)}\Bigl(\frac{1}{\sqrt{n}}+\epsilon_+\Bigr)
\end{align*}
with probability at least $1-\alpha/2$.
\end{Theorem}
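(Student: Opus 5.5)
The argument has two parts: coverage, which reduces to the DKWMR mechanism of the warm-up, and length, which combines DKWMR with the realisability sandwich and local positivity of $f$ at $\theta$. Write $\eta \coloneqq \sqrt{\log(4/\alpha)/(2n)}$ and $q\coloneqq 1/2+\eta$.

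\emph{Coverage.} Since $\mathcal{M}(P,\epsilon_1,\epsilon_2)\subseteq\mathcal{M}(P,0,\epsilon_+)$, it suffices to treat the simplified model~\eqref{eq:model-mnar} with $\epsilon=\epsilon_+$. I would use a coupling $Z_i = X_i\ostar\Omega_i$ with $X_i\overset{\mathrm{iid}}{\sim}P$. On the event $E_0$ that at least one entry is observed, $\{\theta < Q_n^-(q)\}\cap E_0 = \bigl\{\frac1n\sum_i \indi_{\{Z_i\le \theta\}} < q\bigr\}\cap E_0$. Every observed value is a true $X_i$, so $\frac1n\sum_i\indi_{\{Z_i\le\theta\}} \le \frac1n\sum_i \indi_{\{X_i\le\theta\}}$; this inclusion is in the wrong direction to control the upper endpoint directly. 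I therefore expect the endpoint roles to be arranged as in the warm-up: the event that $\theta$ lies above the upper endpoint $Q_n^-(q)$ requires $\frac1n\sum_i\indi_{\{Z_i\le \theta\}}\ge q$, which implies $\frac1n\sum_i\indi_{\{X_i\le\theta\}}\ge 1/2+\eta$. Because $P$ has a density symmetric about $\theta$, $P((-\infty,\theta])=1/2$. The one-sided DKWMR inequality then bounds this probability by $\alpha/4$. The lower endpoint is handled symmetrically using $Q_n^+$ and $P((\theta,\infty))=1/2$. The event $E_0^c$ has probability $(1-p)^n\le \epsilon_+^n$, and the condition $\epsilon_+\le 1-4\log_2(8/\alpha)/n$ makes this at most $\alpha/4$. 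I would simply invoke the warm-up coverage result~\eqref{eq:warmup-coverage-all-known} at $t=0$ (valid for $\sigma$ arbitrary, since $\sigma t=0$), with $\Phi$ replaced by the CDF $F$ of $P$ as remarked after~\eqref{eq:GST}; symmetry gives $1-F(\theta)=1/2$, which is all that argument uses.

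\emph{Length.} Write $G(x)\coloneqq \mathbb{P}(Z\le x)$ and $\bar G(x)\coloneqq\mathbb{P}(Z>x)$ for $x\in\mathbb{R}$. By~\eqref{eq:realisability-ineq}, the distribution of the observed part has density $g$ satisfying $(1-\epsilon_+)f\le g\le f$. Hence, with $u>0$,
\[
G(\theta+u)\ge (1-\epsilon_+)F(\theta+u)\ge (1-\epsilon_+)\bigl(\tfrac12 + \tfrac12 f(\theta)u\bigr)
\]
for $u\le \delta$, where $\delta>0$ is chosen so that $f\ge f(\theta)/2$ on $[\theta-\delta,\theta+\delta]$, using continuity and positivity of $f$ at $\theta$. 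By the two-sided DKWMR inequality applied to $G$ (the empirical CDF of the $Z_i$ with $\star$ counted nowhere, which is an average of iid indicators), with probability at least $1-\alpha/2$ we have $\sup_x|G_n(x)-G(x)|\le \eta'$ for some $\eta'\asymp_\alpha n^{-1/2}$, and similarly for the survival function. On this event, $Q_n^-(q)\le\theta+u$ as soon as
\[
(1-\epsilon_+)\bigl(\tfrac12+\tfrac12 f(\theta)u\bigr) - \eta' \ge \tfrac12 + \eta .
\]
This holds for
\[
u \asymp \frac{\epsilon_+ + \eta+\eta'}{f(\theta)(1-\epsilon_+)} \lesssim_\alpha \frac{1}{f(\theta)}\Bigl(\frac1{\sqrt n}+\epsilon_+\Bigr),
\]
using $\epsilon_+\le c\le 1/2$. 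We also need $u\le\delta$, which is where $c$ and $C$ (depending on $f$ and $\alpha$) enter. This requirement also ensures $q<\narrowhat{p}_n$, so that the non-trivial branch of the quantile definitions applies: $\narrowhat{p}_n\ge 1-\epsilon_+-\eta'>1/2+\eta$ once $c$ is small and $n$ is large. Symmetrically, $Q_n^+(q)\ge \theta-u$. The length is therefore at most $2u$.

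\emph{Main obstacle.} The main obstacle is bookkeeping in the length step: verifying that the empirical quantiles use the ``$p<\narrowhat{p}_n$'' branches, and absorbing all constants through the choice of $c$ and $C$. The coverage step reduces cleanly to the warm-up.
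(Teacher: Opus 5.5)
Your proposal is correct and is essentially the paper's proof: coverage comes from the warm-up argument at $t=0$, which uses only $P((-\infty,\theta])=P([\theta,\infty))=1/2$. Length comes from $\mathbb{P}(Z_1\le\theta')\ge(1-\epsilon_+)F(\theta')$ together with $f\ge f(\theta)/2$ near $\theta$; the paper uses pointwise Hoeffding bounds at $\theta'$ and $\theta''$ where you use a uniform DKW bound. One correction to your coverage sketch: the auxiliary event to control is $\{Z_{\max}\le\theta\}$, because $Q_n^-(q)=Z_{\max}$ when $q>\narrowhat{p}_n$, not merely $E_0^c$. Bounding that event is where $\epsilon_+\le 1-4\log_2(8/\alpha)/n$ is used, via a Chernoff bound on the number of uncontaminated observations, exactly as in the warm-up proof you invoke.
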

Comparing Theorem~\ref{thm:symmetric} with Theorem~\ref{thm:gaussian-minimax-rates}\emph{(d)}, we see that when the base distribution $P$ is symmetric, $\epsilon_+\leq c$ and we regard $f(\theta)$ as constant, the length guarantee for $\hat{\mathrm{CI}}^{(\mathrm{Sym})}_{n,\alpha}$ is minimax optimal and is of the same order as if we knew in advance that $P$ were Gaussian. If we further assume that~$P$ has variance bounded by~$\sigma_{\max}^2$, then $\hat{\mathrm{CI}}^{(\mathrm{Sym})}_{n,\alpha/2} \cap \hat{\mathrm{CI}}^{(\mathrm{FV})}_{n,\sigma_{\max}, \alpha/2}$ is a confidence interval with coverage at least $1-\alpha$. Moreover, 
\begin{align*}
    \bigl| \hat{\mathrm{CI}}^{(\mathrm{Sym})}_{n,\alpha/2} \cap \hat{\mathrm{CI}}^{(\mathrm{FV})}_{n,\sigma_{\max}, \alpha/2} \bigr| \lesssim_{\alpha} \begin{cases}
        \frac{1}{f(\theta)}\bigl( \frac{1}{\sqrt{n}} + \epsilon_+ \bigr) \quad&\text{if } \epsilon_+ \leq c\\
        \sigma_{\max}\bigl(\frac{1}{\sqrt{n}} + \sqrt{\frac{\epsilon_+}{1-\epsilon_+}}\bigr) \quad&\text{if } c < \epsilon_+ \leq 1-\frac{27\log(12/\alpha)}{n};
    \end{cases}
\end{align*}
thus, under this additional variance bound, we can extend the range of $\epsilon_+$ for which we have a length guarantee compared with Theorem~\ref{thm:symmetric}.

\section{Consequences for causal inference} \label{sec:causal}

In this section, we detail a natural and straightforward extension of our results to the causal inference setting.  To this end, given $P \in \mathcal{P}(\mathbb{R}^2)$, we extend the definition of $\mnar_P$ in~\eqref{eq:mnar-def} by writing
\[
\mnar_P \coloneqq \Bigl\{\mathsf{Law}(X \ostar \Omega):\; X \sim P \text{ and } \Omega \in \bigl\{(0,1), (1, 0)\bigr\}\Bigr\},
\]
so only one coordinate of $X$ is observed.  Letting $P_0$ and $P_1$ denote the marginal distributions of~$P$, and letting $q,\eta \in [0,1]$ denote a unconfoundedness probability and contamination parameter respectively, we then consider the \emph{realisable causal class} of distributions
\begin{align} \label{eq:causal-model}
\mathcal{M}_{2}(P, q, \eta)\coloneqq (1 - \eta) \cdot \underbrace{\Bigl\{(1 - q)\, P_0 \otimes \delta_{\{\star\}} + q \, \delta_{\{\star\}} \otimes P_1\Bigr\}}_{\text{MCAR/Unconfounded component}} + \, \eta \, \underbrace{\mnar_{P}}_{\text{Departure term}}.
\end{align}
The distributions in $\mathcal{M}_2(P, q,\eta)$ are $\bigl((1-\eta), \eta\bigr)$ mixtures of an unconfounded distribution and an arbitrarily confounded version of the same base distribution.  If $M \in \mathcal{M}_2(P, q,\eta)$ has marginals $M_0$ and $M_1$, then $M_0 \in \mathcal{M}\bigl(P_0,(1-\eta)q,\eta\bigr)$ and $M_1 \in \mathcal{M}\bigl(P_1,(1-\eta)(1-q),\eta\bigr)$.  The following lemma gives a form of converse statement.   
\begin{Lemma}\label{lem:bivariate-univariate-reduction}
        Fix $P \in \mathcal{P}(\mathbb{R}^2)$ with marginals $P_0$ and $P_1$.  For any $M_0' \in \mathcal{M}\bigl(P_0,(1-\eta)q,\eta\bigr)$, 
        there exists $M \in \mathcal{M}_2(P, q,\eta)$ having first marginal~$M_0'$.  Similarly, for any $M_1' \in \mathcal{M}\bigl(P_1,(1-\eta)(1-q),\eta\bigr)$, 
        there exists $M \in \mathcal{M}_2(P, q,\eta)$ having second marginal $M_1'$.
\end{Lemma}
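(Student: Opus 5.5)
The plan is to use the conditional-probability characterisation~\eqref{eq:realisability-ineq} of $\mathcal{M}(P_0,\epsilon_1,\epsilon_2)$. From it I will extract a single observation-probability function of $x_0$. I will then build a bivariate MNAR mechanism on $X\sim P$ whose choice of observed coordinate depends on $X$ only through $X_0$. I treat the first marginal; the second is symmetric, with $q$ replaced by $1-q$ and the coordinates swapped.

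\textbf{Step 1 (extract the observation probability).} Let $M_0'\in\mathcal{M}\bigl(P_0,(1-\eta)q,\eta\bigr)$. By~\eqref{eq:mnar-def} and~\eqref{eq:model-mcar-mnar}, $M_0'=\mathsf{Law}(X_0\ostar\Omega_0)$ for some $X_0\sim P_0$ and $\Omega_0\in\{0,1\}$. Let $\pi(x_0)$ be a version of $\mathbb{P}(\Omega_0=1\,|\,X_0=x_0)$; it exists as a regular conditional probability of a binary variable given a real one. With $\epsilon_1=(1-\eta)q$ and $\epsilon_2=\eta$, we have $1-\epsilon_1-\epsilon_2=(1-\eta)(1-q)$ and $1-\epsilon_1=(1-\eta)(1-q)+\eta$. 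So~\eqref{eq:realisability-ineq} gives
\[
(1-\eta)(1-q)\le\pi(X_0)\le(1-\eta)(1-q)+\eta \quad\text{almost surely.}
\]
After modifying $\pi$ on a $P_0$-null set, these bounds hold for every $x_0$. If $\eta>0$, set $g(x_0)\coloneqq\{\pi(x_0)-(1-\eta)(1-q)\}/\eta\in[0,1]$; if $\eta=0$, take $g\equiv0$. Then $\pi=(1-\eta)(1-q)+\eta g$.

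\textbf{Step 2 (build $M$).} Let $X=(X_0,X_1)\sim P$, and let $B\sim\mathrm{Bern}(1-\eta)$ and $U,V\sim\mathrm{Unif}[0,1]$ be independent of $X$ and of each other. Set $\Omega=(1,0)$ if either $B=1$ and $U\le 1-q$, or $B=0$ and $V\le g(X_0)$; otherwise set $\Omega=(0,1)$. Let $M\coloneqq\mathsf{Law}(X\ostar\Omega)$.

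Conditioning on $B$ decomposes $M$ as $(1-\eta)M^{(1)}+\eta Q$. Here $M^{(1)}$ is the law given $B=1$: since $U$ is independent of $X$, it equals $(1-q)P_0\otimes\delta_{\{\star\}}+q\,\delta_{\{\star\}}\otimes P_1$. The law given $B=0$ is $Q=\mathsf{Law}(X\ostar\Omega')$, where $\Omega'$ is a $\{(1,0),(0,1)\}$-valued indicator with $\mathbb{P}\bigl(\Omega'=(1,0)\,|\,X\bigr)=g(X_0)$; hence $Q\in\mnar_P$. Therefore $M\in\mathcal{M}_2(P,q,\eta)$ by~\eqref{eq:causal-model}.

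\textbf{Step 3 (identify the first marginal).} The first coordinate of $X\ostar\Omega$ is $X_0\ostar\indi_{\{\Omega=(1,0)\}}$. Its law is determined by the joint law of $\bigl(X_0,\indi_{\{\Omega=(1,0)\}}\bigr)$. In that pair, $X_0\sim P_0$ and
\[
\mathbb{P}\bigl(\Omega=(1,0)\,|\,X_0\bigr)=(1-\eta)(1-q)+\eta\, g(X_0)=\pi(X_0),
\]
which is exactly the joint law of $(X_0,\Omega_0)$. Hence the first marginal of $M$ equals $M_0'$.

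The only delicate step is Step~1. One must use~\eqref{eq:realisability-ineq} (equivalently \citet[Eq.~(12)]{ma2024estimation}) to get the almost-sure sandwich, and then choose a version of $\pi$ for which the bounds hold everywhere, so that $g$ is a genuine $[0,1]$-valued measurable function. Everything after that is a direct verification.
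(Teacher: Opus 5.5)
Your proof is correct and is essentially the paper's construction: your $B$, $\indi_{\{U>1-q\}}$ and $\indi_{\{V\le g(X_0)\}}$ play the roles of $1-L^{(2)}$, $L^{(1)}$ and $\Omega^{\mathrm{MNAR}}(0)$, and in both arguments the second coordinate is observed exactly when the first is missing. The one difference is that the paper takes the MNAR component $Q_0\in\mnar_{P_0}$ directly from the mixture definition~\eqref{eq:model-mcar-mnar}, which makes your detour through $\pi$ and the sandwich~\eqref{eq:realisability-ineq} unnecessary (you can simply take $g$ to be the conditional observation probability under $Q_0$); on the other hand, your explicit use of $g(X_0)$ with an independent $V$ spells out the coupling with $X_1$ that the paper leaves implicit in ``we may assume''.
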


Given a sample from $M \in \mathcal{M}_2(P,q,\eta)$, our target of inference is the \emph{average treatment effect}
\[
\mathrm{ATE}_P \coloneqq \mathbb{E}_{P_1}(X) - \mathbb{E}_{P_0}(X);
\]
we will construct a $(1 - \alpha)$-level confidence interval $\widehat{\mathrm{CI}}_{\mathrm{ATE}_P}$ for $\mathrm{ATE}_P$ by combining confidence intervals for $\theta_1 \coloneqq \mathbb{E}_{P_1}(X)$ and $\theta_0 \coloneqq \mathbb{E}_{P_0}(X)$.  Indeed, the Minkowski difference of any $(1 - \alpha/2)$-level confidence interval for $\theta_1$ and any $(1 -\alpha/2)$-level confidence interval for $\theta_0$ yields a $(1-\alpha)$-level confidence interval for $\mathrm{ATE}_P$ whose length is bounded above by the sum of the lengths of the two individual intervals; see Proposition~\ref{prop:ATE-upper-bound} for a precise statement.

In order to assess the optimality of our intervals, for $\mathcal{P} \subseteq \mathcal{P}(\mathbb{R}^{2})$ and $\mathcal{I} \subseteq [0,1]^2$, we consider the class $\widehat{\mathcal{CI}}^{\mathrm{ATE}}(\mathcal{P}, \mathcal{I})$ of confidence intervals $\hat{\mathrm{CI}}$ that satisfy the coverage guarantee
\begin{align*}
    \inf_{P\in\mathcal{P}}\; \inf_{(q, \eta) \in \mathcal{I}}\; \inf_{M\in\mathcal{M}_2(P,q,\eta)}\; \mathbb{P}_M\bigl(\mathrm{ATE}_P\in\hat{\mathrm{CI}}(Z_1,\ldots,Z_n)\bigr) \geq 1-\alpha.
\end{align*}
In analogous fashion to Section~\ref{sec:nonparametric}, we define the minimax length of these intervals as
\begin{align}
L_{n,q,\eta}^{\mathrm{ATE}}\bigl(\widehat{\mathcal{CI}}^{\mathrm{ATE}}(\mathcal{P},\mathcal{I})\bigr) \coloneqq \inf\Bigl\{r>0 : \inf_{\hat{\mathrm{CI}} \in \widehat{\mathcal{CI}}^{\mathrm{ATE}}(\mathcal{P},\mathcal{I})}\; \sup_{P\in\mathcal{P}}\; \sup_{M \in \mathcal{M}_2(P,q,\eta)} \mathbb{P}_M \bigl(|\hat{\mathrm{CI}}(Z_1,\ldots,Z_n)| \geq r\bigr) \leq \alpha \Bigr\}. \label{eq:minimax-quantile-causal}
\end{align}
The quantity above is bounded below by the minimax length given oracle knowledge of $\theta_0$ (or $\theta_1$), and hence inherits our one-dimensional lower bounds in Sections~\ref{sec:gaussian} and~\ref{sec:nonparametric}.  Proposition~\ref{prop:ATE-lower-bound} makes this precise.   
\begin{Proposition}
    \label{prop:ATE-lower-bound}
    Let $\mathcal{P} \subseteq \mathcal{P}(\mathbb{R}^2)$ be such that, writing $\mathcal{P}_0 \coloneqq \{P_0 : P\in\mathcal{P}\}$ and $\mathcal{P}_1 \coloneqq \{P_1 : P\in\mathcal{P}\}$, we have $\{P_0 \otimes P_1:P_0 \in \mathcal{P}_0,P_1 \in \mathcal{P}_1\} \subseteq \mathcal{P}$. Let $\mathcal{E}_n'\coloneqq \{(\epsilon_1,\epsilon_2) \in [0,1]^2 : \epsilon_1+\epsilon_2 \leq 1-\frac{1}{2n}\}$ and let 
    \begin{align*}
        \mathcal{I}_n'\coloneqq \Bigl\{(q,\eta)\in[0,1]^2 : \bigl((1-\eta)q + \eta\bigr) \wedge \bigl((1-\eta)(1-q) + \eta\bigr) \leq 1-\frac{1}{2n} \Bigr\}.
    \end{align*}
    Then, for any $n\in\mathbb{N}$ and $(q,\eta)\in\mathcal{I}$, we have
    \begin{align*}
        L_{n,q,\eta}^{\mathrm{ATE}} \bigl(\widehat{\mathcal{CI}}^{\mathrm{ATE}}(\mathcal{P},\mathcal{I}_n')\bigr) \geq L_{n,(1-\eta)q,\eta}\bigl(\hat{\mathcal{CI}}(\mathcal{P}_0, \mathcal{E}_n')\bigr) \vee L_{n,(1-\eta)(1-q),\eta}\bigl(\hat{\mathcal{CI}}(\mathcal{P}_1, \mathcal{E}_n')\bigr).
    \end{align*}
\end{Proposition}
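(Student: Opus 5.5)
By symmetry it suffices to prove the bound by $L_{n,(1-\eta)q,\eta}\bigl(\hat{\mathcal{CI}}(\mathcal{P}_0,\mathcal{E}_n')\bigr)$; the bound for the second marginal is the same argument with the roles of the coordinates exchanged. The plan is an oracle reduction. Fix any $\hat{\mathrm{CI}}\in\widehat{\mathcal{CI}}^{\mathrm{ATE}}(\mathcal{P},\mathcal{I}_n')$, and fix a distribution $P_1^\circ\in\mathcal{P}_1$ with mean $\theta_1^\circ\coloneqq\mathbb{E}_{P_1^\circ}(X)$. From $\hat{\mathrm{CI}}$ I will build a confidence interval for $\theta_0$ that uses only univariate data $Z_1^{(0)},\ldots,Z_n^{(0)}$ from $M_0'\in\mathcal{M}(P_0,\epsilon_1,\epsilon_2)$. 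This interval will have the same length as $\hat{\mathrm{CI}}$ and will cover at level $1-\alpha$ uniformly over $P_0\in\mathcal{P}_0$ and $(\epsilon_1,\epsilon_2)\in\mathcal{E}_n'$. Taking the infimum over $\hat{\mathrm{CI}}$ then gives the claim.

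\textbf{Step 1 (embedding and randomisation).} For $(\epsilon_1,\epsilon_2)\in\mathcal{E}_n'$, I choose $(q',\eta')$ with $\eta'=\epsilon_2$ and $(1-\eta')q'=\epsilon_1$. This is possible since $\epsilon_1\le 1-\epsilon_2$; if $\epsilon_2=1$ then $\epsilon_1=0$ and any $q'$ works. The constraint $\epsilon_1+\epsilon_2\le 1-\frac{1}{2n}$ says exactly that $(1-\eta')q'+\eta'\le 1-\frac{1}{2n}$, so $(q',\eta')\in\mathcal{I}_n'$.

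Set $P\coloneqq P_0\otimes P_1^\circ$; this lies in $\mathcal{P}$ by the product-closure hypothesis. By Lemma~\ref{lem:bivariate-univariate-reduction}, there is $M\in\mathcal{M}_2(P,q',\eta')$ whose first marginal is $M_0'$. I need to simulate the bivariate sample from the univariate one using auxiliary randomness. The key observation is that in $\mathcal{M}_2$ exactly one coordinate is observed, so $Z^{(0)}$ determines which coordinate is seen. If $Z^{(0)}\neq\star$, the bivariate observation is $(Z^{(0)},\star)$. If $Z^{(0)}=\star$, it is $(\star,Y)$ with $Y$ drawn from the conditional law of the second coordinate under $M$. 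Because of the independent product structure, this conditional law should be expressible through $P_1^\circ$ and the missingness kernel. I will verify this by writing $M$ explicitly as in the proof of the lemma, taking $\Omega$ to depend only on $X_0$ (plus external randomness), so that $Y\sim P_1^\circ$ independently. The resulting statistic is then a randomised confidence interval based on the univariate sample. If the class $\hat{\mathcal{CI}}$ is restricted to deterministic intervals, I would either note that the lower bounds used in the paper apply to randomised procedures, or average over the auxiliary randomness. This is the main technical point: I expect it to be the main obstacle, and I would handle it by checking that the lower-bound constructions (two-point/Le Cam arguments) are insensitive to external randomisation.

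\textbf{Step 2 (coverage and length).} Define
\[
\widetilde{\mathrm{CI}}\coloneqq \theta_1^\circ-\hat{\mathrm{CI}}(\text{simulated sample}).
\]
This is an interval of the same length as $\hat{\mathrm{CI}}$. Since $\mathrm{ATE}_P=\theta_1^\circ-\theta_0$, we have $\theta_0\in\widetilde{\mathrm{CI}}$ if and only if $\mathrm{ATE}_P\in\hat{\mathrm{CI}}$. That event has probability at least $1-\alpha$, because $(q',\eta')\in\mathcal{I}_n'$ and $M\in\mathcal{M}_2(P,q',\eta')$. Hence $\widetilde{\mathrm{CI}}\in\hat{\mathcal{CI}}(\mathcal{P}_0,\mathcal{E}_n')$.

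For the length, fix the target pair $(q,\eta)$ and the corresponding $(\epsilon_1,\epsilon_2)=((1-\eta)q,\eta)$. Then
\[
\sup_{P_0,M_0'}\mathbb{P}\bigl(|\widetilde{\mathrm{CI}}|\ge r\bigr)\le \sup_{P\in\mathcal{P}}\sup_{M\in\mathcal{M}_2(P,q,\eta)}\mathbb{P}_M\bigl(|\hat{\mathrm{CI}}|\ge r\bigr).
\]
So any $r$ that is feasible in the definition of $L^{\mathrm{ATE}}_{n,q,\eta}$ is also feasible for $L_{n,(1-\eta)q,\eta}\bigl(\hat{\mathcal{CI}}(\mathcal{P}_0,\mathcal{E}_n')\bigr)$, which yields the inequality.
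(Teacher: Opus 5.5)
Your reduction is the argument the paper intends; the paper gives no written proof here, only the remark that $L^{\mathrm{ATE}}_{n,q,\eta}$ dominates the minimax length under oracle knowledge of one counterfactual mean, and your steps make that remark precise: fix $P_1^\circ\in\mathcal{P}_1$, use product closure and Lemma~\ref{lem:bivariate-univariate-reduction} with the MNAR indicator depending only on $X_0$ (so rows with $Z_i^{(0)}=\star$ carry independent $P_1^\circ$ draws), use the $\wedge$ in $\mathcal{I}_n'$ to place $(q',\eta')$ in it, and reflect the ATE interval about $\theta_1^\circ$. The only caveat is the one you flag: the simulated draws make $\widetilde{\mathrm{CI}}$ randomised, and averaging does not derandomise it, since no single realisation of the draws need give coverage uniformly over $M_0'$; so the inequality holds with $\hat{\mathcal{CI}}(\mathcal{P}_0,\mathcal{E}_n')$ read as allowing external randomisation, which is harmless for the two-point testing lower bounds this proposition is combined with.
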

The condition on $\mathcal{P}$ in Proposition~\ref{prop:ATE-lower-bound} is a closure property under marginal products: it asks that whenever $P \in \mathcal{P}$ has marginals $P_0$ and $P_1$, the product distribution $P_0 \otimes P_1$ belongs to $\mathcal{P}$.  By combining the lower bounds from Sections~\ref{sec:gaussian} and~\ref{sec:nonparametric} with Proposition~\ref{prop:ATE-lower-bound}, we now show that in several common examples, it is generally impossible to adapt to the proportion of unmeasured confounding\footnote{Here, we adopt the terminology of~\cite{bonvini2022sensitivity}; see Section~\ref{sec:relation-sensitivity-analysis} for a precise connection.}.  
\begin{itemize}
    \item \textit{Finite covariance:} Let $\mathcal{P}_{2,\mathrm{FV}} \coloneqq \{P\in\mathcal{P}(\mathbb{R}^2) : \| \mathrm{Cov}(P) \|_{\mathrm{max}} \leq \sigma_{\max}\}$.  Then, applying Proposition~\ref{prop:ATE-lower-bound} in conjunction with Proposition~\ref{thm:finite-variance-lb} yields
    \[
    \liminf_{n \rightarrow \infty} L_{n,q,\eta}^{\mathrm{ATE}} \bigl(\widehat{\mathcal{CI}}^{\mathrm{ATE}}(\mathcal{P}_{2,\mathrm{FV}},\mathcal{I}_n')\bigr) \geq \sigma_{\max} \max\Biggl\{\sqrt{\frac{\eta + (1 - \eta) q}{1 - \eta - (1 \!-\! \eta) q}},   \sqrt{\frac{\eta + (1 - \eta) (1 - q)}{1 - \eta - (1 - \eta) (1 - q)}}\Biggr\}.
    \]
    Taking $\eta = 0$ in the above expression corresponds to an experiment with no unmeasured confounding, and yet 
    \[
       \liminf_{n \rightarrow \infty}\; L_{n,q,0}^{\mathrm{ATE}} \bigl(\widehat{\mathcal{CI}}^{\mathrm{ATE}}(\mathcal{P}_{2,\mathrm{FV}},\mathcal{I}_n')\bigr) \geq \sigma_{\max} \cdot \max\biggl\{ \sqrt{\frac{q}{1 - q}},   \sqrt{\frac{1 - q}{q}}\biggr\} \geq \sigma_{\max}.
    \]
    In other words, even if there is no unmeasured confounding, any adaptive confidence interval must be of at least constant length.
    \item \textit{Gaussian with unknown covariance:}
    Let $\mathcal{P}_{2,\mathrm{G}}\coloneqq \{N_2(\theta,\Sigma): \theta\in\mathbb{R}^2,\, \|\Sigma\|_{\max} \leq \sigma_{\max}\}$. Then, by Proposition~\ref{prop:ATE-lower-bound} and the lower bound proof of Theorem~\ref{thm:gaussian-minimax-rates}\emph{(d)} (see Proposition~\ref{prop:all-unknown-lb}), we find that
    \begin{align*}
    \liminf_{n \rightarrow \infty} \; L_{n,q,\eta}^{\mathrm{ATE}} \bigl(\widehat{\mathcal{CI}}^{\mathrm{ATE}}(\mathcal{P}_{2,\mathrm{G}},\mathcal{I}_n')\bigr) &\gtrsim \sigma_{\max} \cdot \max\biggl\{\eta + (1 - \eta) q,\, \eta + (1 - \eta) (1 - q), \\
    &\hspace{1cm}\sqrt{\log\Bigl(\frac{1}{(1 - \eta)(1 - q)}\Bigr)} - 1,\, \sqrt{\log\biggl(\frac{1}{(1 - \eta) q}\biggr)} - 1\biggr\}.
    \end{align*}
    Again, considering the case with no unmeasured confounding when $\eta = 0$, we find that any adaptive confidence interval must have length at least
    \[
    \liminf_{n \rightarrow \infty} \; L_{n,q,0}^{\mathrm{ATE}} \bigl(\widehat{\mathcal{CI}}^{\mathrm{ATE}}(\mathcal{P}_{2,\mathrm{G}},\mathcal{I}_n')\bigr) \gtrsim \sigma_{\max} \cdot \max\biggl\{q, 1 - q, \sqrt{\log\biggl(\frac{1}{1 - q}\biggr)} - 1, \sqrt{\log\biggl(\frac{1}{q}\biggr)} - 1\biggr\}.
    \]
    Thus, even with the strong assumption of Gaussianity of the base distribution, any adaptive confidence interval must be of at least constant length.  
    \item \textit{Gaussian with known, diagonal covariance: }  Let $\Sigma\in\mathbb{R}^{2\times 2}$ be diagonal and let $\mathcal{P}_{\Sigma,\mathrm{G}} \coloneqq \{N_2(\theta,\Sigma) : \theta\in\mathbb{R}^2\}$. Proposition~\ref{prop:ATE-lower-bound} and Theorem~\ref{thm:gaussian-minimax-rates}\emph{(c)} yield
    \begin{align*}
    L_{n,q,\eta}^{\mathrm{ATE}} \bigl(\widehat{\mathcal{CI}}^{\mathrm{ATE}}(\mathcal{P}_{\Sigma,\mathrm{G}},\mathcal{I}_n')\bigr) &\gtrsim \max\Biggl\{\frac{\Sigma_{11}}{\sqrt{n}} + \frac{\Sigma_{11}\log\bigl(\frac{1}{(1 - \eta)(1 - q)}\bigr)}{\sqrt{1 \vee \log\bigl\{n\bigl(\eta + (1 - \eta)q\bigr)^2\bigr\}}},\\
    &\hspace{3cm} \frac{\Sigma_{22}}{\sqrt{n}} + \frac{\Sigma_{22}\log\bigl(\frac{1}{(1 - \eta)q}\bigr)}{\sqrt{1 \vee \log\bigl\{n\bigl(\eta + (1 - \eta)(1 -q)\bigr)^2\bigr\}}}\Biggr\}.
    \end{align*}
    Once more simplifying to the case $\eta = 0$ yields
    \[    
        L_{n,q,0}^{\mathrm{ATE}} \bigl(\widehat{\mathcal{CI}}^{\mathrm{ATE}}(\mathcal{P}_{\Sigma,\mathrm{G}},\mathcal{I}_n')\bigr) \gtrsim \max\biggl\{\frac{\Sigma_{11}}{\sqrt{n}} + \frac{\Sigma_{11}\log\bigl(\frac{1}{1 - q}\bigr)}{\sqrt{1 \vee \log(nq^2)}}, \frac{\Sigma_{22}}{\sqrt{n}} + \frac{\Sigma_{22}\log\bigl(\frac{1}{q}\bigr)}{\sqrt{1 \vee \log\bigl(n(1 -q)^2\bigr)}}\biggr\}.
    \]
    As in the previous two cases, we see that it is impossible to adapt to $\eta = 0$ (where one would expect an interval of length $1/\sqrt{n}$).
\end{itemize}
The second example above reveals a distinction between identification and inference in this adaptive setting.  In particular, while necessary and sufficient conditions under which the treatment effect is identifiable have recently been characterised~\citep{cai2025makes}, we saw in this example that the minimax length of any valid, adaptive confidence interval in this setting cannot converge to zero.  On the other hand, it is possible in this example to estimate the average treatment effect consistently, so in particular it is identifiable.  Indeed,~\citet[][Theorem 7]{ma2024estimation} show that their average of extremes estimator $\widehat{\mathrm{ATE}}_P$ of $\mathrm{ATE}_P$, which does not require the knowledge of any of the parameters $\eta, q, \Sigma$, satisfies
\[
\bigl \lvert \widehat{\mathrm{ATE}}_P - \mathrm{ATE}_P \bigr \rvert \lesssim_{\alpha} \sigma_{\max}\cdot\Biggl\{ \frac{\log\bigl(1 + \frac{6\eta}{(1 - q)(1 - \eta)}\bigr)}{\sqrt{\log\bigl(n(1 - q)(1-\eta)\bigr)}} + \frac{\log\bigl(1 + \frac{6\eta}{q(1 - \eta)}\bigr)}{\sqrt{\log\bigl(nq(1-\eta)\bigr)}}\Biggr\},
\]
with probability at least $1-\alpha$ when $\alpha\in[4e^{-n(q\wedge(1-q))(1-\eta)/8}, 1]$.  

Finally, we emphasise that each of the lower bounds in the examples above is matched by applying our optimal adaptive univariate confidence interval for the corresponding class of distributions to obtain confidence intervals for each of the counterfactual means $\theta_0$ and $\theta_1$ and then taking the Minkowski difference to combine these intervals.  Thus, in the third example above, even though we are unable to adapt to $\eta=0$ at the parametric rate, we can still obtain consistent adaptive confidence intervals with a logarithmic rate.

\subsection{Relation to sensitivity analysis} \label{sec:relation-sensitivity-analysis}
In order to connect the model introduced here to related notions in the literature, we consider the more standard setting in which we observe independent copies of $(Y, A)$, where $Y = A Y(1) + (1 - A) Y(0)$ denotes the response variable, $A$ denotes a binary treatment indicator and $Y(0)$ and $Y(1)$ denote the potential outcomes.  

\paragraph{Marginal sensitivity condition.}
For $j,k \in \{0,1\}$, let $P\bigl(Y(j) \, | \, A=k\bigr)$ denote the conditional distribution of $Y(j)$ given that $A=k$.  Specialised to the covariate-free setting considered here, Tan's marginal sensitivity condition~\citep[Eq.~(9)]{tan2006distributional} posits that there exists $\Lambda \geq 1$ such that 
\begin{align}
\frac{1}{\Lambda} \leq \frac{\mathrm{d}P\bigl(Y(0) \,|\, A = 1\bigr)}{\mathrm{d}P\bigl(Y(0) \,|\, A = 0\bigr)} \leq \Lambda \quad \text{ and } \quad \frac{1}{\Lambda} \leq \frac{\mathrm{d}P\bigl(Y(1) \,|\, A = 1\bigr)}{\mathrm{d}P\bigl(Y(1) \,|\, A = 0\bigr)} \leq \Lambda. \label{eq:marginal-sensitivity}
\end{align} 

\begin{Lemma}
\label{Lemma:MaringalSensitivity}
    Any distribution in $\mathcal{M}_2(P, q, \eta)$ satisfies the marginal sensitivity condition~\eqref{eq:marginal-sensitivity} with  
    \[
    \Lambda = \biggl\{1 + \frac{\eta}{(1 - \eta)(1 - q)}\biggr\} \cdot \biggl\{1 + \frac{\eta}{(1 - \eta)q}\biggr\}.
    \]
\end{Lemma}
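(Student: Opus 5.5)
Write a generic $M \in \mathcal{M}_2(P,q,\eta)$ as $M = (1-\eta)\{(1-q)P_0\otimes\delta_{\{\star\}} + q\,\delta_{\{\star\}}\otimes P_1\} + \eta\,\mathsf{Law}(X\ostar\Omega)$, with $X=(Y(0),Y(1))\sim P$ and $\Omega\in\{(1,0),(0,1)\}$ arbitrarily dependent on $X$. The treatment indicator is $A=1$ exactly when the second coordinate is observed, and $P\bigl(Y(j)\,|\,A=k\bigr)$ is the conditional law of $X_j$ given the corresponding treatment status. The plan is to sandwich the density of each such conditional law with respect to $P_j$ between explicit constants. The likelihood ratio in~\eqref{eq:marginal-sensitivity} is then a ratio of two sandwiched densities.

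\textbf{Step 1 (conditional probabilities).} Set $\pi(x) := \mathbb{P}(\Omega=(0,1)\,|\,X=x)\in[0,1]$. Combining the two mixture components on a common probability space, we get
\[
\mathbb{P}(A=1\,|\,X=x) = (1-\eta)q + \eta\pi(x) \in [(1-\eta)q,\,(1-\eta)q+\eta],
\]
and similarly $\mathbb{P}(A=0\,|\,X=x)\in[(1-\eta)(1-q),\,(1-\eta)(1-q)+\eta]$. This is the bivariate analogue of~\eqref{eq:realisability-ineq}. The clean way to formalise it is to realise $M$ as the law of $X\ostar\tilde\Omega$, where $X\sim P$, $U\sim\mathrm{Bern}(\eta)$ and $B\sim\mathrm{Bern}(q)$ are independent of $X$, and $\tilde\Omega = \Omega$ if $U=1$, while $\tilde\Omega=(0,1)$ if $U=0,B=1$ and $\tilde\Omega=(1,0)$ if $U=0,B=0$.

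\textbf{Step 2 (Bayes).} For each $j,k$, the law of $Y(j)=X_j$ given $A=k$ has density with respect to $P_j$ equal to
\[
x_j \mapsto \frac{\mathbb{E}\bigl[\mathbb{P}(A=k\,|\,X)\,\big|\,X_j=x_j\bigr]}{\mathbb{P}(A=k)}.
\]
By Step 1 the numerator lies in $[a_k,a_k+\eta]$, where $a_1=(1-\eta)q$ and $a_0=(1-\eta)(1-q)$. The denominator lies in the same interval. For a fixed $j$, taking the ratio of the $k=1$ and $k=0$ densities gives
\[
\frac{\mathrm{d}P(Y(j)\,|\,A=1)}{\mathrm{d}P(Y(j)\,|\,A=0)}(x)=\frac{N_1(x)}{\mathbb{P}(A=1)}\cdot\frac{\mathbb{P}(A=0)}{N_0(x)},
\]
where $N_k$ denotes the numerator above. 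Each factor $N_1/\mathbb{P}(A=1)$ is at most $(a_1+\eta)/a_1 = 1+\eta/\{(1-\eta)q\}$ and at least its reciprocal, and likewise for $\mathbb{P}(A=0)/N_0$. Hence the ratio lies in $[1/\Lambda,\Lambda]$ with $\Lambda$ as stated, for both $j=0$ and $j=1$.

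\textbf{Main obstacle.} I expect the difficulty to lie mainly in bookkeeping. The first task is making rigorous the identification of a distribution on the missing-data space with the potential-outcomes law $(Y(0),Y(1),A)$, including the absolute continuity needed for the Radon--Nikodym derivatives to exist. The lower bound $a_k>0$ makes both conditional laws mutually absolutely continuous with $P_j$, provided $q\in(0,1)$ and $\eta<1$; otherwise $\Lambda=\infty$ and the claim is trivial. The second task is to check that the bound comes out as the stated product rather than a square of a single factor. This works because the two factors involve different treatment arms, so one factor is controlled by $a_1$ and the other by $a_0$.
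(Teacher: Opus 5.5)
Your proof is correct and follows essentially the same route as the paper. You bound $\mathbb{P}(A=k\,|\,Y(j))$ between $a_k$ and $a_k+\eta$ (you derive this from the bound given the full $X$ via the tower property, which the paper leaves implicit), use the same bounds for $\mathbb{P}(A=k)$, and apply Bayes' rule to the ratio of the two conditional laws. Your labelling $a_1=(1-\eta)q$ matches the definition of $\mathcal{M}_2$, whereas the paper's proof has $q$ and $1-q$ interchanged; this is immaterial because $\Lambda$ is symmetric under $q\leftrightarrow 1-q$.
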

An immediate consequence of Lemma~\ref{Lemma:MaringalSensitivity} is that the lower bounds in the previous section can be applied to distributions satisfying the marginal sensitivity condition.

\paragraph{Rosenbaum's sensitivity condition.} The connection between Rosenbaum's sensitivity condition~\citep{rosenbaum87sensitivity,yadlowsky2022bounds} and the marginal sensitivity condition discussed in the previous paragraph is specified by~\citet[Proposition 7.1]{zhao2019sensitivity}.  
A similar relationship holds between our model and Rosenbaum's sensitivity condition.

\paragraph{Proportion of unmeasured confounding.} In the special case of the covariate-free causal inference setting above, the model~\eqref{eq:causal-model} is identical to the proportion of unmeasured confounding model proposed by~\citet{bonvini2022sensitivity}.  Indeed, \citet{bonvini2022sensitivity} consider a counterfactual distribution 
\[
\mathbb{Q} = (1 - \eta) \mathbb{Q}_0 + \eta \mathbb{Q}_1,
\]  
where $\mathbb{Q}_0$ is an unconfounded distribution for a triple $(Y_0, Y_1, A)$ in which $A \indep \{Y_0, Y_1\}$ and $\mathbb{Q}_1$ is an arbitrarily confounded distribution on $(Y_0, Y_1, A)$ that places no independence assumptions between~$A$ and $(Y_0, Y_1)$.
Their observed data are $(Y, A)$ where $Y \coloneqq (1 - A) Y_0 + AY_1$.  Now consider the bijective map $\psi: \mathbb{R} \times \{0, 1\} \rightarrow (\mathbb{R} \times \{\star\}) \cup (\{\star\} \times \mathbb{R})$ defined by
\[
\psi(Y, A) \coloneqq \bigl((1 - A) \ostar Y, A \ostar Y\bigr)^{\top} = \Omega(A) \ostar (Y_0, Y_1)^{\top},
\]
where $\Omega(0) \coloneqq (1, 0)^{\top}$ and $\Omega(1) \coloneqq (0, 1)^{\top}$.  By construction, with probability $1 - \eta$, we have $\Omega(A) \indep (Y_0, Y_1)$ and with probability $\eta$, $\Omega(A)$ can depend arbitrarily on the pair $(Y_0, Y_1)$.  Letting $q \coloneqq \mathbb{P}_{\mathbb{Q}_0}(A=1)$ in the unconfounded portion of the mixture, it thus follows that $\mathsf{Law}\bigl(\psi(Y, A)\bigr) \in \mathcal{M}_2(P,q,\eta)$, where $P$ denotes the distribution of $Y$.  The converse statement holds by considering the inverse map $\psi^{-1}$. 

\section{Numerical examples}
We study the performance of our confidence intervals on simulated and real data in this section. The code for our experiments is available at \url{https://github.com/kabirverchand/AdaptiveCI}.

\subsection{Synthetic Gaussian data}
We take the base distribution $P = N(0, 1)$ and observe data from
\begin{align*}
    (1-\epsilon_1-\epsilon_2)P + \epsilon_1 \delta_{\{\star\}} + \epsilon_2 Q,
\end{align*}
where $Q \coloneqq \mathrm{Law}(X \ostar \indi_{\{X\geq 1/2\}})$ when $X\sim P$.   Thus, when $\epsilon_2 > 0$, this mixture distribution is MNAR.  We apply the adaptive confidence intervals in Section~\ref{sec:gaussian-CI-construction-general}\emph{(c)} and~\emph{(d)}; in the latter case where the standard deviation $\sigma=1$ is unknown, we take $\sigma_{\max}=2$. Our experiments consider the following three settings:

\begin{enumerate}
    \item We fix $\epsilon_1=0$, $\epsilon_2=0.5$, vary the sample size from $n=10^2$ to $n=10^6$ and consider the settings of unknown $(\epsilon_1,\epsilon_2)$ with the standard deviation~$\sigma$ being either known or unknown.  In Figure~\ref{fig:gaaussian-CI-vary-n}, we plot our adaptive confidence intervals (based on one realisation).  These intervals cover the true population mean, which is identifiable, although they are not centred at this quantity.  When $\sigma$ is known, our intervals shrink as $n$ increases, albeit at logarithmic rate, which is optimal (Theorem~\ref{thm:gaussian-minimax-rates}\emph{(c)}). When $\sigma$ is unknown, the lengths of our intervals do not converge to zero, which is unavoidable by Theorem~\ref{thm:gaussian-minimax-rates}\emph{(d)}.  On the other hand, the naive Gaussian intervals centred at the sample mean, which  ignore the missing data aspect of the model, do not cover the true population mean in the large majority of realisations.

    \item We fix $n=1000$, vary $(\epsilon_1,\epsilon_2) \in \{(\epsilon_1',\epsilon_2')\in[0,1]^2 : \epsilon_1'+\epsilon_2' \leq 0.9\}$ and again consider unknown $(\epsilon_1,\epsilon_2)$ with $\sigma$ known or unknown. For each value of $(\epsilon_1,\epsilon_2)$, we compute the average length of the confidence intervals over 100 repetitions, and plot corresponding heat maps in Figure~\ref{fig:heat-map}.  The average length is  roughly constant for fixed $\epsilon_1+\epsilon_2$, in line with our minimax rates in Theorem~\ref{thm:gaussian-minimax-rates}\emph{(c)} and~\emph{(d)}.

    \item We fix $n=1000$, take $\epsilon_1\in\{0.05,0.5\}$ and vary $\epsilon_2$. In Figure~\ref{fig:fixed-eps-1}, we plot realisations of our adaptive confidence intervals with knowledge of $\sigma$ but no knowledge of $\epsilon_1,\epsilon_2$. We see that for fixed $\epsilon_1$, the lengths of the intervals decreases as $\epsilon_2$ decreases, again in line with Theorem~\ref{thm:gaussian-minimax-rates}\emph{(c)}. 
\end{enumerate}

\begin{figure}
    \centering
    \begin{subfigure}{0.85\linewidth}
        \includegraphics[width=\linewidth]{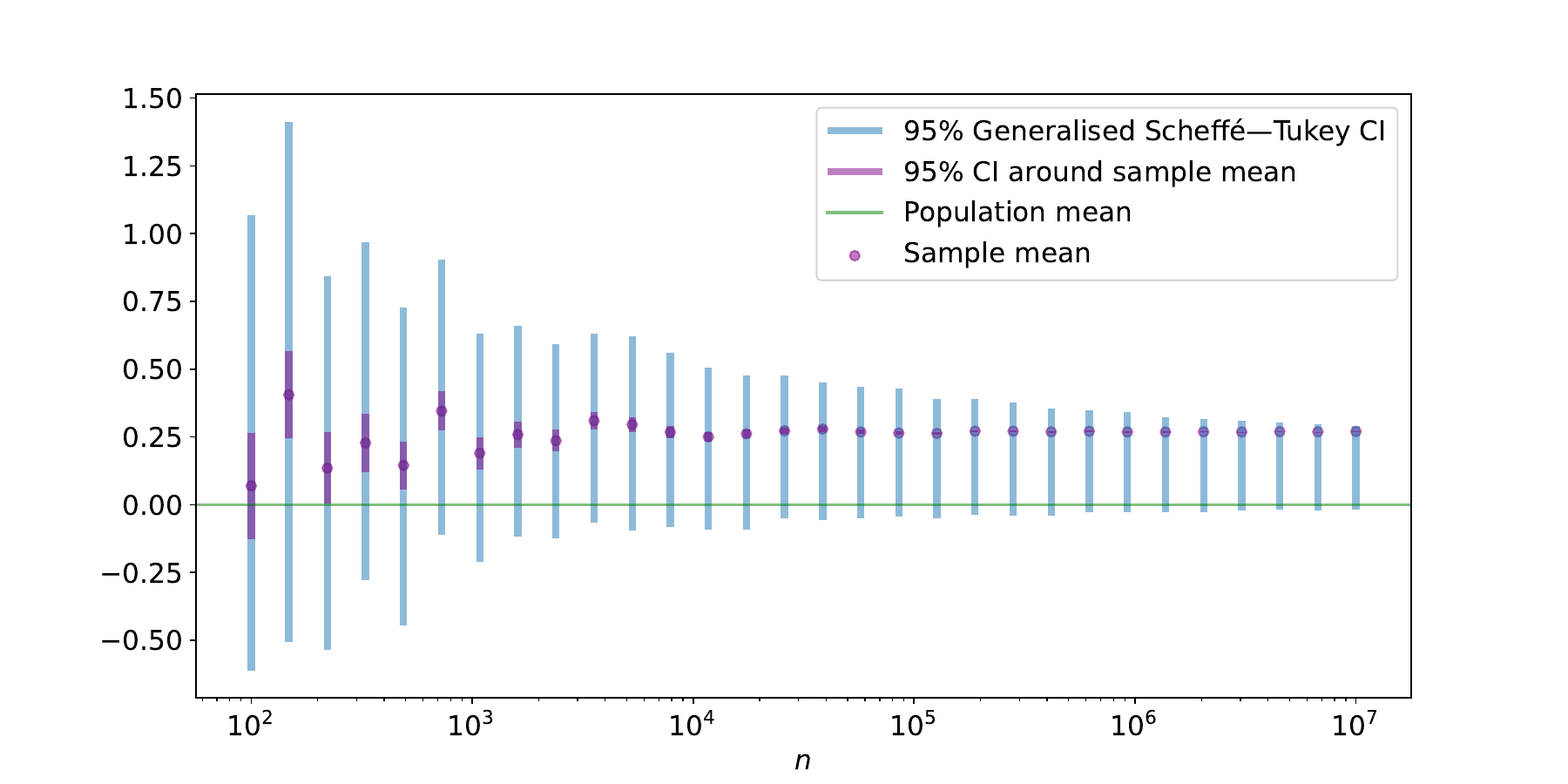}  
        \caption{Known $\sigma$, unknown $\epsilon_1,\epsilon_2$.}
    \end{subfigure}

    \begin{subfigure}{0.85\linewidth}
        \includegraphics[width=\linewidth]{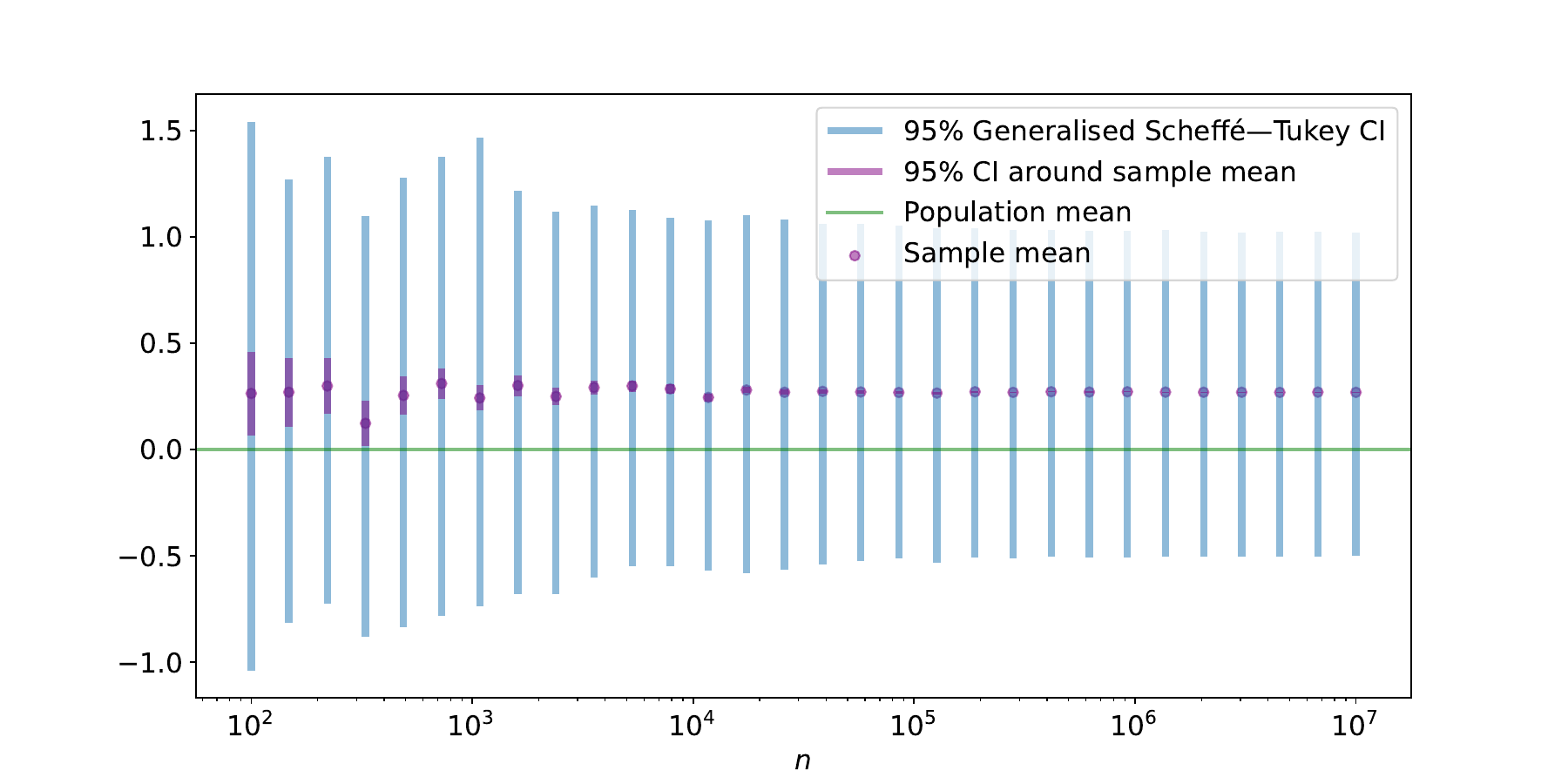}    
        \caption{Unknown $\sigma,\epsilon_1,\epsilon_2$.}
    \end{subfigure}

    \caption{Realisations of our adaptive confidence intervals and naive Gaussian intervals centred at the sample mean at different sample sizes $n$.}
    \label{fig:gaaussian-CI-vary-n}
\end{figure}

\begin{figure}
    \centering
    \begin{subfigure}{0.72\linewidth}
        \includegraphics[width=\linewidth]{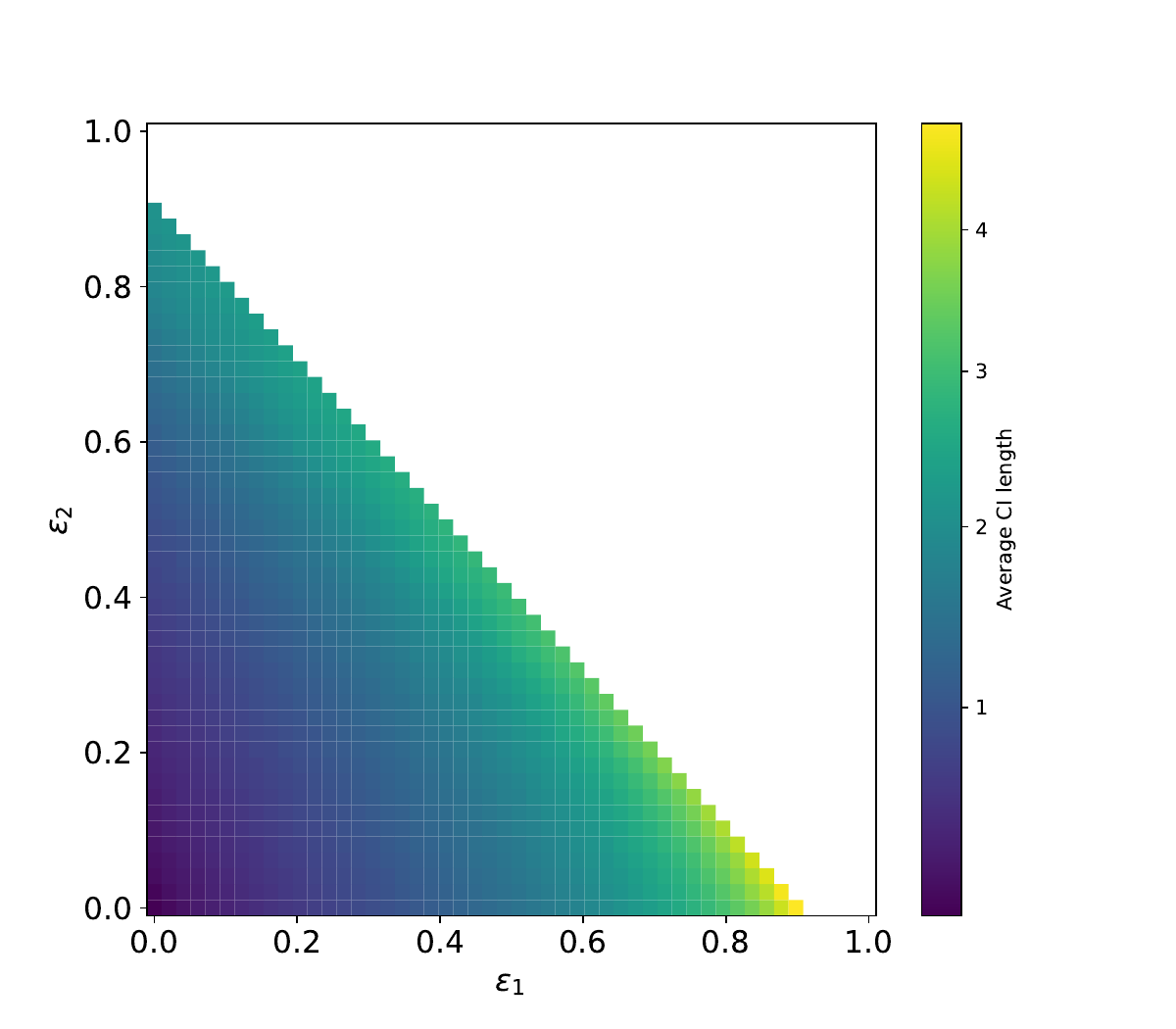}  
        \caption{Known $\sigma$, unknown $\epsilon_1,\epsilon_2$.}
    \end{subfigure}

    \begin{subfigure}{0.72\linewidth}
        \includegraphics[width=\linewidth]{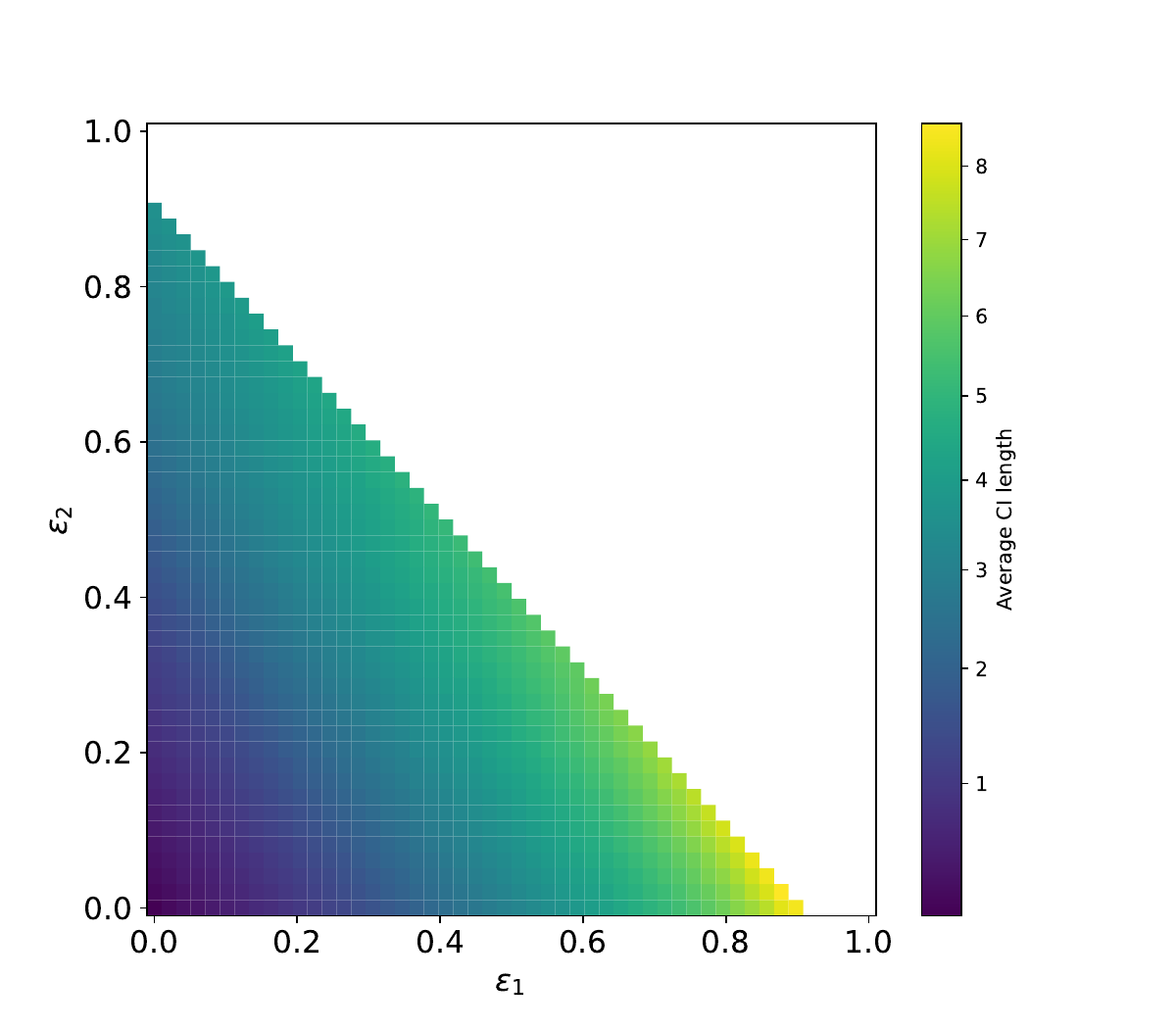}    
        \caption{Unknown $\sigma,\epsilon_1,\epsilon_2$.}
    \end{subfigure}

    \caption{Heat map of the average length of our adaptive confidence intervals.}
    \label{fig:heat-map}
\end{figure}

\begin{figure}
    \centering
    \begin{subfigure}{0.85\linewidth}
        \includegraphics[width=\linewidth]{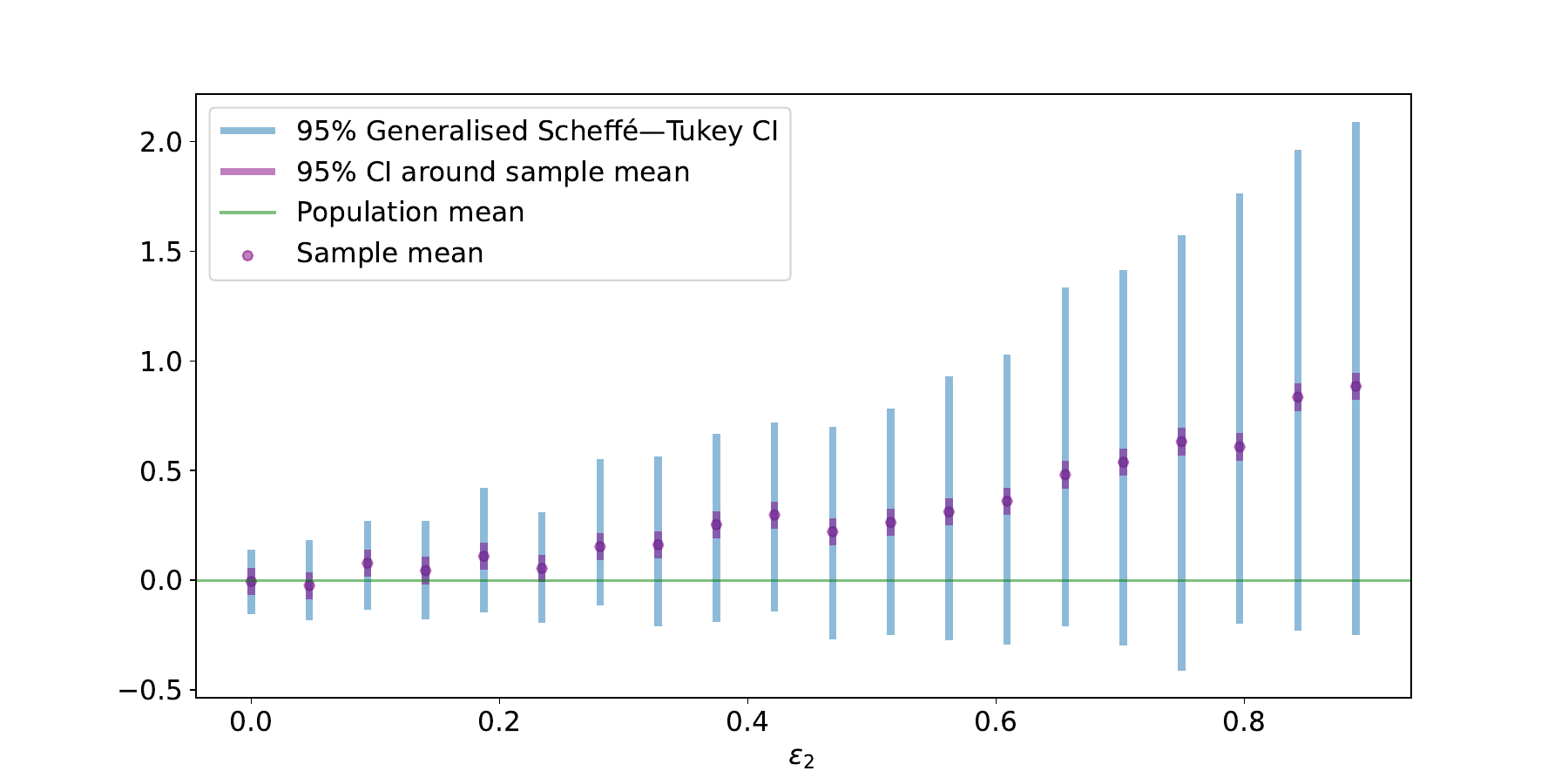}  
        \caption{$\epsilon_1=0.05$.}
    \end{subfigure}

    \begin{subfigure}{0.85\linewidth}
        \includegraphics[width=\linewidth]{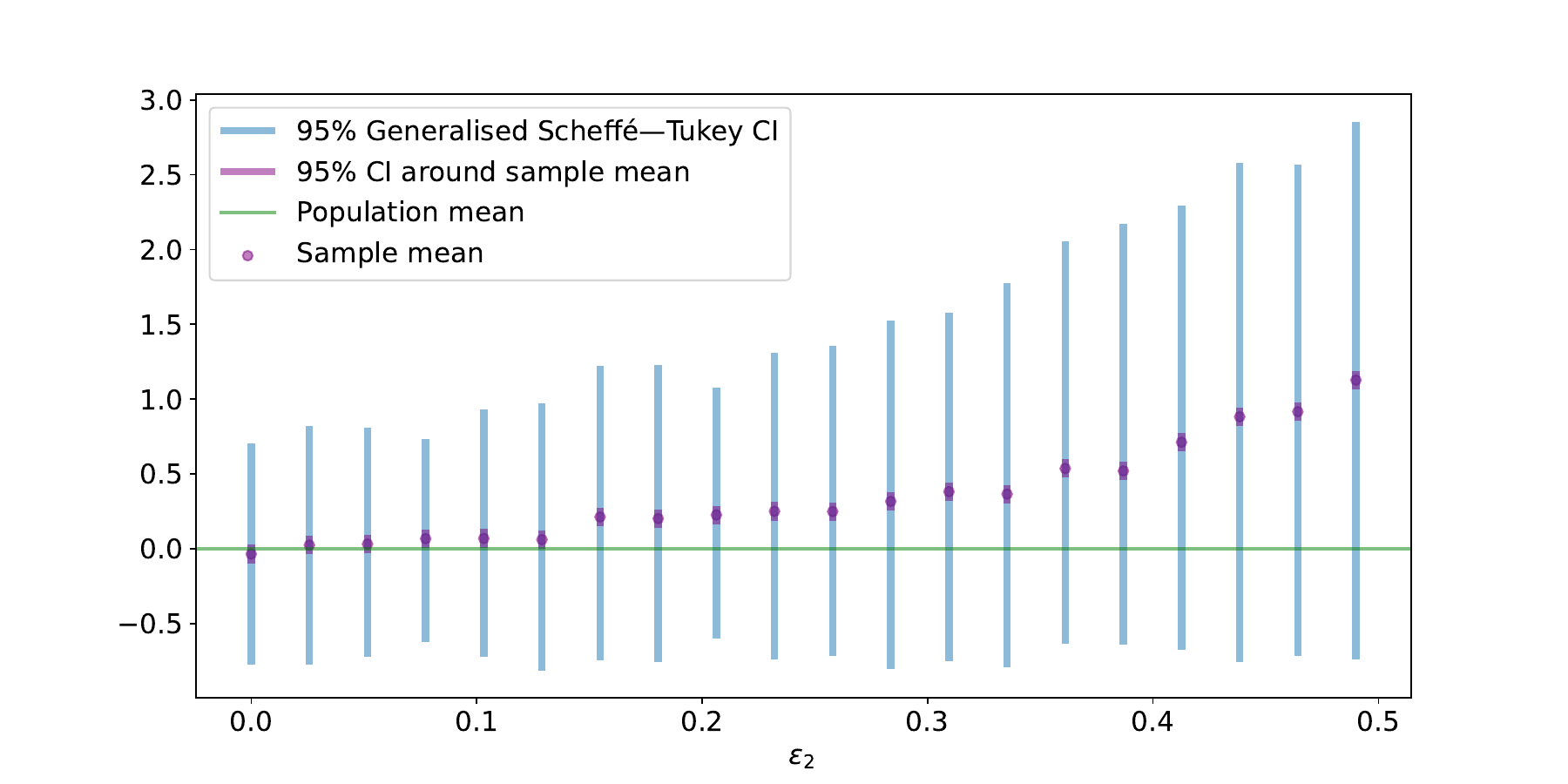}    
        \caption{$\epsilon_1=0.5$.}
    \end{subfigure}

    \caption{Realisations of our adaptive confidence intervals with known $\sigma$ but unknown $\epsilon_1,\epsilon_2$, as $\epsilon_2$ varies.}
    \label{fig:fixed-eps-1}
\end{figure}

\subsection{\texorpdfstring{Revisiting the test score data of~\citet{miratrix2018shape}}{Revisiting the test score data of Miratrix, Wager and Zubizarreta (2018)}}
In this section, we revisit the analysis of~\citet[Section 3]{miratrix2018shape} concerning a sample of $n=1136$ Aymara students in Chile, $847$ of whom took the Prueba de Selecci\'on Universitaria (PSU) mathematics test in 2008.  The minimum and maximum possible scores on the test were 150 and 850 respectively.  The goal is to make inference about the Aymara population mean score.  The analysis of \citet{miratrix2018shape} assumes that $\pi_{\max}/\pi_{\min} \leq 9$, where $\pi_{\min}$ and $\pi_{\max}$ are defined such that $\pi_{\min} \leq \mathbb{P}(\Omega=1 \,|\, X) \leq \pi_{\max}$ almost surely. While this can be ensured by taking $\epsilon_1 + \epsilon_2 \leq 8/9$ in our model~\eqref{eq:model-mcar-mnar} (see~\eqref{eq:realisability-ineq}), our boundedness confidence interval in Section~\ref{sec:adaptation-tail} adapts to all possible values of $\epsilon_1,\epsilon_2$ so does not require an assumption of this form.  Our confidence interval under symmetry constraints in Section~\ref{sec:adaptation-shape} does require $\epsilon_1+\epsilon_2 \leq 1-4\log_2(8/\alpha)/n$, but this still allows $\pi_{\max}/\pi_{\min}$ to be as large as $38.78$ for a $95\%$ confidence interval.  Our nonparametric $95\%$ confidence intervals are $[447,567]$ under symmetry and $[338,645]$ under boundedness (using the fact that the scores are bounded by $150$ and $850$).  Thus, the symmetry assumption does permit a considerably shorter interval.  For comparison, the corresponding $95\%$ confidence interval obtained by \citet{miratrix2018shape} with the bound on $\pi_{\max}/\pi_{\min}$ and under symmetry was\footnote{This interval was obtained by changing the value of $\alpha$ from its default to $0.05$ in the \texttt{R} code provided by the authors.} $[427,577]$.

\subsection{Adaptive treatment effect estimation}
Here, we consider the birthweight data of~\citet{almond2005costs}.  This study collected data from $n=4642$ mothers as well as weights of their children at birth.  We are interested in understanding whether or not the weight of a child at birth (in grams) is affected by whether or not the mother smokes.  Our population quantity of interest is therefore the difference in the average birthweights of children born to mothers who do not smoke (3778 of them) and those who do (the remaining 864).  To illustrate the difficulty of adapting to unobserved confounding, we assume that the underlying population of birthweights is Gaussian and the standard deviations is known (and to be the empirical standard deviation).  A point estimate of the average treatment effect (obtained as the midpoint of our confidence interval) is $\widehat{\mathrm{ATE}}_P = 284 \, \mathrm{grams}$, which suggests a fairly strong negative effect of smoking on birthweight and is consistent with findings in the literature~\citep{cattaneo2010efficient}.  This point estimate is robust to levels of confounding close to one.  On the other hand, our adaptive confidence interval is $(-641, 1209)$, which is rather wide (despite being minimax length-optimal).  Thus, even though our point estimate suggests a robust effect, an adaptive confidence interval tells a much more cautious story.  

\medskip
\noindent \textbf{Acknowledgements}: The first and last author were supported by the latter's European Research Council Advanced Grant 101019498. The third author was supported in part by NSF Grants ECCS-2216912 and DMS-2310769, and an Alfred Sloan fellowship.

\vskip 0.2in
\bibliographystyle{custom}
\bibliography{bibliography}
\end{sloppypar}

\appendix

\section{The extended space and realisable contamination model} \label{sec:properties-of-R-star}
In this section, we introduce some properties of the extended space $\mathbb{R}_{\star}$ and the realisable contamination model. We refer the readers to \citet[Section 2]{ma2024estimation} for more general settings.

Let $\mathcal{T}$ denote the standard topology on $\mathbb{R}$ and let $\mathcal{B}$ denote the Borel $\sigma$-algebra of $\mathbb{R}$. Then the space $\mathbb{R}_{\star} = \mathbb{R} \cup \{\star\}$ is equipped with the topology $\mathcal{T}_{\star} \coloneqq \mathcal{T} \bigcup \bigl\{A\cup\{\star\} : A\in\mathcal{T}\bigr\}$ and corresponding Borel $\sigma$-algebra $\mathcal{B}_{\star} \coloneqq \mathcal{B} \bigcup \bigl\{B\cup\{\star\} : B\in\mathcal{B}\bigr\}$. For a measure $\mu$ on $\mathbb{R}$, we define the extended measure $\mu_{\star}$ on $\mathbb{R}_{\star}$ by
\begin{align*}
    \mu_{\star}(B) \coloneqq \mu(B) \quad\text{and}\quad \mu_{\star}(B\cup\{\star\}) \coloneqq \mu(B) + 1
\end{align*}
for $B\in\mathcal{B}$. In the appendix, we will use $\lambda_{\star}$ for the extension of the Borel measure to $\mathbb{R}_{\star}$.

We have the following characterisation for the realisable contamination model.
\begin{Lemma} \label{lemma:realisability-characterisation}
    Let $P$ be a distribution on $\mathbb{R}$ that is absolutely continuous with respect to a measure $\mu$, and let $\epsilon_1,\epsilon_2\in[0,1]$ be such that $\epsilon_1+\epsilon_2\leq 1$. Then $M\in\mathcal{M}(P,\epsilon_1,\epsilon_2)$ if and only if $M$ is absolutely continuous with respect to $\mu_{\star}$ and
    \begin{align}
        (1-\epsilon_1-\epsilon_2)\frac{\mathrm{d}P}{\mathrm{d}\mu}(x)  \leq \frac{\mathrm{d}M}{\mathrm{d}\mu_{\star}}(x) \leq (1-\epsilon_1)\frac{\mathrm{d}P}{\mathrm{d}\mu}(x) \label{eq:realisability-characterisation}
    \end{align}
    for all $x\in\mathbb{R}$.
\end{Lemma}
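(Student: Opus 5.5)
We prove both directions by decomposing any $M\in\mathcal{M}(P,\epsilon_1,\epsilon_2)$ into its restriction to $\mathbb{R}$ and its atom at $\star$. Everything then reduces to the elementary fact that $\mathsf{MNAR}_P$ consists exactly of the laws $Q$ on $\mathbb{R}_\star$ whose restriction to $\mathbb{R}$ is a sub-measure of $P$. We write $p\coloneqq \mathrm{d}P/\mathrm{d}\mu$. Throughout, the density bound \eqref{eq:realisability-characterisation} is read $\mu$-a.e. (equivalently, for a suitable version of the density), since densities are only defined up to null sets.

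\emph{Step 1 (characterise $\mathsf{MNAR}_P$).} Suppose $Q=\mathsf{Law}(X\ostar\Omega)$ with $X\sim P$. For Borel $B\subseteq\mathbb{R}$ we have $Q(B)=\mathbb{P}(X\in B,\Omega=1)=\int_B \pi(x)\,\mathrm{d}P(x)$, where $\pi(x)\coloneqq\mathbb{P}(\Omega=1\mid X=x)\in[0,1]$. Hence $Q|_{\mathbb{R}}$ has $\mu$-density $\pi p\in[0,p]$, and $Q(\{\star\})=1-\int\pi\,\mathrm{d}P$. Conversely, given a measurable $g$ with $0\le g\le p$ $\mu$-a.e., set $\pi\coloneqq g/p$ on $\{p>0\}$ and $\pi\coloneqq 0$ elsewhere. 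Draw $X\sim P$ and, conditionally on $X$, draw $\Omega\sim\mathrm{Bernoulli}(\pi(X))$. The resulting law is in $\mathsf{MNAR}_P$ and has $\mathbb{R}$-density $g$, because $g=0$ a.e. on $\{p=0\}$. Its mass at $\star$ is forced to be $1-\int g\,\mathrm{d}\mu$.

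\emph{Step 2 (necessity).} Take $M=(1-\epsilon_+)P+\epsilon_1\delta_{\{\star\}}+\epsilon_2Q$ with $\epsilon_+\coloneqq\epsilon_1+\epsilon_2$. Since $\mu_\star$ gives mass $1$ to $\star$, every measure on $\mathbb{R}_\star$ whose $\mathbb{R}$-part is $\ll\mu$ is $\ll\mu_\star$. So $M\ll\mu_\star$, and on $\mathbb{R}$ its density is $(1-\epsilon_+)p+\epsilon_2\pi p$. Because $\pi\in[0,1]$, this density lies between $(1-\epsilon_+)p$ and $(1-\epsilon_1)p$.

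\emph{Step 3 (sufficiency).} Let $m\coloneqq \mathrm{d}M/\mathrm{d}\mu_\star$ restricted to $\mathbb{R}$ satisfy the sandwich. If $\epsilon_2>0$, set $g\coloneqq\{m-(1-\epsilon_+)p\}/\epsilon_2$. Then $0\le g\le p$, so Step 1 gives $Q\in\mathsf{MNAR}_P$ with $\mathbb{R}$-density $g$, and the candidate $M'\coloneqq(1-\epsilon_+)P+\epsilon_1\delta_{\{\star\}}+\epsilon_2Q$ agrees with $M$ on every Borel subset of $\mathbb{R}$. Both $M$ and $M'$ are probability measures, so their masses at $\star$ also agree, and hence $M=M'$. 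If $\epsilon_2=0$, the sandwich forces $m=(1-\epsilon_1)p$, so $M(\{\star\})=\epsilon_1$ and $M=(1-\epsilon_1)P+\epsilon_1\delta_{\{\star\}}$, which lies in the model.

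The only point requiring care is the $\star$-atom bookkeeping in Step 3, namely that the atom need not be checked separately. It is handled entirely by total mass one, together with the definition of $\mu_\star$, which guarantees that $\mu_\star$ charges $\star$.
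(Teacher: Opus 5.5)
Your proof is correct, but it follows a different route from the paper's. The paper gives no direct argument. It notes that $\mathcal{M}(P,\epsilon_1,\epsilon_2)$ is the realisable class $\mathcal{R}(P,\epsilon,q)$ of \citet{ma2024estimation} with $\epsilon=\epsilon_2$ and $q=(1-\epsilon_1-\epsilon_2)/(1-\epsilon_2)$, and quotes their Proposition~4 under this substitution, so the bounds $1-\epsilon_1-\epsilon_2$ and $1-\epsilon_1$ arise as $(1-\epsilon)q$ and $(1-\epsilon)q+\epsilon$.

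You instead re-derive the characterisation from first principles. You identify $\mathsf{MNAR}_P$ with the laws on $\mathbb{R}_\star$ whose restriction to $\mathbb{R}$ has $\mu$-density between $0$ and $\mathrm{d}P/\mathrm{d}\mu$. You then do the mixture algebra and let total mass one fix the atom at $\star$.

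The citation is a one-liner and places the lemma inside the more general framework of the earlier paper. However, it relies on a reparametrisation that is undefined when $\epsilon_2=1$. This is harmless, since the class is then just $\mathsf{MNAR}_P$ for any $q$.

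Your argument is self-contained and treats every $(\epsilon_1,\epsilon_2)$ uniformly, including $\epsilon_2=0$ and $\epsilon_1+\epsilon_2=1$. It also makes explicit two points the statement leaves implicit. First, the inequality can only be required for a suitable version of the density, and your necessity step in fact produces a version satisfying it for every $x$. Second, as in the paper, $\mu$ must be $\sigma$-finite (or the densities otherwise assumed to exist) for $\mathrm{d}P/\mathrm{d}\mu$ and $\mathrm{d}M/\mathrm{d}\mu_\star$ to make sense.
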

If we take $\mu=P$, then~\eqref{eq:realisability-characterisation} yields
\begin{align*}
    1-\epsilon_1-\epsilon_2 \leq \frac{\mathrm{d}M}{\mathrm{d}P_{\star}}(x) \leq 1-\epsilon_1 
\end{align*}
for all $x\in\mathbb{R}$.
\begin{proof}
    This lemma follows from \citet[Proposition 4]{ma2024estimation}, with $\epsilon=\epsilon_2$ and $q=\frac{1-\epsilon_1-\epsilon_2}{1-\epsilon_2}$ therein.
\end{proof}

\section{The simplified contamination model in Section~\ref{sec:warm-up}} \label{sec:proofs-for-warmup}
In this section, we provide valid confidence intervals with length guarantees for the simplified contamination model~\eqref{eq:model-mnar}, under four different cases, depending on whether $\sigma$ and $\epsilon$ are known.

\subsection{Known \texorpdfstring{$\sigma$ and $\epsilon$}{sigma and epsilon}} \label{sec:all-known}
\begin{Proposition}
\label{prop:all-param-known}
    Let $n\in\mathbb{N}$, $\sigma>0$, $\alpha\in(0,1]$, $\epsilon \in \bigl[0,1-\frac{4\log_2(8/\alpha)}{n}\bigr]$, and $Z_1,\ldots,Z_n \overset{\mathrm{iid}}{\sim} (1-\epsilon)N(\theta,\sigma^2)+\epsilon Q$ where $Q\in\mathsf{MNAR}_{N(\theta,\sigma^2)}$.
    
    (a) The confidence intervals $\{\widehat{\mathrm{CI}}_{n,\sigma,\alpha}^{(1)}(t):t \geq 0\}$ defined in~\eqref{eq:GST-warmup} have simultaneous coverage for~$\theta$, i.e.
    \begin{align*}
        \mathbb{P}\biggl( \theta\in \bigcap_{t\geq 0} \widehat{\mathrm{CI}}^{(1)}_{n,\sigma,\alpha}(t) \biggr) \geq 1-\alpha.
    \end{align*}
    
    (b) Let $t_{n,\epsilon,\alpha} \coloneqq \sqrt{0\vee\{\log(n\epsilon^2)-\log\log(4/\alpha)\}/2}$. There exists a universal constant $C>0$ such that if $\alpha \in \bigl[ \frac{C}{n}, 1\bigr]$, then with probability at least $1-\alpha/2$, we have
    \begin{align*}
        \bigl| \hat{\mathrm{CI}}^{(1)}_{n,\sigma,\alpha}(t_{n,\epsilon,\alpha}) \bigr| \leq C_{\alpha}\sigma\biggl\{\frac{1}{\sqrt{n}} + \frac{\log\bigl(1/(1-\epsilon)\bigr)}{\sqrt{1\vee \log(n\epsilon^2)}}\biggr\},
    \end{align*}
    where $C_{\alpha} \geq 1$ depends only on $\alpha$.
\end{Proposition}
\begin{proof}
    \emph{(a)} For $i\in[n]$, write\footnote{Here we interpret $0\cdot\star\coloneqq 0$.} $Z_i \coloneqq L_i X_i + (1-L_i)(X_i \ostar \Omega_i)$ where $(X_1,L_1,\Omega_1),\ldots,(X_n,L_n,\Omega_n)$ are independent triples with $X_i \sim N(\theta,\sigma^2)$, $\Omega_i$ a binary random variable such that $X_i \ostar \Omega_i \sim Q$ and $L_i \sim \mathrm{Bern}(1-\epsilon)$ independent of $(X_i,\Omega_i)$. In other words, if $L_i=1$, then $Z_i = X_i \sim N(\theta,\sigma^2)$ and if $L_i=0$, then $Z_i \sim Q$. Let $\mathcal{I} \coloneqq \{i\in[n] : L_i = 1\}$ so that $|\mathcal{I}| \sim \mathrm{Bin}(n, 1-\epsilon)$. By the multiplicative Chernoff bound \citep[Theorem~2.3(c)]{McDiarmid1998}, and since $1-\epsilon \geq \frac{4\log_2(8/\alpha)}{n}$, we deduce that
    \begin{align*}
        \mathbb{P}\bigl(|\mathcal{I}| \leq \log_2(8/\alpha)\bigr) \leq \exp\bigl(-9\log_2(8/\alpha)/8\bigr) \leq \frac{\alpha}{8}.
    \end{align*}
    Therefore,
    \begin{align}
        \mathbb{P}(Z_{\max} \leq \theta) \leq \mathbb{P}\Bigl(\max_{i\in\mathcal{I}}X_i \leq \theta\Bigr) &\leq \mathbb{P}\Bigl(\max_{i\in\mathcal{I}}X_i \leq \theta \,\Big|\, |\mathcal{I}| > \log_2(8/\alpha)\Bigr) + \mathbb{P}\Bigl(|\mathcal{I}| \leq \log_2(8/\alpha)\Bigr) \nonumber\\
        &\leq 2^{-\log_2(8/\alpha)} + \frac{\alpha}{8} \leq \frac{\alpha}{4}. \label{eq:Zmax<theta}
    \end{align}
    Similarly, we have $\mathbb{P}(Z_{\min} \geq \theta) \leq \frac{\alpha}{4}$.
    Thus, by Lemma~\ref{lemma:empirical-quantile-property} and the one-sided Dvoretzky--Kiefer--Wolfowitz--Masssart--Reeve (DKWMR) inequality \citep[][Corollary~2]{reeve2024short} and Eq.~\eqref{ineq:sandwich-q-1}, we deduce that
    \begin{align*}
        &\mathbb{P}\biggl\{Q_n^-\biggl(1-\Phi(t)+\sqrt{\frac{\log(4/\alpha)}{2n}}\biggr) < \theta - \sigma t \text{ for some } t\geq 0\biggr\}\\
        &\hspace{1cm} \leq \mathbb{P}\biggl\{\frac{1}{n}\sum_{i=1}^n \indi_{\{Z_i \leq \theta - \sigma t\}} \geq 1-\Phi(t)+\sqrt{\frac{\log(4/\alpha)}{2n}} \text{ for some } t\geq 0\biggr\} + \mathbb{P}(Z_{\max} \leq \theta) \leq \frac{\alpha}{2}.
    \end{align*}
    Similarly, by Lemma~\ref{lemma:empirical-quantile-property}, Eq.~\eqref{ineq:sandwich-q-1} and the one-sided DKWMR inequality, 
    \begin{align*}
        &\mathbb{P}\biggl\{Q_n^+\biggl(1-\Phi(t)+\sqrt{\frac{\log(4/\alpha)}{2n}}\biggr) > \theta + \sigma t \text{ for some } t\geq 0\biggr\} \leq \frac{\alpha}{2}.
    \end{align*}
    Combining these two bounds yields that
    \begin{align*}
        \mathbb{P}\biggl\{Q_n^+\biggl(1-\Phi(t)+\sqrt{\frac{\log(4/\alpha)}{2n}}\biggr) - \sigma t \leq \theta \leq Q_n^-\biggl(1-\Phi(t)+\sqrt{\frac{\log(4/\alpha)}{2n}}\biggr) + \sigma t \;\;\forall\; t\geq 0\biggr\} \geq 1-\alpha,
    \end{align*}
    as required.

    \medskip
    \emph{(b)} We have $\mathbb{P}(Z_1 \leq \theta + \sigma x) \geq (1-\epsilon)\bigl\{1-\Phi(-x)\bigr\}$ and $\mathbb{P}(Z_1 > \theta + \sigma x) \geq (1-\epsilon)\bigl\{1-\Phi(x)\bigr\}$ for all $x\in\mathbb{R}$. Thus, by the DKWMR inequality, the event
    \begin{align*}
        \mathcal{E}_1 &\coloneqq \biggl\{\frac{1}{n} \sum_{i=1}^n \indi_{\{Z_i \leq \theta + \sigma(b-t)\}} \geq (1-\epsilon)\bigl\{ 1 - \Phi(t-b)\bigr\} - \sqrt{\frac{\log(4/\alpha)}{2n}} \;\;\forall\; b,t\in\mathbb{R}\biggr\}\\
        &\hspace{2cm} \bigcap\biggl\{\frac{1}{n} \sum_{i=1}^n \indi_{\{Z_i > \theta + \sigma(t-b)\}} > (1-\epsilon)\bigl\{ 1 - \Phi(t-b)\bigr\} - \sqrt{\frac{\log(4/\alpha)}{2n}} \;\;\forall\; b,t\in\mathbb{R}\biggr\}
    \end{align*}
    has probability at least $1-\alpha/2$. Now suppose that, for some $t_*,b\geq 0$, we have
    \begin{align}
        (1-\epsilon)\frac{1 - \Phi(t_*-b)}{1-\Phi(t_*)} \geq 1 + 6e^{t_*^2}\sqrt{\frac{\log(4/\alpha)}{2n}}. \label{eq:sufficient-cond-gaussian-known-eps}
    \end{align}
    Then by Lemma~\ref{lemma:mills-ratio}, $1-\Phi(t_*) \geq \frac{2}{\sqrt{2\pi}} \cdot \frac{1}{t_*+\sqrt{4+t_*^2}} e^{-t_*^2/2} \geq \frac{1}{3}e^{-t_*^2}$. Thus, 
    \begin{align*}
        (1-\epsilon)\bigl\{1 - \Phi(t_*-b)\bigr\} - \sqrt{\frac{\log(4/\alpha)}{2n}} \geq 1 - \Phi(t_*) + \sqrt{\frac{\log(4/\alpha)}{2n}}.
    \end{align*}
    Therefore, on the event $\mathcal{E}_1$, we have $\frac{1}{n} \sum_{i=1}^n \indi_{\{Z_i \leq \theta + \sigma(b-t_*)\}} \geq  1 - \Phi(t_*) + \sqrt{\frac{\log(4/\alpha)}{2n}}$ as well as $\frac{1}{n} \sum_{i=1}^n \indi_{\{Z_i > \theta + \sigma(t_*-b)\}} > 1 - \Phi(t_*) + \sqrt{\frac{\log(4/\alpha)}{2n}}$. Thus by Lemma~\ref{lemma:empirical-quantile-property}, with probability at least $1-\alpha/2$, we have $Q_n^-\bigl(1-\Phi(t_*) + \sqrt{\frac{\log(4/\alpha)}{2n}}\bigr) \leq \theta + \sigma(b-t_*)$ and $Q_n^+\bigl(1-\Phi(t_*) + \sqrt{\frac{\log(4/\alpha)}{2n}}\bigr) > \theta + \sigma(t_*-b)$, so $|\hat{\mathrm{CI}}^{(1)}_{n,\sigma,\alpha}(t_*)| \leq 2\sigma b$. Now we take $t_* \equiv t_{n,\epsilon,\alpha} \coloneqq \sqrt{0\vee \{\log(n\epsilon^2)-\log\log(4/\alpha)\}/2}$ and assume throughout the proof that $C$ is large enough that $n\geq 400\log(4/\alpha)$.

    \textbf{Case 1:} Suppose that $\epsilon \leq \sqrt{\frac{\log(4/\alpha)}{n}}$, and let $b \coloneqq 20\sqrt{\frac{\log(4/\alpha)}{n}}$. In this case, we have $t_*=0$ and $b\leq 1$, so by Lemma~\ref{lemma:gaussian-cdf-ratio}\emph{(a)},
    \begin{align*}
        (1-\epsilon)\frac{1 - \Phi(t_*-b)}{1-\Phi(t_*)} \geq \biggl(1 - \sqrt{\frac{\log(4/\alpha)}{n}}\biggr) \biggl(1 + \frac{10}{\sqrt{e}}\sqrt{\frac{\log(4/\alpha)}{n}}\biggr) \geq 1 + 6e^{t_*^2}\sqrt{\frac{\log(4/\alpha)}{2n}}.
    \end{align*}
    This verifies~\eqref{eq:sufficient-cond-gaussian-known-eps}. Thus, in this case, we have $|\hat{\mathrm{CI}}^{(1)}_{n,\sigma,\alpha}(t_*)| \leq 40\sigma \sqrt{\frac{\log(4/\alpha)}{n}}$ with probability at least $1-\alpha/2$.

    \textbf{Case 2:} Suppose that $\sqrt{\frac{\log(4/\alpha)}{n}} < \epsilon \leq \frac{1}{20}$, and let $b\coloneqq 20\epsilon/(t_*+1)$. In this case, $t_*= \sqrt{\{\log(n\epsilon^2)-\log\log(4/\alpha)\}/2}$ and $b\leq 1$. Then by Lemma~\ref{lemma:gaussian-cdf-ratio}\emph{(a)},
    \begin{align*}
        (1-\epsilon)\frac{1 - \Phi(t_*-b)}{1-\Phi(t_*)} \geq (1-\epsilon)\biggl(1+\frac{10\epsilon}{\sqrt{e}}\biggr) &\geq 1+\biggl(\frac{10}{\sqrt{e}} - 1 - \frac{1}{2\sqrt{e}}\biggr)\epsilon \geq 1 + 6e^{t_*^2}\sqrt{\frac{\log(4/\alpha)}{2n}}.
    \end{align*}
    This verifies~\eqref{eq:sufficient-cond-gaussian-known-eps}. Thus, in this case, we have
    \begin{align}
        |\hat{\mathrm{CI}}^{(1)}_{n,\sigma,\alpha}(t_*)| \leq \frac{40\sigma\epsilon}{1+\sqrt{\{\log(n\epsilon^2) - \log\log(4/\alpha)\}/2}} \label{eq:CI-case-2}
    \end{align}
    with probability at least $1-\alpha/2$. Moreover, when $\log(n\epsilon^2) \leq 2\log\log(4/\alpha)$, we have
    \begin{align}
        1+\sqrt{\frac{\log(n\epsilon^2) - \log\log(4/\alpha)}{2}} \geq 1 \geq \sqrt{\frac{\log(n\epsilon^2)}{2\log\log(4/\alpha)}}, \label{eq:denom-lb-1}
    \end{align}
    and when $\log(n\epsilon^2) > 2\log\log(4/\alpha)$, we have
    \begin{align}
        1+\sqrt{\frac{\log(n\epsilon^2) - \log\log(4/\alpha)}{2}} \geq 1+\sqrt{\frac{\log(n\epsilon^2)}{4}} \geq \sqrt{\frac{\log(n\epsilon^2)}{4}}. \label{eq:denom-lb-2}
    \end{align}
    Hence, by~\eqref{eq:CI-case-2},~\eqref{eq:denom-lb-1},~\eqref{eq:denom-lb-2}, and the fact that $x\leq \log\bigl(\frac{1}{1-x}\bigr)$, we have
    \begin{align*}
        |\hat{\mathrm{CI}}^{(1)}_{n,\sigma,\alpha}(t_*)| \leq 40\bigl\{2 \vee \sqrt{2\log\log(4/\alpha)}\bigr\} \cdot \frac{\sigma\epsilon}{\sqrt{\log(n\epsilon^2)}} \lesssim_{\alpha} \frac{\sigma\log\bigl(1/(1-\epsilon)\bigr)}{\sqrt{\log(n\epsilon^2)}},
    \end{align*}
    with probability at least $1-\alpha/2$.

    \textbf{Case 3:} Suppose that $\frac{1}{20} <\epsilon \leq \Bigl\{1- \bigl(\frac{\log(4/\alpha)}{n\epsilon^2}\bigr)^{1/60}\Bigr\} \vee \frac{1}{20}$, and let $b\coloneqq 30\log\bigl(\frac{1}{1-\epsilon}\bigr) / t_*$. In this case, $t_* = \sqrt{\{\log(n\epsilon^2)-\log\log(4/\alpha)\}/2}$ and $b\leq t_*$. Then by Lemma~\ref{lemma:gaussian-cdf-ratio}\emph{(b)},
    \begin{align*}
        (1-\epsilon)\frac{1 - \Phi(t_*-b)}{1-\Phi(t_*)} \geq \frac{1-\epsilon}{\sqrt{2}} \exp\biggl\{15\log\biggl(\frac{1}{1-\epsilon}\biggr)\biggr\} &= \frac{1}{\sqrt{2}(1-\epsilon)^{14}}\\
        &\geq 1+3\sqrt{2}\epsilon = 1 + 6e^{t_*^2}\sqrt{\frac{\log(4/\alpha)}{2n}}.
    \end{align*}
    This verifies~\eqref{eq:sufficient-cond-gaussian-known-eps}. Thus, by~\eqref{eq:denom-lb-1} and~\eqref{eq:denom-lb-2}, we have
    \begin{align*}
        |\hat{\mathrm{CI}}^{(1)}_{n,\sigma,\alpha}(t_*)| \leq 60\bigl\{2 \vee \sqrt{2\log\log(4/\alpha)}\bigr\} \cdot \frac{\sigma\log\bigl(1/(1-\epsilon)\bigr)}{\sqrt{\log(n\epsilon^2)}},
    \end{align*}
    with probability at least $1-\alpha/2$.

    \textbf{Case 4:} Suppose that $\Bigl\{1- \bigl(\frac{\log(4/\alpha)}{n\epsilon^2}\bigr)^{1/60}\Bigr\} \vee \frac{1}{20} < \epsilon \leq 1-\frac{4\log_2(8/\alpha)}{n}$. By Lemma~\ref{lemma:max-min}, with probability at least $1-\alpha/2$, 
    \begin{align*}
        |\hat{\mathrm{CI}}^{(1)}_{n,\sigma,\alpha}(t_*)| \leq Z_{\max} + \sigma t_* - \bigl(Z_{\min} - \sigma t_*\bigr) &\leq 2\sigma\sqrt{2\log(4n/\alpha)} + \sigma \sqrt{2\{\log n - \log\log(4/\alpha)\}}\\
        &\leq 5\sigma\sqrt{\log n - \log\log(4/\alpha)} \leq C_3 \frac{\sigma\log\bigl(1/(1-\epsilon)\bigr)}{\sqrt{\log(n\epsilon^2)}},
    \end{align*}
    where $C_3>0$ depends only on $\alpha$ and the final inequality uses the fact that $\epsilon > 1- \bigl(\frac{\log(4/\alpha)}{n\epsilon^2}\bigr)^{1/60}$.

    The final result follows by combining all the four cases.
\end{proof}

\subsection{\texorpdfstring{Known $\sigma$, unknown $\epsilon$}{Known sigma, unknown epsilon}} \label{sec:known-sigma-unknown-eps}

One issue about the confidence interval in~\eqref{eq:GST-warmup} is that it requires knowledge of both $\sigma$ and $\epsilon$ (as $\hat{\mathrm{CI}}^{(1)}_{n,\sigma,\alpha}$ depends on $\sigma$ and $t_{n,\epsilon,\alpha}$ depends on $\epsilon$). When $\sigma$ is known but $\epsilon$ is unknown, we may choose $t\geq 0$ that minimises the length of $\hat{\mathrm{CI}}^{(1)}_{n,\sigma,\alpha}(t)$, i.e.~we define
\begin{align}
    \hat{t}_{n,\sigma,\alpha} \in \sargmin_{t\geq 0} \bigl|\hat{\mathrm{CI}}^{(1)}_{n,\sigma,\alpha}(t)\bigr| \label{eq:hat-t}
\end{align}
where $\sargmin$ denotes the smallest element of the $\argmin$ set,
and consider the confidence interval $\hat{\mathrm{CI}}^{(1)}_{n,\sigma,\alpha}(\hat{t}_{n,\sigma,\alpha})$. This confidence interval has desired coverage as a consequence of Proposition~\ref{prop:all-param-known}\emph{(a)}. Moreover, its length is no larger than the length of $\hat{\mathrm{CI}}^{(1)}_{n,\sigma,\alpha}(t_{n,\epsilon,\alpha})$, so its length also satisfies the rate in Proposition~\ref{prop:all-param-known}\emph{(b)}.

\begin{Proposition} \label{prop:known-sigma-unknown-epsilon}
  Let $n\in\mathbb{N}$, $\sigma>0$, $\alpha\in(0,1]$, $\epsilon \in \bigl[0,1-\frac{4\log_2(8/\alpha)}{n}\bigr]$, and $Z_1,\ldots,Z_n \overset{\mathrm{iid}}{\sim} (1-\epsilon)N(\theta,\sigma^2)+\epsilon Q$, where $Q\in\mathsf{MNAR}_{N(\theta,\sigma^2)}$. Then
  \begin{align*}
      \mathbb{P}\bigl(\theta \in \hat{\mathrm{CI}}^{(1)}_{n,\sigma,\alpha}(\hat{t}_{n,\sigma,\alpha})\bigr) \geq 1-\alpha. 
  \end{align*}
  Moreover, there exists a universal constant $C>0$ such that if $\alpha\in\bigl[\frac{C}{n}, 1\bigr]$, then with probability at least $1-\alpha/2$,
  \begin{align*}
      \bigl|\hat{\mathrm{CI}}^{(1)}_{n,\sigma,\alpha}(\hat{t}_{n,\sigma,\alpha})\bigr| \leq  C_{\alpha}\sigma\biggl\{\frac{1}{\sqrt{n}} + \frac{\log\bigl(1/(1-\epsilon)\bigr)}{\sqrt{1\vee \log(n\epsilon^2)}}\biggr\},
  \end{align*}
  where $C_{\alpha}\geq 1$ is defined as in Proposition~\ref{prop:all-param-known}.
\end{Proposition}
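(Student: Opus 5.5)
The proof reduces directly to Proposition~\ref{prop:all-param-known}, because the simultaneous coverage in part~\emph{(a)} makes any data-dependent choice of $t$ harmless.

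\emph{Coverage.} Let $A \coloneqq \bigl\{\theta \in \bigcap_{t\geq 0}\hat{\mathrm{CI}}^{(1)}_{n,\sigma,\alpha}(t)\bigr\}$. By Proposition~\ref{prop:all-param-known}\emph{(a)}, whose hypotheses on $n,\sigma,\alpha,\epsilon$ coincide with the present ones, we have $\mathbb{P}(A)\geq 1-\alpha$. On $A$, the parameter $\theta$ lies in $\hat{\mathrm{CI}}^{(1)}_{n,\sigma,\alpha}(t)$ for every deterministic $t\geq 0$. Hence it also lies in $\hat{\mathrm{CI}}^{(1)}_{n,\sigma,\alpha}(\hat t_{n,\sigma,\alpha})$, since $\hat t_{n,\sigma,\alpha}$ is just some (random) point of $[0,\infty)$. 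This pointwise-in-$\omega$ inclusion gives the coverage claim.

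\emph{Well-definedness of $\hat t_{n,\sigma,\alpha}$.} This is the only genuine obstacle: we must check that the $\sargmin$ in~\eqref{eq:hat-t} exists. Write $q_t\coloneqq 1-\Phi(t)+\sqrt{\log(4/\alpha)/(2n)}$, which is continuous and strictly decreasing in $t$. Since $Q_n^{\pm}$ are step functions of $p$ taking values among the order statistics (or $Z_{\min},Z_{\max}$), the map $t\mapsto Q_n^-(q_t)-Q_n^+(q_t)$ is piecewise constant with finitely many jumps. Therefore the length $t\mapsto Q_n^-(q_t)-Q_n^+(q_t)+2\sigma t$ is piecewise linear with slope $2\sigma>0$ on each piece, and it tends to $+\infty$ as $t\to\infty$.

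It follows that the infimum is attained at $t=0$ or at a left endpoint of some piece. The remaining point is which side of each jump the quantile takes its value on. Depending on the left/right continuity of $Q_n^{\pm}$, the infimum might only be approached and not attained. To settle this, I would check using Lemma~\ref{lemma:empirical-quantile-property} that at a jump point $t_0$ the length takes the smaller of its one-sided values. The mechanism is that $q_t$ decreases in $t$, and $Q_n^-$ is left-continuous in $p$ while $Q_n^+$ is right-continuous in the appropriate sense. If this fails, the fallback is to replace $\sargmin$ by any $t$ whose length is within $\sigma/\sqrt n$ of the infimum; that changes the rate bound below only by a constant.

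\emph{Length.} On the event from Proposition~\ref{prop:all-param-known}\emph{(b)}, which has probability at least $1-\alpha/2$ when $\alpha\ge C/n$, the deterministic choice $t_{n,\epsilon,\alpha}$ satisfies the stated bound. Minimality of $\hat t_{n,\sigma,\alpha}$ then gives
\[
\bigl|\hat{\mathrm{CI}}^{(1)}_{n,\sigma,\alpha}(\hat t_{n,\sigma,\alpha})\bigr|\le \bigl|\hat{\mathrm{CI}}^{(1)}_{n,\sigma,\alpha}(t_{n,\epsilon,\alpha})\bigr|
\]
pointwise. The second claim follows, with the same constant $C_\alpha$.
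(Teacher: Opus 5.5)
Your coverage and length arguments are the paper's own reduction. Coverage is inherited pointwise from the simultaneous-coverage event of Proposition~\ref{prop:all-param-known}\emph{(a)}, and the length bound comes from minimality against the deterministic choice $t_{n,\epsilon,\alpha}$ in Proposition~\ref{prop:all-param-known}\emph{(b)}. The paper offers nothing beyond these two lines and never discusses whether the $\sargmin$ exists.

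\textbf{The check you propose for existence of $\hat{t}_{n,\sigma,\alpha}$ fails.}

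Write $q_t \coloneqq 1-\Phi(t)+\sqrt{\log(4/\alpha)/(2n)}$ and $m\coloneqq n\narrowhat{p}_n$. Let $Y_{(1)}<\cdots<Y_{(m)}$ be the observed values; they are a.s.\ distinct because $M$ has a Lebesgue density on $\mathbb{R}$. When $q_t<\narrowhat{p}_n$, you have $Q_n^-(q_t)=Y_{(\lceil nq_t\rceil)}$ and $Q_n^+(q_t)=Y_{(m-\lfloor nq_t\rfloor)}$.

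Now take a point $t_k$ with $nq_{t_k}=k\in\{1,\ldots,m-1\}$.
\begin{itemize}
\item The upper endpoint there uses its right-limit value $Y_{(k)}$, which is the favourable one.
\item The lower endpoint uses $Y_{(m-k)}$, whereas for $t$ slightly above $t_k$ it uses $Y_{(m-k+1)}$.
\end{itemize}
The cause is that right-continuity of $Q_n^+$ in $p$ becomes left-continuity of the nondecreasing map $t\mapsto Q_n^+(q_t)$. So at a jump the length lies strictly between its one-sided limits, not at the smaller one. In particular $|\hat{\mathrm{CI}}^{(1)}_{n,\sigma,\alpha}(t_k)|$ exceeds its right limit by $Y_{(m-k+1)}-Y_{(m-k)}>0$.

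Since the length increases with slope $2\sigma$ on each piece, its infimum over $t\ge 0$ is almost surely attained only if it is attained at $t=0$. Otherwise the $\sargmin$ is empty, so the paper's definition is genuinely defective here, not just delicate.

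\textbf{Repairing it.} Your fallback, a $\sigma/\sqrt{n}$-near-minimiser, is valid, but it yields the constant $C_\alpha+1$ rather than the stated $C_\alpha$. Two repairs keep $C_\alpha$ exactly.
\begin{itemize}
\item Take the limit of $\hat{\mathrm{CI}}^{(1)}_{n,\sigma,\alpha}(t)$ as $t$ decreases to the left endpoint of the optimal piece. On the simultaneous-coverage event, its lower endpoint is a limit of quantities $Q_n^+(q_t)-\sigma t\le\theta$, so it still covers $\theta$. Its length is exactly $\inf_{t\ge0}|\hat{\mathrm{CI}}^{(1)}_{n,\sigma,\alpha}(t)|$.
\item Use $\bigcap_{t\ge0}\hat{\mathrm{CI}}^{(1)}_{n,\sigma,\alpha}(t)$, as Section~\ref{sec:warm-up} suggests. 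It contains $\theta$ on the same event and is no longer than $\hat{\mathrm{CI}}^{(1)}_{n,\sigma,\alpha}(t_{n,\epsilon,\alpha})$.
\end{itemize}
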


\subsection{\texorpdfstring{Unknown $\sigma$, known $\epsilon$}{Unknown sigma, known epsilon}} \label{sec:warm-up-unknown-sigma}

Surprisingly, Proposition~\ref{prop:all-param-known} suggests a confidence interval for~$\sigma$ that can be combined with our confidence interval in Section~\ref{sec:all-known} to obtain a valid confidence interval when $\sigma$ is unknown but $\epsilon$ is known.  To see this, set
\begin{align}
    t_{n,\epsilon,\alpha}' \coloneqq \sqrt{1\vee\{\log(n\epsilon^2)-\log\log(12/\alpha)\}/2}, \label{eq:t'-defn}
\end{align}
and observe that the coverage property of Proposition~\ref{prop:all-param-known}\emph{(a)} (with $\alpha$ replaced by $\alpha/3$) implies that with probability at least $1-\alpha/3$, the interval $\hat{\mathrm{CI}}_{n,\sigma,\alpha}^{(1)}(t_{n,\epsilon,\alpha}')$ is non-empty, or equivalently
\begin{align}
    Q_n^-\biggl(1-\Phi(t_{n,\epsilon,\alpha}')+\sqrt{\frac{\log(12/\alpha)}{2n}}\biggr) - Q_n^+\biggl(1-\Phi(t_{n,\epsilon,\alpha}') + \sqrt{\frac{\log(12/\alpha)}{2n}}\biggr) + 2\sigma t_{n,\epsilon,\alpha}' \geq 0. \label{eq:sigma-lb}
\end{align}
The definition of $t_{n,\epsilon,\alpha}'$ involves a maximum with one as opposed to with zero in the definition of~$t_{n,\epsilon,\alpha}$ in Proposition~\ref{prop:all-param-known}.  Nevertheless, Lemma~\ref{lemma:sigma-ub} below (which is a slight modification of Proposition~\ref{prop:all-param-known}\emph{(b)}) shows that with probability at least $1-\alpha/6$,
\begin{align}
    &Q_n^-\biggl(1-\Phi(t_{n,\epsilon,\alpha}')+\sqrt{\frac{\log(12/\alpha)}{2n}}\biggr) - Q_n^+\biggl(1-\Phi(t_{n,\epsilon,\alpha}') + \sqrt{\frac{\log(12/\alpha)}{2n}}\biggr) + 2\sigma t_{n,\epsilon,\alpha}' \nonumber\\
    &\hspace{9cm} \leq 2C_{\alpha/3}\sigma\biggl\{\frac{1}{\sqrt{n}} + \frac{\log\bigl(1/(1-\epsilon)\bigr)}{\sqrt{1\vee \log(n\epsilon^2)}}\biggr\}, \label{eq:sigma-ub}
\end{align}
where $C_{\alpha}\geq 1$ is taken from Proposition~\ref{prop:all-param-known}.

\begin{Lemma} \label{lemma:sigma-ub}
    Let $n\in\mathbb{N}$, $\sigma>0$, $\alpha\in(0,1]$, $\epsilon \in \bigl[0,1-\frac{4\log_2(24/\alpha)}{n}\bigr]$, and $Z_1,\ldots,Z_n \overset{\mathrm{iid}}{\sim} (1-\epsilon)N(\theta,\sigma^2)+\epsilon Q$,  where $Q\in\mathsf{MNAR}_{N(\theta,\sigma^2)}$. There exists a universal constant $C>0$ such that if $\alpha \in \bigl[ \frac{C}{n}, 1\bigr]$, then~\eqref{eq:sigma-ub} holds with probability at least $1-\alpha/6$.
\end{Lemma}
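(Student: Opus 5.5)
The plan is to rerun the proof of Proposition~\ref{prop:all-param-known}\emph{(b)} with $\alpha$ replaced by $\alpha/3$ and $t_*$ replaced by $t'\coloneqq t_{n,\epsilon,\alpha}'$. The only change is that $t'\geq 1/\sqrt{2}$ even when $n\epsilon^2$ is small. Write $\delta\coloneqq\sqrt{\log(12/\alpha)/(2n)}$. The left-hand side of~\eqref{eq:sigma-ub} equals $Q_n^-(1-\Phi(t')+\delta)+\sigma t' - \bigl(Q_n^+(1-\Phi(t')+\delta)-\sigma t'\bigr)$, which is exactly $|\hat{\mathrm{CI}}^{(1)}_{n,\sigma,\alpha/3}(t')|$ read as a signed length.

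By the DKWMR inequality at level $\alpha/3$, the event $\mathcal{E}_1$ from that proof, with $\log(4/\alpha)$ replaced by $\log(12/\alpha)$, has probability at least $1-\alpha/6$. The deterministic part of the earlier argument then applies. If, for some $b\geq0$,
\[
(1-\epsilon)\frac{1-\Phi(t'-b)}{1-\Phi(t')}\geq 1+6e^{t'^2}\delta,
\]
then on $\mathcal{E}_1$, Lemma~\ref{lemma:empirical-quantile-property} bounds the signed length by $2\sigma b$. In that derivation the quantity $1-\Phi(t')\geq\frac13 e^{-t'^2}$ follows from Lemma~\ref{lemma:mills-ratio} for any $t'\geq0$. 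So it remains to exhibit $b\lesssim_\alpha \frac{1}{\sqrt n}+\frac{\log(1/(1-\epsilon))}{\sqrt{1\vee\log(n\epsilon^2)}}$ in each regime.

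The four cases follow those of Proposition~\ref{prop:all-param-known}\emph{(b)}. In Cases~3 and~4, where $\epsilon>1/20$ and $\log(n\epsilon^2)-\log\log(12/\alpha)\geq2$ once $n\geq C$, we have $t'=t_*$ and the earlier arguments transfer verbatim. In Case~4 this uses Lemma~\ref{lemma:max-min} at level $\alpha/3$, where the hypothesis $\epsilon\leq1-4\log_2(24/\alpha)/n$ is what Lemma~\ref{lemma:max-min} needs. The new work is in the small-$\epsilon$ regimes where $t'$ is clamped at $1/\sqrt2$. There $e^{t'^2}\leq e^{1/2}$, so the right-hand side is $1+O(\delta)$, and since $t'$ is bounded the ratio $\frac{1-\Phi(t'-b)}{1-\Phi(t')}\geq 1+c\,b$ for $b\leq1$ by Lemma~\ref{lemma:gaussian-cdf-ratio}\emph{(a)}, possibly with a slightly smaller constant. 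I would take $b\asymp\sqrt{\log(12/\alpha)/n}+\epsilon$. When $\epsilon\leq\sqrt{\log(12/\alpha)/n}$ this gives $b\lesssim_\alpha n^{-1/2}$. When $\sqrt{\log(12/\alpha)/n}<\epsilon\leq1/20$ and $\log(n\epsilon^2)\lesssim\log\log(12/\alpha)$, it gives $b\lesssim\epsilon\lesssim_\alpha \log(1/(1-\epsilon))/\sqrt{1\vee\log(n\epsilon^2)}$. When $\log(n\epsilon^2)$ is larger, we again have $t'=t_*$ and Case~2 applies unchanged.

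The main obstacle is this clamping regime. I need $(1-\epsilon)(1+cb)\geq1+6e^{1/2}\delta$, which requires $cb\geq\epsilon+6e^{1/2}\delta$ up to second-order terms, so the constants have to be rechecked with $t'$ bounded away from zero. A constant such as $b=40(\delta+\epsilon)$ should suffice, provided $C$ is chosen large enough to force $b\leq1$. Finally, I would absorb all the resulting constants into $2C_{\alpha/3}$, enlarging $C_\alpha$ from Proposition~\ref{prop:all-param-known} if necessary, since it is only required to depend on $\alpha$.
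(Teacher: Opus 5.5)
Your proposal is correct and follows the paper's proof. The paper also reruns Proposition~\ref{prop:all-param-known}\emph{(b)} at level $\alpha/3$. It treats only the clamped regime $\epsilon\leq\sqrt{e^2\log(12/\alpha)/n}$ afresh, taking $b=10\sqrt{e^2\log(12/\alpha)/n}$ and applying Lemma~\ref{lemma:gaussian-cdf-ratio}\emph{(a)}. Otherwise $t'_{n,\epsilon,\alpha}=t_*$, so it invokes Cases~2--4 verbatim.

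The one difference is that the paper reads $t'_{n,\epsilon,\alpha}$ as clamped at $1$, not $1/\sqrt{2}$, so the sufficient condition carries $e^{(t')^2}=e$. This only changes constants, and your choice $b=40(\delta+\epsilon)$ still verifies the condition once $n\gtrsim\log(12/\alpha)$.
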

\begin{proof}
    We follow the same arguments and notation as in the proof of Proposition~\ref{prop:all-param-known}\emph{(b)}, but with $\alpha$ replaced by $\alpha/3$ and $t_* = t_{n,\epsilon,\alpha}'$. We will assume $C$ is large enough that $n\geq 100 e^2\log(12/\alpha)$. 

    First suppose that $\epsilon \leq \sqrt{\frac{e^2\log(12/\alpha)}{n}}$, and let $b \coloneqq 10\sqrt{\frac{e^2\log(12/\alpha)}{n}}$. In this case, we have $t_*=1$ and $b\leq 1$, so by Lemma~\ref{lemma:gaussian-cdf-ratio}\emph{(a)},
    \begin{align*}
        (1-\epsilon)\frac{1 - \Phi(t_*-b)}{1-\Phi(t_*)} \geq \biggl(1 - \sqrt{\frac{e^2\log(12/\alpha)}{n}}\biggr) \biggl(1 + \frac{10}{\sqrt{e}}\sqrt{\frac{e^2\log(12/\alpha)}{n}}\biggr) \geq 1 + 6e^{t_*^2}\sqrt{\frac{\log(12/\alpha)}{2n}}.
    \end{align*}
    This verifies~\eqref{eq:sufficient-cond-gaussian-known-eps} with $\alpha$ replaced by $\alpha/3$. Thus, following the argument in the proof of Proposition~\ref{prop:all-param-known}\emph{(b)}, in this case, we have $|\hat{\mathrm{CI}}_{n,\sigma,\alpha}(t_*)| \leq 2\sigma b = 20e\sigma \sqrt{\frac{\log(12/\alpha)}{n}}$ with probability at least $1-\alpha/6$.

    For $\epsilon > \sqrt{\frac{e^2\log(12/\alpha)}{n}}$, the same proof as in Cases~2--4 of Proposition~\ref{prop:all-param-known}\emph{(b)} holds, except that now $\alpha$ is replaced by $\alpha/3$.
\end{proof}

Thus, if we define
\begin{equation}
\hat{\sigma}_{\mathrm{L}}^0 \coloneqq \frac{Q_n^+\Bigl(1-\Phi(t_{n,\epsilon,\alpha}') + \sqrt{\frac{\log(12/\alpha)}{2n}}\Bigr) - Q_n^-\Bigl(1-\Phi(t_{n,\epsilon,\alpha}')+\sqrt{\frac{\log(12/\alpha)}{2n}}\Bigr)}{2t_{n,\epsilon,\alpha}'}, \label{eq:sigma-hat}
\end{equation}
then combining the two inequalities~\eqref{eq:sigma-lb} and~\eqref{eq:sigma-ub} immediately yields the following corollary.

\begin{Corollary} \label{cor:sigma-CI}
Let $n\in\mathbb{N}$, $\sigma>0$, $\alpha\in(0,1]$, $\epsilon \in \bigl[0,1-\frac{4\log_2(24/\alpha)}{n}\bigr]$, and $Z_1,\ldots,Z_n \overset{\mathrm{iid}}{\sim} (1-\epsilon)N(\theta,\sigma^2)+\epsilon Q$ where $Q\in\mathsf{MNAR}_{N(\theta,\sigma^2)}$. There exists a universal constant $C>0$ such that if $\alpha \in \bigl[ \frac{C}{n}, 1\bigr]$, then with probability at least $1-\alpha/2$, we have
\begin{equation*}
0\leq \frac{\sigma-\hat{\sigma}^0_{\mathrm{L}}}{\sigma}\leq \frac{C_{\alpha/3}}{t'_{n,\epsilon,\alpha}} \biggl\{\frac{1}{\sqrt{n}} + \frac{\log\bigl(1/(1-\epsilon)\bigr)}{\sqrt{1\vee \log(n\epsilon^2)}}\biggr\} \eqqcolon r_{n,\epsilon,\alpha}. 
\end{equation*}
In particular, if we define
\begin{align*}
    \hat{\sigma}_{\mathrm{R}}^0 \coloneqq \frac{\hat{\sigma}^0_{\mathrm{L}}}{(1-r_{n,\epsilon,\alpha})_+}, 
\end{align*}
then $\mathbb{P}(\hat{\sigma}_{\mathrm{L}}^0 \leq \sigma \leq \hat{\sigma}_{\mathrm{R}}^0) \geq 1-\alpha/2$.
\end{Corollary}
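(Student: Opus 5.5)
The plan is to obtain both inequalities on the intersection of the two high-probability events already identified before the corollary. Write $q' \coloneqq 1-\Phi(t'_{n,\epsilon,\alpha}) + \sqrt{\log(12/\alpha)/(2n)}$ and $D \coloneqq Q_n^-(q') - Q_n^+(q')$. By definition~\eqref{eq:sigma-hat}, $\hat{\sigma}^0_{\mathrm{L}} = -D/(2t'_{n,\epsilon,\alpha})$, so
\[
\sigma - \hat{\sigma}^0_{\mathrm{L}} = \frac{D + 2\sigma t'_{n,\epsilon,\alpha}}{2t'_{n,\epsilon,\alpha}},
\]
which reduces everything to two-sided control of $D + 2\sigma t'_{n,\epsilon,\alpha}$.

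\textbf{Step 1 (lower bound).} I will apply Proposition~\ref{prop:all-param-known}\emph{(a)} with $\alpha$ replaced by $\alpha/3$. This needs $\epsilon \le 1 - 4\log_2(24/\alpha)/n$, which is exactly our hypothesis. With probability at least $1-\alpha/3$, $\theta$ lies in $\hat{\mathrm{CI}}^{(1)}_{n,\sigma,\alpha/3}(t)$ for all $t\ge 0$, and in particular for $t = t'_{n,\epsilon,\alpha}$, at which point the quantile level is exactly $q'$. An interval containing $\theta$ is non-empty, so its right endpoint is at least its left endpoint. This is~\eqref{eq:sigma-lb}: $D + 2\sigma t' \ge 0$, i.e. $\hat{\sigma}^0_{\mathrm{L}} \le \sigma$.

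\textbf{Step 2 (upper bound).} Lemma~\ref{lemma:sigma-ub} gives~\eqref{eq:sigma-ub} with probability at least $1-\alpha/6$, provided $\alpha \ge C/n$, with $C$ the universal constant from that lemma. Dividing~\eqref{eq:sigma-ub} by $2\sigma t'$ yields
\[
\frac{\sigma-\hat{\sigma}^0_{\mathrm{L}}}{\sigma} \le \frac{C_{\alpha/3}}{t'_{n,\epsilon,\alpha}}\biggl\{\frac{1}{\sqrt n} + \frac{\log(1/(1-\epsilon))}{\sqrt{1\vee\log(n\epsilon^2)}}\biggr\} = r_{n,\epsilon,\alpha}.
\]
Here $t' \ge 1/\sqrt2 > 0$ by its definition (the maximum with one), so the division is legitimate.

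\textbf{Step 3 (union bound and the consequence).} A union bound gives failure probability at most $\alpha/3 + \alpha/6 = \alpha/2$, which proves the first display. For the second claim, on the same event we have $\hat{\sigma}^0_{\mathrm{L}} \ge \sigma(1-r_{n,\epsilon,\alpha})$.

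If $r_{n,\epsilon,\alpha} < 1$, divide by $1 - r_{n,\epsilon,\alpha} > 0$ to get $\sigma \le \hat{\sigma}^0_{\mathrm{L}}/(1-r_{n,\epsilon,\alpha}) = \hat{\sigma}^0_{\mathrm{R}}$.

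If $r_{n,\epsilon,\alpha} \ge 1$, then $(1-r_{n,\epsilon,\alpha})_+ = 0$, and I interpret $\hat{\sigma}^0_{\mathrm{R}} = +\infty$ (the natural convention for division by zero of a possibly nonnegative quantity). Under this convention $\sigma \le \hat{\sigma}^0_{\mathrm{R}}$ holds trivially.

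Together with Step 1, this gives $\hat{\sigma}^0_{\mathrm{L}} \le \sigma \le \hat{\sigma}^0_{\mathrm{R}}$ on the event.

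\textbf{Main obstacle.} There is no real obstacle, since both ingredients are already established. The only care needed is bookkeeping:
\begin{itemize}
\item checking that the quantile level used in Proposition~\ref{prop:all-param-known}\emph{(a)} at level $\alpha/3$ matches the level $q'$ appearing in $\hat{\sigma}^0_{\mathrm{L}}$ (both use $\log(12/\alpha)$);
\item checking that the range condition on $\epsilon$ suffices for both results;
\item handling the degenerate case $r_{n,\epsilon,\alpha} \ge 1$ by the convention above.
\end{itemize}
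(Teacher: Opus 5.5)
Your proposal is correct and follows the paper's argument. The paper likewise obtains the corollary by combining the non-emptiness bound \eqref{eq:sigma-lb}, which comes from Proposition~\ref{prop:all-param-known}\emph{(a)} at level $\alpha/3$ and holds with probability at least $1-\alpha/3$, with Lemma~\ref{lemma:sigma-ub}, which holds with probability at least $1-\alpha/6$, via a union bound. Your convention $\hat{\sigma}^0_{\mathrm{R}}=+\infty$ when $r_{n,\epsilon,\alpha}\ge 1$ makes explicit a degenerate case the paper leaves implicit; the paper never needs it, since it switches to $[Z_{\min},Z_{\max}]$ once $r_{n,\epsilon,\alpha}>0.9$.
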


Corollary~\ref{cor:sigma-CI} motivates the construction of a confidence interval that replaces the unknown~$\sigma$ in~\eqref{eq:GST-warmup} with $\hat{\sigma}_{\mathrm{R}}$ to yield
\begin{align*}
    &\hat{\mathrm{CI}}^{(2)}_{n,\epsilon,\alpha}\coloneqq\\
    &\begin{cases}
        \biggl[Q_n^+\biggl(1-\Phi(t_{n,\epsilon,\alpha}') + \sqrt{\frac{\log(12/\alpha)}{2n}}\biggr)-\hat{\sigma}_{\mathrm{R}} t_{n,\epsilon,\alpha}',\, Q_n^-\biggl(1-\Phi(t_{n,\epsilon,\alpha}')+\sqrt{\frac{\log(12/\alpha)}{2n}}\biggr)+ \hat{\sigma}_{\mathrm{R}} t_{n,\epsilon,\alpha}'\biggr]\\
        \hspace{13.3cm} \text{if } r_{n,\epsilon,\alpha} \leq 0.9\\
        [Z_{\min}, Z_{\max}] \quad\text{if } r_{n,\epsilon,\alpha} > 0.9. 
    \end{cases} 
\end{align*}
Proposition~\ref{prop:unknown-sigma} below shows that this adaptive confidence interval retains the same coverage and length guarantees as its non-adaptive counterpart.

\begin{Proposition} \label{prop:unknown-sigma}
Let $n\in\mathbb{N}$, $\sigma>0$, $\alpha\in(0,1]$, $\epsilon \in \bigl[0,1-\frac{4\log_2(24/\alpha)}{n}\bigr]$, and $Z_1,\ldots,Z_n \overset{\mathrm{iid}}{\sim} (1-\epsilon)N(\theta,\sigma^2)+\epsilon Q$ where $Q\in\mathsf{MNAR}_{N(\theta,\sigma^2)}$. Then 
\begin{align*}
    \mathbb{P}\bigl(\theta \in \hat{\mathrm{CI}}^{(2)}_{n,\epsilon,\alpha}\bigr) \geq 1-\alpha.
\end{align*}
Moreover, there exists a universal constant $C>0$ such that if $\alpha \in \bigl[ \frac{C}{n}, 1\bigr]$, then with probability at least $1-\alpha$, we have 
\begin{align*}
    \bigl|\hat{\mathrm{CI}}^{(2)}_{n,\epsilon,\alpha} \bigr| \leq C_{\alpha}'\sigma\biggl\{\frac{1}{\sqrt{n}} + \frac{\log\bigl(1/(1-\epsilon)\bigr)}{\sqrt{1\vee \log(n\epsilon^2)}}\biggr\},
\end{align*}
where $C_{\alpha}'>0$ depends only on $\alpha$.
\end{Proposition}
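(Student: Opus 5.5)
The argument splits according to whether $r_{n,\epsilon,\alpha}\le 0.9$, since $r_{n,\epsilon,\alpha}$ is deterministic (it depends only on $n,\epsilon,\alpha$). Throughout, $\hat{\sigma}_{\mathrm{R}}$ in the definition of $\hat{\mathrm{CI}}^{(2)}_{n,\epsilon,\alpha}$ means $\hat{\sigma}_{\mathrm{R}}^0$ from Corollary~\ref{cor:sigma-CI}.

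\emph{Coverage when $r_{n,\epsilon,\alpha}\le 0.9$.} Let $A$ be the event, of probability at least $1-\alpha/2$, on which $\hat{\sigma}^0_{\mathrm{L}}\le\sigma\le\hat{\sigma}_{\mathrm{R}}^0$. Let $B$ be the event on which $\theta\in\hat{\mathrm{CI}}^{(1)}_{n,\sigma,\alpha/3}(t'_{n,\epsilon,\alpha})$. By Proposition~\ref{prop:all-param-known}\emph{(a)} with $\alpha$ replaced by $\alpha/3$, which is applicable because $\epsilon\le 1-4\log_2(24/\alpha)/n$ gives the required constraint, $B$ has probability at least $1-\alpha/3$. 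On $A\cap B$, replacing $\sigma$ by the larger quantity $\hat{\sigma}_{\mathrm{R}}^0$ only widens the interval, so $\theta\in\hat{\mathrm{CI}}^{(2)}_{n,\epsilon,\alpha}$. A union bound then gives a miscoverage probability of at most $\alpha/2+\alpha/3<\alpha$.

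The one point to check is that the event in Corollary~\ref{cor:sigma-CI} does not require the length bound. Its lower half, $\hat{\sigma}^0_{\mathrm{L}}\le\sigma$, is exactly~\eqref{eq:sigma-lb}, which holds on $B$. Its upper half uses~\eqref{eq:sigma-ub}, which holds with probability at least $1-\alpha/6$. So I will work with the events $B$ and~\eqref{eq:sigma-ub} directly, and the total failure probability is at most $\alpha/3+\alpha/6=\alpha/2$.

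\emph{Coverage when $r_{n,\epsilon,\alpha}>0.9$.} Here the interval is $[Z_{\min},Z_{\max}]$. By~\eqref{eq:Zmax<theta} with $\alpha$ replaced by $\alpha/3$ (the constraint on $\epsilon$ ensures enough uncontaminated observations), and the analogous bound for $Z_{\min}$, the miscoverage probability is at most $\alpha/2$.

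\emph{Length when $r_{n,\epsilon,\alpha}\le 0.9$.} Work on the event where~\eqref{eq:sigma-ub} and~\eqref{eq:sigma-lb} both hold. Write $\ell\coloneqq Q_n^--Q_n^+$, both evaluated at the common level. Then the length equals $\ell+2\hat{\sigma}_{\mathrm{R}}^0t'$. Since $\hat{\sigma}^0_{\mathrm{L}}=-\ell/(2t')$ and $r_{n,\epsilon,\alpha}\le 0.9$, we have $\hat{\sigma}_{\mathrm{R}}^0=\hat{\sigma}^0_{\mathrm{L}}/(1-r)\le 10\,\hat{\sigma}^0_{\mathrm{L}}$, and hence
\[
\ell+2\hat{\sigma}_{\mathrm{R}}^0t'=(\ell+2\hat{\sigma}^0_{\mathrm{L}}t')+2t'(\hat{\sigma}_{\mathrm{R}}^0-\hat{\sigma}^0_{\mathrm{L}})=\frac{2t'\hat{\sigma}^0_{\mathrm{L}}\,r}{1-r}\le 20\,t'\sigma r_{n,\epsilon,\alpha}.
\]
Note that $\ell+2\hat{\sigma}^0_{\mathrm{L}}t'=0$ by the definition of $\hat{\sigma}^0_{\mathrm{L}}$, which is why only the second term survives. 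Using the definition of $r_{n,\epsilon,\alpha}$ in Corollary~\ref{cor:sigma-CI}, the bound $t'r_{n,\epsilon,\alpha}$ equals $C_{\alpha/3}$ times the target rate. This is where the factor $t'$ cancels, so estimating $\sigma$ costs nothing in the rate.

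\emph{Length when $r_{n,\epsilon,\alpha}>0.9$.} This is the main obstacle. By Lemma~\ref{lemma:max-min}, $Z_{\max}-Z_{\min}\lesssim\sigma\sqrt{\log(n/\alpha)}$ with high probability, so I need the rate to be at least of order $\sqrt{\log n}$ times a constant depending on $\alpha$. Since $r>0.9$, the rate is greater than $0.9\,t'/C_{\alpha/3}$. I then split according to the size of $\epsilon$.
\begin{itemize}
\item[(i)] If $\epsilon$ is bounded away from $1$, the ratio $\log(1/(1-\epsilon))/\sqrt{\log(n\epsilon^2)}$ is small. Together with $t'\asymp\sqrt{\log(n\epsilon^2)}\vee 1$, this makes $r\lesssim_\alpha\log(1/(1-\epsilon))+1/\sqrt{n}$ small, which contradicts $r>0.9$ except in a range where a constant $C'_\alpha$ can absorb the discrepancy.
\item[(ii)] If $\epsilon$ is close to $1$, I would reuse the Case~4 argument from the proof of Proposition~\ref{prop:all-param-known}: once $\epsilon>1-(\log(4/\alpha)/(n\epsilon^2))^{1/60}$, the rate dominates $\sqrt{\log n}$.
\end{itemize}
If this comparison breaks down in some intermediate range, I would fall back on a direct bound, using only $r>0.9$ to show that $\sqrt{\log n}\lesssim_\alpha$ rate.
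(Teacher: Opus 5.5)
Your coverage arguments in both regimes, and the length bound when $r_{n,\epsilon,\alpha}\le 0.9$, are correct and follow the paper. For the latter you take a slightly cleaner route. The quantile levels in $\hat{\sigma}^0_{\mathrm{L}}$ and in $\hat{\mathrm{CI}}^{(2)}_{n,\epsilon,\alpha}$ coincide, so you get the exact identity $|\hat{\mathrm{CI}}^{(2)}_{n,\epsilon,\alpha}|=2t'_{n,\epsilon,\alpha}r_{n,\epsilon,\alpha}\hat{\sigma}^0_{\mathrm{L}}/(1-r_{n,\epsilon,\alpha})$. Only $\hat{\sigma}^0_{\mathrm{L}}\le\sigma$, i.e.\ \eqref{eq:sigma-lb}, is then needed. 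The paper instead also invokes \eqref{eq:sigma-ub} and bounds $\hat{\sigma}^0_{\mathrm{R}}-\sigma$ separately. For coverage, both your argument and the paper's use \eqref{eq:sigma-ub}, and hence tacitly require $\alpha\ge C/n$.

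The gap is the case $r_{n,\epsilon,\alpha}>0.9$. You leave it as a hedged sketch, and the sketch as written does not close. In (i) you assert that for $\epsilon$ bounded away from $1$, $r_{n,\epsilon,\alpha}\lesssim_\alpha\log(1/(1-\epsilon))+1/\sqrt{n}$ is small. For $\epsilon=1/2$ that bound is of constant order, so nothing contradicts $r_{n,\epsilon,\alpha}>0.9$. Meanwhile (ii) only starts at the Case~4 threshold $\epsilon>1-(\log(4/\alpha)/(n\epsilon^2))^{1/60}$, leaving the whole range in between uncovered.

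The right split is whether $\epsilon$ is bounded away from $0$. Write $R$ for the rate. Then $r_{n,\epsilon,\alpha}>0.9$ and $t'_{n,\epsilon,\alpha}\ge 1$ give $R>0.9\,t'_{n,\epsilon,\alpha}/C_{\alpha/3}\ge 0.9/C_{\alpha/3}$.
\begin{itemize}
\item If $\epsilon\ge 1/2$, then $(t'_{n,\epsilon,\alpha})^2\ge 1\vee\{\log n-\log 4-\log\log(12/\alpha)\}/2\gtrsim_\alpha\log n$. Hence $R\gtrsim_\alpha 1\vee\sqrt{\log n}\gtrsim_\alpha\sqrt{\log(4n/\alpha)}$, and Lemma~\ref{lemma:max-min} finishes. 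No Case~4 argument is needed.
\item If $\epsilon<1/2$, use $\log(1/(1-\epsilon))\le 2\epsilon$ and keep the factor $1/t'_{n,\epsilon,\alpha}$. For $\epsilon\le n^{-1/4}$, the second term of $r_{n,\epsilon,\alpha}$ is at most $2C_{\alpha/3}n^{-1/4}$. For $\epsilon>n^{-1/4}$, we have $\log(n\epsilon^2)>\frac{1}{2}\log n$. Hence $t'_{n,\epsilon,\alpha}\gtrsim\sqrt{\log n}$ once $n$ exceeds a constant depending on $\alpha$, and that term is $\lesssim_\alpha 1/\log n$. So $r_{n,\epsilon,\alpha}\lesssim_\alpha n^{-1/4}+1/\log n$, and $r_{n,\epsilon,\alpha}>0.9$ forces $n\lesssim_\alpha 1$. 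Then $\sqrt{\log(4n/\alpha)}\lesssim_\alpha 1\lesssim_\alpha R$.
\end{itemize}

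The paper compresses this into a single claim: for $\alpha\ge C/n$, $r_{n,\epsilon,\alpha}>0.9$ forces $\epsilon\ge 1-n^{-c_\alpha}$, which excludes the small-$\epsilon$ branch. Then $\log(1/(1-\epsilon))\gtrsim_\alpha\log n$, and the rate dominates $\sqrt{\log(4n/\alpha)}$ directly.
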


\begin{proof}
    When $r_{n,\epsilon,\alpha} \leq 0.9$, the interval $\hat{\mathrm{CI}}^{(2)}_{n,\epsilon,\alpha}$ is the same as $\hat{\mathrm{CI}}^{(1)}_{n,\sigma,\alpha/3}(t'_{n,\epsilon,\alpha})$ but with $\sigma$ in the latter interval replaced by $\hat{\sigma}_{\mathrm{R}}^0$, so by Corollary~\ref{cor:sigma-CI} and Proposition~\ref{prop:all-param-known}\emph{(a)},
    \begin{align*}
        &\mathbb{P}\bigl(\theta \notin \hat{\mathrm{CI}}^{(2)}_{n,\epsilon,\alpha}\bigr) \leq \frac{\alpha}{2} + \mathbb{P}\bigl(\theta \notin \hat{\mathrm{CI}}^{(1)}_{n,\sigma,\alpha/3}(t'_{n,\epsilon,\alpha})\bigr) \leq \alpha.
    \end{align*}
    When $r_{n,\epsilon,\alpha} > 0.9$, the coverage follows from~\eqref{eq:Zmax<theta} and the line following that bound.

    Regarding the length guarantee, suppose first that $r_{n,\epsilon,\alpha} \leq 0.9$. Then by Lemma~\ref{lemma:sigma-ub} and Corollary~\ref{cor:sigma-CI}, we have with probability at least $1-\alpha$ that
    \begin{align*}
        Q_n^-\biggl(1-\Phi(t_{n,\epsilon,\alpha}')&+\sqrt{\frac{\log(12/\alpha)}{2n}}\biggr) - Q_n^+\biggl(1-\Phi(t_{n,\epsilon,\alpha}') + \sqrt{\frac{\log(12/\alpha)}{2n}}\biggr) +2\hat{\sigma}_{\mathrm{R}}^0 t_{n,\epsilon,\alpha}'\\
        &\leq 2C_{\alpha/3}\sigma \biggl\{\frac{1}{\sqrt{n}} + \frac{\log\bigl(1/(1-\epsilon)\bigr)}{\sqrt{1\vee \log(n\epsilon^2)}}\biggr\} + 2(\hat{\sigma}_{\mathrm{R}}^0 - \sigma) t_{n,\epsilon,\alpha}'\\
        &\leq 2C_{\alpha/3}\sigma \biggl\{\frac{1}{\sqrt{n}} + \frac{\log\bigl(1/(1-\epsilon)\bigr)}{\sqrt{1\vee \log(n\epsilon^2)}}\biggr\} + 2\biggl(\frac{\sigma}{1-r_{n,\epsilon,\alpha}} - \sigma\biggr) t_{n,\epsilon,\alpha}'\\
        &\leq 2C_{\alpha/3}\sigma \biggl\{\frac{1}{\sqrt{n}} + \frac{\log\bigl(1/(1-\epsilon)\bigr)}{\sqrt{1\vee \log(n\epsilon^2)}}\biggr\} + 20\sigma r_{n,\epsilon,\alpha} t_{n,\epsilon,\alpha}'\\
        &= 22C_{\alpha/3} \sigma \biggl\{\frac{1}{\sqrt{n}} + \frac{\log\bigl(1/(1-\epsilon)\bigr)}{\sqrt{1\vee \log(n\epsilon^2)}}\biggr\}.
    \end{align*}
    If $r_{n,\epsilon,\alpha} > 0.9$, then there exist $c_\alpha > 0$, depending only on $\alpha$, and a universal constant $C > 0$, such that $\epsilon \geq 1-n^{-c_{\alpha}}$ for all $\alpha \in [C/n,1]$. We may therefore apply Lemma~\ref{lemma:max-min} to deduce that with probability at least $1-\alpha/2$,
    \begin{align*}
        \bigl| \hat{\mathrm{CI}}^{(2)}_{n,\epsilon,\alpha} \bigr| \leq 2\sigma\sqrt{2\log(4n/\alpha)} \leq C_{\alpha}'\sigma \biggl\{\frac{1}{\sqrt{n}} + \frac{\log\bigl(1/(1-\epsilon)\bigr)}{\sqrt{1\vee \log(n\epsilon^2)}}\biggr\},
    \end{align*}
    for sufficiently large $C'_{\alpha}>0$, depending only on $\alpha$.
\end{proof}

\subsection{\texorpdfstring{Unknown $\sigma$ and $\epsilon$}{Unknown sigma and epsilon}} \label{sec:unknown-sigma-eps}
In this section, we assume that $\sigma\in(0,\sigma_{\max}]$ and $\epsilon\in[0,1]$ are both unknown, where $\sigma_{\max}>0$ is a known upper bound on $\sigma$. Recall the definition of $\hat{\mathrm{CI}}^{(1)}_{n,\sigma,\alpha}(t)$ from~\eqref{eq:GST-warmup} and define
\begin{align*}
    \hat{t}_{n,\sigma_{\max},\alpha} \coloneqq \argmin_{t\geq 0} \bigl|\hat{\mathrm{CI}}^{(1)}_{n,\sigma_{\max},\alpha}(t)\bigr|.
\end{align*}
We will take  $\hat{\mathrm{CI}}^{(1)}_{n,\sigma_{\max},\alpha}(\hat{t}_{n,\sigma_{\max},\alpha})$ as our confidence interval.  The following theorem guarantees its coverage and width.

\begin{Proposition}\label{prop:unknown-eps-sigma}
Let $n\in\mathbb{N}$, $\sigma\in(0,\sigma_{\max}]$, $\alpha\in(0,1]$, $\epsilon \in \bigl[0,1-\frac{4\log_2(8/\alpha)}{n}\bigr]$, and $Z_1,\ldots,Z_n \overset{\mathrm{iid}}{\sim} (1-\epsilon)N(\theta,\sigma^2)+\epsilon Q$ where $Q\in\mathsf{MNAR}_{N(\theta,\sigma^2)}$. Then 
\begin{align*}
    \mathbb{P}\bigl(\theta \in \hat{\mathrm{CI}}^{(1)}_{n,\sigma_{\max},\alpha}(\hat{t}_{n,\sigma_{\max},\alpha}) \bigr) \geq 1-\alpha.
\end{align*}
Moreover, there exist universal constants $C,C'>0$ such that if $\alpha\in\bigl[\frac{C}{n}, 1\bigr]$, then with probability at least $1-\alpha/2$, we have
\begin{align*}
    \bigl|\hat{\mathrm{CI}}^{(1)}_{n,\sigma_{\max},\alpha}(\hat{t}_{n,\sigma_{\max},\alpha})\bigr| \leq C'\sigma_{\max}\biggl\{\biggl(\sqrt{\frac{\log(4/\alpha)}{n}} + \epsilon\biggr) \vee \biggl(\sqrt{\log\Bigl(\frac{1}{1-\epsilon}\Bigr)} - 1 \biggr)\biggr\}.
\end{align*}
\end{Proposition}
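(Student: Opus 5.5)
Coverage follows from the argument of Proposition~\ref{prop:all-param-known}\emph{(a)} run with $\sigma_{\max}$ in place of $\sigma$. Since $\sigma\le\sigma_{\max}$ and $t\ge0$, we have $\theta-\sigma_{\max}t\le\theta-\sigma t$. Hence
\[
\Bigl\{\tfrac1n\sum_i\indi_{\{Z_i\le\theta-\sigma_{\max}t\}}\ge q_t\Bigr\}\subseteq\Bigl\{\tfrac1n\sum_i\indi_{\{Z_i\le\theta-\sigma t\}}\ge q_t\Bigr\},
\]
where $q_t\coloneqq1-\Phi(t)+\sqrt{\log(4/\alpha)/(2n)}$. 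The one-sided DKWMR inequality together with \eqref{ineq:sandwich-q-1} then controls this event simultaneously over $t\ge0$, and the same holds for the upper tail. Combining this with the bound on $\mathbb{P}(Z_{\max}\le\theta)$ from \eqref{eq:Zmax<theta} and Lemma~\ref{lemma:empirical-quantile-property}, we get $\theta\in\bigcap_{t\ge0}\hat{\mathrm{CI}}^{(1)}_{n,\sigma_{\max},\alpha}(t)$ with probability at least $1-\alpha$. In particular $\theta$ lies in the interval at the data-dependent $\hat t_{n,\sigma_{\max},\alpha}$.

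For the length, it is enough to exhibit a deterministic $t_*$, which may depend on $\epsilon$, with the property that on the event $\mathcal{E}_1$ from the proof of Proposition~\ref{prop:all-param-known}\emph{(b)} (probability at least $1-\alpha/2$) the length $|\hat{\mathrm{CI}}^{(1)}_{n,\sigma_{\max},\alpha}(t_*)|$ is at most the claimed bound. The length at $\hat t$ is no larger. Fix $t_*$ and $b\ge0$ satisfying \eqref{eq:sufficient-cond-gaussian-known-eps}. On $\mathcal{E}_1$, Lemma~\ref{lemma:empirical-quantile-property} gives $Q_n^-(q_{t_*})\le\theta+\sigma(b-t_*)$ and $Q_n^+(q_{t_*})>\theta+\sigma(t_*-b)$. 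Therefore
\[
|\hat{\mathrm{CI}}^{(1)}_{n,\sigma_{\max},\alpha}(t_*)|\le 2\sigma b+2(\sigma_{\max}-\sigma)t_*\le 2\sigma_{\max}\bigl\{b+(t_*-b)_+\bigr\}\le 2\sigma_{\max}\max(b,t_*).
\]
The new feature compared with the known-$\sigma$ case is that the mismatch $\sigma_{\max}-\sigma$ costs an additive $\sigma_{\max}t_*$. We therefore want $t_*$ as small as possible while still allowing $b\lesssim\epsilon+n^{-1/2}$.

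The cases are as follows.
\begin{itemize}
\item[(i)] If $\epsilon\le c_0$ for a small universal $c_0$, take $t_*=0$ and $b=C_1\bigl(\epsilon+\sqrt{\log(4/\alpha)/n}\bigr)\le1$. Lemma~\ref{lemma:gaussian-cdf-ratio}\emph{(a)} gives $(1-\Phi(-b))/(1/2)\ge1+c b$, so $(1-\epsilon)(1+cb)\ge1+6\sqrt{\log(4/\alpha)/(2n)}$ once $C_1$ is large. This verifies \eqref{eq:sufficient-cond-gaussian-known-eps}, and the length is $\lesssim\sigma_{\max}(\sqrt{\log(4/\alpha)/n}+\epsilon)$.
\item[(ii)] If $c_0<\epsilon\le1-e^{-K}$ for a universal $K$, the target bound is of constant order times $\sigma_{\max}$. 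Taking $t_*=0$ and a constant $b=b(c_0,K)$ satisfies \eqref{eq:sufficient-cond-gaussian-known-eps}, because $(1-\epsilon)\,2(1-\Phi(-b))\to2(1-\epsilon)$ only if $\epsilon<1/2$. For $\epsilon\ge1/2$ we instead need $t_*>0$: choose $t_*=\sqrt{2\log(1/(1-\epsilon))}$, or a constant multiple of it, and $b=t_*$. Then $1-\Phi(0)=1/2$ and $(1-\epsilon)/(2(1-\Phi(t_*)))\gtrsim(1-\epsilon)e^{t_*^2/2}t_*$, which exceeds $1+6e^{t_*^2}\sqrt{\log(4/\alpha)/(2n)}$ as long as $e^{t_*^2}\lesssim\sqrt n$, i.e.\ $1-\epsilon\gtrsim n^{-1/4}$.
\item[(iii)] Generally, for $\epsilon$ bounded away from $0$, take $t_*\asymp\sqrt{\log(1/(1-\epsilon))}$ and $b=t_*$. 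This gives length $\lesssim\sigma_{\max}\sqrt{\log(1/(1-\epsilon))}$, which is $\lesssim\sigma_{\max}\bigl\{\epsilon\vee(\sqrt{\log(1/(1-\epsilon))}-1)\bigr\}$ because $\sqrt{x}\le 1+(\sqrt x-1)$ and $\epsilon\gtrsim1$ here.
\item[(iv)] When $1-\epsilon$ is so small that \eqref{eq:sufficient-cond-gaussian-known-eps} fails, namely $1-\epsilon\lesssim n^{-1/4}$ up to $\alpha$-factors, fall back on Lemma~\ref{lemma:max-min}. This gives $|\hat{\mathrm{CI}}|\le Z_{\max}-Z_{\min}+2\sigma_{\max}t_*\lesssim\sigma_{\max}\sqrt{\log(n/\alpha)}$, which is $\lesssim\sigma_{\max}\sqrt{\log(1/(1-\epsilon))}$ because $\log(1/(1-\epsilon))\gtrsim\log n$ in this regime and $\alpha\ge C/n$.
\end{itemize}

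The main obstacle is case (iii): verifying \eqref{eq:sufficient-cond-gaussian-known-eps} with $b=t_*\asymp\sqrt{\log(1/(1-\epsilon))}$ uniformly over the intermediate range, and matching the boundaries between the regimes. The condition reduces to $(1-\epsilon)/\{2(1-\Phi(t_*))\}\ge1+6e^{t_*^2}\sqrt{\log(4/\alpha)/(2n)}$. Using the Mills-ratio bound of Lemma~\ref{lemma:mills-ratio}, I would pick $t_*^2=2\log(1/(1-\epsilon))+O(1)$ so that the left side is at least $2$. Then I would check that the right side is at most $2$ whenever $(1-\epsilon)^{-2}\lesssim\sqrt{n/\log(4/\alpha)}$, with the complementary regime handled by (iv).
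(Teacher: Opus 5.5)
Your proposal is correct and follows the paper's own route. Coverage comes from Proposition~\ref{prop:all-param-known}\emph{(a)} using $\sigma_{\max}\ge\sigma$. For length, you use the bound $2\sigma b+2(\sigma_{\max}-\sigma)t$ on $\mathcal{E}_1$ across three regimes: $t=0$ with $b\asymp\epsilon+n^{-1/2}$ for small $\epsilon$; $t=b\asymp\sqrt{\log(1/(1-\epsilon))}$ for intermediate $\epsilon$; and Lemma~\ref{lemma:max-min} when $1-\epsilon$ is polynomially small in $n$. The paper takes $t=b=4\sqrt{\log(1/(1-\epsilon))}$, which gives the threshold $(\log(4/\alpha)/n)^{1/32}$ instead of your $n^{-1/4}$. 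Your case (ii) is redundant, and its $t_*=0$ claim fails as $\epsilon\uparrow 1/2$, but case (iii) already covers every $\epsilon>c_0$, so nothing is lost.
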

\begin{proof}
    The coverage guarantee directly follows from Proposition~\ref{prop:all-param-known}\emph{(a)}, since $\sigma_{\max} \geq \sigma$.

    \medskip
    For the length guarantee, we assume throughout the proof that the universal constant $C > 0$ is large enough that $\alpha \geq 4e^{-2n/45^2}$. We follow a similar argument to the proof of Proposition~\ref{prop:all-param-known}\emph{(b)}. In particular, we showed in the proof of Proposition~\ref{prop:all-param-known}\emph{(b)} that if there exist $t,b\geq 0$ such that~\eqref{eq:sufficient-cond-gaussian-known-eps} holds, 
    then with probability at least $1-\alpha/2$, we have $Q_n^-\bigl(1-\Phi(t) + \sqrt{\frac{\log(4/\alpha)}{2n}}\bigr) \leq \theta + \sigma(b-t)$ and $Q_n^+\bigl(1-\Phi(t) + \sqrt{\frac{\log(4/\alpha)}{2n}}\bigr) > \theta + \sigma(t-b)$, so 
    \begin{align*}
        |\hat{\mathrm{CI}}^{(1)}_{n,\sigma_{\max},\alpha}(\hat{t}_{n,\sigma_{\max},\alpha})| &\leq |\hat{\mathrm{CI}}^{(1)}_{n,\sigma_{\max},\alpha}(t)| \\
        &= Q_n^-\biggl(1-\Phi(t) + \sqrt{\frac{\log(4/\alpha)}{2n}}\biggr) - Q_n^+\biggl(1-\Phi(t) + \sqrt{\frac{\log(4/\alpha)}{2n}}\biggr) + 2\sigma_{\max} t\\
        &\leq 2\sigma b + 2(\sigma_{\max} - \sigma)t.
    \end{align*}

    First consider the case where $\epsilon \leq 1/5$. We take $t\coloneqq 0$ and $b\coloneqq 18\sqrt{\frac{\log(4/\alpha)}{2n}} + 3\epsilon \in [0,1]$. Then
    \begin{align*}
        (1-\epsilon)\frac{1 - \Phi(t-b)}{1-\Phi(t)} &= 2(1-\epsilon)\Phi(b) \geq 2(1-\epsilon)\biggl( \frac{1}{2}+ b\cdot\phi(1)\biggr) \\
        &\geq 1-\epsilon + \frac{8e^{-1/2}}{5\sqrt{2\pi}} \biggl(18\sqrt{\frac{\log(4/\alpha)}{2n}} + 3\epsilon\biggr) \geq 1 + 6e^{t^2}\sqrt{\frac{\log(4/\alpha)}{2n}},
    \end{align*}
    where the second inequality uses the mean value inequality.  Hence, in this case, we deduce that $|\hat{\mathrm{CI}}^{(1)}_{n,\sigma_{\max},\alpha}(\hat{t}_{n,\sigma_{\max},\alpha})| \leq 36\sigma\sqrt{\frac{\log(4/\alpha)}{2n}} + 6\sigma\epsilon$ with probability at least $1-\alpha/2$.

    Next, we consider the case where $1/5 <\epsilon \leq 1-\bigl(\frac{\log(4/\alpha)}{n}\bigr)^{1/32}$. We take $t = b \coloneqq 4\sqrt{\log\bigl(\frac{1}{1-\epsilon}\bigr)} \geq 9/5$. Then
    \begin{align*}
        (1-\epsilon)\frac{1 - \Phi(t-b)}{1-\Phi(t)} &= \frac{1-\epsilon}{2\bigl(1-\Phi(t)\bigr)} \geq \frac{1-\epsilon}{2} \cdot \frac{t}{\phi(t)}\\
        &\geq \frac{9\sqrt{2\pi}}{10} \cdot \frac{1}{(1-\epsilon)^7} \geq 10 \geq 1 + 6e^{t^2}\sqrt{\frac{\log(4/\alpha)}{2n}},
    \end{align*}
    where the first inequality uses the Mills ratio bound $1-\Phi(x) \leq \phi(x)/x$ for $x>0$, and the final inequality uses $\epsilon \leq 1-\bigl(\frac{\log(4/\alpha)}{n}\bigr)^{1/32}$. Hence, in this case, we have $|\hat{\mathrm{CI}}^{(1)}_{n,\sigma_{\max},\alpha}(\hat{t}_{n,\sigma_{\max},\alpha})| \leq 8\sigma_{\max}\sqrt{\log\bigl(\frac{1}{1-\epsilon}\bigr)}$ with probability at least $1-\alpha/2$.

    Finally, we consider the case where $1-\bigl(\frac{\log(4/\alpha)}{n}\bigr)^{1/32}<\epsilon \leq 1-\frac{4\log_2(8/\alpha)}{n}$. By Lemma~\ref{lemma:max-min}, we have with probability at least $1-\alpha/2$ that
    \begin{align*}
            |\hat{\mathrm{CI}}^{(1)}_{n,\sigma_{\max},\alpha}(\hat{t}_{n,\sigma_{\max},\alpha})| &\leq |\hat{\mathrm{CI}}^{(1)}_{n,\sigma_{\max},\alpha}(0)| \leq Z_{\max}-Z_{\min} \leq 2\sigma\sqrt{2\log(4n/\alpha)}\\
        &\leq 2\sigma\sqrt{2\log(4Cn^2)} \leq C'\sigma_{\max}\sqrt{\log\biggl(\frac{1}{1-\epsilon}\biggr)},
    \end{align*}
    for some universal constant $C' > 0$.

    The final conclusion follows by combining the three cases.
\end{proof}

\section{Proof of Theorem~\ref{thm:gaussian-minimax-rates}} \label{sec:proof-of-gaussian-rates}
In this section we present the proof of Theorem~\ref{thm:gaussian-minimax-rates}. In each of the four cases, the proof involves an upper bound showing that the confidence intervals described in Section~\ref{sec:gaussian-CI-construction-general} have desired coverage and length guarantees, and a matching lower bound showing that the minimax length of any adaptive confidence interval must have the same rate as the length guarantee.
\subsection{\texorpdfstring{Known $\sigma,\epsilon_1,\epsilon_2$}{Known sigma, epsilon1, epsilon2}}
\subsubsection{Upper bound}
Recall the generalised Gaussian Scheff{\'e}--Tukey interval from~\eqref{eq:GST}. Define 
\begin{align}
    \widehat{\mathrm{CI}}_{n,\sigma,\epsilon_1,\alpha}^{(3)}(t) &\coloneqq \hat{\mathrm{CI}}_n^{(\mathrm{GST})} \Bigl(t;\; 1-\epsilon_1,\, 18(1-\epsilon_1),\, \frac{\alpha}{4},\, \sigma\Bigr)\nonumber\\
    &\phantom{:}=  \Biggl[Q_n^{+}\biggl((1-\epsilon_1)\{1-\Phi(t)\}+3\sqrt{\frac{(1-\epsilon_1)\log(16/\alpha)}{n}}\biggr)-\sigma t, \nonumber\\
    &\hspace{3.5cm} Q_n^{-}\biggl((1-\epsilon_1)\{1-\Phi(t)\}+3\sqrt{\frac{(1-\epsilon_1)\log(16/\alpha)}{n}}\biggr)+\sigma t\Biggr].\label{eq:gaussian-general-CI-a}
\end{align}
as the confidence interval constructed in Section~\ref{sec:gaussian-CI-construction-general} part~\emph{(a)}.

\begin{Proposition}
\label{prop:all-param-known-2}
    Let $n\in\mathbb{N}$, $\sigma>0$, $\alpha\in(0,1]$ and $\epsilon_1,\epsilon_2 \in [0,1]$ be such that $\epsilon_1+\epsilon_2 \leq 1 - \frac{4\log_2(8/\alpha)}{n}$ and $Z_1,\ldots,Z_n \overset{\mathrm{iid}}{\sim} M \in \mathcal{M}\bigl(N(\theta,\sigma^2),\epsilon_1,\epsilon_2\bigr)$.
    
    (a) The confidence interval $\widehat{\mathrm{CI}}_{n,\sigma,\epsilon_1,\alpha}^{(3)}(t)$ in~\eqref{eq:gaussian-general-CI-a} satisfies simultaneous coverage for all $t\geq 0$, i.e.
    \begin{align*}
        \mathbb{P}\biggl( \theta\in \bigcap_{t\geq 0} \hat{\mathrm{CI}}^{(3)}_{n,\sigma,\epsilon_1,\alpha}(t) \biggr) \geq 1-\alpha.
    \end{align*}
    
    (b) Let $t_{n,\epsilon_1,\epsilon_2,\alpha} \coloneqq \sqrt{0\vee\bigl\{\log\bigl(n\epsilon_2^2/(1-\epsilon_1)\bigr)-\log\log(16/\alpha)\bigr\}/2}$. There exists a universal constant $C>0$ such that if $\alpha \in \bigl[ \frac{C}{n(1-\epsilon_1)}, 1\bigr]$, then with probability at least $1-\alpha/2$, we have
    \begin{align*}
        \bigl| \hat{\mathrm{CI}}^{(3)}_{n,\sigma,\epsilon_1,\alpha} (t_{n,\epsilon_1,\epsilon_2,\alpha}) \bigr| \leq C_{\alpha} \sigma \biggl\{\frac{1}{\sqrt{n(1-\epsilon_1)}} + \frac{\log\bigl(\frac{1-\epsilon_1}{1-\epsilon_1-\epsilon_2}\bigr)}{\sqrt{1\vee \log\{n\epsilon_2^2/(1-\epsilon_1)\}}}\biggr\},
    \end{align*}
    where $C_{\alpha} \geq 1$ depends only on $\alpha$.
\end{Proposition}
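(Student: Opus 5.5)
The plan is to mirror the proof of Proposition~\ref{prop:all-param-known}, with two changes. The sandwich~\eqref{ineq:sandwich-q-1} is replaced by Lemma~\ref{lemma:realisability-characterisation} with $\mu=P$. The standard DKWMR inequality is replaced by its missing-data version, Lemma~\ref{lemma:dkw-missing}, whose deviation term scales like $\sqrt{(1-\epsilon_1)\log(1/\beta)/n}$ instead of $\sqrt{\log(1/\beta)/n}$.

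For part~(a), I first control $Z_{\max}$ and $Z_{\min}$ as in~\eqref{eq:Zmax<theta}. Write $M$ as a mixture with latent labels $L_i\sim\mathrm{Bern}(1-\epsilon_1-\epsilon_2)$. On $\{L_i=1\}$ the observation is a clean Gaussian draw. The multiplicative Chernoff bound together with $1-\epsilon_1-\epsilon_2\ge 4\log_2(8/\alpha)/n$ then gives $\mathbb{P}(Z_{\max}\le\theta)\le\alpha/4$, and similarly for $Z_{\min}$.

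Next, the upper bound in Lemma~\ref{lemma:realisability-characterisation} gives, for every $t\ge0$,
\[
\mathbb{P}_M(Z_1\le\theta-\sigma t)\le(1-\epsilon_1)\{1-\Phi(t)\},
\]
and the analogous bound for the upper tail. Applying Lemma~\ref{lemma:dkw-missing} with $\beta=\alpha/4$, uniformly in $t$, shows that
\[
\frac1n\sum_i\indi_{\{Z_i\le\theta-\sigma t\}}<(1-\epsilon_1)\{1-\Phi(t)\}+3\sqrt{(1-\epsilon_1)\log(16/\alpha)/n}\quad\text{for all } t\ge0,
\]
outside an event of small probability. I expect this to be exactly the form the lemma provides, since the constant $18(1-\epsilon_1)$ under the square root is built to match it. Lemma~\ref{lemma:empirical-quantile-property} then converts these statements into $Q_n^-(\cdot)+\sigma t\ge\theta$ and $Q_n^+(\cdot)-\sigma t\le\theta$ simultaneously in $t$. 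A union bound over the four bad events, each of probability at most $\alpha/4$ or smaller, gives coverage $1-\alpha$.

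For part~(b), the lower bound in Lemma~\ref{lemma:realisability-characterisation} gives $\mathbb{P}(Z_1\le\theta+\sigma x)\ge(1-\epsilon_1-\epsilon_2)\Phi(x)$. The lower-tail form of Lemma~\ref{lemma:dkw-missing} then shows that the analogue of $\mathcal E_1$ holds with probability at least $1-\alpha/2$. As before, $|\hat{\mathrm{CI}}^{(3)}(t_*)|\le2\sigma b$ whenever
\[
\frac{1-\epsilon_1-\epsilon_2}{1-\epsilon_1}\cdot\frac{1-\Phi(t_*-b)}{1-\Phi(t_*)}\ge1+c\,e^{t_*^2}\sqrt{\frac{\log(16/\alpha)}{n(1-\epsilon_1)}}.
\]
I therefore reparametrise by $\epsilon\coloneqq\epsilon_2/(1-\epsilon_1)$ and $n'\coloneqq n(1-\epsilon_1)$. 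Then $1-\epsilon=(1-\epsilon_1-\epsilon_2)/(1-\epsilon_1)$, and $n\epsilon_2^2/(1-\epsilon_1)\ge n'\epsilon^2$, with equality up to the factor $(1-\epsilon_1)$ inside the log. The sufficient condition now coincides with~\eqref{eq:sufficient-cond-gaussian-known-eps} with $n$ replaced by $n'$.

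Cases~1--4 of Proposition~\ref{prop:all-param-known}(b) can then be rerun verbatim in the variables $(n',\epsilon)$, using Lemmas~\ref{lemma:gaussian-cdf-ratio} and~\ref{lemma:mills-ratio}. This yields
\[
\sigma\Bigl\{\frac{1}{\sqrt{n(1-\epsilon_1)}}+\frac{\log\bigl(\frac{1-\epsilon_1}{1-\epsilon_1-\epsilon_2}\bigr)}{\sqrt{1\vee\log(n\epsilon_2^2/(1-\epsilon_1))}}\Bigr\},
\]
noting that $t_{n,\epsilon_1,\epsilon_2,\alpha}$ is exactly $t_{n',\epsilon,\alpha}$ with $\log\log(16/\alpha)$. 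The assumption $\alpha\ge C/(n(1-\epsilon_1))$ plays the role of $n'\ge400\log(16/\alpha)$.

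The main obstacle is Case~4, where $\epsilon$ is close to $1$. There I will bound the length by $Z_{\max}-Z_{\min}+2\sigma t_*$ via Lemma~\ref{lemma:max-min}. This needs the number of observed clean points to be at least of order $\log(1/\alpha)$, which is guaranteed by $1-\epsilon_1-\epsilon_2\ge4\log_2(8/\alpha)/n$. I must also check that $\sqrt{\log n}$ is dominated by the target rate. That uses $1-\epsilon\le(\log(16/\alpha)/(n'\epsilon^2))^{1/60}$, giving $\log(1/(1-\epsilon))\gtrsim\log n'$. The remaining gap between $\log n$ and $\log n'$ is handled because $n'\ge 4\log_2(8/\alpha)$, though this step may require splitting off the regime where $1-\epsilon_1$ is tiny.
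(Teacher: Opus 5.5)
Your part~(a) and Cases~1--3 of part~(b) follow the paper's proof. Lemma~\ref{lemma:realisability-characterisation} replaces~\eqref{ineq:sandwich-q-1}, and Lemma~\ref{lemma:dkw-missing} is applied at level $\alpha/4$; its hypothesis holds because $4\log_2(8/\alpha)\ge\log(16/\alpha)$. The reparametrisation $\epsilon=\epsilon_2/(1-\epsilon_1)$, $n'=n(1-\epsilon_1)$ is the right one, and $n'\epsilon^2=n\epsilon_2^2/(1-\epsilon_1)$ is an exact identity.

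The cases do not rerun verbatim, however. The sufficient condition becomes $\frac{1-\epsilon_1-\epsilon_2}{1-\epsilon_1}\cdot\frac{1-\Phi(t_*-b)}{1-\Phi(t_*)}\ge 1+18e^{t_*^2}\sqrt{\log(16/\alpha)/n'}$. Since $e^{t_*^2}=\sqrt{n'\epsilon^2/\log(16/\alpha)}$ in Cases~2--4, the target there is $1+18\epsilon$ rather than $1+3\sqrt{2}\epsilon$. For example, $\frac{1}{\sqrt2}(1-\epsilon)^{-14}\ge1+3\sqrt2\epsilon$ is no longer enough in Case~3. You must enlarge the constants in $b$ and in the threshold exponent; the paper uses $80$, $80$, $100$, the exponent $1/200$, and splits Cases~2/3 at $\epsilon=1/80$. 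This adjustment is routine.

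The genuine gap is Case~4. Lemma~\ref{lemma:max-min} only gives $Z_{\max}-Z_{\min}\le2\sigma\sqrt{2\log(4n/\alpha)}$. The target rate in Case~4, however, is at most of order $\sqrt{\log n'}$ up to $\alpha$-dependent constants. Indeed, $\log\bigl(\frac{1-\epsilon_1}{1-\epsilon_1-\epsilon_2}\bigr)\le\log n'$ because $n(1-\epsilon_1-\epsilon_2)\ge1$, and $\log(n'\epsilon^2)\gtrsim\log n'$ when $\epsilon\ge1/80$.

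The hypotheses allow $n'$ to stay fixed at any value $\ge C/\alpha$ while $n\to\infty$, that is, $1-\epsilon_1=n'/n\to0$. Then the target stays bounded while your bound grows like $\sqrt{\log n}$, so no $C_\alpha$ can absorb the difference. The fact $n'\ge4\log_2(8/\alpha)$ that you invoke does not help. It is a lower bound on the number of potentially observed points, which is what coverage needs. Bounding $Z_{\max}$ from above needs an upper bound on that number.

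The missing idea is that draws from the $\delta_{\{\star\}}$ component never enter $Z_{\max}$ or $Z_{\min}$. Write $M=(1-\epsilon_1-\epsilon_2)N(\theta,\sigma^2)+\epsilon_1\delta_{\{\star\}}+\epsilon_2Q$, with latent component labels independent of the $(X_i,\Omega_i)$. Let $\mathcal I$ be the set of indices not drawn from $\delta_{\{\star\}}$. Then $|\mathcal I|\sim\mathrm{Bin}(n,1-\epsilon_1)$, and every observed $Z_i$ has $i\in\mathcal I$ and equals the Gaussian $X_i$.

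A multiplicative Chernoff bound gives $|\mathcal I|\le3n'$ with probability at least $1-\alpha/16$. A union bound over at most $3n'$ i.i.d.\ Gaussians then yields $Z_{\max}-\theta\le\sigma\sqrt{2\log(12n'/\alpha)}$ with probability at least $1-\alpha/4$, and similarly for $Z_{\min}$. This is Lemma~\ref{lemma:max-min-2}.

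To finish, use $\alpha\ge C/n'$ to get $\log(12n'/\alpha)\lesssim\log n'$. Then use $\epsilon\ge1/80$ and $n'\ge C$ to get $\log n'\lesssim\log(n'\epsilon^2)\le2\{\log(n'\epsilon^2)-\log\log(16/\alpha)\}$, as in~\eqref{eq:denom-bound}. Finally, the Case~4 threshold $1-\epsilon<\{\log(16/\alpha)/(n'\epsilon^2)\}^{1/200}$ converts this bound into the stated rate.
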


\begin{proof}
    \emph{(a)} We may write $M=(1-\epsilon_1-\epsilon_2)N(\theta,\sigma^2) + (\epsilon_1+\epsilon_2)Q$ where $Q\in\mathsf{MNAR}_{N(\theta,\sigma^2)}$. For $i\in[n]$, we may assume that\footnote{Here we recall our convention that $0\cdot\star\coloneqq 0$.} $Z_i \coloneqq L_i X_i + (1-L_i)(X_i \ostar \Omega_i)$ where $(X_1,L_1,\Omega_1),\ldots,(X_n,L_n,\Omega_n)$ are independent triples with $X_i \sim N(\theta,\sigma^2)$, $\Omega_i$ a binary random variable such that $X_i \ostar \Omega_i \sim Q$ and $L_i \sim \mathrm{Bern}(1-\epsilon_1-\epsilon_2)$ independent of $(X_i,\Omega_i)$. In other words, if $L_i=1$, then $Z_i = X_i \sim N(\theta,\sigma^2)$ and if $L_i=0$, then $Z_i \sim Q$. Let $\mathcal{I} \coloneqq \{i\in[n] : L_i = 1\}$ so that $|\mathcal{I}| \sim \mathrm{Bin}(n, 1-\epsilon_1-\epsilon_2)$. By the multiplicative Chernoff bound \citep[Theorem~2.3(c)]{McDiarmid1998}, and since $1-\epsilon_1-\epsilon_2 \geq \frac{4\log_2(8/\alpha)}{n}$, we deduce that
    \begin{align*}
        \mathbb{P}\bigl(|\mathcal{I}| \leq \log_2(8/\alpha)\bigr) \leq \exp\bigl(-9\log_2(8/\alpha)/8\bigr) \leq \frac{\alpha}{8}.
    \end{align*}
    Therefore,
    \begin{align}
        \mathbb{P}(Z_{\max} \leq \theta) \leq \mathbb{P}\Bigl(\max_{i\in\mathcal{I}}X_i \leq \theta\Bigr) &\leq \mathbb{P}\Bigl(\max_{i\in\mathcal{I}}X_i \leq \theta \,\Big|\, |\mathcal{I}| > \log_2(8/\alpha)\Bigr) + \mathbb{P}\Bigl(|\mathcal{I}| \leq \log_2(8/\alpha)\Bigr) \nonumber\\
        &\leq 2^{-\log_2(8/\alpha)} + \frac{\alpha}{8} \leq \frac{\alpha}{4}. \label{eq:Zmax<theta-2}
    \end{align}
    Similarly, $\mathbb{P}(Z_{\min} \geq \theta) \leq \alpha/4$.
    Thus, we deduce that
    \begin{align*}
        \mathbb{P}\biggl\{&Q_n^{-}\biggl((1-\epsilon_1)\{1-\Phi(t)\} + 3\sqrt{\frac{(1-\epsilon_1)\log(16/\alpha)}{n}}\biggr) < \theta - \sigma t \text{ for some } t\geq 0\biggr\}\\
        & \leq \mathbb{P}\biggl(\bigcup_{t \geq 0}\biggl\{\frac{1}{n}\sum_{i=1}^n \indi_{\{Z_i \leq \theta - \sigma t\}} \geq (1-\epsilon_1)\{1-\Phi(t)\} + 3\sqrt{\frac{(1-\epsilon_1)\log(16/\alpha)}{n}}\biggr\}\biggr) + \mathbb{P}(Z_{\max} \leq \theta)\\
        & \leq \mathbb{P}\biggl(\sup_{t \geq 0}\biggl\{ \frac{1}{n}\sum_{i=1}^n \indi_{\{Z_i \leq \theta - \sigma t\}} - \mathbb{P}_M(Z_1\leq \theta-\sigma t) \biggr\} \geq 3\sqrt{\frac{(1-\epsilon_1)\log(16/\alpha)}{n}} \biggr) + \mathbb{P}(Z_{\max} \leq \theta)\\
        &\leq \frac{\alpha}{2},
    \end{align*}
    where the first inequality follows from Lemma~\ref{lemma:empirical-quantile-property}, the second inequality follows from Lemma~\ref{lemma:realisability-characterisation} and the third inequality follows from Lemma~\ref{lemma:dkw-missing} (with the fact that $\log_2(8/\alpha) > \log(16/\alpha)$) and~\eqref{eq:Zmax<theta-2}.
    Similarly,  
    \begin{align*}
        &\mathbb{P}\biggl\{Q_n^{+}\biggl((1-\epsilon_1)\{1-\Phi(t)\}-3\sqrt{\frac{(1-\epsilon_1)\log(16/\alpha)}{n}}\biggr) > \theta + \sigma t \text{ for some } t\geq 0\biggr\} \leq \frac{\alpha}{2}.
    \end{align*}
    Combining these two bounds yields that
    \begin{align*}
        \mathbb{P}\biggl( \theta\in \bigcap_{t\geq 0} \hat{\mathrm{CI}}^{(3)}_{n,\sigma,\epsilon_1,\alpha}(t) \biggr) \geq 1-\alpha,
    \end{align*}
    as required.

    \medskip
    \emph{(b)} By Lemma~\ref{lemma:realisability-characterisation}, we have $\mathbb{P}(Z_1 \leq \theta + \sigma x) \geq (1-\epsilon_1-\epsilon_2)\bigl\{1-\Phi(-x)\bigr\}$ and $\mathbb{P}(Z_1 > \theta + \sigma x) \geq (1-\epsilon_1-\epsilon_2)\bigl\{1-\Phi(x)\bigr\}$ for all $x\in\mathbb{R}$. Thus, by Lemma~\ref{lemma:dkw-missing}, the event
    \begin{align*}
        \mathcal{E}_1 &\coloneqq \biggl\{\frac{1}{n} \sum_{i=1}^n \indi_{\{Z_i \leq \theta + \sigma(b-t)\}} > (1-\epsilon_1-\epsilon_2)\bigl\{ 1 - \Phi(t-b)\bigr\} - 3\sqrt{\frac{(1-\epsilon_1)\log(16/\alpha)}{n}} \;\;\forall\; b,t\in\mathbb{R}\biggr\}\\
        &\hspace{0.7cm} \bigcap\biggl\{\frac{1}{n} \sum_{i=1}^n \indi_{\{Z_i > \theta + \sigma(t-b)\}} > (1-\epsilon_1-\epsilon_2)\bigl\{ 1 - \Phi(t-b)\bigr\} - 3\sqrt{\frac{(1-\epsilon_1)\log(16/\alpha)}{n}} \;\;\forall\; b,t\in\mathbb{R}\biggr\}
    \end{align*}
    has probability at least $1-\alpha/2$. Now suppose that, for some $t_*,b\geq 0$, we have
    \begin{align}
        \frac{1-\epsilon_1-\epsilon_2}{1-\epsilon_1} \cdot \frac{1 - \Phi(t_*-b)}{1-\Phi(t_*)} \geq 1 + 18e^{t_*^2}\sqrt{\frac{\log(16/\alpha)}{n(1-\epsilon_1)}}. \label{eq:sufficient-cond-gaussian-known-eps-2}
    \end{align}
    By Lemma~\ref{lemma:mills-ratio}, $1-\Phi(t_*) \geq \frac{2}{\sqrt{2\pi}} \cdot \frac{1}{t_*+\sqrt{4+t_*^2}} e^{-t_*^2/2} \geq \frac{1}{3}e^{-t_*^2}$. Thus, 
    \begin{align*}
        (1 \!-\! \epsilon_1 \!-\! \epsilon_2)\bigl\{1 - \Phi(t_*-b)\bigr\} - 3\sqrt{\frac{(1-\epsilon_1)\log(16/\alpha)}{n}} \geq (1 \! - \! \epsilon_1) \{1 - \Phi(t_*)\} + 3\sqrt{\frac{(1-\epsilon_1)\log(16/\alpha)}{n}}.
    \end{align*}
    Hence, on the event $\mathcal{E}_1$, we have both $\frac{1}{n} \sum_{i=1}^n \indi_{\{Z_i \leq \theta + \sigma(b-t_*)\}} >  (1-\epsilon_1) \{1 - \Phi(t_*)\} + 3\sqrt{\frac{(1-\epsilon_1)\log(16/\alpha)}{n}}$ and $\frac{1}{n} \sum_{i=1}^n \indi_{\{Z_i > \theta + \sigma(t_*-b)\}} > (1-\epsilon_1) \{1 - \Phi(t_*)\} + 3\sqrt{\frac{(1-\epsilon_1)\log(16/\alpha)}{n}}$. Thus by Lemma~\ref{lemma:empirical-quantile-property}, with probability at least $1-\alpha/2$, we have
    \begin{align*}
        Q_n^-\biggl((1-\epsilon_1) \{1 - \Phi(t_*)\} + 3\sqrt{\frac{(1-\epsilon_1)\log(16/\alpha)}{n}}\biggr) \leq \theta + \sigma(b-t_*)
    \end{align*}
    and
    \begin{align*}
        Q_n^+\biggl((1-\epsilon_1) \{1 - \Phi(t_*)\} + 3\sqrt{\frac{(1-\epsilon_1)\log(16/\alpha)}{n}}\biggr) > \theta + \sigma(t_*-b),
    \end{align*}
    so $|\hat{\mathrm{CI}}^{(3)}_{n, \sigma, \epsilon_1, \alpha}(t_*)| \leq 2\sigma b$ whenever~\eqref{eq:sufficient-cond-gaussian-known-eps-2} holds. In the remainder of this proof, we take $t_* \equiv t_{n,\epsilon_1,\epsilon_2,\alpha} \coloneqq \sqrt{0\vee\bigl\{\log\bigl(n\epsilon_2^2/(1-\epsilon_1)\bigr)-\log\log(16/\alpha)\bigr\}/2}$ and assume throughout that $C$ in the statement of the theorem is large enough that $n(1-\epsilon_1) \geq 6400\log(16/\alpha)$ for $\alpha \in \bigl[\frac{C}{n(1-\epsilon_1)},1\bigr]$ and that $\frac{1}{2}\log\{n(1-\epsilon_1)/6400\} \geq \log\log\{16n(1-\epsilon_1)/C\}$ for $n(1-\epsilon_1) \geq C/\alpha \geq C$.

    \textbf{Case 1:} Suppose that $\frac{\epsilon_2}{1-\epsilon_1} \leq \sqrt{\frac{\log(16/\alpha)}{n(1-\epsilon_1)}}$, and let $b \coloneqq 80\sqrt{\frac{\log(16/\alpha)}{n(1-\epsilon_1)}}$. In this case, we have $t_*=0$ and $b\leq 1$, so by Lemma~\ref{lemma:gaussian-cdf-ratio}\emph{(a)},
    \begin{align*}
        \frac{1-\epsilon_1-\epsilon_2}{1-\epsilon_1} \cdot \frac{1 - \Phi(t_*-b)}{1-\Phi(t_*)} &\geq \biggl(1 - \sqrt{\frac{\log(16/\alpha)}{n(1-\epsilon_1)}}\biggr) \biggl(1 + \frac{40}{\sqrt{e}}\sqrt{\frac{\log(16/\alpha)}{n(1-\epsilon_1)}}\biggr)\\
        &\geq 1 + 18e^{t_*^2}\sqrt{\frac{\log(16/\alpha)}{n(1-\epsilon_1)}}.
    \end{align*}
    This verifies~\eqref{eq:sufficient-cond-gaussian-known-eps-2}. Thus, in this case, we have $|\hat{\mathrm{CI}}^{(3)}_{n,\sigma,\epsilon_1,\alpha}(t_*)| \leq 160\sigma \sqrt{\frac{\log(16/\alpha)}{n(1-\epsilon_1)}}$ with probability at least $1-\alpha/2$.

    \textbf{Case 2:} Suppose that $\sqrt{\frac{\log(16/\alpha)}{n(1-\epsilon_1)}} < \frac{\epsilon_2}{1-\epsilon_1} \leq \frac{1}{80}$, and let $b\coloneqq \frac{80\epsilon_2}{(t_*+1)(1-\epsilon_1)}$. In this case, $t_*= \sqrt{\{\log\{n\epsilon_2^2/(1-\epsilon_1)\}-\log\log(16/\alpha)\}/2}$ and $b\leq 1$. Then by Lemma~\ref{lemma:gaussian-cdf-ratio}\emph{(a)},
    \begin{align*}
        \frac{1-\epsilon_1-\epsilon_2}{1-\epsilon_1} \cdot \frac{1 - \Phi(t_*-b)}{1-\Phi(t_*)} &\geq \biggl(1-\frac{\epsilon_2}{1-\epsilon_1}\biggr)\biggl(1+\frac{40}{\sqrt{e}} \cdot \frac{\epsilon_2}{1-\epsilon_1} \biggr)\\ &\geq 1+\biggl(\frac{40}{\sqrt{e}} - 1 - \frac{1}{2\sqrt{e}}\biggr) \frac{\epsilon_2}{1-\epsilon_1} \geq 1 + 18e^{t_*^2}\sqrt{\frac{\log(16/\alpha)}{n(1-\epsilon_1)}}.
    \end{align*}
    This verifies~\eqref{eq:sufficient-cond-gaussian-known-eps-2}. Thus, in this case, we have
    \begin{align}
        |\hat{\mathrm{CI}}^{(3)}_{n,\sigma,\epsilon_1,\alpha}(t_*)| \leq 160\sigma\cdot \frac{\epsilon_2/(1-\epsilon_1)}{1+\sqrt{\{\log\{n\epsilon_2^2/(1-\epsilon_1)\}-\log\log(16/\alpha)\}/2}} \label{eq:CI-case-2-2}
    \end{align}
    with probability at least $1-\alpha/2$. Moreover, when $\log\{n\epsilon_2^2/(1-\epsilon_1)\} \leq 2\log\log(16/\alpha)$, we have
    \begin{align}
        1+\sqrt{\frac{\log\{n\epsilon_2^2/(1-\epsilon_1)\} - \log\log(16/\alpha)}{2}} \geq 1 \geq \sqrt{\frac{\log\{n\epsilon_2^2/(1-\epsilon_1)\}}{2\log\log(16/\alpha)}}, \label{eq:denom-lb-1-2}
    \end{align}
    and when $\log\{n\epsilon_2^2/(1-\epsilon_1)\} > 2\log\log(16/\alpha)$, we have
    \begin{align}
        1+\sqrt{\frac{\log\{n\epsilon_2^2/(1-\epsilon_1)\} - \log\log(16/\alpha)}{2}} \geq \sqrt{\frac{\log\{n\epsilon_2^2/(1-\epsilon_1)\}}{4}}. \label{eq:denom-lb-2-2}
    \end{align}
    Hence, by~\eqref{eq:CI-case-2-2},~\eqref{eq:denom-lb-1-2},~\eqref{eq:denom-lb-2-2}, and the fact that $x\leq \log\bigl(\frac{1}{1-x}\bigr)$ for $x \in (0,1)$, we have
    \begin{align*}
        |\hat{\mathrm{CI}}^{(3)}_{n,\sigma,\epsilon_1,\alpha}(t_*)| &\leq 160\bigl\{2 \vee \sqrt{2\log\log(16/\alpha)}\bigr\}\sigma \cdot \frac{\epsilon_2/(1-\epsilon_1)}{\sqrt{\log\{n\epsilon_2^2/(1-\epsilon_1)\}}}\\
        &\lesssim_{\alpha} \sigma \cdot \frac{\log\bigl(\frac{1-\epsilon_1}{1-\epsilon_1-\epsilon_2}\bigr)}{\sqrt{\log\{n\epsilon_2^2/(1-\epsilon_1)\}}}
    \end{align*}
    with probability at least $1-\alpha/2$.

    \textbf{Case 3:} Suppose that $\frac{1}{80} < \frac{\epsilon_2}{1-\epsilon_1} \leq 1- \bigl(\frac{\log(16/\alpha)}{n\epsilon_2^2/(1-\epsilon_1)}\bigr)^{1/200}$, and let $b\coloneqq 100\log\bigl(\frac{1-\epsilon_1}{1-\epsilon_1-\epsilon_2}\bigr) / t_*$. In this case, $t_* = \sqrt{\{\log\{n\epsilon_2^2/(1-\epsilon_1)\}-\log\log(16/\alpha)\}/2}$ and $b\leq t_*$. Hence by Lemma~\ref{lemma:gaussian-cdf-ratio}\emph{(b)},
    \begin{align*}
        \frac{1-\epsilon_1-\epsilon_2}{1-\epsilon_1} \cdot \frac{1 - \Phi(t_*-b)}{1-\Phi(t_*)} &\geq \frac{1-\epsilon_1-\epsilon_2}{\sqrt{2}(1-\epsilon_1)} \exp\biggl\{50\log\biggl(\frac{1-\epsilon_1}{1-\epsilon_1-\epsilon_2}\biggr)\biggr\}\\
        &= \frac{1}{\sqrt{2}} \biggl(\frac{1}{1-\epsilon_2/(1-\epsilon_1)}\biggr)^{49} \geq 1+ 18\frac{\epsilon_2}{1-\epsilon_1} = 1 + 18e^{t_*^2}\sqrt{\frac{\log(16/\alpha)}{n(1-\epsilon_1)}},
    \end{align*}
    where the second inequality uses the fact that $\frac{1}{\sqrt{2}}\bigl(\frac{1}{1-x}\bigr)^{49} \geq 1+18x$ for $x\geq 1/80$.
    This verifies~\eqref{eq:sufficient-cond-gaussian-known-eps-2}, so we have
    \begin{align*}
        |\hat{\mathrm{CI}}^{(3)}_{n,\sigma,\epsilon_1,\alpha}(t_*)| \leq 2\sigma b \leq 200\sigma\cdot \frac{\log\bigl(\frac{1-\epsilon_1}{1-\epsilon_1-\epsilon_2}\bigr)}{(1+t^*)/81} \lesssim_{\alpha} \sigma \cdot \frac{\log\bigl(\frac{1-\epsilon_1}{1-\epsilon_1-\epsilon_2}\bigr)}{\sqrt{\log\{n\epsilon_2^2/(1-\epsilon_1)\}}}
    \end{align*}
    with probability at least $1-\alpha/2$, where the second inequality follows since $t^*\geq b\geq \frac{1}{80}$, and the final inequality follows from~\eqref{eq:denom-lb-1-2} and~\eqref{eq:denom-lb-2-2}.

    \textbf{Case 4:} Suppose that $\frac{\epsilon_2}{1-\epsilon_1} > \bigl\{1- \bigl(\frac{\log(16/\alpha)}{n\epsilon_2^2/(1-\epsilon_1)}\bigr)^{1/200}\bigr\} \vee \frac{1}{80}$. We have
    \begin{align}
        &\log\{n\epsilon_2^2/(1-\epsilon_1)\} - \log\log(16/\alpha)\nonumber\\
        &\quad \geq \frac{\log\{n\epsilon_2^2/(1-\epsilon_1)\}}{2} + \frac{\log\{n(1-\epsilon_1)/6400\}}{2} - \log\log\{16n(1-\epsilon_1)/C\} \geq \frac{\log\{n\epsilon_2^2/(1-\epsilon_1)\}}{2}, \label{eq:denom-bound}
    \end{align}
    where the first inequality follows since $\frac{\epsilon_2}{1-\epsilon_1}\geq \frac{1}{80}$ and $\alpha \geq \frac{C}{n(1-\epsilon_1)}$, and the second inequality follows since $C>0$ is chosen large enough that $\frac{1}{2}\log\{n(1-\epsilon_1)/6400\} \geq \log\log\{16n(1-\epsilon_1)/C\}$ for $n(1-\epsilon_1) \geq C$.
    By Lemma~\ref{lemma:max-min-2}, with probability at least $1-\alpha/2$, 
    \begin{align*}
    |\hat{\mathrm{CI}}^{(3)}_{n,\sigma,\epsilon_1,\alpha}(t_*)| &\leq Z_{\max} + \sigma t_* - \bigl(Z_{\min} - \sigma t_*\bigr)\\
        &\leq 2\sigma\sqrt{2\log\{12n(1-\epsilon_1)/\alpha\}} + \sigma \sqrt{2\bigl\{\log\{n\epsilon_2^2/(1-\epsilon_1)\}-\log\log(16/\alpha)\bigr\}}\\
        &\lesssim \sigma\sqrt{\log \{n\epsilon_2^2/(1-\epsilon_1)\}}\\
        &\lesssim \sigma \cdot\frac{\log \{n\epsilon_2^2/(1-\epsilon_1)\} - \log\log(16/\alpha)}{\sqrt{\log\{n\epsilon_2^2/(1-\epsilon_1)\}}}\\
        &\lesssim \sigma \cdot \frac{\log\bigl(\frac{1-\epsilon_1}{1-\epsilon_1-\epsilon_2}\bigr)}{\sqrt{\log\{n\epsilon_2^2/(1-\epsilon_1)\}}},
    \end{align*}
    where the penultimate inequality follows from~\eqref{eq:denom-bound}, and the final inequality uses the fact that $\frac{\epsilon_2}{1-\epsilon_1} > 1- \bigl(\frac{\log(16/\alpha)}{n\epsilon_2^2/(1-\epsilon_1)}\bigr)^{1/200}$.

    The final result follows by combining all the four cases.
\end{proof}

\subsubsection{Lower bound}
In order to study lower bounds on the length of valid confidence intervals, we next present a lower bound for point estimation of $\theta$ in the model~\eqref{eq:model-mcar-mnar} with $P = N(\theta,\sigma^2)$, for which it is convenient to work in the minimax quantile framework of~\citet{ma2024high}.  Specifically, writing $\hat{\Theta}$ for the set of Borel measurable functions from $\mathbb{R}_\star^n$ to $\mathbb{R}$, for $\alpha \in (0,1)$ we define the \emph{lower minimax $(1-\alpha)$th quantile} for estimating $\theta$ by  
\begin{align*}
    L_{\mathrm{est}}(n,\sigma,\epsilon_1,\epsilon_2) \coloneqq \inf\Bigl\{r\geq 0 : \inf_{\hat{\theta}\in\hat{\Theta}} \sup_{\theta\in\mathbb{R}} \sup_{M\in\mathcal{M}(N(\theta,\sigma^2),\epsilon_1,\epsilon_2)} \mathbb{P}_M(|\hat{\theta}-\theta|>r) \leq \alpha\Bigr\}.
\end{align*}
Since the midpoint of any confidence interval for $\theta$ may be regarded as a point estimate, Proposition~\ref{prop:point-estimate-lb} below immediately implies that $L_{n,\sigma,\epsilon_1,\epsilon_2}\bigl(\hat{\mathcal{CI}}(\sigma, \epsilon_1, \epsilon_2)\bigr) \geq 2L_{\mathrm{est}}(n,\sigma,\epsilon_1,\epsilon_2)$.
    
\begin{Proposition} \label{prop:point-estimate-lb}
    Let $n\in\mathbb{N}$, $\sigma>0$ and  $\epsilon_1,\epsilon_2\in[0,1)$ be such that $\epsilon_1+\epsilon_2 \leq 1- 2/n$, and let $\alpha\in(0,1/4]$.  
    Then
    \begin{align*}
        L_{\mathrm{est}}(n,\sigma,\epsilon_1,\epsilon_2) \geq c \sigma \biggl\{\frac{1}{\sqrt{n(1-\epsilon_1)}} + \frac{\log\bigl(\frac{1-\epsilon_1}{1-\epsilon_1-\epsilon_2}\bigr)}{\sqrt{1\vee \log\{n\epsilon_2^2/(1-\epsilon_1)\}}}\biggr\}, 
    \end{align*}
    where $c>0$ is a universal constant.
\end{Proposition}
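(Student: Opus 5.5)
We prove the two terms of the lower bound separately. Since the lower minimax quantile is monotone under restricting the parameter space, the maximum of two lower bounds is a lower bound, and the maximum is at least half the sum. Throughout we use Le Cam's two-point method in the minimax-quantile form of \citet{ma2024high}. If $M_0\in\mathcal{M}(N(\theta_0,\sigma^2),\epsilon_1,\epsilon_2)$ and $M_1\in\mathcal{M}(N(\theta_1,\sigma^2),\epsilon_1,\epsilon_2)$ satisfy $\mathrm{TV}(M_0^{\otimes n},M_1^{\otimes n})\le 1-2\alpha$, then any estimator has $\max_j \mathbb{P}_{M_j}(|\hat\theta-\theta_j|\ge|\theta_1-\theta_0|/2)>\alpha$. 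This gives $L_{\mathrm{est}}\ge|\theta_1-\theta_0|/2$. Because $\alpha\le 1/4$, it suffices that $\mathrm{TV}(M_0^{\otimes n},M_1^{\otimes n})\le 1/2$. We control this through the Hellinger distance, using $n H^2(M_0,M_1)\le c_0$ for a small universal $c_0$.

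\emph{Parametric term.} Take both $M_j$ to be pure MCAR: $M_j=(1-\epsilon_1)N(\theta_j,\sigma^2)+\epsilon_1\delta_{\{\star\}}$. This lies in the model because MCAR is a special case of $\mathsf{MNAR}$. Then $H^2(M_0,M_1)=(1-\epsilon_1)H^2\bigl(N(\theta_0,\sigma^2),N(\theta_1,\sigma^2)\bigr)\lesssim(1-\epsilon_1)(\theta_1-\theta_0)^2/\sigma^2$. Choosing $|\theta_1-\theta_0|=c\sigma/\sqrt{n(1-\epsilon_1)}$ gives the first term.

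\emph{Contamination term.} By Lemma~\ref{lemma:realisability-characterisation}, we need sub-densities $m$ on $\mathbb{R}$ with $(1-\epsilon_1-\epsilon_2)\phi_{(\theta_j,\sigma)}\le m\le(1-\epsilon_1)\phi_{(\theta_j,\sigma)}$; the remaining mass is placed on $\star$. Set $\theta_0=0$ and $\theta_1=\Delta>0$, and write $\rho\coloneqq\frac{1-\epsilon_1}{1-\epsilon_1-\epsilon_2}$. Our goal is to make the two observed sub-densities coincide exactly on a large window $[-T,T]$, where $T\approx\sigma\sqrt{2\log(n\epsilon_2^2/(1-\epsilon_1))}$. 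Outside this window we let them differ only on a set of mass $\lesssim 1/n$. The constraint for exact agreement at $x$ is that the intervals $[(1-\epsilon_1-\epsilon_2)\phi_j(x),(1-\epsilon_1)\phi_j(x)]$ intersect for $j=0,1$, which amounts to $\phi_{(\Delta,\sigma)}(x)/\phi_{(0,\sigma)}(x)\in[\rho^{-1},\rho]$. Since the log-likelihood ratio is $(x\Delta-\Delta^2/2)/\sigma^2$, this holds on $|x|\le T$ provided $\Delta T/\sigma^2\lesssim\log\rho$, i.e.\ $\Delta\asymp\sigma\log\rho/\sqrt{\log(n\epsilon_2^2/(1-\epsilon_1))}$. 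On the window we take $m$ to be the common value, for instance the clipped geometric mean. Outside it we take $m_j=(1-\epsilon_1-\epsilon_2)\phi_j$. The masses of $\star$ then differ, but only by amounts controlled by the tail discrepancy plus any asymmetry inside the window. To fix the latter, we choose $m$ inside the window so that the total observed masses agree, adjusting between the allowed bounds; this is possible when $\epsilon_2$ is not tiny relative to the tail mass. Then $H^2(M_0,M_1)\lesssim(1-\epsilon_1)\,\mathbb{P}(|N(0,1)|>T/\sigma)$, which is $\lesssim 1/n$ by the choice of $T$; the factor $\epsilon_2^2$ in the logarithm accounts for the mass-balancing slack. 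When $n\epsilon_2^2/(1-\epsilon_1)\lesssim1$ we take $T\asymp\sigma$, and the claimed term is at most of the parametric order (using $\log\rho\asymp\epsilon_2/(1-\epsilon_1)$ when $\epsilon_2$ is small), so it is already covered by the first part.

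\emph{Main obstacle.} The hardest step is the tail and mass-balancing bookkeeping. We must show the $\star$-masses can be equalised while keeping the observed sub-densities within the sandwich constraints. We must also show the Hellinger contribution from outside the window, including the possibly very skewed regime $\rho\to\infty$ where $\epsilon_1+\epsilon_2$ approaches $1-2/n$, is $O(1/n)$ with constants uniform in $\epsilon_1,\epsilon_2$. As a first attempt we will equalise total observed mass by shifting the window slightly asymmetrically, to $[-T+\Delta,T]$. We will then bound the residual mass difference by $(1-\epsilon_1)\bar\Phi(T/\sigma-\Delta/\sigma)\lesssim\epsilon_2 e^{-T^2/(2\sigma^2)}\cdot\mathrm{poly}$, which the chosen $T$ makes $\lesssim\sqrt{(1-\epsilon_1)/n}$.
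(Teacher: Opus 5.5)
Your architecture matches the paper's: a two-point argument, observed sub-densities that coincide on a window of half-width $\asymp\sigma\sqrt{L}$ with $L\coloneqq\log\{n\epsilon_2^2/(1-\epsilon_1)\}$, separation $\asymp\sigma\log\rho/\sqrt{L}$ with your $\rho=(1-\epsilon_1)/(1-\epsilon_1-\epsilon_2)$, the parametric term from the MCAR submodel, and the regime $n\epsilon_2^2/(1-\epsilon_1)\lesssim 1$ absorbed into that term. However, the step that carries the whole argument fails. You assert $H^2(M_0,M_1)\lesssim(1-\epsilon_1)\mathbb{P}(|N(0,1)|>T/\sigma)\lesssim 1/n$ for $T\approx\sigma\sqrt{2L}$. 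Since $\mathbb{P}(|N(0,1)|>\sqrt{2L})\asymp e^{-L}/\sqrt{L}$, the middle quantity is of order $(1-\epsilon_1)^2/(n\epsilon_2^2\sqrt{L})$. This is not $O(1/n)$ unless $\epsilon_2\gtrsim(1-\epsilon_1)L^{-1/4}$; for $\epsilon_1=0$, $\epsilon_2=n^{-1/3}$ it is of order $n^{-1/3}/\sqrt{\log n}$. A bound that only charges the tail mass can support at most $T\approx\sigma\sqrt{2\log\{n(1-\epsilon_1)\}}$. That puts $\log\{n(1-\epsilon_1)\}$ instead of $\log\{n\epsilon_2^2/(1-\epsilon_1)\}$ in the denominator and loses an unbounded factor. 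For example, with $\epsilon_1=0$ and $\epsilon_2=\sqrt{\log n/n}$ you recover only $n^{-1/2}$, while the claimed bound is of order $n^{-1/2}\sqrt{\log n/\log\log n}$.

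The $\epsilon_2^2$ inside the logarithm is not ``mass-balancing slack''. It comes from the tail divergence being second order in the log-likelihood ratio. Beyond the window, the two observed sub-densities have log-ratio of order at most $\log\rho$ near the boundary, growing only at rate $\Delta/\sigma^2$. So the tail contribution is roughly $(1-\epsilon_1-\epsilon_2)\log^2\rho$ times the Gaussian tail, rather than $(1-\epsilon_1)$ times it. The inequality $(1-\epsilon_1-\epsilon_2)\log^2\rho\le\epsilon_2^2/(1-\epsilon_1)$ then cancels exactly the factor $(1-\epsilon_1)/(n\epsilon_2^2)$ in $e^{-T^2/(2\sigma^2)}$.

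The paper builds this in directly. With hypotheses $\theta=\pm a$, beyond $\tau$ one sub-density switches to its upper envelope $(1-\epsilon_1)\phi_{(-a,\sigma)}$ exactly where it meets the other's lower envelope $(1-\epsilon_1-\epsilon_2)\phi_{(a,\sigma)}$, and symmetrically at $-\tau$. Hence the log-ratio vanishes at $\pm\tau$ and the $\star$-masses agree by reflection symmetry. An explicit KL computation, using a cancellation, then gives $\mathrm{KL}(M_1,M_2)\lesssim a^2(1-\epsilon_1-\epsilon_2)\tau\sigma^{-2}\phi_{(0,\sigma)}(\tau-a)$. Choosing $\tau=2\sigma\sqrt{L}$, i.e.\ $a=\sigma\log\rho/(4\sqrt{L})$, yields $\mathrm{KL}\le 1/(6n)$ once $n\epsilon_2^2/(1-\epsilon_1)\ge 4$. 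Your lower-envelope tails could be rescued by a second-order estimate of this kind, but that estimate is the heart of the proof and it is absent.

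Conversely, the obstacle you flag is not really one. Re-tuning a common interior value cannot equalise the $\star$-masses, since it moves both observed masses by the same amount. But your window $[\Delta-T,T]$ is symmetric about $\Delta/2$, so a reflection-symmetric interior choice makes the masses equal exactly and nothing needs bounding. A residual of size $\sqrt{(1-\epsilon_1)/n}$, as you target, would in any case cost about $(1-\epsilon_1)/\{n(\epsilon_1+\epsilon_2)\}$ in $H^2$, which is too much when $\epsilon_1+\epsilon_2$ is small.
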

\begin{proof}
    For $a>0$ to be specified later, let
    \begin{align}
        \tau \coloneqq \frac{\sigma^2 \log\bigl(\frac{1-\epsilon_1}{1-\epsilon_1-\epsilon_2}\bigr)}{2a} \label{eq:tau-def}
    \end{align}
    denote the unique point in $\mathbb{R}$ where $(1-\epsilon_1) \phi_{(-a,\sigma)}(\tau) = (1-\epsilon_1-\epsilon_2) \phi_{(a,\sigma)}(\tau)$.  Next, define the function $f_1: \mathbb{R} \rightarrow \mathbb{R}$ by
    \begin{align} \label{eq:f1-definition-univariate-lb}
    f_1(x) \coloneqq \begin{cases}
        (1-\epsilon_1-\epsilon_2)  \phi_{(-a,\sigma)}(x) & \text{ if } x \leq 0\\
        (1-\epsilon_1-\epsilon_2)  \phi_{(a,\sigma)}(x) & \text{ if } 0 < x \leq \tau\\
        (1-\epsilon_1) \phi_{(-a,\sigma)}(x) & \text{ if } x > \tau.
    \end{cases} 
    \end{align}
    Similarly, we note that $-\tau$ is the unique point satisfying $(1-\epsilon_1) \phi_{(a,\sigma)}(-\tau) = (1-\epsilon_1-\epsilon_2) \phi_{(-a,\sigma)}(-\tau)$ and define the function $f_2: \mathbb{R} \rightarrow \mathbb{R}$ by 
    \begin{align} \label{eq:f2-definition-univariate-lb}
    f_2(x) \coloneqq \begin{cases}
        (1-\epsilon_1) \phi_{(a,\sigma)}(x) & \text{ if } x \leq -\tau\\
        (1-\epsilon_1-\epsilon_2)  \phi_{(-a,\sigma)}(x) & \text{ if } -\tau < x \leq 0\\
        (1-\epsilon_1-\epsilon_2) \phi_{(a,\sigma)}(x) & \text{ if } x > 0.
    \end{cases}
    \end{align}
    Note that $\int_{\mathbb{R}} f_1(x) \, \mathrm{d}x = \int_{\mathbb{R}} f_2(x) \, \mathrm{d}x \leq 1-\epsilon_1 \leq 1$, so we may construct $M_1, M_2 \in \mathcal{P}(\mathbb{R}_{\star})$ with Radon--Nikodym derivatives
    \[
    \frac{\mathrm{d}M_{\ell}}{\mathrm{d} \lambda_{\star}}(z) \coloneqq f_{\ell}(z) \mathbbm{1}_{\{z \in \mathbb{R}\}} + \biggl(1 - \int_{\mathbb{R}} f_{\ell}(x)\, \mathrm{d}x\biggr) \mathbbm{1}_{\{z = \star\}} \quad \text{for} \quad \ell \in \{1, 2\},
    \]
    where $\lambda_{\star}$ denotes the extension of the Lebesgue measure to $\mathbb{R}_{\star}$. Then by Lemma~\ref{lemma:realisability-characterisation}, we have $M_{1} \in \mathcal{M}\bigl(N(-a, \sigma^2), \epsilon_1, \epsilon_2\bigr)$ and $M_{2} \in \mathcal{M}\bigl(N(a, \sigma^2), \epsilon_1, \epsilon_2\bigr)$. Since $M_{1}(\{\star\}) = M_{2}(\{\star\})$ and $f_1(x) = f_2(x)$ for $x \in [-\tau, \tau]$, we compute
    \begin{align} 
        \mathrm{KL}(&M_1, M_2) = \int_{-\infty}^{-\tau} (1-\epsilon_1-\epsilon_2) \phi_{(-a,\sigma)}(x) \log\biggl(\frac{(1-\epsilon_1-\epsilon_2) \phi_{(-a,\sigma)}(x)}{(1-\epsilon_1)\phi_{(a,\sigma)}(x)}\biggr)\, \mathrm{d}x  \nonumber\\
        & \qquad \qquad+ \int_{\tau}^{\infty} (1-\epsilon_1) \phi_{(-a,\sigma)}(x) \log\biggl(\frac{(1-\epsilon_1) \phi_{(-a,\sigma)}(x)}{(1-\epsilon_1-\epsilon_2) \phi_{(a,\sigma)}(x)}\biggr)\, \mathrm{d} x \nonumber\\
        &= (1-\epsilon_1-\epsilon_2) \biggl\{ \frac{2a^2}{\sigma^2} - \log\biggl(1+ \frac{\epsilon_2}{(1-\epsilon_1-\epsilon_2)} \biggr) \biggr\} \bigl\{1 - \Phi_{(0,\sigma)}(\tau-a)\bigr\}\nonumber\\
        &\qquad\qquad + (1-\epsilon_1) \biggl\{ \frac{2a^2}{\sigma^2} + \log\biggl(1+ \frac{\epsilon_2}{(1-\epsilon_1-\epsilon_2)} \biggr) \biggr\}\bigl\{1-\Phi_{(0,\sigma)}(\tau+a)\bigr\} \nonumber\\
        &\qquad\qquad + 2a\bigl\{ (1-\epsilon_1-\epsilon_2) \phi_{(0,\sigma)}(\tau - a) - (1-\epsilon_1) \phi_{(0,\sigma)}(\tau+a) \bigr\} \nonumber\\
        &= \frac{2a(1-\epsilon_1-\epsilon_2)}{\sigma^2}(a-\tau)\bigl\{1 - \Phi_{(0,\sigma)}(\tau-a)\bigr\} + \frac{2a(1-\epsilon_1)}{\sigma^2}(a+\tau)\bigl\{1-\Phi_{(0,\sigma)}(\tau+a)\bigr\} \nonumber\\
        &\qquad\qquad + 2a\bigl\{ (1-\epsilon_1-\epsilon_2) \phi_{(0,\sigma)}(\tau - a) - (1-\epsilon_1) \phi_{(0,\sigma)}(\tau+a) \bigr\}. \label{eq:KL-expression}
    \end{align}
    By the Mills ratio bound $1 - \Phi_{(0,\sigma)}(x) \leq \sigma^2 \phi_{(0,\sigma)}(x)/x$ for $x > 0$, we have
    \begin{align}
        &\frac{2a(1-\epsilon_1-\epsilon_2)}{\sigma^2}(a-\tau)\bigl\{1 - \Phi_{(0,\sigma)}(\tau-a)\bigr\} + \frac{2a(1-\epsilon_1)}{\sigma^2}(a+\tau)\bigl\{1-\Phi_{(0,\sigma)}(\tau+a)\bigr\} \nonumber\\
        &\hspace{0.4cm}= \frac{2a(1-\epsilon_1-\epsilon_2)}{\sigma^2}\cdot \tau \cdot \bigl\{\Phi_{(0,\sigma)}(\tau-a) - \Phi_{(0,\sigma)}(\tau+a)\bigr\} + \frac{2a(1-\epsilon_1-\epsilon_2)}{\sigma^2}\cdot a \cdot \bigl\{1-\Phi_{(0,\sigma)}(\tau-a)\bigr\} \nonumber\\
        &\hspace{2cm}+ \frac{2a(1-\epsilon_1-\epsilon_2)}{\sigma^2}\cdot a \cdot \bigl\{1- \Phi_{(0,\sigma)}(\tau+a)\bigr\} + \frac{2a\epsilon_2}{\sigma^2}\cdot (a+\tau) \cdot \bigl\{1-\Phi_{(0,\sigma)}(\tau+a)\bigr\} \nonumber\\
        &\hspace{0.4cm} \leq \frac{2a^2(1-\epsilon_1-\epsilon_2)}{\tau-a} \phi_{(0,\sigma)}(\tau-a) + \frac{2a^2(1-\epsilon_1-\epsilon_2)}{\tau+a} \phi_{(0,\sigma)}(\tau+a) + 2a\epsilon_2\phi_{(0,\sigma)}(\tau+a). \label{eq:KL-ub-1}
    \end{align}
    Moreover, by the mean value theorem, assuming that $\tau \geq a$,
    \begin{align}
        &2a\bigl\{ (1-\epsilon_1-\epsilon_2) \phi_{(0,\sigma)}(\tau - a) - (1-\epsilon_1) \phi_{(0,\sigma)}(\tau+a) \bigr\}\nonumber\\
        &\hspace{3cm}= 2a(1-\epsilon_1-\epsilon_2) \bigl\{\phi_{(0,\sigma)}(\tau - a) - \phi_{(0,\sigma)}(\tau+a)\bigr\} - 2a\epsilon_2\phi_{(0,\sigma)}(\tau+a)\nonumber\\
        &\hspace{3cm}\leq \frac{4a^2(1-\epsilon_1-\epsilon_2)(\tau+a)}{\sigma^2}\phi_{(0,\sigma)}(\tau - a) - 2a\epsilon_2\phi_{(0,\sigma)}(\tau+a). \label{eq:KL-ub-2}
    \end{align}
    Therefore, by~\eqref{eq:KL-expression}, \eqref{eq:KL-ub-1} and \eqref{eq:KL-ub-2}, we deduce that for $\tau\geq a$,
    \begin{align}
        \mathrm{KL}(M_1, M_2) &\leq 2a^2(1-\epsilon_1-\epsilon_2)\biggl\{ \frac{\phi_{(0,\sigma)}(\tau-a)}{\tau-a} + \frac{\phi_{(0,\sigma)}(\tau+a)}{\tau+a} + \frac{2(\tau+a)}{\sigma^2}\phi_{(0,\sigma)}(\tau - a) \biggr\}\nonumber\\
        &\leq 2a^2(1-\epsilon_1-\epsilon_2) \biggl\{ \frac{1}{\tau-a} + \frac{1}{\tau+a} + \frac{2(\tau+a)}{\sigma^2} \biggr\}\phi_{(0,\sigma)}(\tau - a). \label{eq:KL-ub-final}
    \end{align}

    We first consider the case where $\frac{\epsilon_2}{1-\epsilon_1} \geq \frac{2}{\sqrt{n(1-\epsilon_1)}}$, which implies $\frac{n\epsilon_2^2}{1-\epsilon_1} \geq 4$. Let
    \begin{align}
        a\coloneqq \frac{\sigma\log\bigl(\frac{1-\epsilon_1}{1-\epsilon_1-\epsilon_2}\bigr)}{4\sqrt{\log\{n\epsilon_2^2/(1-\epsilon_1)\}}}, \label{eq:a-def}
    \end{align}
    so that by substituting this into the definition of~$\tau$, we obtain $\tau = 2\sigma\sqrt{\log\{n\epsilon_2^2/(1-\epsilon_1)\}} > 2\sigma$, and by Lemma~\ref{lemma:a-tau-relation}, we have $a\leq \tau/8$. Therefore,~\eqref{eq:KL-ub-final} yields that
    \begin{align*}
        \mathrm{KL}(M_1, M_2) &\leq \frac{\sigma^2\log^2\bigl(\frac{1-\epsilon_1}{1-\epsilon_1-\epsilon_2}\bigr)}{8\log\{n\epsilon_2^2/(1-\epsilon_1)\}} \cdot (1-\epsilon_1-\epsilon_2) \cdot \frac{6\sqrt{\log\{n\epsilon_2^2/(1-\epsilon_1)\}}}{\sigma} \cdot \phi_{(0,\sigma)}(7\tau/8)\\
        &\leq \frac{3}{4\sqrt{2\pi}} \cdot \frac{\epsilon_2^2/(1-\epsilon_1)}{\sqrt{\log\{n\epsilon_2^2/(1-\epsilon_1-\epsilon_2)\}}} \cdot \biggl(\frac{1-\epsilon_1}{n\epsilon_2^2}\biggr)^{3/2}\\
        &\leq \frac{1}{3} \cdot \frac{(1-\epsilon_1)^{1/2}}{n^{3/2}\epsilon_2} \leq \frac{1}{6n},
    \end{align*}
    where the second inequality uses $\log\bigl(\frac{1-\epsilon_1}{1-\epsilon_1-\epsilon_2}\bigr) \leq \frac{\epsilon_2}{1-\epsilon_1}$.
    Thus, $\mathrm{KL}(M_1^{\otimes n}, M_2^{\otimes n}) \leq 1/6 < \log\bigl( \frac{1}{4\alpha(1-\alpha)} \bigr)$ for $\alpha\in(0,1/4]$, so by \citet[Corollary~6]{ma2024high}, we deduce that for $\alpha\in(0,1/4]$ and when $\frac{\epsilon_2}{1-\epsilon_1}\geq \frac{2}{\sqrt{n(1-\epsilon_1)}}$,
    \begin{align} 
        L_{\mathrm{est}}(n,\sigma,\epsilon_1,\epsilon_2) \geq a = \frac{\sigma\log\bigl(\frac{1-\epsilon_1}{1-\epsilon_1-\epsilon_2}\bigr)}{4\sqrt{\log\{n\epsilon_2^2/(1-\epsilon_1)\}}}. \label{eq:all-known-lb-1}
    \end{align}

    Next, since $(1-\epsilon_1)N(\theta,\sigma^2) + \epsilon_1\delta_{\{\star\}} \in \mathcal{M}\bigl(N(\theta, \sigma^2), \epsilon_1, \epsilon_2\bigr)$, we may use the MCAR lower bound from \citet[Proposition~S33(a)]{ma2024estimation} to deduce that
    \begin{align}
        L_{\mathrm{est}}(n,\sigma,\epsilon_1,\epsilon_2) \geq \frac{\sigma}{\sqrt{20n(1-\epsilon_1)}}. \label{eq:all-known-lb-2}
    \end{align}
    
    Hence, when $\frac{\epsilon_2}{1-\epsilon_1} \geq \frac{2}{\sqrt{n(1-\epsilon_1)}}$, we deduce the claim of the theorem by combining~\eqref{eq:all-known-lb-1} and~\eqref{eq:all-known-lb-2}. When $\frac{\epsilon_2}{1-\epsilon_1} < \frac{2}{\sqrt{n(1-\epsilon_1)}}$, we have $\log\bigl(\frac{1-\epsilon_1}{1-\epsilon_1-\epsilon_2}\bigr) \leq \frac{\epsilon_2}{1-\epsilon_1} < \frac{2}{\sqrt{n(1-\epsilon_1)}}$, so by~\eqref{eq:all-known-lb-2}, 
    \begin{align*}
        L_{\mathrm{est}}(n,\sigma,\epsilon_1,\epsilon_2) \geq \frac{\sigma}{\sqrt{20n(1-\epsilon_1)}} \geq \frac{\sigma}{3\sqrt{20}} \biggl\{\frac{1}{\sqrt{n(1-\epsilon_1)}} + \frac{\log\bigl(\frac{1-\epsilon_1}{1-\epsilon_1-\epsilon_2}\bigr)}{\sqrt{1\vee \log\{n\epsilon_2^2/(1-\epsilon_1)\}}}\biggr\}, 
    \end{align*}
    as required.
\end{proof}

\begin{Lemma} \label{lemma:a-tau-relation}
    Consider the setting of the proof of Proposition~\ref{prop:point-estimate-lb} and let $a$ and $\tau$ be defined as in~\eqref{eq:a-def} and~\eqref{eq:tau-def} respectively. If $\frac{\epsilon_2}{1-\epsilon_1} \geq \frac{2}{\sqrt{n(1-\epsilon_1)}}$ and $\epsilon_1+\epsilon_2 \leq 1-\frac{2}{n}$, then $a\leq \tau/8$.
\end{Lemma}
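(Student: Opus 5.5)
The inequality $a\leq \tau/8$ reduces to a comparison of two logarithms, which I would then verify by a two-case elementary argument. Write $L\coloneqq \log\bigl(\frac{1-\epsilon_1}{1-\epsilon_1-\epsilon_2}\bigr)$ and $\ell\coloneqq \log\{n\epsilon_2^2/(1-\epsilon_1)\}$. The hypothesis $\frac{\epsilon_2}{1-\epsilon_1}\geq \frac{2}{\sqrt{n(1-\epsilon_1)}}$ gives $n\epsilon_2^2/(1-\epsilon_1)\geq 4$, so $\ell\geq \log 4>0$. Substituting~\eqref{eq:a-def} into~\eqref{eq:tau-def} gives $\tau=2\sigma\sqrt{\ell}$, as already noted in the proof of Proposition~\ref{prop:point-estimate-lb}, and $a=\sigma L/(4\sqrt{\ell})$. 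Hence
\begin{align*}
a\leq \tau/8 \iff \frac{\sigma L}{4\sqrt{\ell}}\leq \frac{\sigma\sqrt{\ell}}{4} \iff L\leq \ell.
\end{align*}
It therefore suffices to show that
\begin{align*}
\frac{(1-\epsilon_1)^2}{1-\epsilon_1-\epsilon_2}\leq n\epsilon_2^2.
\end{align*}

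To remove the dependence on $\epsilon_1$, I would reparametrise with $x\coloneqq \epsilon_2/(1-\epsilon_1)\in[0,1)$ and $m\coloneqq n(1-\epsilon_1)$. The target inequality becomes $\frac{1}{1-x}\leq m x^2$. The two hypotheses translate as follows:
\begin{itemize}
\item[(i)] $x\geq 2/\sqrt{m}$, that is, $mx^2\geq 4$;
\item[(ii)] $1-\epsilon_1-\epsilon_2\geq 2/n$, which reads $(1-\epsilon_1)(1-x)\geq 2/n$, that is, $1-x\geq 2/m$.
\end{itemize}

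The verification then splits into two cases.
\begin{itemize}
\item[(a)] If $x\geq 1/\sqrt{2}$, then (ii) gives $\frac{1}{1-x}\leq m/2\leq mx^2$.
\item[(b)] If $x<1/\sqrt{2}$, then $\frac{1}{1-x}<\frac{1}{1-1/\sqrt{2}}=2+\sqrt{2}<4\leq mx^2$ by (i).
\end{itemize}
In both cases $L\leq\ell$, and hence $a\leq\tau/8$.

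The only step needing care is choosing the split point. Near $x=1$ the bound must come from (ii), since (i) is too weak there; for moderate $x$ the bound must come from (i). Any threshold $x_0$ with $x_0^2\geq 1/2$ and $1/(1-x_0)\leq 4$ works, and $x_0=1/\sqrt{2}$ satisfies both. I expect no further obstacle.
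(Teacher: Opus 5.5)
Your proposal is correct and follows essentially the same route as the paper: you make the same reduction to $(1-\epsilon_1)^2/(1-\epsilon_1-\epsilon_2)\leq n\epsilon_2^2$, then use the same two-case split. The bound $n\epsilon_2^2/(1-\epsilon_1)\geq 4$ handles moderate $\epsilon_2/(1-\epsilon_1)$, and $\epsilon_1+\epsilon_2\leq 1-\frac{2}{n}$ handles $\epsilon_2/(1-\epsilon_1)$ near one. The only differences are your normalisation via $x$ and $m$, and your threshold $x=1/\sqrt{2}$ in place of the paper's $\frac{1-\epsilon_1}{1-\epsilon_1-\epsilon_2}=4$, i.e.\ $x=3/4$.
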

\begin{proof}
    By substituting the definitions of $a$ and $\tau$, it suffices to show that
    \begin{align*}
        \frac{1-\epsilon_1}{1-\epsilon_1-\epsilon_2} \leq \frac{n\epsilon_2^2}{1-\epsilon_1}.
    \end{align*}
    First note that $\frac{\epsilon_2}{1-\epsilon_1} \geq \frac{2}{\sqrt{n(1-\epsilon_1)}}$ implies $\frac{n\epsilon_2^2}{1-\epsilon_1} \geq 4$. Thus the claim is true when $\frac{1-\epsilon_1}{1-\epsilon_1-\epsilon_2} \leq 4$. Now suppose that $\frac{1-\epsilon_1}{1-\epsilon_1-\epsilon_2} > 4$. Then $\epsilon_2 > \frac{3}{4}(1-\epsilon_1)$. Hence,
    \begin{align*}
        n\epsilon_2^2 > \frac{n(1-\epsilon_1)^2}{2} \geq \frac{(1-\epsilon_1)^2}{1-\epsilon_1-\epsilon_2},
    \end{align*}
    where the final inequality uses $\epsilon_1+\epsilon_2 \leq 1-\frac{2}{n}$. This completes the proof.
\end{proof}

\subsection{\texorpdfstring{Unknown $\sigma$, known $\epsilon_1,\epsilon_2$}{Unknown sigma, known epsilon1, epsilon2}}
\subsubsection{Upper bound}
Let
\begin{align}
    t_{n,\epsilon_1,\epsilon_2,\alpha}' \coloneqq \sqrt{1\vee\bigl\{\log\bigl(n\epsilon_2^2/(1-\epsilon_1)\bigr)-\log\log(48/\alpha)\bigr\}/2}, \label{eq:t'-defn-2}
\end{align}
and observe that the coverage property of Proposition~\ref{prop:all-param-known-2}\emph{(a)} (with $\alpha$ replaced by $\alpha/3$) certainly implies that with probability at least $1-\alpha/3$, the interval $\hat{\mathrm{CI}}_{n,\sigma,\epsilon_1,\alpha}^{(3)}(t_{n,\epsilon_1,\epsilon_2,\alpha}')$ is non-empty; equivalently,
\begin{align}
    & Q_n^{-}\biggl((1-\epsilon_1)\{1-\Phi(t_{n,\epsilon_1,\epsilon_2,\alpha}')\}+3\sqrt{\frac{(1-\epsilon_1)\log(48/\alpha)}{n}}\biggr) \nonumber\\
    &\hspace{2cm} - Q_n^{+}\biggl((1-\epsilon_1)\{1-\Phi(t_{n,\epsilon_1,\epsilon_2,\alpha}')\}+3\sqrt{\frac{(1-\epsilon_1)\log(48/\alpha)}{n}}\biggr) + 2\sigma t_{n,\epsilon_1,\epsilon_2,\alpha}' \geq 0. \label{eq:sigma-lb-2}
\end{align}
The definition of $t_{n,\epsilon_1,\epsilon_2,\alpha}'$ involves a maximum with one as opposed to with zero in the definition of~$t_{n,\epsilon_1,\epsilon_2,\alpha}$ in Proposition~\ref{prop:all-param-known-2}.  Nevertheless, we have the following lemma, which is a slight modification of Proposition~\ref{prop:all-param-known-2}\emph{(b)}. 
\begin{Lemma} \label{lemma:sigma-ub-2}
    Let $n\in\mathbb{N}$, $\sigma>0$, $\alpha\in(0,1]$, $\epsilon_1,\epsilon_2 \in [0,1]$ be such that $\epsilon_1+\epsilon_2 \leq 1-\frac{4\log_2(24/\alpha)}{n}$, and $Z_1,\ldots,Z_n \overset{\mathrm{iid}}{\sim} M \in \mathcal{M}\bigl(N(\theta,\sigma^2),\epsilon_1,\epsilon_2\bigr)$. There exists a universal constant $C>0$ such that if $\alpha \in \bigl[ \frac{C}{n(1-\epsilon_1)}, 1\bigr]$, then with probability at least $1-\alpha/6$,
    \begin{align}
    & Q_n^{-}\biggl((1-\epsilon_1)\{1-\Phi(t_{n,\epsilon_1,\epsilon_2,\alpha}')\}+3\sqrt{\frac{(1-\epsilon_1)\log(48/\alpha)}{n}}\biggr)\nonumber\\
    &\hspace{1cm} - Q_n^{+}\biggl((1-\epsilon_1)\{1-\Phi(t_{n,\epsilon_1,\epsilon_2,\alpha}')\}+3\sqrt{\frac{(1-\epsilon_1)\log(48/\alpha)}{n}}\biggr) + 2\sigma t_{n,\epsilon_1,\epsilon_2,\alpha}' \nonumber\\
    &\hspace{6cm} \leq 2C_{\alpha/3}\sigma\biggl\{\frac{1}{\sqrt{n(1-\epsilon_1)}} + \frac{\log\bigl(\frac{1-\epsilon_1}{1-\epsilon_1-\epsilon_2}\bigr)}{\sqrt{1\vee \log\{n\epsilon_2^2/(1-\epsilon_1)\}}}\biggr\}, \label{eq:sigma-ub-2}
    \end{align}
    where $C_{\alpha}\geq 1$ is taken from Proposition~\ref{prop:all-param-known-2}.
\end{Lemma}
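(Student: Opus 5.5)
The plan is to mirror the proof of Lemma~\ref{lemma:sigma-ub}, reusing the machinery from the proof of Proposition~\ref{prop:all-param-known-2}\emph{(b)} with $\alpha$ replaced by $\alpha/3$. Throughout, write
\[
q'(t)\coloneqq(1-\epsilon_1)\{1-\Phi(t)\}+3\sqrt{(1-\epsilon_1)\log(48/\alpha)/n}.
\]
The left-hand side of~\eqref{eq:sigma-ub-2} is exactly $|\hat{\mathrm{CI}}^{(3)}_{n,\sigma,\epsilon_1,\alpha/3}(t_*)|$ with $t_*\coloneqq t'_{n,\epsilon_1,\epsilon_2,\alpha}$. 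Indeed, replacing $\alpha$ by $\alpha/3$ turns $\log(16/\alpha)$ into $\log(48/\alpha)$ in $\hat{\mathrm{CI}}^{(3)}$.

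From the proof of Proposition~\ref{prop:all-param-known-2}\emph{(b)} (via Lemma~\ref{lemma:dkw-missing}, Lemma~\ref{lemma:realisability-characterisation} and Lemma~\ref{lemma:empirical-quantile-property}), the event $\mathcal{E}_1$ with $\alpha/3$ has probability at least $1-\alpha/6$. On that event, whenever some $b\ge0$ satisfies~\eqref{eq:sufficient-cond-gaussian-known-eps-2} at $t_*$ (with $\log(48/\alpha)$ in place of $\log(16/\alpha)$), we get $Q_n^-(q'(t_*))\le\theta+\sigma(b-t_*)$ and $Q_n^+(q'(t_*))>\theta+\sigma(t_*-b)$. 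This gives the length bound $2\sigma b$. Note that this deterministic reduction holds for any $t_*\ge0$, so the change from a maximum with $0$ to a maximum with $1$ in the definition of $t'$ is harmless.

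The work is to exhibit a suitable $b$ in each regime of $\epsilon_2/(1-\epsilon_1)$, split as in Cases~1--4.

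\textbf{Case 1 (new).} The only genuinely new case is small contamination, $\epsilon_2/(1-\epsilon_1)\le\sqrt{e^2\log(48/\alpha)/\{n(1-\epsilon_1)\}}$. Here $t_*=1$, since the second argument of the maximum is then at most $1$. I would take $b\coloneqq 40\sqrt{e^2\log(48/\alpha)/\{n(1-\epsilon_1)\}}$, and choose $C$ large so that $b\le 1$. Lemma~\ref{lemma:gaussian-cdf-ratio}\emph{(a)} gives
\[
\frac{1-\Phi(1-b)}{1-\Phi(1)}\ge 1+\frac{b}{2\sqrt e}.
\]
Multiplying by $1-\epsilon_2/(1-\epsilon_1)$ leaves a margin of order $b$. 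This dominates $18e\sqrt{\log(48/\alpha)/\{n(1-\epsilon_1)\}}$, because the extra factor $e$ was built into $b$ precisely to absorb $e^{t_*^2}=e$. The resulting length is $2\sigma b\lesssim\sigma/\sqrt{n(1-\epsilon_1)}$ times a factor depending only on $\alpha$. This is at most $2C_{\alpha/3}\sigma/\sqrt{n(1-\epsilon_1)}$ provided $C_{\alpha}$ is enlarged if needed; the statement permits this, since $C_\alpha$ is only required to depend on $\alpha$.

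\textbf{Cases 2--4.} For $\epsilon_2/(1-\epsilon_1)$ above this threshold, $\log\{n\epsilon_2^2/(1-\epsilon_1)\}-\log\log(48/\alpha)\ge 2$, so the maximum with $1$ is inactive. Then $t_*$ coincides with $t_{n,\epsilon_1,\epsilon_2,\alpha/3}$, and Cases~2--4 of Proposition~\ref{prop:all-param-known-2}\emph{(b)} go through verbatim with $\alpha/3$. Case~4 additionally needs the max/min bound of Lemma~\ref{lemma:max-min-2} with $\alpha/3$, with failure probability at most $\alpha/6$ once constants are adjusted. There is one bookkeeping point: Case~2 used $t_*=0$ at its lower boundary. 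Since now $\epsilon_2/(1-\epsilon_1)$ exceeds the larger threshold, the Case~2 verification is unaffected.

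\textbf{Main obstacle.} The main obstacle is the constant bookkeeping in Case~1. One must keep $b\le1$ so that Lemma~\ref{lemma:gaussian-cdf-ratio}\emph{(a)} applies, and must check the slack against the $e^{t_*^2}=e$ inflation. This is handled by taking the universal $C$ large enough that $n(1-\epsilon_1)\gtrsim\log(48/\alpha)$.
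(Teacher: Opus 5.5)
Your proposal matches the paper's proof: the same new Case~1 with $t_*=1$ and $b=40\sqrt{e^2\log(48/\alpha)/\{n(1-\epsilon_1)\}}$ (with $C$ chosen so that $n(1-\epsilon_1)\ge 1600e^2\log(48/\alpha)$, giving $b\le 1$), and Cases~2--4 of Proposition~\ref{prop:all-param-known-2}\emph{(b)} reused with $\alpha/3$ once $t'$ coincides with $t_{n,\epsilon_1,\epsilon_2,\alpha/3}$. There is one slip in your Case~1 constant check, and it is easily fixed.

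\begin{itemize}
\item At $t_*=1$, Lemma~\ref{lemma:gaussian-cdf-ratio}\emph{(a)} gives $1+b(t_*+1)/(2\sqrt e)=1+b/\sqrt e$, and you need this full factor.
\item Write $x\coloneqq e\sqrt{\log(48/\alpha)/\{n(1-\epsilon_1)\}}$, so $b=40x$. With your weaker bound $1+b/(2\sqrt e)$, multiplying by $1-x$ leaves a margin of only about $(20/\sqrt e-1)x\approx 11x$.
\item The required margin is $18e^{t_*^2}\sqrt{\log(48/\alpha)/\{n(1-\epsilon_1)\}}=18x$, so your version falls short.
\item With the full factor the margin is about $23x-24x^2\ge 18x$ for $x\le 1/40$. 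This is exactly the verification the paper carries out.
\end{itemize}
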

\begin{proof}
    We follow the same arguments and notation as in the proof of Proposition~\ref{prop:all-param-known-2}\emph{(b)}, but with $\alpha$ replaced by $\alpha/3$ and $t_* = t_{n,\epsilon_1,\epsilon_2,\alpha}'$. We will assume $C > 0$ is large enough that $n(1-\epsilon_1)\geq 1600 e^2\log(48/\alpha)$ and that $\frac{1}{2}\log\{n(1-\epsilon_1)/6400\} \geq \log\log\{48n(1-\epsilon_1)/C\}$ for $n(1-\epsilon_1) \geq C$. 

    First suppose that $\frac{\epsilon_2}{1-\epsilon_1} \leq \sqrt{\frac{e^2\log(48/\alpha)}{n(1-\epsilon_1)}}$, and let $b \coloneqq 40\sqrt{\frac{e^2\log(48/\alpha)}{n(1-\epsilon_1)}}$. In this case, we have $t_*=1$ and $b\leq 1$, so by Lemma~\ref{lemma:gaussian-cdf-ratio}\emph{(a)},
    \begin{align*}
        \frac{1-\epsilon_1-\epsilon_2}{1-\epsilon_1} \cdot \frac{1 - \Phi(t_*-b)}{1-\Phi(t_*)} &\geq \biggl(1 - \sqrt{\frac{e^2\log(48/\alpha)}{n(1-\epsilon_1)}}\biggr) \biggl(1 + 40\sqrt{\frac{e\log(48/\alpha)}{n(1-\epsilon_1)}}\biggr)\\
        &\geq 1 + 18e^{t_*^2}\sqrt{\frac{\log(48/\alpha)}{n(1-\epsilon_1)}}.
    \end{align*}
    This verifies~\eqref{eq:sufficient-cond-gaussian-known-eps-2} with $\alpha$ replaced by $\alpha/3$. Thus, following the argument in the proof of Proposition~\ref{prop:all-param-known-2}\emph{(b)}, in this case, we have $|\hat{\mathrm{CI}}_{n,\sigma,\alpha}(t_*)| \leq 2\sigma b = 80e\sigma \sqrt{\frac{\log(48/\alpha)}{n(1-\epsilon_1)}}$ with probability at least $1-\alpha/6$.

    For $\frac{\epsilon_2}{1-\epsilon_1} > \sqrt{\frac{e^2\log(48/\alpha)}{n(1-\epsilon_1)}}$, the same proof as in Cases~2--4 of Proposition~\ref{prop:all-param-known-2}\emph{(b)} holds, except that now $\alpha$ is replaced by $\alpha/3$.
\end{proof}

Thus, if we define
\begin{align}
&\hat{\sigma}_{\mathrm{L}} \coloneqq \frac{1}{{2t_{n,\epsilon_1,\epsilon_2,\alpha}'}} \cdot \biggl\{Q_n^{+}\biggl((1-\epsilon_1)\{1-\Phi(t_{n,\epsilon_1,\epsilon_2,\alpha}')\}+3\sqrt{\frac{(1-\epsilon_1)\log(48/\alpha)}{n}}\biggr)\nonumber\\
&\hspace{4cm} - Q_n^{-}\biggl((1-\epsilon_1)\{1-\Phi(t_{n,\epsilon_1,\epsilon_2,\alpha}')\}+3\sqrt{\frac{(1-\epsilon_1)\log(48/\alpha)}{n}}\biggr)\biggr\}, \label{eq:sigma-hat-2}
\end{align}
then combining the two inequalities~\eqref{eq:sigma-lb-2} and~\eqref{eq:sigma-ub-2} immediately yields the following corollary.

\begin{Corollary} \label{cor:sigma-CI-2}
Let $n\in\mathbb{N}$, $\sigma>0$, $\alpha\in(0,1]$, $\epsilon_1,\epsilon_2 \in [0,1]$ be such that $\epsilon_1+\epsilon_2 \leq 1-\frac{4\log_2(24/\alpha)}{n}$, and $Z_1,\ldots,Z_n \overset{\mathrm{iid}}{\sim} M \in \mathcal{M}\bigl(N(\theta,\sigma^2),\epsilon_1,\epsilon_2\bigr)$. Define
\begin{align}
    r_{n,\epsilon_1,\epsilon_2,\alpha} \coloneqq \frac{C_{\alpha/3}}{t'_{n,\epsilon_1,\epsilon_2,\alpha}} \biggl\{\frac{1}{\sqrt{n(1-\epsilon_1)}} + \frac{\log\bigl(\frac{1-\epsilon_1}{1-\epsilon_1-\epsilon_2}\bigr)}{\sqrt{1\vee \log\{n\epsilon_2^2/(1-\epsilon_1)\}}}\biggr\}. \label{eq:r-n-eps12-alpha}
\end{align}
There exists a universal constant $C>0$ such that if $\alpha \in \bigl[ \frac{C}{n(1-\epsilon_1)}, 1\bigr]$, then with probability at least $1-\alpha/2$, we have
\begin{equation*}
0\leq \frac{\sigma-\hat{\sigma}_{\mathrm{L}}}{\sigma}\leq r_{n,\epsilon_1,\epsilon_2,\alpha}. 
\end{equation*}
In particular, if we define
\begin{align}
    \hat{\sigma}_{\mathrm{R}} \coloneqq \frac{\hat{\sigma}_{\mathrm{L}}}{(1-r_{n,\epsilon_1,\epsilon_2,\alpha})_+}, \label{eq:hat-sigma-R}
\end{align}
then $\mathbb{P}(\hat{\sigma}_{\mathrm{L}} \leq \sigma \leq \hat{\sigma}_{\mathrm{R}}) \geq 1-\alpha/2$.
\end{Corollary}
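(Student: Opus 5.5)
The plan is to combine the two one-sided inequalities~\eqref{eq:sigma-lb-2} and~\eqref{eq:sigma-ub-2} on a common high-probability event and then rearrange, exactly parallel to Corollary~\ref{cor:sigma-CI}. Abbreviate $t' \coloneqq t'_{n,\epsilon_1,\epsilon_2,\alpha}$ and write $D \coloneqq Q_n^- (\cdot) - Q_n^+(\cdot)$ for the difference of the two empirical quantiles evaluated at the common level $(1-\epsilon_1)\{1-\Phi(t')\}+3\sqrt{(1-\epsilon_1)\log(48/\alpha)/n}$. By definition~\eqref{eq:sigma-hat-2}, $\hat{\sigma}_{\mathrm{L}} = -D/(2t')$. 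Since $t' \geq 1 > 0$, dividing by $2t'$ is legitimate and preserves inequalities.

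First, the lower inequality. Apply Proposition~\ref{prop:all-param-known-2}\emph{(a)} with $\alpha$ replaced by $\alpha/3$, which is permitted because $\epsilon_1+\epsilon_2 \leq 1-4\log_2(24/\alpha)/n$ is exactly its hypothesis at level $\alpha/3$. Note that $\widehat{\mathrm{CI}}^{(3)}_{n,\sigma,\epsilon_1,\alpha/3}(t')$ has the quantile level above, since $\log(16/(\alpha/3)) = \log(48/\alpha)$. Hence, with probability at least $1-\alpha/3$, this interval contains $\theta$ and is therefore non-empty, which is precisely~\eqref{eq:sigma-lb-2}: $D + 2\sigma t' \geq 0$. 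This rearranges to $\hat{\sigma}_{\mathrm{L}} \leq \sigma$, i.e.\ $(\sigma-\hat{\sigma}_{\mathrm{L}})/\sigma \geq 0$.

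Second, the upper inequality. Lemma~\ref{lemma:sigma-ub-2}, valid when $\alpha \geq C/\{n(1-\epsilon_1)\}$ for its universal $C$, gives with probability at least $1-\alpha/6$ that $D + 2\sigma t' \leq 2C_{\alpha/3}\sigma\{\cdots\}$. Dividing by $2\sigma t'$ yields $(\sigma - \hat{\sigma}_{\mathrm{L}})/\sigma \leq r_{n,\epsilon_1,\epsilon_2,\alpha}$ by~\eqref{eq:r-n-eps12-alpha}. A union bound over the two failure events gives total failure probability at most $\alpha/3+\alpha/6 = \alpha/2$. This establishes the first claim with the same universal $C$ as in Lemma~\ref{lemma:sigma-ub-2}.

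Finally, for $\hat{\sigma}_{\mathrm{R}}$, work on the same event. If $r \coloneqq r_{n,\epsilon_1,\epsilon_2,\alpha} < 1$, then $\hat{\sigma}_{\mathrm{L}} \geq (1-r)\sigma$ gives $\sigma \leq \hat{\sigma}_{\mathrm{L}}/(1-r) = \hat{\sigma}_{\mathrm{R}}$. If $r\geq 1$, then $(1-r)_+=0$ and, with the convention $x/0 = \infty$ for $x\ge 0$ (and reading $\hat\sigma_{\mathrm R}=+\infty$ in this degenerate case), the upper bound holds trivially. The only point needing care is this degenerate case, in particular the possibility that $\hat{\sigma}_{\mathrm{L}}\le 0$ when $r\ge1$; I would handle it by this convention, noting that it is irrelevant downstream since the construction only uses $\hat{\sigma}_{\mathrm{R}}$ when $r\le 0.9$. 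Combined with $\hat{\sigma}_{\mathrm{L}}\leq\sigma$, this gives $\mathbb{P}(\hat{\sigma}_{\mathrm{L}}\leq\sigma\leq\hat{\sigma}_{\mathrm{R}})\geq 1-\alpha/2$.
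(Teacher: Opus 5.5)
Your proposal is correct and follows the paper's own argument. The paper obtains the corollary in the same way: it intersects the event in~\eqref{eq:sigma-lb-2}, which comes from Proposition~\ref{prop:all-param-known-2}\emph{(a)} at level $\alpha/3$, with the event of Lemma~\ref{lemma:sigma-ub-2}, union bounds to $\alpha/3+\alpha/6=\alpha/2$, and rearranges. Your remark that $\hat{\sigma}_{\mathrm{R}}$ must be read as $+\infty$ when $r_{n,\epsilon_1,\epsilon_2,\alpha}\geq 1$ addresses a convention the paper leaves implicit.
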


Corollary~\ref{cor:sigma-CI-2} motivates the construction of a confidence interval that replaces the unknown~$\sigma$ in~\eqref{eq:gaussian-general-CI-a} with $\hat{\sigma}_{\mathrm{R}}$ to yield
\begin{align}
    \hat{\mathrm{CI}}^{(4)}_{n,\epsilon_1,\epsilon_2,\alpha} \coloneqq \begin{cases}
        \hat{\mathrm{CI}}_n^{(\mathrm{GST})} \bigl(t'_{n,\epsilon_1,\epsilon_2,\alpha};\; 1-\epsilon_1,\, 18(1-\epsilon_1),\, \alpha/12,\, \hat{\sigma}_{\mathrm{R}}\bigr)\quad&\text{if } r_{n,\epsilon_1,\epsilon_2,\alpha} \leq 0.9\\
        [Z_{\min}, Z_{\max}] \qquad&\text{if } r_{n,\epsilon_1,\epsilon_2,\alpha} > 0.9. 
    \end{cases}  \label{eq:gaussian-general-CI-b}
\end{align}

\begin{Proposition} \label{prop:unknown-sigma-2}
Let $n\in\mathbb{N}$, $\sigma>0$, $\alpha\in(0,1]$, $\epsilon_1,\epsilon_2 \in [0,1]$ be such that $\epsilon_1+\epsilon_2 \leq 1-\frac{4\log_2(24/\alpha)}{n}$, and $Z_1,\ldots,Z_n \overset{\mathrm{iid}}{\sim} M \in \mathcal{M}\bigl(N(\theta,\sigma^2),\epsilon_1,\epsilon_2\bigr)$. Then 
\begin{align*}
    \mathbb{P}\bigl(\theta \in \hat{\mathrm{CI}}^{(4)}_{n,\epsilon_1,\epsilon_2,\alpha}\bigr) \geq 1-\alpha.
\end{align*}
Moreover, there exists a universal constant $C>0$ such that if $\alpha \in \bigl[ \frac{C}{n(1-\epsilon_1)}, 1\bigr]$, then with probability at least $1-\alpha$, we have 
\begin{align*}
    \bigl|\hat{\mathrm{CI}}^{(4)}_{n,\epsilon_1, \epsilon_2,\alpha} \bigr| \leq C_{\alpha}'\sigma \biggl\{\frac{1}{\sqrt{n(1-\epsilon_1)}} + \frac{\log\bigl(\frac{1-\epsilon_1}{1-\epsilon_1-\epsilon_2}\bigr)}{\sqrt{1\vee \log\{n\epsilon_2^2/(1-\epsilon_1)\}}}\biggr\},
\end{align*}
where $C_{\alpha}'>0$ depends only on $\alpha$.
\end{Proposition}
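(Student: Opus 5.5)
The argument mirrors the proof of Proposition~\ref{prop:unknown-sigma} in the warm-up, now using Proposition~\ref{prop:all-param-known-2}, Lemma~\ref{lemma:sigma-ub-2} and Corollary~\ref{cor:sigma-CI-2} in place of their warm-up analogues. We split according to whether $r_{n,\epsilon_1,\epsilon_2,\alpha}\le 0.9$ or $r_{n,\epsilon_1,\epsilon_2,\alpha}>0.9$.

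\emph{Coverage.} Suppose first that $r_{n,\epsilon_1,\epsilon_2,\alpha}\le 0.9$. The interval $\hat{\mathrm{CI}}^{(4)}_{n,\epsilon_1,\epsilon_2,\alpha}$ is $\hat{\mathrm{CI}}^{(3)}_{n,\sigma,\epsilon_1,\alpha/3}(t'_{n,\epsilon_1,\epsilon_2,\alpha})$ with $\sigma$ replaced by $\hat\sigma_{\mathrm R}$. Note that $\beta=\alpha/12$ gives exactly the quantile level $3\sqrt{(1-\epsilon_1)\log(48/\alpha)/n}$. The interval is increasing in its scale parameter. On the event $\{\sigma\le\hat\sigma_{\mathrm R}\}$, which has probability at least $1-\alpha/2$ by Corollary~\ref{cor:sigma-CI-2}, it therefore contains $\hat{\mathrm{CI}}^{(3)}_{n,\sigma,\epsilon_1,\alpha/3}(t')$. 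By Proposition~\ref{prop:all-param-known-2}\emph{(a)} applied at level $\alpha/3$, that interval misses $\theta$ with probability at most $\alpha/3$; the assumption $\epsilon_+\le 1-4\log_2(24/\alpha)/n$ is exactly what that result needs at level $\alpha/3$. A union bound gives miscoverage at most $\alpha/2+\alpha/3\le\alpha$.

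If instead $r_{n,\epsilon_1,\epsilon_2,\alpha}>0.9$, the interval is $[Z_{\min},Z_{\max}]$. Coverage then follows from~\eqref{eq:Zmax<theta-2} and the symmetric bound for $Z_{\min}$, which give miscoverage at most $\alpha/2$.

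\emph{Length when $r\le0.9$.} The length of $\hat{\mathrm{CI}}^{(4)}$ equals the left-hand side of~\eqref{eq:sigma-ub-2} plus $2(\hat\sigma_{\mathrm R}-\sigma)t'$. On the intersection of the events in Lemma~\ref{lemma:sigma-ub-2} and Corollary~\ref{cor:sigma-CI-2}, which has probability at least $1-\alpha$, the first part is at most $2C_{\alpha/3}\sigma\{\cdots\}$. For the second part, $\hat\sigma_{\mathrm L}\le\sigma$ gives
\[
\hat\sigma_{\mathrm R}-\sigma\le \sigma\Bigl(\frac{1}{1-r}-1\Bigr)\le 10\sigma r .
\]
Hence $2(\hat\sigma_{\mathrm R}-\sigma)t'\le 20\sigma r t'=20C_{\alpha/3}\sigma\{\cdots\}$, by the definition~\eqref{eq:r-n-eps12-alpha} of $r$, in which the factor $t'$ cancels. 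This gives the bound with $C'_\alpha=22C_{\alpha/3}$.

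\emph{Length when $r>0.9$ (main obstacle).} Here we must show that the crude interval $[Z_{\min},Z_{\max}]$ still has the target rate. The plan is as follows.
\begin{itemize}
\item[(i)] Since $t'\ge1$ and $C_{\alpha/3}$ is a constant, $r>0.9$ forces the bracket $\{\cdots\}$ to be of order $t'/C_{\alpha/3}$. Taking the constant $C$ large makes $1/\sqrt{n(1-\epsilon_1)}$ small, so the second term must be of order at least $t'$.
\item[(ii)] Writing $\ell\coloneqq\log\{n\epsilon_2^2/(1-\epsilon_1)\}$, this says $\log\bigl(\frac{1-\epsilon_1}{1-\epsilon_1-\epsilon_2}\bigr)\gtrsim_\alpha \ell$. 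In particular, the target rate $\sigma\log(\cdots)/\sqrt{1\vee\ell}$ is at least of order $\sigma\sqrt{\ell}$ and at least of order $\sigma\log\bigl(\frac{1-\epsilon_1}{1-\epsilon_1-\epsilon_2}\bigr)/\sqrt{1\vee\log n}$.
\item[(iii)] Lemma~\ref{lemma:max-min-2}, as in Case~4 of Proposition~\ref{prop:all-param-known-2}, gives $Z_{\max}-Z_{\min}\le 2\sigma\sqrt{2\log\{12n(1-\epsilon_1)/\alpha\}}$ with probability at least $1-\alpha/2$.
\item[(iv)] It remains to check that $\sqrt{\log\{n(1-\epsilon_1)\}}\lesssim_\alpha\log\bigl(\frac{1-\epsilon_1}{1-\epsilon_1-\epsilon_2}\bigr)/\sqrt{1\vee\ell}$.
\end{itemize}

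I would establish (iv) by cases. If $\epsilon_2/(1-\epsilon_1)\ge 1/80$, then $\ell\ge \log\{n(1-\epsilon_1)\}-O(1)$, so (ii) gives the bound via $\sqrt{\ell}$. If $\epsilon_2/(1-\epsilon_1)<1/80$, then $\log\bigl(\frac{1-\epsilon_1}{1-\epsilon_1-\epsilon_2}\bigr)$ is bounded, so (ii) forces $\ell=O_\alpha(1)$. The target is then at least of order $\epsilon_2/(1-\epsilon_1)$, while $r\lesssim\epsilon_2/(1-\epsilon_1)+1/\sqrt{n(1-\epsilon_1)}$. Reaching $r>0.9$ would then need $C_{\alpha/3}\epsilon_2/(1-\epsilon_1)\gtrsim 1$. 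If this is possible, I would deal with it either by adjusting constants or by bounding $Z_{\max}-Z_{\min}$ directly. This constant bookkeeping, making $r>0.9$ imply the right relationship between $\epsilon_2/(1-\epsilon_1)$ and $n(1-\epsilon_1)$, is the step I expect to be delicate.
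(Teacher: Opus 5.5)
Your coverage argument and your length bound for $r\le 0.9$ match the paper's proof, including $C'_\alpha=22C_{\alpha/3}$. The case $r>0.9$, however, is not finished. You leave the sub-case $\epsilon_2/(1-\epsilon_1)<1/80$ open. Step (i) also does not work as stated: $C$ is universal while $C_{\alpha/3}$ depends on $\alpha$, so ``taking $C$ large'' cannot make $C_{\alpha/3}/\sqrt{n(1-\epsilon_1)}$ small unless you also invoke a growth bound such as $C_{\alpha/3}\lesssim\sqrt{\log(e/\alpha)}$. The missing idea is that $C'_\alpha$ may depend on $\alpha$. So the threshold on $\epsilon_2/(1-\epsilon_1)$ should be $\alpha$-dependent rather than the universal $1/80$, and $r>0.9$ supplies exactly such a threshold.

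Concretely, write $\ell\coloneqq\log\{n\epsilon_2^2/(1-\epsilon_1)\}$ and $B$ for the bracketed rate, so that $r>0.9$ reads $B>0.9\,t'/C_{\alpha/3}$. One of the two terms of $B$ must exceed $0.45\,t'/C_{\alpha/3}$.

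\emph{First term.} If $1/\sqrt{n(1-\epsilon_1)}>0.45\,t'/C_{\alpha/3}$, then $n(1-\epsilon_1)<(C_{\alpha/3}/0.45)^2$. Lemma~\ref{lemma:max-min-2} then gives width $\lesssim_\alpha\sigma\lesssim_\alpha\sigma B$.

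\emph{Second term.} Since $t'\ge1$ and $\sqrt{1\vee\ell}\ge1$, this gives $\log\bigl(\frac{1-\epsilon_1}{1-\epsilon_1-\epsilon_2}\bigr)>0.45/C_{\alpha/3}$, i.e.\ $\epsilon_2/(1-\epsilon_1)\ge c_\alpha\coloneqq1-e^{-0.45/C_{\alpha/3}}$. This is your own observation ``$C_{\alpha/3}\epsilon_2/(1-\epsilon_1)\gtrsim1$'', and it is the input you need, not an obstruction. It yields $\ell\ge\log\{n(1-\epsilon_1)\}-2\log(1/c_\alpha)$, hence $\log\{12n(1-\epsilon_1)/\alpha\}\lesssim_\alpha 1\vee\ell\lesssim_\alpha (t')^2$. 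Lemma~\ref{lemma:max-min-2} then bounds the width by an $\alpha$-dependent multiple of $\sigma t'$, and $r>0.9$ itself gives $t'<C_{\alpha/3}B/0.9$.

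This is how the paper closes the argument; there is no need to pass through $\sqrt{\ell}$ or to split at $1/80$. In your sub-case, the same lower bound on $\epsilon_2/(1-\epsilon_1)$, combined with your $\ell=O_\alpha(1)$, forces $n(1-\epsilon_1)=O_\alpha(1)$. The width is then $O_\alpha(\sigma)$, while $B\gtrsim_\alpha t'\ge1$.
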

\begin{proof}
    When $r_{n,\epsilon_1, \epsilon_2,\alpha} \leq 0.9$, the interval $\hat{\mathrm{CI}}^{(4)}_{n,\epsilon_1, \epsilon_2,\alpha}$ is the same as $\hat{\mathrm{CI}}^{(3)}_{n,\sigma,\epsilon_1,\alpha/3}(t'_{n,\epsilon_1, \epsilon_2,\alpha})$ but with $\sigma$ in the latter interval replaced by $\hat{\sigma}_{\mathrm{R}}$, so by Corollary~\ref{cor:sigma-CI-2} and Proposition~\ref{prop:all-param-known-2}\emph{(a)},
    \begin{align*}
        \mathbb{P}\bigl(\theta \notin \hat{\mathrm{CI}}^{(4)}_{n,\epsilon_1, \epsilon_2,\alpha}\bigr) &\leq \mathbb{P}(\sigma > \hat{\sigma}_{\mathrm{R}}) + \mathbb{P}\Bigl(\bigl\{\theta \notin \hat{\mathrm{CI}}^{(4)}_{n,\epsilon_1, \epsilon_2,\alpha}\bigr\} \cap \{\sigma \leq \hat{\sigma}_{\mathrm{R}}\}\Bigr)\\
        &\leq \frac{\alpha}{2} + \mathbb{P}\bigl(\theta \notin \hat{\mathrm{CI}}^{(3)}_{n,\sigma,\epsilon_1,\alpha/3}(t'_{n,\epsilon_1, \epsilon_2,\alpha})\bigr) \leq \alpha.
    \end{align*}
    When $r_{n,\epsilon_1, \epsilon_2,\alpha} > 0.9$, the coverage follows from~\eqref{eq:Zmax<theta-2} and the line following that bound.

    Regarding the length guarantee, suppose first that $r_{n,\epsilon_1, \epsilon_2,\alpha} \leq 0.9$. Then by Lemma~\ref{lemma:sigma-ub-2} and Corollary~\ref{cor:sigma-CI-2}, we have with probability at least $1-\alpha$ that
    \begin{align*}
        &Q_n^{-}\biggl((1-\epsilon_1)\{1-\Phi(t'_{n,\epsilon_1,\epsilon_2,\alpha})\}+3\sqrt{\frac{(1-\epsilon_1)\log(48/\alpha)}{n}}\biggr)\\
        &\hspace{2cm} - Q_n^+\biggl((1-\epsilon_1)\{1-\Phi(t'_{n,\epsilon_1,\epsilon_2,\alpha})\}-3\sqrt{\frac{(1-\epsilon_1)\log(48/\alpha)}{n}}\biggr) +2\hat{\sigma}_{\mathrm{R}} t_{n,\epsilon_1, \epsilon_2,\alpha}'\\
        &\leq 2C_{\alpha/3}\sigma \biggl\{\frac{1}{\sqrt{n(1-\epsilon_1)}} + \frac{\log\bigl(\frac{1-\epsilon_1}{1-\epsilon_1-\epsilon_2}\bigr)}{\sqrt{1\vee \log\{n\epsilon_2^2/(1-\epsilon_1)\}}}\biggr\} + 2(\hat{\sigma}_{\mathrm{R}} - \sigma) t_{n,\epsilon_1, \epsilon_2,\alpha}'\\
        &\leq 2C_{\alpha/3}\sigma \biggl\{\frac{1}{\sqrt{n(1-\epsilon_1)}} + \frac{\log\bigl(\frac{1-\epsilon_1}{1-\epsilon_1-\epsilon_2}\bigr)}{\sqrt{1\vee \log\{n\epsilon_2^2/(1-\epsilon_1)\}}}\biggr\} + 2\biggl(\frac{\sigma}{1-r_{n,\epsilon_1, \epsilon_2,\alpha}} - \sigma\biggr) t_{n,\epsilon_1, \epsilon_2,\alpha}'\\
        &\leq 2C_{\alpha/3}\sigma \biggl\{\frac{1}{\sqrt{n(1-\epsilon_1)}} + \frac{\log\bigl(\frac{1-\epsilon_1}{1-\epsilon_1-\epsilon_2}\bigr)}{\sqrt{1\vee \log\{n\epsilon_2^2/(1-\epsilon_1)\}}}\biggr\} + 20\sigma r_{n,\epsilon_1, \epsilon_2,\alpha} \cdot t_{n,\epsilon_1, \epsilon_2,\alpha}'\\
        &= 22C_{\alpha/3} \sigma \biggl\{\frac{1}{\sqrt{n(1-\epsilon_1)}} + \frac{\log\bigl(\frac{1-\epsilon_1}{1-\epsilon_1-\epsilon_2}\bigr)}{\sqrt{1\vee \log\{n\epsilon_2^2/(1-\epsilon_1)\}}}\biggr\}.
    \end{align*}
    If $r_{n,\epsilon_1,\epsilon_2,\alpha} > 0.9$, then $\frac{1}{\sqrt{n(1-\epsilon_1)}} + \frac{\log\bigl(\frac{1-\epsilon_1}{1-\epsilon_1-\epsilon_2}\bigr)}{\sqrt{1\vee \log\{n\epsilon_2^2/(1-\epsilon_1)\}}} > 0.9 C_{\alpha/3}^{-1} t_{n,\epsilon_1,\epsilon_2,\alpha}'$. By inspecting the proof of Proposition~\ref{prop:all-param-known-2}\emph{(b)}, we see that $C_{\alpha/3} \lesssim \sqrt{\log(e/\alpha)}$. Thus, $r_{n,\epsilon_1,\epsilon_2,\alpha} > 0.9$ implies
    either $\frac{1}{\sqrt{n(1-\epsilon_1)}} \gtrsim t_{n,\epsilon_1,\epsilon_2,\alpha}'/ \sqrt{\log(e/\alpha)}$ or $\frac{\log\bigl(\frac{1-\epsilon_1}{1-\epsilon_1-\epsilon_2}\bigr)}{\sqrt{1\vee \log\{n\epsilon_2^2/(1-\epsilon_1)\}}} \gtrsim t_{n,\epsilon_1,\epsilon_2,\alpha}'/\sqrt{\log(e/\alpha)}$.  In the first case, since $\epsilon_1+\epsilon_2 \leq 1-\frac{4\log_2(24/\alpha)}{n}$, we deduce that $n(1-\epsilon_1) \asymp_{\alpha} 1$.
    Then, by Lemma~\ref{lemma:max-min-2}, we have with probability at least $1-\alpha/2$, that
    \begin{align*}
        \bigl| \hat{\mathrm{CI}}^{(4)}_{n,\epsilon_1,\epsilon_2,\alpha} \bigr| &\leq 2\sigma\sqrt{2\log\{12n(1-\epsilon_1)/\alpha\}} \lesssim_{\alpha} \sigma \lesssim_{\alpha} \sigma \biggl\{\frac{1}{\sqrt{n(1-\epsilon_1)}} + \frac{\log\bigl(\frac{1-\epsilon_1}{1-\epsilon_1-\epsilon_2}\bigr)}{\sqrt{1\vee \log\{n\epsilon_2^2/(1-\epsilon_1)\}}}\biggr\}.
    \end{align*}
    In the second case, $\log\bigl(\frac{1-\epsilon_1}{1-\epsilon_1-\epsilon_2}\bigr) \gtrsim 1/\sqrt{\log(e/\alpha)}$, so $\frac{\epsilon_2}{1-\epsilon_1} \gtrsim 1$. Then, by Lemma~\ref{lemma:max-min-2}, we have with probability at least $1-\alpha/2$, that
    \begin{align*}
        \bigl| \hat{\mathrm{CI}}^{(4)}_{n,\epsilon_1,\epsilon_2,\alpha} \bigr| &\leq 2\sigma\sqrt{2\log\{12n(1-\epsilon_1)/\alpha\}} \lesssim \sigma\sqrt{\log\{n(1-\epsilon_1)\}}\\
        &\lesssim \sigma \sqrt{2\log\{n\epsilon_2^2/(1-\epsilon_1)\} - 2\log\log(48/\alpha)} \leq 2\sigma t_{n,\epsilon_1,\epsilon_2,\alpha}'\\
        &\lesssim_{\alpha} \sigma \biggl\{\frac{1}{\sqrt{n(1-\epsilon_1)}} + \frac{\log\bigl(\frac{1-\epsilon_1}{1-\epsilon_1-\epsilon_2}\bigr)}{\sqrt{1\vee \log\{n\epsilon_2^2/(1-\epsilon_1)\}}}\biggr\},
    \end{align*}
    where the second and third inequalities use the fact that $\frac{C}{n(1-\epsilon_1)} \leq \alpha \leq 1$ for a large enough universal constant $C > 0$.
\end{proof}

\subsubsection{Lower bound}
The lower bound for this case again follows from Proposition~\ref{prop:point-estimate-lb}.

\subsection{\texorpdfstring{Known $\sigma$, unknown $\epsilon_1,\epsilon_2$}{Known sigma, unknown epsilon1, epsilon2}}
\subsubsection{Upper bound}
Note that $\mathcal{M}\bigl(P,\epsilon_1,\epsilon_2\bigr) \subseteq \mathcal{M}\bigl(P,0,\epsilon_+\bigr)$ where $\epsilon_+=\epsilon_1+\epsilon_2$. Thus, we may use the confidence interval $\hat{\mathrm{CI}}^{(1)}_{n,\sigma,\alpha}(\hat{t}_{n,\sigma,\alpha})$ from Appendix~\ref{sec:known-sigma-unknown-eps}, which depends on $\sigma$ but not $(\epsilon_1,\epsilon_2)$, and apply Proposition~\ref{prop:known-sigma-unknown-epsilon} with $\epsilon=\epsilon_+$ to deduce the following result.
\begin{Proposition}
Let $n\in\mathbb{N}$, $\sigma>0$, $\alpha\in(0,1]$, and let $\epsilon_1,\epsilon_2 \in [0,1)$ be such that $\epsilon_1 + \epsilon_2 \leq 1-\frac{4\log_2(8/\alpha)}{n}$. Let $Z_1,\ldots,Z_n \overset{\mathrm{iid}}{\sim} M \in \mathcal{M}\bigl(N(\theta,\sigma),\epsilon_1,\epsilon_2\bigr)$. Then
  \begin{align*}
      \mathbb{P}\bigl(\theta \in \hat{\mathrm{CI}}^{(1)}_{n,\sigma,\alpha}(\hat{t}_{n,\sigma,\alpha})\bigr) \geq 1-\alpha. 
  \end{align*}
  Moreover, there exists a universal constant $C>0$ such that if $\alpha\in\bigl[\frac{C}{n}, 1\bigr]$, then with probability at least $1-\alpha/2$,
  \begin{align*}
      \bigl|\hat{\mathrm{CI}}^{(1)}_{n,\sigma,\alpha}(\hat{t}_{n,\sigma,\alpha})\bigr| \leq  C_{\alpha}\sigma\biggl\{\frac{1}{\sqrt{n}} + \frac{\log\bigl(\frac{1}{1-\epsilon_1-\epsilon_2}\bigr)}{\sqrt{1\vee \log\{n(\epsilon_1+\epsilon_2)^2\}}}\biggr\},
  \end{align*}
  where $C_{\alpha}\geq 1$ is defined as in Proposition~\ref{prop:all-param-known}.
\end{Proposition}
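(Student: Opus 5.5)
The proof is a reduction to the simplified model of Appendix~\ref{sec:known-sigma-unknown-eps}, applied with $\epsilon=\epsilon_+\coloneqq\epsilon_1+\epsilon_2$.

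\emph{Step 1: model inclusion.} We first check that $\mathcal{M}\bigl(N(\theta,\sigma^2),\epsilon_1,\epsilon_2\bigr)\subseteq\mathcal{M}\bigl(N(\theta,\sigma^2),0,\epsilon_+\bigr)$. By Lemma~\ref{lemma:realisability-characterisation} with $\mu=P$, any $M$ in the left-hand class satisfies
\[
1-\epsilon_+\leq \frac{\mathrm{d}M}{\mathrm{d}P_\star}(x)\leq 1-\epsilon_1\leq 1 .
\]
This is exactly the characterisation~\eqref{ineq:sandwich-q-1} of the simplified model with $\epsilon=\epsilon_+$. Alternatively, one can use the monotonicity remark after~\eqref{eq:model-mcar-mnar}, taking $\epsilon_1'=0$ and $\epsilon_2'=\epsilon_+$. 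Either way, $M=(1-\epsilon_+)N(\theta,\sigma^2)+\epsilon_+Q$ for some $Q\in\mathsf{MNAR}_{N(\theta,\sigma^2)}$.

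\emph{Step 2: apply Proposition~\ref{prop:known-sigma-unknown-epsilon}.} The hypothesis $\epsilon_+\leq 1-4\log_2(8/\alpha)/n$ is exactly the condition required there, with $\epsilon=\epsilon_+$. The interval $\hat{\mathrm{CI}}^{(1)}_{n,\sigma,\alpha}(\hat t_{n,\sigma,\alpha})$ depends only on $\sigma$, $\alpha$ and the data. In particular it does not depend on $(\epsilon_1,\epsilon_2)$, so it is the same object whichever parametrisation we use.

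\emph{Step 3: coverage.} By Proposition~\ref{prop:all-param-known}\emph{(a)}, the family $\{\hat{\mathrm{CI}}^{(1)}_{n,\sigma,\alpha}(t):t\geq 0\}$ covers $\theta$ simultaneously with probability at least $1-\alpha$. Because coverage holds for every $t$ at once, plugging in the data-dependent $\hat t$ costs nothing, and the coverage claim follows.

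\emph{Step 4: length.} By the definition of $\hat t$ as a minimiser, the chosen interval is no longer than $\hat{\mathrm{CI}}^{(1)}_{n,\sigma,\alpha}(t_{n,\epsilon_+,\alpha})$. Proposition~\ref{prop:all-param-known}\emph{(b)} bounds the length of that interval with probability at least $1-\alpha/2$ whenever $\alpha\geq C/n$. Substituting $\epsilon=\epsilon_+$ into that bound gives the stated rate, with the same constant $C_\alpha$.

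The only point needing care is the existence of the minimiser $\hat t$. The $\sargmin$ in~\eqref{eq:hat-t} implicitly assumes the infimum over $t\geq 0$ is attained. If it were not, we would instead take any $t$ whose length is within a factor of two of the infimum; coverage is unaffected, and the length bound changes only by a constant.
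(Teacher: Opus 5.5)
Your proposal is correct and matches the paper's argument: the paper notes $\mathcal{M}(P,\epsilon_1,\epsilon_2)\subseteq\mathcal{M}(P,0,\epsilon_+)$ and applies Proposition~\ref{prop:known-sigma-unknown-epsilon} with $\epsilon=\epsilon_+$, and that proposition is justified exactly by your Steps~3--4 (simultaneous coverage from Proposition~\ref{prop:all-param-known}\emph{(a)}, plus the fact that the length at $\hat{t}_{n,\sigma,\alpha}$ is no larger than at $t_{n,\epsilon_+,\alpha}$). Your caveat about attainment of the minimiser is justified and is not addressed in the paper, because $t\mapsto Q_n^+\bigl(1-\Phi(t)+\sqrt{\log(4/\alpha)/(2n)}\bigr)$ is only left-continuous in $t$, so the infimum of the length can fail to be attained; besides your near-minimiser fix, you could use $\bigcap_{t\geq 0}\hat{\mathrm{CI}}^{(1)}_{n,\sigma,\alpha}(t)$, which directly inherits the coverage guarantee and has length at most the infimum.
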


\subsubsection{Lower bound}
\begin{Proposition} \label{prop:unknown-eps-q-lb}
    Let $n \in \mathbb{N}$, $\sigma>0$, $(\epsilon_1,\epsilon_2)\in\mathcal{E}_n'$ and $\alpha\in(0,\frac{1}{9})$. Then
    \begin{align*}
        L_{n,\sigma,\epsilon_1,\epsilon_2}\bigl(\hat{\mathcal{CI}}(\sigma,\mathcal{E}_n')\bigr) \gtrsim \sigma \biggl\{\frac{1}{\sqrt{n}} + \frac{\log\bigl(\frac{1}{1-\epsilon_1-\epsilon_2}\bigr)}{\sqrt{1\vee \log\{n(\epsilon_1+\epsilon_2)^2\}}}\biggr\}. 
    \end{align*}
\end{Proposition}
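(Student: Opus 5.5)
The plan is a two-point argument exploiting adaptivity. Any $\widehat{\mathrm{CI}}\in\widehat{\mathcal{CI}}(\sigma,\mathcal{E}_n')$ must cover under every $(\epsilon_1',\epsilon_2')\in\mathcal{E}_n'$, including pure MNAR with a larger total fraction $\epsilon'>\epsilon_+$. Suppose $M_0\in\mathcal{M}(N(0,\sigma^2),\epsilon_1,\epsilon_2)$ and $M_1\in\mathcal{M}(N(a,\sigma^2),0,\epsilon')$ with $(0,\epsilon')\in\mathcal{E}_n'$ and $\mathrm{TV}(M_0^{\otimes n},M_1^{\otimes n})\le\delta$. Then
\[
\mathbb{P}_{M_0}\bigl(\{0,a\}\subseteq\widehat{\mathrm{CI}}\bigr)\ge 1-2\alpha-\delta ,
\]
which exceeds $\alpha$ when $\alpha<1/9$ and $\delta\le 1/3$. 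Since $\widehat{\mathrm{CI}}$ is an interval, this forces $L_{n,\sigma,\epsilon_1,\epsilon_2}\ge a$. The $\sigma/\sqrt{n}$ term is handled separately: $\widehat{\mathcal{CI}}(\sigma,\mathcal{E}_n')\subseteq\widehat{\mathcal{CI}}(\sigma,\epsilon_1,\epsilon_2)$, so Proposition~\ref{prop:point-estimate-lb} gives $\gtrsim\sigma/\sqrt{n(1-\epsilon_1)}\ge\sigma/\sqrt{n}$. Likewise, if $\epsilon_+\lesssim n^{-1/2}$ the contamination term is dominated by $\sigma/\sqrt n$, so I assume $n\epsilon_+^2\ge 4$.

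For $M_0$ I take the MCAR-type law $(1-\epsilon_+)N(0,\sigma^2)+\epsilon_+\delta_{\{\star\}}$, whose density $(1-\epsilon_+)\phi_{(0,\sigma)}$ lies in $[(1-\epsilon_+)\phi_{(0,\sigma)},(1-\epsilon_1)\phi_{(0,\sigma)}]$, so $M_0$ is admissible by Lemma~\ref{lemma:realisability-characterisation}. This choice is forced: in the MCAR case $\epsilon_1=\epsilon_+$, it is the only element of the model. Consequently, unlike in Proposition~\ref{prop:point-estimate-lb}, I cannot distort $M_0$ and all flexibility must come from $M_1$.

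Let $\tau\asymp\sigma\sqrt{\log(n\epsilon_+^2)}$ and $a\coloneqq c\,\sigma^2\log\bigl(1/(1-\epsilon_+)\bigr)/\tau$, and define $\epsilon'$ by
\[
1-\epsilon'=(1-\epsilon_+)e^{-a\tau/\sigma^2}=(1-\epsilon_+)^{1+c}.
\]
The density of $M_1$ on $\mathbb{R}$ is the clamp of $(1-\epsilon_+)\phi_{(0,\sigma)}$ into the band $[(1-\epsilon')\phi_{(a,\sigma)},\phi_{(a,\sigma)}]$, with the remaining mass placed on $\star$. By Lemma~\ref{lemma:realisability-characterisation} this gives $M_1\in\mathcal{M}(N(a,\sigma^2),0,\epsilon')$. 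The lower clamp is inactive exactly when $x\le\tau+a/2$, by the choice of $\epsilon'$. The upper clamp is inactive for $x\ge a/2-\sigma^2\log(1/(1-\epsilon_+))/a=a/2-\tau/c$. So $M_0$ and $M_1$ differ only on two Gaussian tails at distance $\gtrsim\tau$ from the centres, plus the compensating mass at $\star$.

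The tail discrepancy is bounded as in~\eqref{eq:KL-ub-final}, using Mills-ratio estimates. The right tail has mass $\lesssim(1-\epsilon')\{1-\Phi((\tau-a)/\sigma)\}$, and the left tail (beyond $\tau/c$) is smaller. With $a\le\tau/8$ and $\tau=2\sigma\sqrt{\log(n\epsilon_+^2)}$, I expect a bound of order $(n\epsilon_+^2)^{-3/2}\le 1/n$ up to constants. I would bound $\mathrm{KL}$ or $\chi^2$ per observation by $\lesssim 1/n$ and tensorise to get $\delta\le 1/3$. This yields
\[
L\gtrsim a\asymp\frac{\sigma\log\bigl(1/(1-\epsilon_+)\bigr)}{\sqrt{\log(n\epsilon_+^2)}}.
\]

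The main obstacle is ensuring $(0,\epsilon')\in\mathcal{E}_n'$, i.e.\ $(1-\epsilon_+)^{1+c}\ge 1/(2n)$, given only $1-\epsilon_+\ge 1/(2n)$. Choosing $c$ small does not suffice near the boundary $1-\epsilon_+\approx 1/(2n)$. In that regime I would instead cap the exponent, setting $1-\epsilon'=1/(2n)$. There $a\tau/\sigma^2=\log\bigl((1-\epsilon_+)2n\bigr)$ may be small, and I would compensate with the observation that $\log\bigl(1/(1-\epsilon_+)\bigr)/\sqrt{\log n}\asymp\sqrt{\log n}$ when $1-\epsilon_+$ is polynomially small. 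So I would bound the target rate by $\sigma\sqrt{\log n}$ and instead separate the means by $\asymp\sigma\sqrt{\log n}$, using the fact that only $O(1)$ samples are observed uncontaminated. A second check is that the lower clamp region does not overlap the upper clamp region, i.e.\ $a/2-\tau/c<\tau+a/2$, which is immediate. Verifying the analogue of Lemma~\ref{lemma:a-tau-relation} ($a\le\tau/8$) across all $\epsilon_+$ is the other routine but delicate step.
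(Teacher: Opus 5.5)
Your two-point reduction is correct. It is the paper's argument up to the reflection $x\mapsto a-x$: your $M_0$ plays the role of the paper's $M_2$ with $r=a$, and your $M_1$ that of its $M_1$.

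\textbf{The gap.} The divergence step only tracks discrepancies on $\mathbb{R}$ and ignores the atom at $\star$. With $c<1$ the upper clamp sits at $a/2-\tau/c$, far beyond $-\tau$. So the right-tail excess of $M_1$ over $M_0$ is not offset anywhere on $\mathbb{R}$; it is paid for entirely by lowering $M_1(\{\star\})$ from $\epsilon_+$ to $\epsilon_+-\Delta$. Take $\sigma=1$, $\tau=2\sqrt{\log(n\epsilon_+^2)}$ and $\epsilon_+$ small. Then $\Delta\asymp a\,\phi(\tau)/\tau^2\asymp c\,\epsilon_+(n\epsilon_+^2)^{-2}$, up to logarithmic factors. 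Write $\mathrm{KL}$ (or $\chi^2$) as a sum of nonnegative pointwise terms. The atom alone contributes about $\Delta^2/(2\epsilon_+)$, and $n\Delta^2/\epsilon_+\asymp c^2\sqrt{n}\,(n\epsilon_+^2)^{-7/2}$ up to logarithms.

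For $n\epsilon_+^2=n^{\delta}$ with $\delta<1/7$, this diverges. Indeed the number of missing entries, $\mathrm{Bin}(n,\epsilon_+)$ versus $\mathrm{Bin}(n,\epsilon_+-\Delta)$, then consistently separates $M_0^{\otimes n}$ from $M_1^{\otimes n}$. Yet in this range the target $\epsilon_+/\sqrt{\log(n\epsilon_+^2)}$ exceeds $1/\sqrt{n}$ by the unbounded factor $n^{\delta/2}/\sqrt{\delta\log n}$, so the MCAR term does not cover it.

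Relatedly, your bound $(n\epsilon_+^2)^{-3/2}\le 1/n$ holds only for $\epsilon_+\ge n^{-1/6}$. The tail masses you estimate are first order in the discrepancy. The analogy with \eqref{eq:KL-ub-final} also fails: there the extra factor $a^2$ is available only because $M_1(\{\star\})=M_2(\{\star\})$ by construction in the proof of Proposition~\ref{prop:point-estimate-lb}.

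\textbf{The missing idea.} You need to compensate the excess on one side of $\mathbb{R}$ by a matching deficit on the other side, so that the $\star$ masses agree to first order. Take $c=1$, i.e.\ $1-\epsilon'=(1-\epsilon_+)^2$. This puts the two clamps at $a/2\pm\tau$. Reflecting about $a/2$ shows that the right excess equals $(1-\epsilon_+)$ times the left deficit. Hence $M_1(\{\star\})-M_0(\{\star\})$ is nonnegative and only $\epsilon_+$ times that deficit. The $\star$ term of $\mathrm{KL}(M_0,M_1)$ is then nonpositive and can be dropped. The first-order contributions of the two tails cancel to leading order, leaving a bound that carries the factor $a^2$.

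Up to the reflection and the constant in $\tau$, this is exactly the paper's Case~2 construction: $m_1=b^2\phi\vee b\phi(\cdot-r)\wedge\phi$ and $m_2=b\phi(\cdot-r)$ with $b=1-\epsilon_+$, analysed in \eqref{eq:unknown-eps-q-lb-regime-2}.

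With this choice your ``main obstacle'' disappears for $\epsilon_+<1/2$, since $(1-\epsilon_+)^2\ge 1/4>1/(2n)$. For $\epsilon_+\ge 1/2$, the paper uses the floor $\phi/(2n)$ on that whole range, not just near the boundary, with no appeal to having only $O(1)$ observed entries. There $\log(n\epsilon_+^2)\asymp\log n$, so the clamp can be placed at $\tau\ge 2\sqrt{\log n}$. With $r=\log\{1/(1-\epsilon_+)\}/(2\sqrt{\log n})$, a crude first-order bound already gives per-observation total variation at most $1/(2n)$. The genuinely delicate regime is therefore small $\epsilon_+$, which you treated as routine, rather than $\epsilon_+$ near one.
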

\begin{proof}
    For simplicity of exposition, we will assume without loss of generality that $\sigma=1$, since the general case follows directly by scaling. We also write $L_{n,\sigma,\epsilon_1,\epsilon_2}\equiv L_{n,\sigma,\epsilon_1,\epsilon_2}\bigl(\hat{\mathcal{CI}}(\sigma,\mathcal{E}_n')\bigr)$ in this proof.
    For $r>0$, $M_1\in \mathcal{M}\bigl(N(0,1),0,1-\frac{1}{2n}\bigr)$ and $M_2 \in \mathcal{M}\bigl(N(r,1), \epsilon_1, \epsilon_2\bigr)$ to be chosen later,
    consider the testing problem $H_0: Z_1,\ldots,Z_n \overset{\mathrm{iid}}{\sim} M_1$ and $H_1: Z_1,\ldots,Z_n \overset{\mathrm{iid}}{\sim} M_2$. Let $\xi>0$ and let $\hat{\mathrm{CI}} \in \hat{\mathcal{CI}}$ be such that
    \begin{align*}
        \sup_{\theta\in\mathbb{R}} \;\sup_{M\in\mathcal{M}(N(\theta,\sigma^2),\epsilon_1,\epsilon_2)} \mathbb{P}_M\bigl(|\hat{\mathrm{CI}}(Z_1,\ldots,Z_n)| \geq (1+\xi)L_{n,\sigma,\epsilon_1,\epsilon_2} \bigr) \leq \alpha',
    \end{align*}
    where $\alpha'\coloneqq 1/3 - 2\alpha > \alpha$.  Define $T\coloneqq \mathbbm{1}_{\{0\notin \hat{\mathrm{CI}}(Z_1,\ldots,Z_n)\}}$.
    If $(1+\xi)L_{n,\sigma,\epsilon_1,\epsilon_2} \leq r$, then
    \begin{align}
        \mathbb{P}_{M_1}(T=1) &+ \mathbb{P}_{M_2}(T=0)\nonumber\\
        &\leq \alpha + \mathbb{P}_{M_2}\bigl(r\notin\hat{\mathrm{CI}}(Z_1,\ldots,Z_n)\bigr) + \mathbb{P}_{M_2}\bigl(|\hat{\mathrm{CI}}(Z_1,\ldots,Z_n)| \geq (1+\xi)L_{n,\sigma,\epsilon_1,\epsilon_2}\bigr) \nonumber\\
        &\leq 2\alpha+\alpha' = \frac{1}{3}. \label{eq:testing-lb-unkown-epsilon}
    \end{align}
    We now consider two cases and make specific choices of $r$, $M_1$ and $M_2$ in each to prove that~\eqref{eq:testing-lb-unkown-epsilon} cannot hold.  This will establish a contradiction, allowing us to conclude, since $\xi>0$ is arbitrary, that $L_{n,\sigma,\epsilon_1,\epsilon_2} \geq r$.

    \medskip
    \textbf{Case 1:} Suppose that $\frac{1}{2} \leq \epsilon_1+\epsilon_2 \leq 1-\frac{1}{2n}$ Let $\tau\coloneqq \frac{r}{2} + \frac{\log(\frac{1}{1-\epsilon_1-\epsilon_2})}{r}$ be the unique solution to $\phi(\tau) = (1-\epsilon_1-\epsilon_2)\phi(\tau-r)$. Let $r\coloneqq \frac{\log(\frac{1}{1-\epsilon_1-\epsilon_2})}{2\sqrt{\log n}} > 0$, and let $\gamma \in (-\infty, \tau)$ be the unique solution to $\frac{\phi(\gamma)}{2n} = (1-\epsilon_1-\epsilon_2) \phi(\gamma-r)$.
    Define $f_1:\mathbb{R} \rightarrow [0,\infty)$ by
    \begin{align*}
        f_1(x) \coloneqq
        \begin{cases}
            \frac{1}{2n} \cdot \phi(x) \quad&\text{if } x\leq \gamma\\
            (1-\epsilon_1-\epsilon_2) \phi(x-r)  \quad&\text{if } \gamma< x \leq \tau\\
            \phi(x) &\text{if } x> \tau.
        \end{cases}
    \end{align*}
    Further define $M_1,M_2 \in \mathcal{P}(\mathbb{R}_{\star})$ with Radon--Nikodym derivatives
    \begin{align*}
        \frac{\mathrm{d} M_1}{\mathrm{d} \lambda_{\star}} (z) \coloneqq f_1(z) \indi_{\{z\in\mathbb{R}\}} + \Bigl(1 - \int_{\mathbb{R}} f_1(x) \,\mathrm{d} x\Bigr)\indi_{\{z=\star\}},
    \end{align*}
    and
    \begin{align*}
        \frac{\mathrm{d} M_2}{\mathrm{d} \lambda_{\star}} (z) \coloneqq (1-\epsilon_1-\epsilon_2)\phi(z-r) \indi_{\{z\in\mathbb{R}\}} + (\epsilon_1+\epsilon_2)\indi_{\{z=\star\}}
    \end{align*}
    respectively.  Then by Lemma~\ref{lemma:realisability-characterisation}, $M_1 \in \mathcal{M}\bigl(N(0,1),0,1-1/(2n)\bigr)$ and $M_2\in \mathcal{M}\bigl(N(r,1),\epsilon_1,\epsilon_2\bigr)$.
    Moreover,
    \begin{align*}
    \mathrm{TV}(M_1,M_2) \leq \frac{1}{2n} + (1-\epsilon_1-\epsilon_2)\bigl(1-\Phi(\tau-r)\bigr) - \bigl(1-\Phi(\tau)\bigr).
    \end{align*}
    Now, 
    \begin{align*}
    \frac{(1-\epsilon_1-\epsilon_2)\bigl(1-\Phi(\tau-r)\bigr)}{1-\Phi(\tau)} &\leq (1-\epsilon_1-\epsilon_2) \cdot \frac{\tau+1/\tau}{\tau-r} \cdot \frac{\phi(\tau-r)}{\phi(\tau)} \\
    &\leq 3(1-\epsilon_1-\epsilon_2) \cdot \frac{\phi(\tau-r)}{\phi(\tau)} \leq 1 + \frac{e^{\tau^2/2}}{2n}
    \end{align*}
    where the first inequality follows by Mills ratio bounds (Lemma~\ref{lemma:mills-ratio}), the second inequality follows since $r\leq \tau/2$ when $\epsilon_1+\epsilon_2 \leq 1-\frac{1}{2n}$ and since $\tau \geq 2\sqrt{\log n} \geq 2$, and the third inequality follows by substituting $r$ and $\tau$ in the left-hand side and using the fact that $\tau \geq 2\sqrt{\log n}$ on the right-hand side. 
    Hence
    \begin{align*}
        (1-\epsilon_1-\epsilon_2)\bigl(1-\Phi(\tau-r)\bigr) - \bigl(1-\Phi(\tau)\bigr) \leq \frac{e^{\tau^2/2}}{2n} \cdot \{1-\Phi(\tau)\} \leq \frac{1}{2n},
    \end{align*}
    so $\mathrm{TV}(M_1,M_2) \leq \frac{1}{2n}$ and by \citet[Exercise~8.6\emph{(b)}]{samworth2026statistics}, 
    \begin{align*}
        \mathrm{TV}(M_1^{\otimes n}, M_2^{\otimes n}) \leq 1-\biggl(1-\frac{1}{2n}\biggr)^n < \frac{2}{3}.
    \end{align*}
    Therefore, 
    \begin{align*}
        \mathbb{P}_{M_1}(T=1) + \mathbb{P}_{M_2}(T=0) \geq 1 - \mathrm{TV}(M_1^{\otimes n},M_2^{\otimes n}) > \frac{1}{3}, 
    \end{align*}
    contradicting~\eqref{eq:testing-lb-unkown-epsilon}. We deduce that in this case, $L_{n,\sigma,\epsilon_1,\epsilon_2} \geq \frac{\log(\frac{1}{1-\epsilon_1-\epsilon_2})}{2\sqrt{\log n}}$.

    \begin{figure}[ht]
    \centering
    \begin{subfigure}{0.75\linewidth}
        \includegraphics[width=\linewidth]{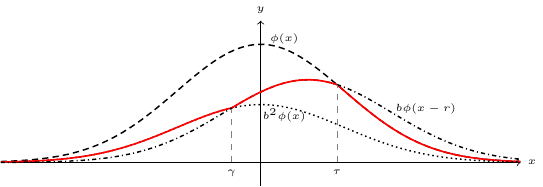}
        \caption{The red curve illustrates $m_1(x)$.}
    \end{subfigure}

    \begin{subfigure}{0.75\linewidth}
        \includegraphics[width=\linewidth]{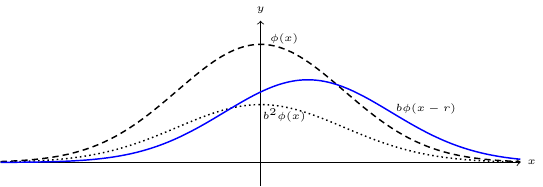}
        \caption{The blue curve illustrates $m_2(x)$.}
    \end{subfigure}
    
    \caption{Construction of the lower bound in the proof of case 2 in Proposition \ref{prop:unknown-eps-q-lb}.}\label{fig:unknown-eps-q-lb}
    \end{figure}

    \medskip
    \textbf{Case 2:} Suppose that $\frac{2}{\sqrt{n}} \leq \epsilon_1+\epsilon_2 < \frac{1}{2}$. Define $b\coloneqq 1-\epsilon_1-\epsilon_2$ and for $x\in\mathbb{R}$, define
    \begin{align*}
        m_1(x) \coloneqq b^2\phi(x) \vee b\phi(x-r) \wedge \phi(x)  \quad\text{and}\quad  m_2(x) \coloneqq b\phi(x-r),
    \end{align*}
    where $r>0$ will be chosen later.
    Let $\gamma\coloneqq \frac{r}{2} - \frac{\log (1/b)}{r}$ be the unique solution to $b^2\phi(\gamma) = b\phi(\gamma-r)$ and let $\tau\coloneqq \frac{r}{2} + \frac{\log (1/b)}{r}$ be the unique solution to $\phi(\tau) = b\phi(\tau-r)$. Let $M_1$ and $M_2$ be distributions on~$\mathbb{R}_{\star}$ whose Radon--Nikodym derivatives with respect to $\lambda_\star$, when restricted to $\mathbb{R}$, are given by~$m_1$ and $m_2$ respectively; see Figure~\ref{fig:unknown-eps-q-lb}.  By Lemma~\ref{lemma:realisability-characterisation} and since $b^2 \geq 1/4 > 1/(2n)$, we have $M_1\in  \mathcal{M}\bigl(N(0,1), 0, 1-1/(2n)\bigr)$ and $M_2\in \mathcal{M}\bigl(N(r,1), \epsilon_1, \epsilon_2\bigr)$. 
    Moreover, we have 
    \begin{align*}
        M_2(\mathbb{R}) - M_1(\mathbb{R}) &= \int_{\mathbb{R}} \bigl\{b\phi(x-r) - \phi(x)\bigr\}_+ \,\d x - \int_{\mathbb{R}} \bigl\{b^2\phi(x) - b\phi(x-r)\bigr\}_+ \,\d x \geq 0,
    \end{align*}
    so $M_2(\{\star\}) \leq M_1(\{\star\})$.
    Thus, if $\frac{\log (1/b)}{r} > \frac{r}{2}$, then
    \begin{align}
        \mathrm{KL}(M_2&,M_1) \leq \int_{\mathbb{R}} m_2(x)\log\biggl(\frac{m_2(x)}{m_1(x)}\biggr) \,\mathrm{d}x \nonumber\\
        & = \int_{-\infty}^\gamma b\phi(x-r) \log\biggl( \frac{b\phi(x-r)}{b^2\phi(x)} \biggr) \,\mathrm{d}x + \int_{\tau}^{\infty} b\phi(x-r) \log\biggl( \frac{b\phi(x-r)}{\phi(x)} \biggr) \,\mathrm{d}x \nonumber\\
        &= \int_{-\infty}^\gamma b\phi(x-r)\bigl\{rx-r^2/2 + \log (1/b)\bigr\} \,\d x + \int_{\tau}^{\infty} b\phi(x-r) \bigl\{ rx - r^2/2 - \log (1/b) \bigr\} \,\mathrm{d}x \nonumber\\
        &= b\bigl(r^2/2 + \log (1/b)\bigr) \Phi(\gamma-r) - br\phi(r-\gamma) + b\bigl(r^2/2 - \log(1/b)\bigr)\Phi(r-\tau) + br\phi(\tau-r) \nonumber\\
        &\leq \frac{br^2}{2}\bigl\{\Phi(\gamma-r) + \Phi(r-\tau)\bigr\} + br\bigl\{\phi(\tau-r) - \phi(r-\gamma)\bigr\} \nonumber\\
        &\leq br^2 \cdot \frac{\phi\bigl(\log(1/b)/r - r/2\bigr)}{\log(1/b)/r - r/2} + br^2\bigl(\log(1/b)/r - r/2\bigr) \phi\bigl(\log(1/b)/r - r/2\bigr), \label{eq:unknown-eps-q-lb-regime-2}
    \end{align}
    where the penultimate inequality follows since $\Phi(\gamma-r) \leq \Phi(r-\tau)$; the final inequality follows by applying the Mills ratio bound (Lemma~\ref{lemma:mills-ratio}) to the first term, using the mean value theorem for the second term, and substituting the definitions of $\tau$ and $\gamma$. Now let $r\coloneqq \frac{\log (1/b)}{4\sqrt{\log\{n(1-b)^2\}}}$. Note that since $b\in[1/2,\, 1-2/\sqrt{n}]$, we have $\log\{n(1-b)^2\} > \log(1/b)$, $0 < r \leq \sqrt{\log\{n(1-b)^2\}}$ and 
    \begin{align}
        3\sqrt{\log\{n(1-b)^2\}} \leq \frac{\log(1/b)}{r} - \frac{r}{2} \leq 4\sqrt{\log\{n(1-b)^2\}} \leq \frac{\log(1/b)}{r} + \frac{r}{2} \leq 5\sqrt{\log\{n(1-b)^2\}}. \label{eq:logb-r-ineq}
    \end{align}
    Since $\frac{\log(1/b)}{r} > \frac{r}{2}$, we may apply~\eqref{eq:logb-r-ineq} to~\eqref{eq:unknown-eps-q-lb-regime-2} to deduce that
    \begin{align*}
        \mathrm{KL}(M_2,M_1) &\leq \frac{\log^2 (1/b)}{16\log\{n(1-b)^2\}} \cdot \frac{\phi\bigl(3\sqrt{\log\{n(1-b)^2\}}\bigr)}{4\sqrt{\log\{n(1-b)^2\}}}\\
        &\hspace{6cm} + \frac{\log^2 (1/b)}{4\sqrt{\log\{n(1-b)^2\}}}\cdot \phi\bigl(3\sqrt{\log\{n(1-b)^2\}}\bigr)\\
        &\leq \frac{(1-b)^2}{\{n(1-b)^2\}^4} = \frac{1}{n} \cdot \frac{1}{\{n(1-b)^2\}^3} \leq \frac{1}{64n},
    \end{align*}
    where the second inequality uses the fact that $\log^2 (1/b) \leq 3(1-b)^2$ for $b\in[1/2,1]$ and the final inequality uses the fact that $b\leq 1-2/\sqrt{n}$. Then, by Pinsker's inequality, 
    \begin{align*}
        \mathbb{P}_{M_1}(T=1) + \mathbb{P}_{M_2}(T=0)  \geq 1 - \mathrm{TV}(M_1^{\otimes n},M_2^{\otimes n}) \geq 1 - \sqrt{\frac{n}{2} \mathrm{KL}(M_2,M_1)}> \frac{1}{3}, 
    \end{align*}
    again contradicting \eqref{eq:testing-lb-unkown-epsilon}. We deduce that in this case, $L_{n,\sigma,\epsilon_1,\epsilon_2} \geq \frac{\log(\frac{1}{1-\epsilon_1-\epsilon_2})}{4\sqrt{\log \{n(\epsilon_1+\epsilon_2)^2\}}}$.

    \medskip
    Finally, when $\epsilon_1+\epsilon_2 \leq \frac{2}{\sqrt{n}}$, we have $L_{n,\sigma,\epsilon_1,\epsilon_2} \geq L_{n,\sigma,\epsilon_1+\epsilon_2,0} \gtrsim \frac{1}{\sqrt{n}}$ by the MCAR lower bound \citep[Proposition~S33(a)]{ma2024estimation}.
    The final claim follows by combining the three cases.
\end{proof}

\subsection{\texorpdfstring{Unknown $\sigma,\epsilon_1,\epsilon_2$}{Unknown sigma, epsilon1, epsilon2}}
\subsubsection{Upper bound}
Again, since $\mathcal{M}\bigl(P,\epsilon_1,\epsilon_2\bigr) \subseteq \mathcal{M}\bigl(P,0,\epsilon_+\bigr)$ where $\epsilon_+=\epsilon_1+\epsilon_2$, we may use the confidence interval $\hat{\mathrm{CI}}^{(1)}_{n,\sigma_{\max},\alpha}(\hat{t}_{n,\sigma_{\max},\alpha})$ from Appendix~\ref{sec:unknown-sigma-eps}, which does not depend on $\sigma,\epsilon_1,\epsilon_2$, and apply Proposition~\ref{prop:unknown-eps-sigma} with $\epsilon=\epsilon_+$ to deduce the following result.

\begin{Proposition}
    Let $n\in\mathbb{N}$, $\sigma\in(0,\sigma_{\max}]$, $\alpha\in(0,1]$, and let $\epsilon_1,\epsilon_2\in[0,1)$ be such that $ \epsilon_1+\epsilon_2 \leq 1-\frac{4\log_2(8/\alpha)}{n}$. Let $Z_1,\ldots,Z_n \overset{\mathrm{iid}}{\sim} M \in \mathcal{M}\bigl(N(\theta,\sigma^2),\epsilon_1,\epsilon_2\bigr)$. Then 
    \begin{align*}
        \mathbb{P}\bigl(\theta \in \hat{\mathrm{CI}}^{(1)}_{n,\sigma_{\max},\alpha}(\hat{t}_{n,\sigma_{\max},\alpha}) \bigr) \geq 1-\alpha.
    \end{align*}
    Moreover, there exist universal constants $C,C'>0$ such that if $\alpha\in\bigl[\frac{C}{n}, 1\bigr]$, then with probability at least $1-\alpha/2$, we have
    \begin{align*}
    \bigl| \hat{\mathrm{CI}}^{(1)}_{n,\sigma_{\max},\alpha}(\hat{t}_{n,\sigma_{\max},\alpha}) \bigr| \leq C'\sigma_{\max}\biggl\{\biggl(\sqrt{\frac{\log(4/\alpha)}{n}} + \epsilon_1 + \epsilon_2 \biggr) \vee \biggl(\sqrt{\log\Bigl(\frac{1}{1-\epsilon_1 - \epsilon_2}\Bigr)} - 1 \biggr)\biggr\}.
    \end{align*}
\end{Proposition}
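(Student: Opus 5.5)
This result is an immediate reduction to the warm-up model of Appendix~\ref{sec:unknown-sigma-eps}. The key observation is the inclusion $\mathcal{M}(P,\epsilon_1,\epsilon_2)\subseteq\mathcal{M}(P,0,\epsilon_+)$ with $\epsilon_+ \coloneqq \epsilon_1+\epsilon_2$. It holds because MCAR is a special case of MNAR: the term $\epsilon_1\delta_{\{\star\}}+\epsilon_2 Q$ can be rewritten as $\epsilon_+ Q'$, where $Q'\coloneqq \frac{\epsilon_1}{\epsilon_+}\delta_{\{\star\}}+\frac{\epsilon_2}{\epsilon_+}Q$. We then check that $Q'\in\mathsf{MNAR}_P$, which we do via Lemma~\ref{lemma:realisability-characterisation}. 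The density of $Q'$ with respect to $P_\star$ on $\mathbb{R}$ is a convex combination of $0$ and something in $[0,1]$, so it lies in $[0,1]$, which is exactly the MNAR condition. (If $\epsilon_+=0$ there is nothing to prove.) Equivalently, we can verify the sandwich~\eqref{eq:realisability-ineq} directly: $1-\epsilon_+\le \mathbb{P}(Z\neq\star\mid X)\le 1-\epsilon_1\le 1$.

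Consequently, for $M\in\mathcal{M}(N(\theta,\sigma^2),\epsilon_1,\epsilon_2)$ we may write $M=(1-\epsilon_+)N(\theta,\sigma^2)+\epsilon_+Q'$ with $Q'\in\mathsf{MNAR}_{N(\theta,\sigma^2)}$. This is precisely the simplified model~\eqref{eq:model-mnar} with $\epsilon=\epsilon_+$. The hypothesis $\epsilon_+\le 1-4\log_2(8/\alpha)/n$ matches the range of $\epsilon$ required in Proposition~\ref{prop:unknown-eps-sigma}, and $\sigma\in(0,\sigma_{\max}]$ there as well.

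Both the coverage statement and the length statement now follow by applying Proposition~\ref{prop:unknown-eps-sigma} with $\epsilon=\epsilon_+$, and substituting $\epsilon_+=\epsilon_1+\epsilon_2$ into its length bound. The interval $\hat{\mathrm{CI}}^{(1)}_{n,\sigma_{\max},\alpha}(\hat t_{n,\sigma_{\max},\alpha})$ depends only on the data, $\sigma_{\max}$ and $\alpha$, so it is the same object in both models. The constants $C,C'$ are inherited unchanged.

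The only point requiring care is the MNAR verification of the mixture $Q'$. Nothing else is needed beyond quoting the earlier proposition.
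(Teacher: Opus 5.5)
Your proposal is correct and is essentially the paper's argument: the paper also invokes the inclusion $\mathcal{M}(P,\epsilon_1,\epsilon_2)\subseteq\mathcal{M}(P,0,\epsilon_1+\epsilon_2)$ and then applies Proposition~\ref{prop:unknown-eps-sigma} with $\epsilon=\epsilon_1+\epsilon_2$, since the interval depends only on the data, $\sigma_{\max}$ and $\alpha$. Your explicit check of that inclusion via Lemma~\ref{lemma:realisability-characterisation} (density of $Q'$ with respect to $P_\star$ lying in $[0,1]$) supplies a step the paper only asserts.
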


\subsubsection{Lower bound}
\begin{Proposition} \label{prop:all-unknown-lb}
    Let $n \in \mathbb{N}$, $\sigma>0$, $(\epsilon_1,\epsilon_2)\in\mathcal{E}_n'$ and $\alpha\in(0,\frac{1}{9})$. Then
    \begin{align*}
        L_{n,\sigma,\epsilon_1,\epsilon_2}\bigl( \widehat{\mathcal{CI}}(\mathcal{S},\mathcal{E}_{n}') \bigr) \gtrsim \sigma \biggl\{\biggl(\frac{1}{\sqrt{n}} + \epsilon_1 + \epsilon_2 \biggr) \vee \biggl(\sqrt{\log\Bigl(\frac{1}{1-\epsilon_1 - \epsilon_2}\Bigr)} - 1 \biggr)\biggr\}.
    \end{align*}
\end{Proposition}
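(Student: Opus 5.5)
The plan is to reuse the two-point testing reduction from the proof of Proposition~\ref{prop:unknown-eps-q-lb}, now exploiting the additional freedom that $\sigma$ is unknown. As there, take $\sigma=1$ by scaling (the class $\mathcal{S}=(0,\sigma_{\max}]$ contains the values we need after rescaling, with $\sigma_{\max}$ treated as fixed). Fix $\xi>0$ and an interval in $\widehat{\mathcal{CI}}(\mathcal{S},\mathcal{E}_n')$ whose length exceeds $(1+\xi)L$ with probability at most $\alpha'=1/3-2\alpha$ under every $M\in\mathcal{M}(N(r,\sigma^2),\epsilon_1,\epsilon_2)$. We then build a null $M_1\in\mathcal{M}(N(0,\sigma_0^2),0,1-\tfrac{1}{2n})$, with $\sigma_0\in\mathcal{S}$ allowed to differ from $\sigma$, and an alternative $M_2\in\mathcal{M}(N(r,\sigma^2),\epsilon_1,\epsilon_2)$. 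If $(1+\xi)L\le r$, the test $T=\mathbbm{1}_{\{0\notin\widehat{\mathrm{CI}}\}}$ has total error at most $1/3$, so it suffices to exhibit $\mathrm{TV}(M_1^{\otimes n},M_2^{\otimes n})<2/3$. The three terms in the lower bound will be handled separately, and we take the maximum at the end.

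\textbf{The $1/\sqrt n$ term.} This follows exactly as before, from the MCAR lower bound of \citet[Proposition~S33(a)]{ma2024estimation}, or equivalently from Proposition~\ref{prop:point-estimate-lb}.

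\textbf{The $\epsilon_+$ term.} Take $M_2=(1-\epsilon_+)N(r,1)+\epsilon_+\delta_{\{\star\}}$, which is MCAR and lies in $\mathcal{M}(N(r,1),\epsilon_1,\epsilon_2)$. We want a null with mean $0$, possibly a different $\sigma_0$, and contamination up to $1-\tfrac{1}{2n}$, whose observed part matches $(1-\epsilon_+)\phi(x-r)$. By Lemma~\ref{lemma:realisability-characterisation}, this requires $\tfrac{1}{2n}\phi_{(0,\sigma_0)}(x)\le(1-\epsilon_+)\phi(x-r)\le\phi_{(0,\sigma_0)}(x)$. An exact match is impossible in the tails, so we match on the bulk and pay a small total variation in the tails. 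With $\sigma_0$ slightly larger than $1$ (say $\sigma_0=2$), $\phi_{(0,\sigma_0)}(x)\ge(1-\epsilon_+)\phi(x-r)$ holds for all $x$ provided $r\lesssim\epsilon_+$ and $\epsilon_+$ is bounded below by a constant. For small $\epsilon_+$, the ratio $\phi(x-r)/\phi(x)=e^{rx-r^2/2}$ exceeds $1/(1-\epsilon_+)$ only for $x\gtrsim\epsilon_+/r$. We therefore choose $r=c\epsilon_+$ so that this happens only beyond a point where the tail mass is $\le 1/(2n)$; this requires care. Where the exact bound fails, we truncate $M_1$ to $\phi_{(0,\sigma_0)}$, and the TV cost is the tail mass. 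A cleaner route is to take $\sigma_0$ larger, so that the density ratio $\phi_{(0,\sigma_0)}/\phi(\cdot-r)$ is bounded below by $1-\epsilon_+$ everywhere. Indeed $\phi(x-r)/\phi_{(0,\sigma_0)}(x)$ is bounded by $\sigma_0\exp\{r^2/(2(1-\sigma_0^{-2}))\}$, which is achievable since $\sigma$ unknown lets us choose $\sigma_0$; we need $\sigma_0\le\sigma_{\max}$, which holds since $\sigma\le\sigma_{\max}$ and constants are absorbed. The lower bound $\tfrac{1}{2n}\phi_{(0,\sigma_0)}\le(1-\epsilon_+)\phi(x-r)$ fails only in far tails of mass $o(1/n)$, and this gives $\mathrm{TV}\le 1/(2n)$.

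\textbf{The $\sqrt{\log(1/(1-\epsilon_+))}-1$ term.} This is the main obstacle, and here the variance freedom is essential. Set $b=1-\epsilon_+$, which may be as small as $1/(2n)$. Take $M_2$ with observed density $b\phi(x-r)$ and $M_1$ with observed density $b\phi(x-r)\wedge\phi_{(0,\sigma_0)}(x)$, where $\sigma_0$ is large enough (of order $\sigma_{\max}$, and we may rescale so the base $\sigma$ is small relative to $\sigma_{\max}$) that $\phi_{(0,\sigma_0)}\ge b\phi(\cdot-r)$ on $|x-r|\le\sqrt{2\log(1/b)}$ up to constants. Since $\sigma_0\phi_{(0,\sigma_0)}(x)\approx\phi(0)$ there, we need $b\phi(x-r)\le\phi(0)/\sigma_0$, so whenever $b\sigma_0\le 1$ the whole bulk matches even with $r$ of order $\sqrt{\log(1/b)}$, as long as $|r|\lesssim\sigma_0$. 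The lower constraint $\tfrac{1}{2n}\phi_{(0,\sigma_0)}\le m_1$ then needs checking only in the tails. The mismatch mass $\int(b\phi(x-r)-\phi_{(0,\sigma_0)}(x))_+$ is controlled by choosing $r=c\{\sqrt{\log(1/b)}-1\}_+$. I expect the delicate part to be verifying both realisability inequalities simultaneously in the tails and the resulting bound $\mathrm{TV}\le 1/(2n)$, after which the same $(1-\tfrac{1}{2n})^n$ argument as in Case~1 of Proposition~\ref{prop:unknown-eps-q-lb} concludes.
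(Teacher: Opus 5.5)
\textbf{Verdict: genuine gap.} Your overall architecture is the right one, and it is the paper's. You test a null $M_1\in\mathcal{M}\bigl(N(0,\sigma_0^2),0,1-\frac{1}{2n}\bigr)$ with a \emph{wider} base against an MCAR alternative with observed density $b\,\phi_{(r,\sigma)}$, where $b\coloneqq 1-\epsilon_1-\epsilon_2$, and aim for $\mathrm{TV}(M_1,M_2)\le\frac{1}{2n}$. Your $1/\sqrt{n}$ term is also fine. The problem is that neither contamination term is actually established, because you never use the exact condition under which the narrow bump fits under the wide Gaussian. For $\sigma_0>\sigma$,
\[
\sup_{x\in\mathbb{R}}\frac{b\,\phi_{(r,\sigma)}(x)}{\phi_{(0,\sigma_0)}(x)}=\frac{b\sigma_0}{\sigma}\exp\Bigl(\frac{r^2}{2(\sigma_0^2-\sigma^2)}\Bigr),
\]
so $b\,\phi_{(r,\sigma)}\le\phi_{(0,\sigma_0)}$ everywhere if and only if $r^2\le 2(\sigma_0^2-\sigma^2)\log\{\sigma/(b\sigma_0)\}$. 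You wrote down a slightly weaker form of this bound but did not draw its consequences.

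In particular $\sigma_0$ must lie below $\sigma/b$. That is the opposite of your advice for the $\epsilon_+$ term: when $\epsilon_+$ is small, $\sigma_0=2\sigma$ or ``taking $\sigma_0$ larger'' makes the prefactor exceed $1$, and the construction collapses. What is needed there is $\sigma_0/\sigma-1\asymp\epsilon_+$, e.g.\ $\sigma_0=\sigma/(1-\epsilon_+/2)$, which gives $r\asymp\sigma\epsilon_+$ for $\epsilon_+\le 0.2$.

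For the logarithmic term, your flat-bulk heuristic imposes the spurious constraint $|r|\lesssim\sigma_0$. That forces $\sigma_0\gtrsim\sigma\sqrt{\log(1/b)}$, which is unbounded as $b\to 0$ and unavailable since $\sigma_0\le\sigma_{\max}$. Rescaling cannot help, because it leaves $\sigma_{\max}/\sigma$ unchanged; likewise ``$\sigma_0\le\sigma_{\max}$ holds since $\sigma\le\sigma_{\max}$'' is a non sequitur once $\sigma_0>\sigma$. The display shows that a \emph{fixed} ratio suffices: $\sigma_0=\sigma/0.9$ gives $r=\sigma\sqrt{2(0.9^{-2}-1)\log(0.9/b)}\gtrsim\sigma\sqrt{\log(1/b)}$ whenever $\epsilon_+>0.2$. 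The point is that the wide Gaussian exceeds the peak height of the narrow bump out to distance $\sigma_0\sqrt{2\log\{\sigma/(b\sigma_0)\}}$, not merely $O(\sigma_0)$.

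The tail difficulties you flag as delicate also disappear once domination holds globally. Take observed densities $m_1\coloneqq\frac{1}{2n}\phi_{(0,\sigma_0)}\vee b\,\phi_{(r,\sigma)}$ and $m_2\coloneqq b\,\phi_{(r,\sigma)}$.
\begin{itemize}
\item Then $\frac{1}{2n}\phi_{(0,\sigma_0)}\le m_1\le\phi_{(0,\sigma_0)}$, so Lemma~\ref{lemma:realisability-characterisation} gives $M_1\in\mathcal{M}\bigl(N(0,\sigma_0^2),0,1-\frac{1}{2n}\bigr)$ with no bulk matching or mismatch mass to control.
\item Moreover $\mathrm{TV}(M_1,M_2)=\int\bigl(\frac{1}{2n}\phi_{(0,\sigma_0)}-b\,\phi_{(r,\sigma)}\bigr)_+\le\frac{1}{2n}$, after which $(1-\frac{1}{2n})^n\ge\frac12$ contradicts the testing bound.
\end{itemize}
This single construction, with $\sigma_0=\sigma/0.9$ if $\epsilon_+>0.2$ and $\sigma_0=\sigma/(1-\epsilon_+/2)$ if $\epsilon_+\le0.2$, delivers both the $\epsilon_+$ term and the $\sqrt{\log(1/(1-\epsilon_+))}-1$ term. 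The requirement $\sigma_0\le\sigma_{\max}$ then costs only a fixed-ratio condition on $\sigma/\sigma_{\max}$, rather than one that degenerates as $\epsilon_+\to1$.
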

\begin{proof}
    We write $L_{n,\sigma,\epsilon_1,\epsilon_2}\equiv L_{n,\sigma,\epsilon_1,\epsilon_2}\bigl( \widehat{\mathcal{CI}}(\mathcal{S},\mathcal{E}_{n}') \bigr)$ in this proof. For $r>0$, $\gamma\in(0,\sigma_{\max}]$, $M_1\in \mathcal{M}\bigl(N(0,\sigma^2),0,1-1/(2n)\bigr)$ and $M_2 \in \mathcal{M}\bigl(N(r,\gamma^2), \epsilon_1, \epsilon_2\bigr)$ to be chosen later,
    consider the testing problem $H_0:Z_1,\ldots,Z_n \overset{\mathrm{iid}}{\sim} M_1$ against $H_1: Z_1,\ldots,Z_n \overset{\mathrm{iid}}{\sim} M_2$.
    Let $\xi>0$ and let $\hat{\mathrm{CI}} \in \hat{\mathcal{CI}}$ be such that
    \begin{align*}
        \sup_{\theta\in\mathbb{R}} \;\sup_{M\in\mathcal{M}(N(\theta,\gamma^2),\epsilon_1,\epsilon_2)} \mathbb{P}_M\bigl(|\hat{\mathrm{CI}}(Z_1,\ldots,Z_n)| \geq (1+\xi)L(n,\sigma,\epsilon_1,\epsilon_2) \bigr) \leq \alpha',
    \end{align*}
    where $\alpha'\coloneqq 1/3-2\alpha > \alpha$.
    Define a test statistic $T\coloneqq \mathbbm{1}_{\{0\notin \hat{\mathrm{CI}}(Z_1,\ldots,Z_n)\}}$. If $(1+\xi)L_{n,\sigma,\epsilon_1,\epsilon_2} \leq r$, then
    \begin{align}
        \mathbb{P}_{M_1}(T=1) &+ \mathbb{P}_{M_2}(T=0)\nonumber\\
        &\leq \alpha + \mathbb{P}_{M_2}\bigl(r\notin\hat{\mathrm{CI}}(Z_1,\ldots,Z_n)\bigr) + \mathbb{P}_{M_2}\bigl(|\hat{\mathrm{CI}}(Z_1,\ldots,Z_n)| \geq (1+\xi)L_{n,\sigma,\epsilon_1,\epsilon_2}\bigr) \nonumber\\
        &\leq 2\alpha+\alpha' = \frac{1}{3}. \label{eq:testing-lb-unkown-epsilon-sigma}
    \end{align}
    We now make specific choices of $r$, $M_1$ and $M_2$, and later consider two different choices of $\gamma \geq \sigma(1-\epsilon_1 - \epsilon_2)$, to prove that~\eqref{eq:testing-lb-unkown-epsilon-sigma} cannot hold.  This will establish a contradiction, allowing us to conclude, since $\xi>0$ is arbitrary, that $L_{n,\sigma,\epsilon_1,\epsilon_2} \geq r$.
    
    Let 
    \begin{align*}
        r\coloneqq \sqrt{2(\sigma^2 - \gamma^2)\log\biggl(\frac{\gamma}{\sigma(1-\epsilon_1-\epsilon_2)}\biggr)},
    \end{align*}
    so that $(1-\epsilon_1-\epsilon_2)\phi_{(r,\gamma)}(x) \leq \phi_{(0,\sigma)}(x)$ for all $x\in\mathbb{R}$ and $\gamma \geq \sigma(1-\epsilon_1 - \epsilon_2)$. Define 
    \begin{align*}
        m_1 \coloneqq \frac{1}{2n}\phi_{(0,\sigma)} \vee (1-\epsilon_1-\epsilon_2)\phi_{(r,\gamma)} \quad\text{and}\quad m_2 \coloneqq (1-\epsilon_1-\epsilon_2)\phi_{(r,\gamma)}.
    \end{align*}
    Let $M_1$ and $M_2$ be distributions on $\mathbb{R}_{\star}$ whose densities restricted to $\mathbb{R}$ are given by $m_1$ and $m_2$ respectively. Then, by Lemma~\ref{lemma:realisability-characterisation}, we have $M_1\in \mathcal{M}\bigl(N(0,\sigma^2),0,1-\frac{1}{2n}\bigr)$ and $M_2\in \mathcal{M}\bigl(N(r,\gamma^2),\epsilon_1,\epsilon_2\bigr)$. Moreover, $\mathrm{TV}(M_1,M_2) \leq 1/(2n)$, so
    \begin{align*}
        \mathbb{P}_{H_0}(T=1) + \mathbb{P}_{H_1}(T=0) \geq 1 - \mathrm{TV}(M_1^{\otimes n},M_2^{\otimes n}) \geq \biggl(1 - \frac{1}{2n}\biggr)^n \geq \frac{1}{2},
    \end{align*}
    which contradicts~\eqref{eq:testing-lb-unkown-epsilon-sigma}. Thus $L_{n,\sigma,\epsilon_1,\epsilon_2} \geq r = \sqrt{2(\sigma^2 - \gamma^2)\log\bigl(\frac{\gamma}{\sigma(1-\epsilon_1-\epsilon_2)}\bigr)}$.  It remains to choose $\gamma \geq \sigma(1-\epsilon_1 - \epsilon_2)$.  When $\epsilon_1+\epsilon_2 > 0.2$, we choose $\gamma = 0.9\sigma \geq \sigma(1-\epsilon_1 - \epsilon_2)$ so that $L_{n,\sigma,\epsilon_1,\epsilon_2} \gtrsim \sigma \sqrt{\log\bigl(\frac{1}{1-\epsilon_1-\epsilon_2}\bigr)}$. When $\epsilon_1+\epsilon_2 \leq 0.2$, we choose $\gamma = \bigl(1-\frac{\epsilon_1+\epsilon_2}{2}\bigr)\sigma \geq \sigma(1-\epsilon_1 - \epsilon_2)$. Then
    \begin{align*}
        L_{n,\sigma,\epsilon_1,\epsilon_2} &\geq \sigma \sqrt{2(\epsilon_1+\epsilon_2) - \frac{(\epsilon_1+\epsilon_2)^2}{2}} \cdot \sqrt{\log\biggl(\frac{1-(\epsilon_1 + \epsilon_2)/2}{1-(\epsilon_1 + \epsilon_2)}\biggr)}\\
        &\geq \sigma\sqrt{1.9(\epsilon_1 + \epsilon_2)} \cdot \sqrt{\log\biggl(1 + \frac{\epsilon_1 + \epsilon_2}{2}\biggr)} \geq \sigma(\epsilon_1+\epsilon_2)/2,
    \end{align*}
    where the last inequality uses $\log(1+x) \geq 0.9x$ for all $x\in[0,0.1]$. Finally, by the MCAR lower bound \citep[Proposition~S33(a)]{ma2024estimation}, we have that $L_{n,\sigma,\epsilon_1,\epsilon_2} \gtrsim \frac{\sigma}{\sqrt{n}}$. Thus, $L_{n,\sigma,\epsilon_1,\epsilon_2} \gtrsim \sigma\bigl(\frac{1}{\sqrt{n}} + \epsilon_1+\epsilon_2\bigr)$ when $\epsilon_1+\epsilon_2\leq 0.2$.  The final conclusion follows by combining the two cases.
\end{proof}

\section{Proof of Theorem \ref{thm:nonparametric-minimax-rates}} \label{sec:proof-of-nonparametric-rates}
Again, we will provide a proof for the upper bound and a proof for the lower bound in each case.

\subsection{Bounded random variables}

\begin{Lemma} \label{lemma:bounded-rv-bias}
    Let $\theta\in\mathbb{R}$ and $P\in \mathcal{P}_{\mathrm{BD}}(\theta,[a,b])$.
    Let $Z \sim M \in \mathsf{MNAR}_P$ and $p\coloneqq \mathbb{P}(Z \neq \star)$. Then
    \begin{align*}
        p\mathbb{E}(Z\,|\,Z\neq\star) + (1-p)a \leq \theta \leq p\mathbb{E}(Z\,|\,Z\neq\star) + (1-p)b.
    \end{align*}
\end{Lemma}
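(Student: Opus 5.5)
We write $Z = X\ostar\Omega$ with $X\sim P$ and $\Omega\in\{0,1\}$ arbitrarily dependent on $X$, as permitted by the definition~\eqref{eq:mnar-def} of $\mathsf{MNAR}_P$. Then $\{Z\neq\star\}=\{\Omega=1\}$ and $p=\mathbb{P}(\Omega=1)$. The plan is to decompose the mean $\theta$ according to whether the observation is missing:
\begin{align*}
\theta = \mathbb{E}(X) = \mathbb{E}(X\indi_{\{\Omega=1\}}) + \mathbb{E}(X\indi_{\{\Omega=0\}}).
\end{align*}
Because $P$ is supported on $[a,b]$, $X$ is bounded, so all these expectations are finite.

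For the first term, note that on $\{\Omega=1\}$ we have $Z=X$. Hence $\mathbb{E}(X\indi_{\{\Omega=1\}})=\mathbb{E}(Z\indi_{\{Z\neq\star\}})=p\,\mathbb{E}(Z\,|\,Z\neq\star)$ when $p>0$. When $p=0$, both sides are zero under the natural convention.

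For the second term, we use $a\leq X\leq b$ almost surely. Multiplying by $\indi_{\{\Omega=0\}}$ and taking expectations gives
\begin{align*}
(1-p)a\leq \mathbb{E}(X\indi_{\{\Omega=0\}})\leq (1-p)b.
\end{align*}
Adding the two pieces yields both inequalities in Lemma~\ref{lemma:bounded-rv-bias}.

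There is no real obstacle here. The only points needing care are the degenerate case $p=0$, where the conditional expectation must be interpreted by convention (e.g.\ as $0$, or the product $p\,\mathbb{E}(Z\,|\,Z\neq\star)$ read as $\mathbb{E}(Z\indi_{\{Z\neq\star\}})=0$), and the fact that the representation of $M$ via some pair $(X,\Omega)$ exists by definition of $\mathsf{MNAR}_P$.
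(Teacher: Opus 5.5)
Your proof is correct and is essentially the paper's argument. The paper also realises $Z$ in distribution as $X\ostar\Omega$, which suffices since $p$ and $\mathbb{E}(Z\,|\,Z\neq\star)$ depend only on the law of $Z$; it then identifies $p\,\mathbb{E}(Z\,|\,Z\neq\star)$ with $\mathbb{E}(X\indi_{\{\Omega=1\}})$ and uses $a\leq X\leq b$ on $\{\Omega=0\}$, writing each bound in one line as $\mathbb{E}(X\indi_{\{\Omega=1\}} + a\indi_{\{\Omega=0\}}) \leq \mathbb{E}(X) = \theta$ (and similarly with $b$) rather than first splitting $\theta$ into two pieces.
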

\begin{proof}
    There exist $X\sim P$ and a binary random variable $\Omega$ such that $\mathbb{P}(\Omega=1)=p$ and $X\ostar\Omega \overset{\mathrm{d}}{=} Z$. Thus, we have
    \begin{align*}
        p\mathbb{E}(Z\,|\,Z\neq\star) + (1-p)a &= \mathbb{P}(Z \neq \star) \mathbb{E}(Z\,|\,Z\neq\star) + \mathbb{P}(Z = \star)a\\
        &= \mathbb{E}(Z\indi_{\{Z\neq\star\}} + a\indi_{\{Z=\star\}}) = \mathbb{E}(X\indi_{\{\Omega=1\}} + a\indi_{\{\Omega=0\}}) \leq \mathbb{E}(X) = \theta.
    \end{align*}
    Similarly,
    \begin{align*}
        p\mathbb{E}(Z\,|\,Z\neq\star) + (1-p)b = \mathbb{E}(X\indi_{\{\Omega=1\}} + b\indi_{\{\Omega=0\}}) \geq \mathbb{E}(X) = \theta,
    \end{align*}
    as required.
\end{proof}

Recalling $\hat{\mathrm{CI}}^{(\mathrm{BD})}_{n,a,b,\alpha}$ from~\eqref{eq:CI-BD}, we have the following proposition.
\begin{Proposition}
    Let $(\epsilon_1,\epsilon_2) \in \mathcal{E}$ and $P\in\mathcal{P}_{\mathrm{BD}}(\theta,[a,b])$. Let $Z_1,\ldots,Z_n \overset{\mathrm{iid}}{\sim} M \in \mathcal{M}(P,\epsilon_1,\epsilon_2)$. Then
    \begin{align*}
        \mathbb{P}\Bigl(\theta \in \hat{\mathrm{CI}}^{(\mathrm{BD})}_{n,a,b,\alpha}\Bigr) \geq 1-\alpha.
    \end{align*}
    Moreover, if $\epsilon_1 + \epsilon_2 \leq 1-\frac{8\log(4/\alpha)}{n}$, then with probability at least $1-\alpha/2$,
    \begin{align*}
        \bigl| \hat{\mathrm{CI}}^{(\mathrm{BD})}_{n,a,b,\alpha} \bigr| \leq C_{\alpha} (b-a) \biggl(\frac{1}{\sqrt{n}} + \epsilon_1 + \epsilon_2\biggr),
    \end{align*}
    where $C_{\alpha}>0$ depends only on $\alpha$.
\end{Proposition}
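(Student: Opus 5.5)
Coverage will rest on Lemma~\ref{lemma:bounded-rv-bias} together with two concentration events. Since $M \in \mathcal{M}(P,\epsilon_1,\epsilon_2) \subseteq \mathsf{MNAR}_P$, that lemma gives
\[
p\mu + (1-p)a \leq \theta \leq p\mu + (1-p)b,
\]
where $p \coloneqq \mathbb{P}(Z_1 \neq \star)$ and $\mu \coloneqq \mathbb{E}(Z_1 \mid Z_1\neq\star) \in [a,b]$.

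The first event is $E_1 \coloneqq \{\narrowhat{p}_{n,\alpha}^- \leq p\}$. By the empirical Bernstein inequality quoted after~\eqref{eq:p-lb}, $\mathbb{P}(E_1) \geq 1-\alpha/2$.

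The second event controls $\bar{Z}_n$. Conditional on the set $\mathcal{J}$ of observed indices, with $|\mathcal{J}| = n\narrowhat{p}_n$, the observed values are i.i.d.\ from the law of $Z_1$ given $Z_1 \neq \star$, which is supported on $[a,b]$ with mean $\mu$. Hoeffding's inequality, applied conditionally and then integrated, gives
\[
|\bar{Z}_n - \mu| \leq (b-a)\sqrt{\frac{\log(4/\alpha)}{2n\narrowhat{p}_n}}
\]
with probability at least $1-\alpha/2$. When $\narrowhat{p}_n=0$ the right-hand side is infinite, so the interval is the whole line.

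On both events I write $s=\narrowhat{p}_{n,\alpha}^-\leq p$. The key observation is that the map $q \mapsto q\mu + (1-q)a = a + q(\mu-a)$ is nondecreasing in $q$, because $\mu \geq a$. Hence
\[
\theta \geq p\mu + (1-p)a \geq s\mu + (1-s)a \geq s\bar{Z}_n + (1-s)a - s(b-a)\sqrt{\cdot} \geq s\bar{Z}_n + \Delta_{\mathrm{BD},\mathrm{L}},
\]
using $s \leq 1$ in the last step. The upper endpoint is symmetric, since $q \mapsto b - q(b-\mu)$ is nonincreasing. A union bound over the two events gives coverage at least $1-\alpha$.

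For the length, the interval has deterministic form
\[
|\hat{\mathrm{CI}}^{(\mathrm{BD})}_{n,a,b,\alpha}| = (1-\narrowhat{p}_{n,\alpha}^-)(b-a) + 2(b-a)\sqrt{\frac{\log(4/\alpha)}{2n\narrowhat{p}_n}}.
\]
The tool is a lower-tail bound on $\narrowhat{p}_n$. Lemma~\ref{lemma:realisability-characterisation} gives $p \geq 1-\epsilon_+$, and $n\narrowhat{p}_n \sim \mathrm{Bin}(n,p)$. A Bernstein or multiplicative Chernoff bound then shows that, with probability at least $1-\alpha/2$,
\[
\narrowhat{p}_n \geq p - \sqrt{\frac{2p(1-p)\log(2/\alpha)}{n}} - \frac{\log(2/\alpha)}{n}.
\]
Using $p \geq 8\log(4/\alpha)/n$, this also yields $\narrowhat{p}_n \geq p/2$.

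The step I expect to be the main obstacle is the case analysis that follows. On this event, I bound $1-\narrowhat{p}_{n,\alpha}^-$ by
\[
1-\narrowhat{p}_n + \sqrt{\frac{2\narrowhat{p}_n(1-\narrowhat{p}_n)\log(6/\alpha)}{n}} + \frac{3\log(6/\alpha)}{n}.
\]
Each piece is $\lesssim_\alpha \epsilon_+ + \sqrt{\epsilon_+/n} + 1/\sqrt{n} + 1/n$, because $1-p\leq\epsilon_+$, and $\sqrt{\epsilon_+/n}\leq \epsilon_+ + 1/n$. The remaining term is
\[
(b-a)\sqrt{\frac{\log(4/\alpha)}{n\narrowhat{p}_n}} \lesssim_\alpha (b-a)\sqrt{\frac{1}{np}}.
\]
I split into two cases. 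If $\epsilon_+\leq 1/2$, then $p\geq 1/2$ and this term is $\lesssim 1/\sqrt{n}$. If $\epsilon_+>1/2$, the whole interval is trivially bounded: its length is at most $(b-a)(1+O_\alpha(1))$, because $np \gtrsim \log(4/\alpha)$, and this is $\lesssim_\alpha (b-a)\epsilon_+$.

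Combining the two cases gives $|\hat{\mathrm{CI}}^{(\mathrm{BD})}_{n,a,b,\alpha}| \leq C_\alpha(b-a)(n^{-1/2}+\epsilon_+)$ with probability at least $1-\alpha/2$. The work in this last step is mainly keeping the constants and the probability budget consistent.
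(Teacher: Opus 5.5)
Your proposal is correct and follows essentially the same route as the paper. For coverage, you use Lemma~\ref{lemma:bounded-rv-bias} on the intersection of the empirical-Bernstein event $\{\narrowhat{p}_{n,\alpha}^-\leq p\}$ and the conditional Hoeffding event for $\bar{Z}_n$; applying the monotonicity in $q$ to $\mu$ rather than to $\bar{Z}_n$ is an inessential reordering. For length, you use a lower-tail bound on $\narrowhat{p}_n$ together with a split according to whether $p$ is bounded below, a split the paper leaves implicit in its final $\lesssim_\alpha$ step.

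One cosmetic slip: with $p\geq 8\log(4/\alpha)/n$, your Bernstein display gives only $\narrowhat{p}_n\geq 3p/8$, not $\narrowhat{p}_n\geq p/2$. The paper instead uses a separate multiplicative Chernoff event for $\narrowhat{p}_n\geq p/2$ and a Hoeffding event, each with budget $\alpha/4$. Either way this affects only constants.
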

\begin{proof}
    \textbf{Coverage:} Let $p\coloneqq \mathbb{P}(Z\neq\star)$. Define the event
    \begin{align}
    \label{Eq:E1}
        E_1 \coloneqq \biggl\{ |\narrowhat{p}_n - p| \leq \sqrt{\frac{2\narrowhat{p}_n(1-\narrowhat{p}_n)\log(6/\alpha)}{n}} + \frac{3\log(6/\alpha)}{n} \biggr\}.
    \end{align}
    By the empirical Bernstein inequality \citep[Theorem~1]{audibert2009exploration}, we have $\mathbb{P}(E_1) \geq 1-\alpha/2$. Moreover, on $E_1$, we have $\narrowhat{p}_{n,\alpha}^- \leq p$. Next, define
    \begin{align*}
        E_2 \coloneqq \biggl\{ \bigl|\bar{Z}_n - \mathbb{E}(Z_1\,|\, Z_1 \neq \star)\bigr| \leq (b-a)\sqrt{\frac{\log(4/\alpha)}{2n\narrowhat{p}_n}} \biggr\}.
    \end{align*}
    Let $Y_1,\ldots,Y_n \overset{\mathrm{iid}}{\sim} \mathsf{Law}(Z_1 \,|\, Z_1 \neq\star)$.
    By Hoeffding's inequality, we deduce that for $k \in [n]$,
    \begin{align*}
        \mathbb{P}(E_2\,|\,n\narrowhat{p}_n = k) = \mathbb{P}\biggl( \biggl|\frac{1}{k}\sum_{i=1}^k Y_i - \mathbb{E}(Y_1)\biggr| \leq (b-a)\sqrt{\frac{\log(4/\alpha)}{2k}} \biggr) \geq 1-\frac{\alpha}{2}.
    \end{align*}
    Moreover, when $k=0$, our convention that $\bar{Z}_n=0$ means that this bound continues to hold.  Hence, $\mathbb{P}(E_2) = \mathbb{E}\bigl\{\mathbb{P}(E_2\,|\,n\narrowhat{p}_n)\bigr\} \geq 1- \alpha/2$.
    Therefore, by Lemma~\ref{lemma:bounded-rv-bias}, we have on $E_1 \cap E_2$ that 
    \begin{align*}
        \narrowhat{p}_{n,\alpha}^- \bar{Z}_n + (1-\narrowhat{p}_{n,\alpha}^-)a - (b-a)\sqrt{\frac{\log(4/\alpha)}{2n\narrowhat{p}_n}} &\leq p \bar{Z}_n + (1-p)a - (b-a)\sqrt{\frac{\log(4/\alpha)}{2n\narrowhat{p}_n}}\\
        &\leq p\mathbb{E}(Z\,|\,Z\neq\star) + (1-p)a \leq \theta.
    \end{align*}
    Similarly, on $E_1\cap E_2$, we have $\narrowhat{p}_{n,\alpha}^- \bar{Z}_n + (1-\narrowhat{p}_{n,\alpha}^-)b + (b-a)\sqrt{\frac{\log(4/\alpha)}{2n\narrowhat{p}_n}} \geq p\mathbb{E}(Z\,|\,Z\neq\star) + (1-p)b \geq \theta$.
    Thus, by a union bound, $\mathbb{P}\bigl(\theta\in \hat{\mathrm{CI}}^{(\mathrm{BD})}_{n,a,b,\alpha}\bigr) \geq 1-\alpha$.

    \medskip
    \textbf{Length:} Since $p \geq 1 - \epsilon_1 - \epsilon_2 \geq \frac{8\log(4/\alpha)}{n}$, we have by a multiplicative Chernoff bound that the event 
    \begin{align*}
        E_3 \coloneqq \biggl\{\narrowhat{p}_n \geq \frac{p}{2} \biggr\}
    \end{align*}
    has probability at least $1-\alpha/4$. Moreover, by Hoeffding's inequality, the event
    \begin{align*}
        E_4 \coloneqq \biggl\{\narrowhat{p}_n \geq p - \sqrt{\frac{\log(4/\alpha)}{2n}}\biggr\}
    \end{align*}
    has probability at least $1-\alpha/4$. Thus, on $E_3 \cap E_4$, we have
    \begin{align*}
        \bigl| \hat{\mathrm{CI}}^{(\mathrm{BD})}_{n,a,b,\alpha} \bigr| &= (b-a)(1-\narrowhat{p}^-_{n,\alpha}) + 2(b-a)\sqrt{\frac{\log(4/\alpha)}{2n\narrowhat{p}_n}}\\
        &\lesssim_{\alpha} (b-a) \biggl(1-\narrowhat{p}_n + \frac{1}{\sqrt{n\narrowhat{p}_n}}\biggr)
         \lesssim_{\alpha} (b-a) \biggl(1-p + \frac{1}{\sqrt{np}}\biggr) \lesssim_{\alpha} (b-a) \biggl(\frac{1}{\sqrt{n}} + \epsilon_1 + \epsilon_2\biggr),
    \end{align*}
    where the last inequality uses the fact that $p\geq 1-\epsilon_1-\epsilon_2\geq \frac{8\log(4/\alpha)}{n}$.
\end{proof}

In fact, for fixed $\epsilon_+ < 1$, as $n\to\infty$, we have $\narrowhat{p}_n \to p$ and $\narrowhat{p}^-_{n,\alpha} \to p$ in probability uniformly over~$\mathcal{P}_{\mathrm{BD}}$. Moreover, since $p>1-\epsilon_+>0$ and the confidence interval viewed as a function of $\narrowhat{p}_n$ and $\narrowhat{p}^-_{n,\alpha}$ is uniformly continuous on $[\frac{1-\epsilon_+}{2}, 1]$, we deduce that
\begin{align}
    \lim_{n \rightarrow \infty} \sup_{P\in\mathcal{P}_{\mathrm{BD}}} \sup_{M\in\mathcal{M}(P,\epsilon_1,\epsilon_2)} \mathbb{P}_M\bigl(|\hat{\mathrm{CI}}_{n,a,b,\alpha}^{(\mathrm{BD})}| > (b-a)\epsilon_+ \bigr) = 0.  \label{eq:BD-CI-asym-length}
\end{align}

\subsubsection{Lower bound}
\begin{Proposition}
    Let $n\in\mathbb{N}$, $(\epsilon_1,\epsilon_2) \in\mathcal{E}$ and $\alpha\in(0,\frac{1}{4}]$.
    Then 
    \begin{align*}
        L_{n,\epsilon_1,\epsilon_2}\bigl(\hat{\mathcal{CI}}(\mathcal{P}_{\mathrm{BD}},\mathcal{E})\bigr) \geq \frac{c(b-a)}{\sqrt{n}} \vee (b-a)\epsilon_+,
    \end{align*}
    where $c>0$ is a universal constant.
\end{Proposition}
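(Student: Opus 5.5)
The lower bound has two parts, and the final claim follows by taking the larger of them. The parametric term $c(b-a)/\sqrt{n}$ comes from a standard two-point argument. The contamination term $(b-a)\epsilon_+$ uses the same testing reduction as in the proofs of Propositions~\ref{prop:unknown-eps-q-lb} and~\ref{prop:all-unknown-lb}, but here the two hypotheses can be made to induce \emph{identical} observation laws.

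\textbf{Parametric term.} Since $(1-\epsilon_+)P+\epsilon_+\delta_{\{\star\}}\in\mathcal{M}(P,\epsilon_1,\epsilon_2)$ for every $P$, it suffices to consider fully observed data with $\epsilon_+=0$. Any interval in $\hat{\mathcal{CI}}(\mathcal{P}_{\mathrm{BD}},\mathcal{E})$ also covers in the non-adaptive problem, and its midpoint is a point estimator. Take two Bernoulli-type laws on $\{a,b\}$ whose means differ by $\asymp (b-a)/\sqrt{n}$, with weights near $1/2$. Their per-sample KL divergence is $O(1/n)$. Applying Le Cam's two-point bound in the minimax-quantile form of \citet[Corollary~6]{ma2024high} then gives length $\gtrsim (b-a)/\sqrt{n}$ for $\alpha\le 1/4$.

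\textbf{Contamination term.} The key observation is that coverage is required uniformly over all of $\mathcal{E}$. In particular, any $\hat{\mathrm{CI}}$ must cover under the extreme configuration $(0,1)\in\mathcal{E}$, where $\mathcal{M}(P,0,1)=\mathsf{MNAR}_P$ and the missingness mechanism is arbitrary.

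Fix $M_2\in\mathcal{M}(P_2,\epsilon_1,\epsilon_2)$ to be the MCAR law $(1-\epsilon_+)P_2+\epsilon_+\delta_{\{\star\}}$, with $P_2$ supported on $[a,b]$.

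\emph{Upper alternative.} Let $P_1\coloneqq(1-\epsilon_+)P_2+\epsilon_+\delta_b$. It has mean $\theta_1=\theta_2+\epsilon_+(b-\theta_2)$. Generating $X\sim P_1$ and setting $\Omega=0$ exactly when $X$ comes from the $\delta_b$ component gives $X\ostar\Omega\sim M_2$, so $M_2\in\mathsf{MNAR}_{P_1}=\mathcal{M}(P_1,0,1)$.

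\emph{Lower alternative.} Similarly, $P_0\coloneqq(1-\epsilon_+)P_2+\epsilon_+\delta_a$ has mean $\theta_2-\epsilon_+(\theta_2-a)$, and again $M_2\in\mathsf{MNAR}_{P_0}$.

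So the single law $M_2^{\otimes n}$ is consistent with means $\theta_0$ and $\theta_1$, where $\theta_1-\theta_0=\epsilon_+(b-a)$.

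Coverage gives $\mathbb{P}_{M_2}(\theta_0\in\hat{\mathrm{CI}})\ge1-\alpha$ and $\mathbb{P}_{M_2}(\theta_1\in\hat{\mathrm{CI}})\ge1-\alpha$. Since the data law is the same, both hold simultaneously with probability at least $1-2\alpha$. On that event $|\hat{\mathrm{CI}}|\ge\epsilon_+(b-a)$. Under $M_2\in\mathcal{M}(P_2,\epsilon_1,\epsilon_2)$ this gives
\[
\mathbb{P}_{M_2}\bigl(|\hat{\mathrm{CI}}|\ge(b-a)\epsilon_+\bigr)\ge1-2\alpha>\alpha\quad\text{for }\alpha\le1/4.
\]
Hence $L_{n,\epsilon_1,\epsilon_2}\ge(b-a)\epsilon_+$, which is sharp at the level of constants, as claimed after Theorem~\ref{thm:nonparametric-minimax-rates}.

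\textbf{Main obstacle.} Once the identical-law construction is in place, the remaining care is bookkeeping in the minimax length definition. That definition takes an infimum over $r$ of $\sup\mathbb{P}(|\hat{\mathrm{CI}}|\ge r)\le\alpha$, so the event $\{|\hat{\mathrm{CI}}|\ge(b-a)\epsilon_+\}$ needs to be handled with closed-versus-open care, using $r$ slightly below $(b-a)\epsilon_+$ and taking a limit. It must also be checked that $P_0,P_1\in\mathcal{P}_{\mathrm{BD}}$, which holds because their supports remain in $[a,b]$.
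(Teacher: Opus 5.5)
Your contamination-term argument is correct and is essentially the paper's. The paper takes $P_2=\delta_a$, so your lower alternative $P_0$ coincides with $P_2$ and only the upper alternative $P_1=(1-\epsilon_+)\delta_a+\epsilon_+\delta_b$ is needed. It then observes that $M=(1-\epsilon_+)\delta_a+\epsilon_+\delta_{\{\star\}}$ lies in both $\mathcal{M}(P_1,0,1)$ and $\mathcal{M}(P_2,\epsilon_1,\epsilon_2)$, and runs a test with $M_1=M_2=M$. Your version, with a general $P_2$ and two MNAR alternatives, is the same idea. Your direct bound $\mathbb{P}_{M_2}\bigl(|\hat{\mathrm{CI}}|\ge(b-a)\epsilon_+\bigr)\ge1-2\alpha$, valid for every interval in the class, is slightly cleaner than the paper's $\xi$/$\alpha'$ bookkeeping. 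For the parametric term, the paper simply invokes the MCAR lower bound of \citet[Proposition~S33(b)]{ma2024estimation} via $\mathcal{M}(P,\epsilon_+,0)\subseteq\mathcal{M}(P,\epsilon_1,\epsilon_2)$.

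The step that does not go through as written is your midpoint reduction for the parametric term. Suppose $\hat{\mathrm{CI}}$ has coverage $1-\alpha$ and $\mathbb{P}(|\hat{\mathrm{CI}}|\ge r)\le\alpha'$, with $\alpha'$ slightly above $\alpha$. Then the midpoint $\hat\theta$ only satisfies $\mathbb{P}(|\hat\theta-\theta|\ge r/2)\le\alpha+\alpha'$, so you need a two-point estimation bound at level about $2\alpha$, which is $1/2$ when $\alpha=1/4$. At that level Le Cam's bound is vacuous: it only shows the two error probabilities sum to at least $1-\mathrm{TV}$, which is compatible with both being at most $1/2$. Equivalently, the condition $\mathrm{KL}<\log\{1/(4\beta(1-\beta))\}$ of \citet[Corollary~6]{ma2024high} cannot hold at $\beta=1/2$. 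As $\alpha\uparrow1/4$ the admissible separation, and hence $c$, shrinks to zero, so you do not get a universal $c$ on all of $(0,1/4]$.

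The fix is to reuse your own contamination argument, replacing equality of laws by closeness in total variation. Let $P_1,P_2$ be two-point laws on $\{a,b\}$ whose means satisfy $|\theta_1-\theta_2|=c(b-a)/\sqrt{n}$, and set $M_j\coloneqq(1-\epsilon_+)P_j+\epsilon_+\delta_{\{\star\}}\in\mathcal{M}(P_j,\epsilon_1,\epsilon_2)$. Then every interval in the class satisfies
\[
\mathbb{P}_{M_2}\bigl(|\hat{\mathrm{CI}}|\ge|\theta_1-\theta_2|\bigr)\ge\mathbb{P}_{M_2}(\theta_1\in\hat{\mathrm{CI}})-\alpha\ge1-2\alpha-\mathrm{TV}\bigl(M_1^{\otimes n},M_2^{\otimes n}\bigr),
\]
which exceeds $\alpha$ whenever $\mathrm{TV}<1-3\alpha$, e.g.\ $\mathrm{TV}\le1/5$. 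With weights near $1/2$, $\mathrm{KL}(M_1,M_2)=(1-\epsilon_+)\mathrm{KL}(P_1,P_2)=O(c^2/n)$, so Pinsker's inequality delivers this for a small universal $c$.

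This also makes precise your remark that ``it suffices to consider fully observed data''. The fully observed law is not itself in $\mathcal{M}(P,\epsilon_1,\epsilon_2)$ when $\epsilon_1>0$; what is really being used is that MCAR masking only shrinks the divergence.
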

\begin{proof}
    Write $L_{n,\epsilon_1,\epsilon_2}\equiv L_{n,\epsilon_1,\epsilon_2}\bigl(\hat{\mathcal{CI}}(\mathcal{P}_{\mathrm{BD}},\mathcal{E}_n')\bigr)$ in this proof.
    For $\theta_1,\theta_2\in[a,b]$, $P_1\in \mathcal{P}_{\mathrm{BD}}(\theta_1,[a,b])$, $P_2\in \mathcal{P}_{\mathrm{BD}}(\theta_2,[a,b])$, $M_1\in \mathcal{M}\bigl(P_1,0,1\bigr)$ and $M_2 \in \mathcal{M}\bigl(P_2, \epsilon_1, \epsilon_2\bigr)$ to be chosen later, consider the problem of testing  $H_0: Z_1,\ldots,Z_n \overset{\mathrm{iid}}{\sim} M_1$ against $H_1: Z_1,\ldots,Z_n \overset{\mathrm{iid}}{\sim} M_2$. Let $\xi>0$ and let $\hat{\mathrm{CI}} \in \hat{\mathcal{CI}}$ be such that
    \begin{align*}
        \sup_{\theta\in\mathbb{R},\, P\in\mathcal{P}_{\mathrm{BD}}(\theta,[a,b]),\, M\in\mathcal{M}(P,\epsilon_1,\epsilon_2)} \mathbb{P}_M\bigl(|\hat{\mathrm{CI}}(Z_1,\ldots,Z_n)| \geq (1+\xi)L (n,\epsilon_1,\epsilon_2)\bigr) \leq \alpha',
    \end{align*}
    where $\alpha'\coloneqq 9/10 - 2\alpha > \alpha$.  Define $T\coloneqq \mathbbm{1}_{\{\theta_1\notin \hat{\mathrm{CI}}(Z_1,\ldots,Z_n)\}}$.
    If $(1+\xi)L_{n,\epsilon_1,\epsilon_2} \leq |\theta_1-\theta_2|$, then
    \begin{align}
        \mathbb{P}_{M_1}(T=1) &+ \mathbb{P}_{M_2}(T=0)\nonumber\\
        &\leq \alpha + \mathbb{P}_{M_2}\bigl(\theta_2\notin\hat{\mathrm{CI}}(Z_1,\ldots,Z_n)\bigr) + \mathbb{P}_{M_2}\bigl(|\hat{\mathrm{CI}}(Z_1,\ldots,Z_n)| \geq (1+\xi)L_{n,\epsilon_1,\epsilon_2}\bigr) \nonumber\\
        &\leq 2\alpha+\alpha' = \frac{9}{10}. \label{eq:testing-lb-unkown-epsilon-bd}
    \end{align}
    We now make specific choices of $\theta_1$, $\theta_2$, $P_1$, $P_2$ $M_1$ and $M_2$ to prove that~\eqref{eq:testing-lb-unkown-epsilon-bd} cannot hold.  This will establish a contradiction, allowing us to conclude, since $\xi>0$ is arbitrary, that $L_{n,\epsilon_1,\epsilon_2} \geq |\theta_1-\theta_2|$.

    Let $\theta_1 \coloneqq (1-\epsilon_1-\epsilon_2)a + (\epsilon_1+\epsilon_2)b$ and
    define $P_1\in \mathcal{P}_{\mathrm{BD}}(\theta_1,[a,b])$ by
    \begin{align*}
        P_1(\{a\}) \coloneqq 1-\epsilon_1 - \epsilon_2 \quad\text{and}\quad P_1(\{b\}) \coloneqq \epsilon_1 + \epsilon_2.
    \end{align*}
    Let $\theta_2\coloneqq a$ and define $P_2\in \mathcal{P}_{\mathrm{BD}}(\theta_2,[a,b])$ by
    \begin{align*}
        P_2(\{a\}) \coloneqq 1.
    \end{align*}
    Further define a distribution $M$ on $\mathbb{R}_{\star}$ by
    \begin{align*}
        M(\{a\}) \coloneqq 1-\epsilon_1-\epsilon_2 \quad\text{and}\quad M(\{\star\}) \coloneqq \epsilon_1 + \epsilon_2.
    \end{align*}
    Then $\frac{M(\{a\})}{P_1(\{a\})}, \frac{M(\{b\})}{P_1(\{b\})}, \frac{M(\{a\})}{P_2(\{a\})} \in [0, 1]$ so by Lemma~\ref{lemma:realisability-characterisation}, we have both $M\in\mathcal{M}(P_1,0,1)$ and $M\in\mathcal{M}(P_2,\epsilon_1,\epsilon_2)$. Thus, letting $M_1 \coloneqq M$ and $M_2 \coloneqq M$, we have
    \begin{align*}
        \mathbb{P}_{M_1}(T=1) + \mathbb{P}_{M_2}(T=0) = 1 > \frac{9}{10},
    \end{align*}
    but this contradicts~\eqref{eq:testing-lb-unkown-epsilon-bd}. Hence, $L_{n,\epsilon_1,\epsilon_2} \geq |\theta_1-\theta_2| = (b-a)(\epsilon_1+\epsilon_2)$. Finally, since $\mathcal{M}(P,\epsilon_1+\epsilon_2,0) \subseteq \mathcal{M}(P,\epsilon_1,\epsilon_2)$, we have by the MCAR lower bound of \citet[][Proposition S33\emph{(b)}]{ma2024estimation} that there exists a universal constant $c > 0$ such that $L_{n,\epsilon_1,\epsilon_2} \geq c(b-a)/\sqrt{n}$. Therefore,  $L_{n,\epsilon_1,\epsilon_2} \geq c(b-a)/\sqrt{n} \vee (b-a)(\epsilon_1+\epsilon_2)$.
\end{proof}

\subsection{Sub-Gaussian random variables}
\subsubsection{Upper bound}
\begin{Lemma} \label{lemma:sub-gaussian-bias}
    Let $\theta\in\mathbb{R}$, $\sigma>0$ and $P\in\mathcal{P}_{\mathrm{SG}}(\theta,\sigma)$.
    Let $Z \sim M \in \mathsf{MNAR}_P$, let $p\coloneqq \mathbb{P}(Z \neq \star)$ and let $Y \sim \mathsf{Law}(Z\,|\,Z\neq\star)$ where $\mathsf{Law}(Z\,|\,Z\neq\star)$ is the conditional distribution of $Z$ given $\{Z\neq \star\}$. Then
    \begin{align}
        \bigl|\mathbb{E}(Y) - \theta \bigr| \leq \sigma r^{(\mathrm{SG})}(p) \label{eq:sample-mean-bias}
    \end{align}
    and
    \begin{align}
        \|Y-\mathbb{E}(Y)\|_{\psi_2} \leq 4\sigma\sqrt{\log(2/p)}. \label{eq:SG-norm-bound}
    \end{align}
\end{Lemma}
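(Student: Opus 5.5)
Write $Z = X\ostar\Omega$ with $X\sim P$, so that $\mathsf{Law}(Y)=\mathsf{Law}(X\,|\,\Omega=1)$ and $p=\mathbb{P}(\Omega=1)$. Without loss of generality set $\theta=0$ and $\sigma=1$, by translation and scaling. The key object is the likelihood ratio $w(x)\coloneqq \mathbb{P}(\Omega=1\,|\,X=x)/p$, which satisfies $0\le w\le 1/p$ and $\mathbb{E}_P w(X)=1$. Both claims are then statements about reweighting $P$ by a density bounded by $1/p$.

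\textbf{Bias bound~\eqref{eq:sample-mean-bias}.} We have $\mathbb{E}(Y)=\mathbb{E}\{Xw(X)\}$. I will prove the two terms in the minimum defining $r^{(\mathrm{SG})}(p)$ separately.

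For the first term, apply the Donsker--Varadhan (Gibbs) variational inequality. Taking $Q=\mathsf{Law}(Y)$, for every $\lambda>0$,
\[
\lambda\mathbb{E}(Y)\le \mathrm{KL}(Q,P)+\log \mathbb{E}_P e^{\lambda X}.
\]
Since $\mathrm{d}Q/\mathrm{d}P\le 1/p$, we get $\mathrm{KL}(Q,P)\le \log(1/p)$. The condition $\mathbb{E}e^{X^2}\le 2$ with mean zero gives an mgf bound of the form $\mathbb{E}e^{\lambda X}\le e^{c\lambda^2}$. I expect $c=\log 4/4$ is attainable, for instance via $e^{\lambda x}\le e^{\lambda^2 c'}e^{x^2}\ldots$, or more carefully via Jensen or a Hölder interpolation. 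Optimising over $\lambda$ then gives $|\mathbb{E}Y|\le 2\sqrt{c\log(1/p)}=\sqrt{\log4\cdot\log(1/p)}$.

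For the second term, which is the relevant one when $p\to1$, use the mean-zero property:
\[
\mathbb{E}(Y)=\mathbb{E}\{X(w-1)\}=-\frac{1}{p}\,\mathbb{E}\{X(1-pw)\}.
\]
Here $0\le 1-pw\le 1$ and $\mathbb{E}(1-pw)=1-p$. So $|\mathbb{E}Y|\le \frac{1}{p}\sup\{\mathbb{E}|X|g : 0\le g\le1,\ \mathbb{E}g=1-p\}$. The supremum is attained by $g=\mathbb{1}\{|X|>q\}$ (up to ties) and is bounded by $(1-p)$ times the tail-conditional mean. A Hölder/Orlicz bound with $\psi(x)=e^{x^2}$ gives
\[
\mathbb{E}|X|g\le (1-p)\,\psi^{-1}\bigl(\mathbb{E}\psi(|X|)/(1-p)\bigr)\le (1-p)\sqrt{\log\{2/(1-p)\}},
\]
via Jensen applied to the concave function $\sqrt{\log(\cdot)}$. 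This yields $\frac{1-p}{p}\sqrt{\log\frac{2}{1-p}}$. The constant $11/5$ gives slack for being careless with ties and for the mean-centering.

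\textbf{Norm bound~\eqref{eq:SG-norm-bound}.} For any $t$, $\mathbb{E}e^{Y^2/t^2}=\mathbb{E}\{w(X)e^{X^2/t^2}\}\le \frac1p\mathbb{E}e^{X^2/t^2}$. By Jensen with $t^2=s\ge1$, $\mathbb{E}e^{X^2/s}\le 2^{1/s}$. Choosing $s=\log_2(4/p)\asymp\log(2/p)$ makes $\frac1p 2^{1/s}\le 2$; more carefully, we need $2^{1/s}\le 2p$, which fails for small $p$. So instead we will solve $\frac1p2^{1/s}\le2$ by working with the uncentred norm $\|Y\|_{\psi_2}\lesssim\sqrt{\log(2/p)}$ and then centre using $|\mathbb{E}Y|\le\sqrt{\log4\log(1/p)}$ together with $\|c\|_{\psi_2}=|c|/\sqrt{\log2}$ and the triangle inequality.

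The main obstacle is this norm bound. The issue is that Jensen only reduces $\mathbb{E}e^{X^2/s}$ to $2^{1/s}$, never below $1$, so the factor $1/p$ cannot be absorbed. The fix I will try first is to bound $\mathbb{E}e^{X^2/s}\le 1+\mathbb{E}(e^{X^2}-1)^{1/s}$ restricted appropriately. Alternatively, I will use the tail bound $\mathbb{P}(|Y|>u)\le \min\{1,2e^{-u^2}/p\}$ and integrate to get $\mathbb{E}e^{Y^2/t^2}\le 2$ for $t^2=C\log(2/p)$. The tail-integration route is the one I would commit to, tracking constants to land at $4$.
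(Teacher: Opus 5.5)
Your treatment of the first bias term and of the norm bound follows the paper's route. For the first term you use Donsker--Varadhan with $\mathrm{KL}(Q,P)\le\log(1/p)$, which holds because $\mathrm{d}Q/\mathrm{d}P\le 1/p$. For the norm bound you integrate the tail estimate $\mathbb{P}(|Y|>u)\le\min\{1,2e^{-u^2}/p\}$, then centre via the triangle inequality and $\|c\|_{\psi_2}=|c|/\sqrt{\log 2}$.

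For the second bias term you take a different and cleaner path. Jensen for $e^{x^2}$ under the probability measure $g\,\mathrm{d}P/(1-p)$, together with $g\le 1$, gives $|\mathbb{E}Y|\le\frac{1-p}{p}\sqrt{\log\frac{2}{1-p}}$ for every $p$ in one line. The paper instead truncates at $\sigma\sqrt{\log(2/(1-p))}$ and integrates the tail. That yields $\frac{\sigma(1-p)}{p}\{2L^{1/2}+\frac12 L^{-1/2}\}$ with $L=\log\frac{2}{1-p}$, and it reaches the constant $11/5$ only after restricting to $p\ge 0.9$. Your version has constant $1$ and needs no case split.

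The one real hole is the constant in the first term. The bound $\sqrt{\log 4\cdot\log(1/p)}$ needs exactly $\mathbb{E}e^{\lambda X}\le e^{\lambda^2\log 2/2}$ for centred $X$ with $\mathbb{E}e^{X^2}\le 2$. This is the sharp constant: the Rademacher variable $\pm\sqrt{\log 2}$ attains it as $\lambda\to 0$. It is a recent, nontrivial result that the paper simply imports from the literature. It does not come out of the elementary arguments you suggest:
\begin{itemize}
\item AM--GM ($\lambda x\le \lambda^2/4+x^2$) gives $2e^{\lambda^2/4}$, and hence only $|\mathbb{E}Y|\le\sqrt{\log(2/p)}$.
\item The inequality $e^x\le x+e^{x^2}$ plus Jensen gives $e^{\lambda^2\log 2}$ for $|\lambda|\le 1$, losing a factor $\sqrt{2}$.
\end{itemize}
The loss is not harmless. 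At $p=1/2$ the active term is $r^{(\mathrm{SG})}(1/2)=\sqrt{\log 4\cdot\log 2}\approx 0.98$. The two routes above give about $1.18$ and $1.39$, and your second-term bound gives $\sqrt{\log 4}\approx 1.18$. So you must invoke the sharp mgf theorem here; the same theorem also feeds the centring step of the norm bound.

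Once you do, the remaining constant-tracking is routine but should be written out. Take $t^2=6\log(2/p)$. For $u\ge e^{1/2}$ one has $(2/p)u^{-6\log(2/p)}\le u^{-4\log 2}$, so $\mathbb{E}e^{Y^2/t^2}\le e^{1/2}\{1+1/(4(4\log 2-1))\}<2$. Hence $\|Y\|_{\psi_2}\le\sqrt{6\log(2/p)}$, and since $\sqrt{6}+\sqrt{2}<4$ the stated bound follows.
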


\begin{proof}
    By translation invariance it suffices to prove the claim for $\theta=0$.
    Let $\lambda$ be a Borel measure on~$\mathbb{R}$ such that $P\ll\lambda$, and let $f\coloneqq \frac{\mathrm{d}P}{\mathrm{d}\lambda}$. We may assume without loss of generality that $Z$ is generated via $X\sim P$ and a binary random variable $\Omega$ with $\mathbb{P}(\Omega=1) = \mathbb{P}(Z \neq \star)$ in such a way that $Z=X\ostar \Omega$. Define $m:\mathbb{R}\to [0,1]$ by $m(x) \coloneqq \mathbb{P}(\Omega=1 \,|\, X=x)$. Then $\int_{\mathbb{R}} m(x)f(x) \,\mathrm{d}\lambda (x) = p$ and $\frac{\mathrm{d}M}{\mathrm{d}\lambda_{\star}}(x) = m(x)f(x)$ for $x\in\mathbb{R}$, where $\lambda_\star$ is the extended measure on $\mathbb{R}_\star$ corresponding to $\lambda$. 
    Let $Q$ be the conditional distribution of $Y$. Then
    \begin{align*}
        \frac{\mathrm{d}Q}{\mathrm{d}\lambda}(x) = \frac{m(x)f(x)}{p}  \quad\text{and}\quad
        \frac{\mathrm{d}Q}{\mathrm{d}P}(x) = \frac{m(x)}{p}.
    \end{align*}
    Therefore, by Jensen's inequality,
    \begin{align*}
        \mathrm{KL}(Q,P) = \int_{\mathbb{R}} \log\biggl(\frac{m(x)}{p}\biggr) \,\mathrm{d}Q(x) \leq \log\biggl(\frac{\int_{\mathbb{R}} m(x) \,\mathrm{d}Q(x)}{p}\biggr) \leq \log(1/p).
    \end{align*}
    Hence, by the Donsker--Varadhan variational characterisation of KL divergence \citep[e.g.][Corollary~4.15]{boucheron2003concentration}, we have for any $t\in\mathbb{R}$,
    \begin{align*}
        \mathbb{E}_{Y\sim Q}(tY) \leq \mathrm{KL}(Q,P) + \log \mathbb{E}_{X\sim P} (e^{tX}) \leq \log(1/p) + \frac{\sigma^2 t^2 \log2}{2},
    \end{align*}
    where the final inequality uses the fact that if $\|X\|_{\psi_2} \leq \sigma$, then $\mathbb{E}(e^{tX}) \leq e^{(\sigma^2 t^2 \log2)/2}$ for all $t \in \mathbb{R}$ \citep[Theorem~1.1]{leskela2026sharp}.
    Taking $t=\sigma^{-1}\sqrt{\frac{2\log(1/p)}{\log2}} > 0$ yields $\mathbb{E}(Y) \leq \sigma\sqrt{\log4 \cdot \log(1/p)}$.
    Similarly, taking $t= -\sigma^{-1}\sqrt{\frac{2\log(1/p)}{\log2}} > 0$ yields $\mathbb{E}(Y) \geq -\sigma\sqrt{\log4 \cdot \log(1/p)}$, so 
    \begin{align*}
        \bigl|\mathbb{E}(Y) - 0 \bigr| \leq \sigma\sqrt{\log4 \cdot \log(1/p)}.
    \end{align*}
    On the other hand, we also have
    \begin{align*}
        \bigl|\mathbb{E}(Y) - 0 \bigr| &= \frac{\bigl|\mathbb{E}\bigl\{Xm(X)\bigr\}\bigr|}{p} = \frac{\bigl|\mathbb{E}\bigl\{X\bigl(1-m(X)\bigr)\bigr\}\bigr|}{p}\\
        &\leq \inf_{t>0} \frac{1}{p} \Bigl\{\bigl|\mathbb{E}\bigl\{X\bigl(1-m(X)\bigr) \mathbbm{1}_{\{|X|\leq t\}} \bigr\}\bigr| + \mathbb{E}\bigl(|X|\mathbbm{1}_{\{|X|> t\}} \bigr)\bigr|\Bigr\}\\
        &\leq \inf_{t>0} \frac{1}{p} \Bigl\{ t(1-p) + t\mathbb{P}(|X|\geq t) + \int_t^{\infty} \mathbb{P}(|X| \geq s) \,\mathrm{d}s \Bigr\}\\
        &\leq \inf_{t>0} \frac{1}{p} \Bigl\{ t(1-p) + 2te^{-t^2/\sigma^2} + 2\int_t^{\infty} e^{-s^2/\sigma^2} \,\mathrm{d}s \Bigr\}\\
        &= \inf_{t>0} \frac{1}{p} \Bigl\{ t(1-p) + 2te^{-t^2/\sigma^2} + 2\sigma\sqrt{\pi}\bigl\{ 1-\Phi(\sqrt{2}t/\sigma) \bigr\} \Bigr\}\\
        &\leq \frac{\sigma(1-p)}{p} \biggl\{ 2\log^{1/2}\biggl(\frac{2}{1-p}\biggr) + \frac{1}{2}\log^{-1/2}\biggl(\frac{2}{1-p}\biggr)\biggr\},
    \end{align*}
    where in the penultimate inequality, we used $\mathbb{P}(|X| \geq s) = \mathbb{P}(e^{X^2/\sigma^2} \geq e^{s^2/\sigma^2}) \leq 2e^{-s^2/\sigma^2}$ for $s\geq 0$, and in the final inequality, we take $t = \sigma\sqrt{\log(\frac{2}{1-p})}$ and use the Mills ratio bound. Thus,
    \begin{align*}
        \bigl|\mathbb{E}(Y) - 0 \bigr|
        &\leq \sigma\sqrt{\log4 \cdot \log(1/p)} \wedge \frac{\sigma(1-p)}{p} \biggl\{ 2\log^{1/2}\biggl(\frac{2}{1-p}\biggr) + \frac{1}{2}\log^{-1/2}\biggl(\frac{2}{1-p}\biggr)\biggr\}\\
        &\leq \sigma\sqrt{\log4 \cdot \log(1/p)} \wedge \frac{11\sigma(1-p)}{5p} \log^{1/2}\biggl(\frac{2}{1-p}\biggr),
    \end{align*}
    where the final inequality follows since $\frac{1}{2}\log^{-1/2}\bigl(\frac{2}{1-p}\bigr) \leq \frac{1}{5} \log^{1/2}\bigl(\frac{2}{1-p}\bigr)$ for $p\geq 0.9$, and since the first term attains the minimum in the penultimate expression when $p<0.9$.

    \medskip
    For the second claim, note that
    \begin{align}
        \mathbb{P}\bigl(|Y| \geq s\bigr) = \frac{1}{p} \int_{|x|\geq s} m(x)f(x) \,\mathrm{d}\lambda(x) \leq \frac{1}{p} \mathbb{P}\bigl(|X| \geq s\bigr) \leq 1 \wedge \frac{2\exp(-s^2/\sigma^2)}{p} \label{eq:Y-tail-bound}
    \end{align}
    for all $s\geq 0$. Thus, 
    \begin{align*}
        \mathbb{E}\biggl\{\exp\biggl(\frac{Y^2}{6\sigma^2\log(2/p)}\biggr)\biggr\} &= \int_0^\infty \mathbb{P}\biggl\{\exp\biggl(\frac{Y^2}{6\sigma^2\log(2/p)}\biggr) \geq t\biggr\} \,\mathrm{d}t\\
        &= \int_0^\infty \mathbb{P}\Bigl(|Y| \geq \sqrt{6\sigma^2 \log(2/p)\log t}\Bigr) \,\mathrm{d}t\\
        &\leq e^{1/2} + \frac{2}{p} \int_{e^{1/2}}^\infty \exp\Bigl(-2\log(2/p)\log t - 4\log(2/p)\log t\Bigr)\,\mathrm{d}t\\
        &\leq e^{1/2} + \frac{2}{p} \int_{e^{1/2}}^\infty \exp\Bigl(-\log(2/p) - 4\log2\cdot\log t\Bigr)\,\mathrm{d}t\\
        &= e^{1/2} + \frac{e^{1/2}}{4(4\log 2-1)} \leq 2,
    \end{align*}
    where the first inequality follows from~\eqref{eq:Y-tail-bound}, so $\|Y\|_{\psi_2} \leq \sigma\sqrt{6\log(2/p)}$. Finally, since $\|\cdot\|_{\psi_2}$ is a norm \citep[Exercise~2.5.7]{vershynin2018high}, we deduce that
    \begin{align*}
        \|Y - \mathbb{E}(Y)\|_{\psi_2} \leq \|Y\|_{\psi_2} + \|\mathbb{E}(Y)\|_{\psi_2} \leq \sigma\sqrt{6\log(2/p)} + \frac{|\mathbb{E}(Y)|}{\sqrt{\log 2}} \leq 4\sigma\sqrt{\log(2/p)},
    \end{align*}
    where the final inequality uses the fact that $|\mathbb{E}(Y)| \leq \sigma\sqrt{\log 4 \cdot \log(1/p)}$.
\end{proof}

Recalling $\hat{\mathrm{CI}}^{(\mathrm{SG})}_{n,\sigma_{\max},\alpha}$ from~\eqref{eq:CI-SG}, we have the following proposition.
\begin{Proposition}\label{prop:sub-gaussian-CI}
    Let $(\epsilon_1,\epsilon_2) \in \mathcal{E}$, $\theta\in\mathbb{R}$, $\sigma\in(0,\sigma_{\max}]$ and $P\in\mathcal{P}_{\mathrm{SG}}(\theta,\sigma)$. Let $Z_1,\ldots,Z_n \overset{\mathrm{iid}}{\sim} M\in \mathcal{M}(P,\epsilon_1,\epsilon_2)$. Then
    \begin{align*}
        \mathbb{P}\Bigl(\theta \in \hat{\mathrm{CI}}^{(\mathrm{SG})}_{n,\sigma_{\max},\alpha} \Bigr) \geq 1-\alpha.
    \end{align*}
    Moreover, there exists a universal constant $c>0$ such that if $\alpha\in\bigl[6e^{-cn}, 1\bigr]$ and $\epsilon_1 + \epsilon_2 \leq 1-\frac{27\log(6/\alpha)}{n}$, then with probability at least $1-\alpha/2$,
    \begin{align*}
        \bigl| \hat{\mathrm{CI}}^{(\mathrm{SG})}_{n,\sigma_{\max},\alpha} \bigr| \leq C_{\alpha}\sigma_{\max} \Biggl\{ \frac{1}{\sqrt{n}} + \sqrt{\log\biggl(\frac{1}{1-\epsilon_1-\epsilon_2}\biggr)} \wedge \frac{\epsilon_1+\epsilon_2}{1-\epsilon_1-\epsilon_2} \sqrt{\log\biggl(\frac{1}{\epsilon_1+\epsilon_2}\biggr)}\Biggr\},
    \end{align*}
    where $C_\alpha>0$ depends only on $\alpha$.
\end{Proposition}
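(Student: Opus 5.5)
The plan mirrors the bounded-variable proof: split coverage into a deterministic bias bound plus a concentration event, then control the length using lower bounds on $\narrowhat{p}_n$.

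\textbf{Coverage.} Let $p\coloneqq\mathbb{P}(Z_1\neq\star)$ and recall the event $E_1$ from~\eqref{Eq:E1}. By the empirical Bernstein inequality, $\mathbb{P}(E_1)\ge 1-\alpha/2$, and on $E_1$ we have $\narrowhat{p}_{n,\alpha}^-\le p$.

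Since $r^{(\mathrm{SG})}$ is nonincreasing in $p$, Lemma~\ref{lemma:sub-gaussian-bias} (applied with $\sigma\le\sigma_{\max}$) gives, on $E_1$,
\[
|\mathbb{E}(Y)-\theta|\le \sigma_{\max}r^{(\mathrm{SG})}(p)\le\sigma_{\max}r^{(\mathrm{SG})}(\narrowhat{p}_{n,\alpha}^-).
\]
Here I must check the monotonicity of both branches of the minimum; the second branch $(1-p)\log^{1/2}(2/(1-p))/p$ is decreasing on $[0,1]$.

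For the stochastic term, condition on $n\narrowhat{p}_n=k\ge1$. The observed values are then i.i.d.\ copies of $Y$, and by~\eqref{eq:SG-norm-bound} they satisfy $\|Y-\mathbb{E}Y\|_{\psi_2}\le4\sigma\sqrt{\log(2/p)}$. The sub-Gaussian Hoeffding bound with the constants of~\citet{leskela2026sharp} (variance proxy $\log 2\cdot\|\cdot\|_{\psi_2}^2/2$) then gives
\[
|\bar Z_n-\mathbb{E}Y|\le 4\sigma\sqrt{\log(2/p)\log 4\cdot\log(4/\alpha)/k}
\]
with conditional probability at least $1-\alpha/2$. On $E_1$ we may replace $p$ by $\narrowhat{p}_{n,\alpha}^-$ inside $\log(2/\cdot)$.

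One subtlety is that $\narrowhat{p}_{n,\alpha}^-$ depends on $k$. This is harmless because it is a deterministic function of $k$, so the union of the two events has probability at least $1-\alpha$. When $\narrowhat{p}_{n,\alpha}^-=0$ or $k=0$ the interval is infinite, so coverage is trivial. A union bound finishes coverage.

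\textbf{Length.} Assume $\epsilon_+\le1-27\log(6/\alpha)/n$, so $p\ge1-\epsilon_+\ge27\log(6/\alpha)/n$. A multiplicative Chernoff bound gives $\narrowhat{p}_n\ge p/2$ with high probability. On this event the Bernstein correction is at most a small fraction of $p$ (the constant $27$ is chosen for this), so $\narrowhat{p}_{n,\alpha}^-\ge p/c$. Hoeffding also gives $1-\narrowhat{p}_{n,\alpha}^-\lesssim_\alpha 1-p+\sqrt{1/n}+\sqrt{p(1-p)/n}$.

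The stochastic term is then of order $\sqrt{\log(2/p)/(np)}$. This is $\lesssim_\alpha n^{-1/2}$ when $p\ge1/2$, and otherwise $\lesssim\sqrt{\log(1/p)}$ since $np\gtrsim\log(6/\alpha)$; in that regime the contamination term is already of constant order times $\sqrt{\log(1/(1-\epsilon_+))}$.

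For the bias term I bound $r^{(\mathrm{SG})}(\narrowhat{p}_{n,\alpha}^-)$ in two regimes.
\begin{itemize}
\item[(i)] If $p$ is small, use the first branch: $\sqrt{\log(1/\narrowhat{p}^-)}\lesssim\sqrt{\log(1/p)}+1$, and $\log(1/p)\le\log(1/(1-\epsilon_+))$.
\item[(ii)] If $p\ge1/2$, use the second branch with $1-\narrowhat{p}^-\lesssim_\alpha\epsilon_++n^{-1/2}$. The function $x\mapsto x\sqrt{\log(2/x)}$ is increasing and subadditive up to constants, so the bias is $\lesssim(\epsilon_++n^{-1/2})\sqrt{\log(2/(\epsilon_++n^{-1/2}))}$. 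This is at most a multiple of $\epsilon_+\sqrt{\log(1/\epsilon_+)}+n^{-1/2}\sqrt{\log n}$.
\end{itemize}

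\textbf{Main obstacle.} The extra $\sqrt{\log n}$ in regime (ii) does not match the claimed $n^{-1/2}$ rate when $\epsilon_+\lesssim n^{-1/2}$, and removing it is where I expect the difficulty. First I would try to use the finer Bernstein deviation $1-\narrowhat{p}^-\lesssim\epsilon_++\sqrt{\epsilon_+\log(1/\alpha)/n}+\log(1/\alpha)/n$, which exploits small variance when $p\approx1$. Writing $\delta$ for this deviation, the bias becomes $\delta\sqrt{\log(2/\delta)}$ with $\delta\lesssim_\alpha\epsilon_++1/n$. Then $(1/n)\sqrt{\log n}\le n^{-1/2}$, and the cross term $\sqrt{\epsilon_+/n}\,\sqrt{\log(\cdot)}$ is absorbed by AM–GM into $\epsilon_+\sqrt{\log(1/\epsilon_+)}+n^{-1/2}$, after checking the logarithm factors carefully. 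Finally, combine with the other $\min$-branch and note that $\epsilon_+/(1-\epsilon_+)$ and $\epsilon_+$ agree up to constants when $p\ge1/2$.
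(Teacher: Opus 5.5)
Your proposal is correct. The coverage half is exactly the paper's argument: $E_1$ from empirical Bernstein, the conditional sub-Gaussian event $E_2$ via \eqref{eq:SG-norm-bound}, and monotonicity of $r^{(\mathrm{SG})}$. Your small-$p$ regime, which uses the first branch of $r^{(\mathrm{SG})}$ together with $\narrowhat{p}_{n,\alpha}^-\gtrsim p$, also matches the paper.

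The real difference is how you handle $p$ near $1$.
\begin{itemize}
\item The paper splits this into two cases. For $p>1-1/n$ it uses Bernstein to get $1-\narrowhat{p}_{n,\alpha}^-\lesssim_\alpha 1/n$. For $c_0\le p\le 1-1/n$ it applies the mean value theorem to the concave $f(x)=(1-x)\log^{1/2}\{2/(1-x)\}$. It multiplies the derivative $\lesssim\sqrt{\log\{2/(1-p)\}}\le\sqrt{\log(2n)}$ by the empirical-Bernstein width, then absorbs the cross term by AM--GM.
\item Your fix for the $\sqrt{\log n}$ loss replaces both cases with one bound, $1-\narrowhat{p}_{n,\alpha}^-\lesssim_\alpha(1-p)+1/n$. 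This comes from Bernstein for $1-\narrowhat{p}_n$, followed by $\sqrt{2\narrowhat{p}_n(1-\narrowhat{p}_n)L/n}\le(1-\narrowhat{p}_n)+L/(2n)$ with $L=\log(6/\alpha)$.
\item You then use that $g(x)=x\sqrt{\log(2/x)}$ is increasing on $(0,1]$, that $g(2x)\le 2g(x)$, and that $g(1/n)\lesssim n^{-1/2}$.
\end{itemize}
This goes through. It is somewhat cleaner than the paper's route, since it needs neither the split at $1-1/n$ nor the mean-value step. The paper's route has the minor advantage of comparing $r^{(\mathrm{SG})}(\narrowhat{p}_{n,\alpha}^-)$ directly with $r^{(\mathrm{SG})}(p)$.

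One slip needs fixing. Your final sentence says $\epsilon_+/(1-\epsilon_+)\asymp\epsilon_+$ when $p\ge 1/2$. But $p\ge 1-\epsilon_+$ is only a lower bound on $p$, so $p\ge 1/2$ is compatible with $\epsilon_+$ close to $1$ (e.g.\ $\epsilon_1=0$ and an MNAR component that is almost always observed). Either of two repairs works:
\begin{itemize}
\item Keep the bounds in terms of $1-p$ and finish with $r^{(\mathrm{SG})}(p)\le r^{(\mathrm{SG})}(1-\epsilon_+)$, as the paper does.
\item Note that when $\epsilon_+>1/2$ the target rate is bounded below by a constant. Your bias bound is $O(1)$ there, because $\narrowhat{p}_{n,\alpha}^-\gtrsim p\ge 1/2$ and $g\le g(1)=\sqrt{\log 2}$.
\end{itemize}
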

\begin{proof}
    \textbf{Coverage:} Let $p\coloneqq \mathbb{P}(Z_1 \neq \star)$. Recall the definition of the event $E_1$ from~\eqref{Eq:E1}, as well as the facts that $\mathbb{P}(E_1) \geq 1-\alpha/2$ and $\narrowhat{p}_{n,\alpha}^- \leq p$ on $E_1$. Define
    \begin{align*}
        E_2 \coloneqq \biggl\{\bigl|\bar{Z}_n - \mathbb{E}(Z_1\,|\, Z_1 \neq \star)\bigr| \leq 4\sigma_{\max}\sqrt{\frac{\log(4)\log(2/p)\log(4/\alpha)}{n \narrowhat{p}_n}}\biggr\}.
    \end{align*}
    Let $Y_1,\ldots,Y_n \overset{\mathrm{iid}}{\sim} \mathsf{Law}(Z_1 \,|\, Z_1 \neq\star)$.
    By~\eqref{eq:SG-norm-bound} and \citet[Theorem~1.1]{leskela2026sharp}, we deduce that for $k \in [n]$,
    \begin{align*}
        \mathbb{P}(E_2\,|\,n\narrowhat{p}_n = k) = \mathbb{P}\biggl( \biggl|\frac{1}{k}\sum_{i=1}^k Y_i - \mathbb{E}(Y_1)\biggr| \leq 4\sigma_{\max}\sqrt{\frac{\log(4)\log(2/p)\log(4/\alpha)}{k}} \biggr) \geq 1-\frac{\alpha}{2}.
    \end{align*}
    Moreover, when $k=0$, our convention that $\bar{Z}_n=0$ means that this bound continues to hold.  Hence, $\mathbb{P}(E_2) = \mathbb{E}\bigl\{\mathbb{P}(E_2\,|\,n\narrowhat{p}_n)\bigr\} \geq 1- \alpha/2$.
    Therefore, by~\eqref{eq:sample-mean-bias} and since $r^{(\mathrm{SG})}(\cdot)$ is decreasing, we deduce that on $E_1 \cap E_2$, we have $\theta\in \hat{\mathrm{CI}}^{(\mathrm{SG})}_{n,\sigma_{\max},\alpha}$. Hence $\mathbb{P}\bigl(\theta\in \hat{\mathrm{CI}}^{(\mathrm{SG})}_{n,\sigma_{\max},\alpha}\bigr) \geq 1-\alpha$ by a union bound.

    \medskip
    \textbf{Length:} Let $c_0\in(0,1)$ be the unique solution to the following equation
    \begin{align*}
        \sqrt{\log4 \cdot \log(1/c_0)} =  \frac{11(1-c_0)}{5c_0} \log^{1/2}\biggl(\frac{2}{1-c_0}\biggr).
    \end{align*}
    First, suppose that $p > 1- 1/n$. By Bernstein's inequality \citep[e.g.][Theorem~A.6.12 or Exercise~A.6.10]{samworth2026statistics}, we deduce that the event $E_3 \coloneqq \bigl\{\narrowhat{p}_n \geq 1-\frac{3\log(2/\alpha)}{n}\bigr\}$ has probability at least $1-\alpha/2$. Thus, on $E_3$, we have $1-\narrowhat{p}_{n,\alpha}^- \leq 9\log(6/\alpha)/n$. Therefore, since $z \mapsto z\sqrt{\log(2/z)}$ is increasing on $[0,1]$, we deduce that on $E_3$, and provided we choose the universal constant $c$ in the statement to lie in $(0,1/18]$,
    \begin{align*}
        r^{(\mathrm{SG})}(\narrowhat{p}_{n,\alpha}^-) \lesssim (1-\narrowhat{p}_{n,\alpha}^-) \sqrt{\log\biggl(\frac{2}{1-\narrowhat{p}_{n,\alpha}^-}\biggr)} \leq \frac{9\log(6/\alpha)}{n} \sqrt{\log\biggl(\frac{2n}{9\log(6/\alpha)}\biggr)} \lesssim_{\alpha} \frac{1}{\sqrt{n}}.
    \end{align*}
    Hence, we have with probability at least $1-\alpha/2$ that
    \begin{align*}
        \Bigl| \hat{\mathrm{CI}}^{(\mathrm{SG})}_{n,\sigma,\alpha} \Bigr| \lesssim_{\alpha} \frac{\sigma_{\max}}{\sqrt{n}}.
    \end{align*}
    
    Next, suppose that $c_0 \leq p \leq 1- 1/n$.
    By choosing the universal constant $c > 0$ in the theorem statement small enough, we have $c_0/2\leq \narrowhat{p}_{n,\alpha}^-\leq p$ on the event $E_1$.
    Let $f(x) \coloneqq (1-x)\log^{1/2}\bigl(\frac{2}{1-x}\bigr)$ for $x \in (0,1)$.  By the mean value theorem and the concavity of $f$, on $E_1$,
    \begin{align*}
        r^{(\mathrm{SG})}(\narrowhat{p}_{n,\alpha}^-) &\leq \frac{11(1-\narrowhat{p}_{n,\alpha}^-)}{5\narrowhat{p}_{n,\alpha}^-} \log^{1/2}\biggl(\frac{2}{1-\narrowhat{p}_{n,\alpha}^-}\biggr) \lesssim f(\narrowhat{p}_{n,\alpha}^-)\\
        &\leq f(p) + \Biggl\{ \sqrt{\log\biggl(\frac{2}{1-p}\biggr)} - \frac{1}{2\sqrt{\log\bigl(\frac{2}{1-p}\bigr)}} \Biggr\} (p - \narrowhat{p}_{n,\alpha}^-)\\
        &\leq f(p) + \Biggl\{ \sqrt{\log\biggl(\frac{2}{1-p}\biggr)} - \frac{1}{2\sqrt{\log\bigl(\frac{2}{1-p}\bigr)}} \Biggr\} \cdot 2\biggl\{ \sqrt{\frac{2\narrowhat{p}_n(1-\narrowhat{p}_n)\log(6/\alpha)}{n}} + \frac{3\log(6/\alpha)}{n} \biggr\}\\
        &\lesssim_{\alpha} f(p) + \sqrt{\log\biggl(\frac{2}{1-p}\biggr) \cdot \frac{1-\narrowhat{p}_n}{n}} + \frac{1}{n}\sqrt{\log\biggl(\frac{2}{1-p}\biggr)}\\
        &\lesssim f(p) + \sqrt{\log\biggl(\frac{2}{1-p}\biggr) \cdot \frac{1-p + n^{-1/2}}{n}} + \frac{\sqrt{\log n}}{n}\\
        &\lesssim f(p) + \sqrt{\log\biggl(\frac{2}{1-p}\biggr) \cdot \frac{1-p}{n}} + \sqrt{\frac{\log n}{n^{3/2}}} + \frac{1}{\sqrt{n}} \lesssim f(p) + \frac{1}{\sqrt{n}} \leq r^{(\mathrm{SG})}(p) + \frac{1}{\sqrt{n}}.
    \end{align*}
    Hence, in this case, we have with probability at least $1-\alpha/2$ that
    \begin{align*}
        \Bigl| \hat{\mathrm{CI}}^{(\mathrm{SG})}_{n,\sigma_{\max},\alpha} \Bigr| \lesssim_{\alpha} \sigma_{\max} \biggl\{ \frac{1}{\sqrt{n}} + r^{(\mathrm{SG})}(p) \biggr\}.
    \end{align*}

    Finally, suppose that $0<p<c_0$. Since $\epsilon_1+\epsilon_2 \leq 1-\frac{27\log(6/\alpha)}{n}$, we have $p\geq \frac{27\log(6/\alpha)}{n}$. By a multiplicative Chernoff bound, we have that the event 
    \begin{align*}
        E_4 \coloneqq \biggl\{\frac{2p}{3} \leq \narrowhat{p}_n \leq \frac{4p}{3} \biggr\}
    \end{align*}
    has probability at least $1-\alpha/3$. Thus, we have on $E_4$ that 
    \begin{align*}
        \narrowhat{p}_{n,\alpha}^- &\geq \frac{2p}{3} - \sqrt{\frac{2\narrowhat{p}_n(1-\narrowhat{p}_n)\log(6/\alpha)}{n}} - \frac{3\log(6/\alpha)}{n}\\
        &\geq \frac{2p}{3} - \sqrt{\frac{3p\log(6/\alpha)}{n}} - \frac{3\log(6/\alpha)}{n} \geq \frac{2p}{3} - \frac{p}{3} - \frac{p}{9} = \frac{2p}{9},
    \end{align*}
    where the final inequality uses the fact that $p\geq \frac{27\log(6/\alpha)}{n}$.
    Therefore, we deduce that on $E_4$,
    \begin{align*}
        r^{(\mathrm{SG})}(\narrowhat{p}_{n,\alpha}^-) \lesssim \sqrt{\log(1/\narrowhat{p}_{n,\alpha}^-)} \leq \sqrt{\log(9/2p)} \lesssim r^{(\mathrm{SG})}(p).
    \end{align*}
    Hence, on $E_4$, we have
    \begin{align*}
        \Bigl| \hat{\mathrm{CI}}^{(\mathrm{SG})}_{n,\sigma,\alpha} \Bigr| \lesssim_{\alpha} \sigma_{\max} \biggl\{\sqrt{\frac{\log(9/p)}{np}} + r^{(\mathrm{SG})}(p)\biggr\} \lesssim \sigma_{\max} r^{(\mathrm{SG})}(p).
    \end{align*}

    Combining all the three cases above, we deduce that with probability at least $1-\alpha/2$,
    \begin{align*}
        \bigl| \hat{\mathrm{CI}}^{(\mathrm{SG})}_{n,\sigma,\alpha} \bigr| &\lesssim_{\alpha} \sigma_{\max} \biggl\{ \frac{1}{\sqrt{n}} + r^{(\mathrm{SG})}(p)\biggr\} \leq \sigma_{\max} \biggl\{ \frac{1}{\sqrt{n}} + r^{(\mathrm{SG})}(1-\epsilon_1-\epsilon_2) \biggr\}\\
        &\lesssim \sigma_{\max} \Biggl\{ \frac{1}{\sqrt{n}} + \sqrt{\log\biggl(\frac{1}{1-\epsilon_1-\epsilon_2}\biggr)} \wedge \frac{\epsilon_1+\epsilon_2}{1-\epsilon_1-\epsilon_2} \sqrt{\log\biggl(\frac{1}{\epsilon_1+\epsilon_2}\biggr)}\Biggr\},
    \end{align*}
    where the second inequality follows since $p\geq 1-\epsilon_1 - \epsilon_2$ and since $r^{(\mathrm{SG})}(\cdot)$ is decreasing.
\end{proof}

\subsubsection{Lower bound}
\begin{Proposition}\label{prop:sub-gaussian-lb}
    Let $n\in\mathbb{N}$, $(\epsilon_1,\epsilon_2) \in\mathcal{E}$ and $\alpha\in(0,\frac{1}{4}]$.
    Then  
    \begin{align*}
        L_{n,\epsilon_1,\epsilon_2}\bigl(\hat{\mathcal{CI}}(\mathcal{P}_{\mathrm{SG}}, \mathcal{E})\bigr) \geq \frac{c\sigma_{\max}}{\sqrt{n}} \vee \sigma_{\max}\Biggl\{\sqrt{\log\biggl(\frac{1}{1-\epsilon_+}\biggr)} \wedge \frac{\epsilon_+}{1-\epsilon_+}\sqrt{\log\biggl(\frac{1}{\epsilon_+}\biggr)}\Biggr\},
    \end{align*}
    where $c>0$ is a universal constant.
\end{Proposition}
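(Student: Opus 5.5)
The plan follows the template of the bounded-case lower bound: a two-point testing argument in which a single distribution $M$ on $\mathbb{R}_\star$ is realisable both as a member of $\mathcal{M}(P_1,0,1)$ and as a member of $\mathcal{M}(P_2,\epsilon_1,\epsilon_2)$, with $P_1,P_2\in\mathcal{P}_{\mathrm{SG}}$ having means $\theta_1\neq\theta_2$. Both pairs $(0,1)$ and $(\epsilon_1,\epsilon_2)$ lie in $\mathcal{E}$. So the same argument as in~\eqref{eq:testing-lb-unkown-epsilon-bd} gives $\mathbb{P}_{M_1}(T=1)+\mathbb{P}_{M_2}(T=0)=1>9/10$, a contradiction, and hence $L_{n,\epsilon_1,\epsilon_2}\geq|\theta_1-\theta_2|$. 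The parametric term $c\sigma_{\max}/\sqrt{n}$ comes from the MCAR lower bound \citep[Proposition~S33]{ma2024estimation}, using $\mathcal{M}(P,\epsilon_+,0)\subseteq\mathcal{M}(P,\epsilon_1,\epsilon_2)$ and the fact that $N(\theta,c'\sigma_{\max}^2)\in\mathcal{P}_{\mathrm{SG}}$ for a suitable constant $c'$.

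For the contamination term we use Lemma~\ref{lemma:realisability-characterisation}. It suffices to find $P_1,P_2$ such that $(1-\epsilon_+)P_2\leq P_1$ as measures. We then take $M:=(1-\epsilon_+)P_2+\epsilon_+\delta_{\{\star\}}$. This $M$ lies in $\mathcal{M}(P_2,\epsilon_1,\epsilon_2)$, since its density ratio with respect to $P_2$ equals $1-\epsilon_+$, which is between $1-\epsilon_+$ and $1-\epsilon_1$. It also lies in $\mathcal{M}(P_1,0,1)$, since its density ratio with respect to $P_1$ lies in $[0,1]$. The problem therefore reduces to an extremal one: maximise $|\mathbb{E}P_1-\mathbb{E}P_2|$ subject to $P_1\geq(1-\epsilon_+)P_2$ and both $P_i$ having $\psi_2$-norm (about their means) at most $\sigma_{\max}$. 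Writing $P_1=(1-\epsilon_+)P_2+\epsilon_+R$, the mean gap is $\epsilon_+|\mathbb{E}R-\mathbb{E}P_2|$.

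We treat two regimes, each with a two-point construction scaled by $\sigma_{\max}$ times a small constant $\kappa$.

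\emph{Regime $\epsilon_+\leq 1/2$.} Take $P_2=\delta_0$ and $R=\delta_h$ with $h=\kappa\sigma_{\max}\sqrt{\log(1/\epsilon_+)}$. Then $P_1=(1-\epsilon_+)\delta_0+\epsilon_+\delta_h$ has mean $\epsilon_+h$. Its centred version takes the value $h(1-\epsilon_+)$ with probability $\epsilon_+$ and the value $-\epsilon_+h$ otherwise, so
\[
\mathbb{E}\exp\bigl((X-\theta_1)^2/\sigma_{\max}^2\bigr)\leq \epsilon_+e^{\kappa^2\log(1/\epsilon_+)}+e^{\kappa^2\epsilon_+^2\log(1/\epsilon_+)}\leq 2
\]
for small enough $\kappa$. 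This gives a gap of order $\sigma_{\max}\epsilon_+\sqrt{\log(1/\epsilon_+)}$, which matches the minimum in the statement up to constants in this regime.

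\emph{Regime $\epsilon_+>1/2$.} We want a gap of order $\sigma_{\max}\sqrt{\log(1/(1-\epsilon_+))}$. Let $q:=1-\epsilon_+$ and take $P_2=\delta_h$ with $h=\kappa\sigma_{\max}\sqrt{\log(1/q)}$, and $P_1=q\delta_h+(1-q)\delta_0$. Then $P_1\geq qP_2$, and $P_1$ has mean $qh$. Its centred version equals $(1-q)h$ with probability $q$ and $-qh$ otherwise, so
\[
\mathbb{E}\exp\bigl((X-\theta_1)^2/\sigma_{\max}^2\bigr)\leq q\,q^{-\kappa^2}+e^{\kappa^2q^2\log(1/q)}\leq 2
\]
for small $\kappa$. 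The gap is $(1-q)h\gtrsim\sigma_{\max}\sqrt{\log(1/q)}$. Since $\epsilon_+\sqrt{\log(1/\epsilon_+)}/(1-\epsilon_+)\gtrsim\sqrt{\log(1/(1-\epsilon_+))}$ here, this matches the minimum as well.

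The statement is written with constant $1$ in front of the contamination term and $c$ only on the $1/\sqrt{n}$ term, so the main obstacle is constant tracking. If constant $1$ is genuinely intended, the constructions above must be sharpened: one would choose $R$ or $P_2$ to have Gaussian-like tails (for example a carefully tuned point mass at the extreme level where $\mathbb{E}e^{X^2/\sigma_{\max}^2}$ saturates to exactly $2$) and verify the $\psi_2$ condition precisely. If this proves delicate, I would settle for a universal constant, which suffices for the $\asymp_\alpha$ statement of Theorem~\ref{thm:nonparametric-minimax-rates}(b). The remaining work is routine: checking the $\psi_2$ moment bounds and the elementary comparison between the two branches of the minimum.
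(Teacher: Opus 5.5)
Your strategy is the paper's own. The paper gets the contamination term from Proposition~\ref{prop:tail-adapt} with $\psi_1=\psi_2=e^{x^2}-1$, which uses your device: a single $M=(1-\epsilon_+)\delta_a+\epsilon_+\delta_{\{\star\}}$ lying in both $\mathcal{M}(P_1,0,1)$ and $\mathcal{M}(P_2,\epsilon_1,\epsilon_2)$, with $P_2$ a point mass and $P_1$ a two-point law. It gets the $c\sigma_{\max}/\sqrt{n}$ term from the MCAR bound, as you do.

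The one real gap is the one you flagged. Your two-regime constructions give the contamination term only up to a small universal factor (about $\kappa/2$), whereas the proposition states it with constant $1$. So as written you prove Theorem~\ref{thm:nonparametric-minimax-rates}(b) but not the proposition itself.

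No Gaussian-like tails are needed to fix this. Keep the same two-point family, drop the case split, and take the scale equal to the minimum itself:
\[
a\coloneqq \sigma_{\max}\biggl\{\sqrt{\log\Bigl(\frac{1}{1-\epsilon_+}\Bigr)}\wedge\frac{\epsilon_+}{1-\epsilon_+}\sqrt{\log\Bigl(\frac{1}{\epsilon_+}\Bigr)}\biggr\},\qquad b\coloneqq-\frac{1-\epsilon_+}{\epsilon_+}\,a,
\]
with $P_2\coloneqq\delta_a$ and $P_1\coloneqq(1-\epsilon_+)\delta_a+\epsilon_+\delta_b$, which has mean $0$. Then
\[
\mathbb{E}_{P_1}e^{X^2/\sigma_{\max}^2}=(1-\epsilon_+)e^{a^2/\sigma_{\max}^2}+\epsilon_+e^{b^2/\sigma_{\max}^2}\leq(1-\epsilon_+)\cdot\frac{1}{1-\epsilon_+}+\epsilon_+\cdot\frac{1}{\epsilon_+}=2.
\]
The first branch of the minimum bounds the atom at $a$ and the second branch bounds the atom at $b$; this is Lemma~\ref{lemma:a1-a2}. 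The mean gap is exactly $a$.

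In your regime $\epsilon_+\leq 1/2$, two choices cost you the constant. First, you tied $h$ to the second branch alone and inserted an extra $\kappa$. Second, you bounded $(1-\epsilon_+)e^{\epsilon_+^2h^2/\sigma_{\max}^2}$ by dropping the weight $1-\epsilon_+$. With $h=a/\epsilon_+$ and the first branch in force, that term is at most $1$ instead of needing a small $\kappa$ to absorb it. The regime $\epsilon_+>1/2$ lost its constant in the same way.
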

\begin{proof}
    Since $\mathcal{M}(P,\epsilon_1+\epsilon_2,0) \subseteq \mathcal{M}(P,\epsilon_1,\epsilon_2)$, we have by the MCAR lower bound of \citet[][Proposition S33(a)]{ma2024estimation} that $L_{n,\epsilon_1,\epsilon_2}\bigl(\hat{\mathcal{CI}}(\mathcal{P}_{\mathrm{SG}}, \mathcal{E})\bigr) \geq c\sigma/\sqrt{n}$. The second part of the bound follows by Proposition~\ref{prop:tail-adapt} (with $\psi_1(x)=\psi_2(x)=e^{x^2}-1$) and Lemma~\ref{lemma:a1-a2}, and we therefore conclude the result.
\end{proof}

\subsection{Finite-variance random variables}
\subsubsection{Upper bound}

\begin{Lemma} \label{lemma:finite-variance-bias}
    Let $\theta\in\mathbb{R}$, $\sigma>0$ and $P \in \mathcal{P}_{\mathrm{FV}}(\theta,\sigma)$. Let $Z \sim M \in \mathsf{MNAR}_P$ and $p\coloneqq \mathbb{P}(Z \neq \star) \in [0,1]$. Then
    \begin{align*}
        \bigl|\mathbb{E}(Z \,|\, Z\neq \star) - \theta \bigr| \leq \sigma r^{(\mathrm{FV})}(p),
    \end{align*}
    and
    \begin{align*}
        \mathrm{Var}(Z\,|\,Z\neq \star) \leq \frac{\sigma^2}{p}.
    \end{align*}
\end{Lemma}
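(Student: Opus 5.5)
We follow the same set-up as in the proof of Lemma~\ref{lemma:sub-gaussian-bias}. By translation invariance we may take $\theta=0$. We realise $Z=X\ostar\Omega$ with $X\sim P$, and set $m(x)\coloneqq\mathbb{P}(\Omega=1\,|\,X=x)\in[0,1]$, so that $\mathbb{E}\,m(X)=p$. If $p=0$, the convention $r^{(\mathrm{FV})}(0)=\infty$ makes both claims trivial, so we assume $p>0$. Then $\mathbb{E}(Z\,|\,Z\neq\star)=\mathbb{E}\{Xm(X)\}/p$, and the conditional law of $Z$ given $Z\neq\star$ has density $m/p$ with respect to $P$.

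\textbf{Bias bound.} Since $\mathbb{E}X=0$, we have $\mathbb{E}\{Xm(X)\}=\mathbb{E}\{X(m(X)-c)\}$ for every constant $c$. The natural choice is $c=p$, which gives
\[
\bigl|\mathbb{E}\{Xm(X)\}\bigr|=\bigl|\mathbb{E}\{X(m(X)-p)\}\bigr|\leq \sqrt{\mathbb{E}X^2}\,\sqrt{\mathrm{Var}\,m(X)}\leq \sigma\sqrt{\mathrm{Var}\,m(X)}
\]
by Cauchy--Schwarz. Because $m$ takes values in $[0,1]$, we have $m^2\leq m$, and hence $\mathrm{Var}\,m(X)=\mathbb{E}m(X)^2-p^2\leq p-p^2=p(1-p)$. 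Dividing by $p$ then yields
\[
|\mathbb{E}(Z\,|\,Z\neq\star)|\leq \sigma\sqrt{p(1-p)}/p=\sigma\sqrt{(1-p)/p}=\sigma r^{(\mathrm{FV})}(p).
\]
The key point in this step is to centre $m$ at its mean before applying Cauchy--Schwarz. Without this centring we would only obtain the cruder bound $\sigma/\sqrt{p}$, which does not vanish as $p\to1$.

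\textbf{Variance bound.} The conditional variance is at most the conditional second moment about $0$:
\[
\mathrm{Var}(Z\,|\,Z\neq\star)\leq \mathbb{E}(Z^2\,|\,Z\neq\star)=\mathbb{E}\{X^2m(X)\}/p\leq \mathbb{E}X^2/p\leq \sigma^2/p,
\]
using $m\leq 1$. We do not expect any real obstacle here; the only subtle step in the whole argument is the centring used for the bias bound.
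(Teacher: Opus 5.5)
Your proof is correct and follows essentially the same route as the paper. You reduce to $\theta=0$, write the conditional law as having density $m/p$ with respect to $P$, centre $m$ at $p$ before applying Cauchy--Schwarz (with $\mathrm{Var}\,m(X)\le p(1-p)$), and bound the conditional variance by the conditional second moment using $m\le 1$; your explicit treatment of the case $p=0$ and of the inequality $m^2\le m$ just fills in details the paper leaves implicit.
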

\begin{proof}
    By translation invariance, it suffices to prove the claim for $\theta=0$. As in the proof of Lemma~\ref{lemma:sub-gaussian-bias}, there exists a Borel measurable function $m:\mathbb{R}\to [0,1]$ such that $\int_{\mathbb{R}} m(x) \,\mathrm{d}P(x) = p$ and $\frac{\mathrm{d}M}{\mathrm{d}P_{\star}}(x) = m(x)$ for $x\in\mathbb{R}$, where $P_\star$ is the extended measure of $P$ on $\mathbb{R}_{\star}$. Thus, by Cauchy--Schwarz,
    \begin{align*}
        \bigl|\mathbb{E}(Z \,|\, Z\neq \star) - 0 \bigr| &= \frac{1}{p} \bigl|\mathbb{E}_P\{Xm(X)\}\bigr| = \frac{1}{p} \bigl|\mathbb{E}_P\bigl\{X\bigl(m(X)-p\bigr)\bigr\}\bigr|\\
        &\leq \frac{1}{p} \sqrt{\mathbb{E}_P(X^2) \mathrm{Var}_P \, m(X)} \leq \frac{\sigma\sqrt{p(1-p)}}{p} = \sigma\sqrt{\frac{1-p}{p}},
    \end{align*}
    and
    \begin{align*}
        \mathrm{Var}(Z \,|\, Z\neq \star) \leq \mathbb{E}(Z^2 \,|\, Z\neq \star) = \frac{1}{p} \mathbb{E}_P\{X^2m(X)\} \leq \frac{\sigma^2}{p},
    \end{align*}
    as required.
\end{proof}

Recalling $\hat{\mathrm{CI}}^{(\mathrm{FV})}_{n,\sigma_{\max},\alpha}$ from~\eqref{eq:CI-FV}, we have the following.
\begin{Proposition} \label{prop:finite-variance-CI}
    Let $(\epsilon_1,\epsilon_2) \in \mathcal{E}$, $\theta\in\mathbb{R}$, $\sigma\in(0,\sigma_{\max}]$ and $P \in \mathcal{P}_{\mathrm{FV}}(\theta,\sigma)$. Let $Z_1,\ldots,Z_n \overset{\mathrm{iid}}{\sim} M\in \mathcal{M}(P,\epsilon_1,\epsilon_2)$. Then
    \begin{align*}
        \mathbb{P}\Bigl(\theta \in \hat{\mathrm{CI}}^{(\mathrm{FV})}_{n,\sigma_{\max},\alpha} \Bigr) \geq 1-\alpha.
    \end{align*}
    Moreover, if $\epsilon_1 + \epsilon_2 \leq 1 - \frac{27\log(12/\alpha)}{n}$, then with probability at least $1-\alpha$,
    \begin{align*}
        \bigl| \hat{\mathrm{CI}}^{(\mathrm{FV})}_{n,\sigma_{\max},\alpha} \bigr| \leq C_{\alpha}\sigma_{\max} \Biggl\{ \frac{1}{\sqrt{n}} + \sqrt{\frac{\epsilon_+}{1-\epsilon_+}}\Biggr\},
    \end{align*}
    where $C_{\alpha}>0$ depends only on $\alpha$.
\end{Proposition}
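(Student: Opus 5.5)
The proof mirrors the sub-Gaussian case (Proposition~\ref{prop:sub-gaussian-CI}), with Lemma~\ref{lemma:finite-variance-bias} in place of Lemma~\ref{lemma:sub-gaussian-bias} and Chebyshev's inequality in place of sub-Gaussian concentration.

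\textbf{Coverage.} Let $p\coloneqq\mathbb{P}(Z_1\neq\star)$ and recall the event $E_1$ from~\eqref{Eq:E1}, which has probability at least $1-\alpha/2$ and on which $\narrowhat{p}_{n,\alpha}^-\le p$. Define
\[
E_2\coloneqq\Bigl\{|\bar Z_n-\mathbb{E}(Z_1\,|\,Z_1\neq\star)|\le \sigma_{\max}\sqrt{2/(n\narrowhat{p}_n p\alpha)}\Bigr\}.
\]
Conditionally on $n\narrowhat{p}_n=k\ge1$, the observed values are i.i.d.\ with variance at most $\sigma^2/p$ by Lemma~\ref{lemma:finite-variance-bias}. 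Chebyshev's inequality then gives $\mathbb{P}(E_2\,|\,n\narrowhat{p}_n=k)\ge1-\alpha/2$. The case $k=0$ is trivial by the convention $1/0=\infty$ in the width, so $\mathbb{P}(E_2)\ge1-\alpha/2$. On $E_1\cap E_2$ we have $p\ge\narrowhat{p}_{n,\alpha}^-$, so $1/p\le 1/\narrowhat{p}_{n,\alpha}^-$ and the stochastic term is at most $\sigma_{\max}\sqrt{2/(n\narrowhat{p}_n\narrowhat{p}_{n,\alpha}^-\alpha)}$. Moreover $r^{(\mathrm{FV})}$ is decreasing, so the bias bound $\sigma r^{(\mathrm{FV})}(p)\le\sigma_{\max}r^{(\mathrm{FV})}(\narrowhat{p}_{n,\alpha}^-)$ holds. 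Hence $|\bar Z_n-\theta|\le\Delta_{\mathrm{FV}}$ on $E_1\cap E_2$, and a union bound gives coverage $1-\alpha$.

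\textbf{Length.} We have $|\hat{\mathrm{CI}}^{(\mathrm{FV})}|=2\Delta_{\mathrm{FV}}$, and we bound each term with high probability using $p\ge1-\epsilon_+\ge27\log(12/\alpha)/n$.

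\emph{Small-$p$ regime.} Take $p<1/2$, say. The multiplicative Chernoff argument from the sub-Gaussian proof, applied with $\alpha$ replaced by $\alpha/2$ (hence $\log(12/\alpha)$), shows that with probability at least $1-\alpha/2$ we have $2p/3\le\narrowhat{p}_n\le4p/3$ and $\narrowhat{p}_{n,\alpha}^-\ge2p/9$. Then
\[
r^{(\mathrm{FV})}(\narrowhat{p}_{n,\alpha}^-)\lesssim p^{-1/2}\lesssim\sqrt{(1-p)/p},
\qquad
\sqrt{2/(n\narrowhat{p}_n\narrowhat{p}_{n,\alpha}^-\alpha)}\lesssim_\alpha 1/(p\sqrt n)\lesssim_\alpha p^{-1/2},
\]
where the last step uses $np\gtrsim1$.

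\emph{Large-$p$ regime.} Take $p\ge1/2$. On $E_1$, for $n$ large relative to $\log(1/\alpha)$ (otherwise the bound holds trivially after adjusting constants), we have $\narrowhat{p}_{n,\alpha}^-\ge1/4$ and $\narrowhat{p}_n\ge1/4$, so the stochastic term is $\lesssim_\alpha n^{-1/2}$. The key step is controlling $1-\narrowhat{p}_{n,\alpha}^-$. On $E_1$,
\[
1-\narrowhat{p}_{n,\alpha}^-\le 1-p+2\Bigl(\sqrt{2\narrowhat{p}_n(1-\narrowhat{p}_n)\log(6/\alpha)/n}+3\log(6/\alpha)/n\Bigr),
\]
and $1-\narrowhat{p}_n\le 1-p+\sqrt{\log(6/\alpha)/(2n)}$ with the Bernstein-type bound giving $1-\narrowhat{p}_n\lesssim_\alpha 1-p+1/n$. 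This yields $1-\narrowhat{p}_{n,\alpha}^-\lesssim_\alpha 1-p+\sqrt{(1-p)/n}+1/n\lesssim 1-p+1/n$ by AM--GM. Therefore
\[
r^{(\mathrm{FV})}(\narrowhat{p}_{n,\alpha}^-)\lesssim\sqrt{1-\narrowhat{p}_{n,\alpha}^-}\lesssim_\alpha\sqrt{1-p}+n^{-1/2}.
\]

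\textbf{Conclusion.} In both regimes, with probability at least $1-\alpha$,
\[
|\hat{\mathrm{CI}}^{(\mathrm{FV})}|\lesssim_\alpha\sigma_{\max}\bigl(n^{-1/2}+\sqrt{(1-p)/p}\bigr).
\]
Since $p\ge1-\epsilon_+$ and $p\mapsto\sqrt{(1-p)/p}$ is decreasing, this is at most $C_\alpha\sigma_{\max}\bigl(n^{-1/2}+\sqrt{\epsilon_+/(1-\epsilon_+)}\bigr)$.

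The main obstacle is the large-$p$ regime. There the square root amplifies the estimation error in $\narrowhat{p}_{n,\alpha}^-$, so a naive error of order $n^{-1/2}$ in $p$ would give a term of order $n^{-1/4}$. The empirical-Bernstein form of $\narrowhat{p}_{n,\alpha}^-$, whose fluctuation scales with $\sqrt{\narrowhat{p}_n(1-\narrowhat{p}_n)}$, is precisely what avoids this loss.
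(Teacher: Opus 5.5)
Your coverage argument is the paper's. Your length argument rests on the same two ingredients as the paper's. The first is the multiplicative Chernoff bound, which gives $\narrowhat{p}_{n,\alpha}^-\ge 2p/9$. The second is the variance-adaptive width of $\narrowhat{p}_{n,\alpha}^-$, which gives $p-\narrowhat{p}_{n,\alpha}^-\lesssim_\alpha\sqrt{(1-p)/n}+1/n$. The paper does not split at $p=1/2$. Instead, for every $p$ it intersects a Bernstein event (with the true variance $p(1-p)$) with the Chernoff event. It then bounds $\sqrt{(1-\narrowhat{p}_{n,\alpha}^-)/p}\lesssim\sqrt{(1-p)/p}+\sqrt{(p-\narrowhat{p}_{n,\alpha}^-)/p}$, and converts the stochastic term $1/(p\sqrt{n})$ only at the end, using $n(1-\epsilon_+)\gtrsim 1$.

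Two steps in your large-$p$ regime need repair.

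\emph{First step.} The parenthetical ``otherwise the bound holds trivially after adjusting constants'' is not trivial. When $27\log(12/\alpha)\le n\lesssim\log(6/\alpha)$, the target bound is a finite $\alpha$-dependent constant, but $\Delta_{\mathrm{FV}}=\infty$ if $\narrowhat{p}_{n,\alpha}^-=0$. So you still need high-probability lower bounds on $\narrowhat{p}_{n,\alpha}^-$ and $\narrowhat{p}_n$. Your small-$p$ Chernoff event already supplies them, since it is valid for every $p\ge 27\log(12/\alpha)/n$. For $p\ge 1/2$ it gives $\narrowhat{p}_{n,\alpha}^-\ge 1/9$ and $\narrowhat{p}_n\ge 1/3$. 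The ``$n$ large'' proviso can therefore be dropped, and intersecting with $E_1$ still leaves probability at least $1-\alpha$.

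\emph{Second step.} The Hoeffding-type inequality $1-\narrowhat{p}_n\le 1-p+\sqrt{\log(6/\alpha)/(2n)}$ is too weak to feed into the width of $\narrowhat{p}_{n,\alpha}^-$: it would leave an $n^{-3/8}$ term. The claim you actually need, $1-\narrowhat{p}_n\lesssim_\alpha 1-p+1/n$, does hold on $E_1$ by a self-bounding step. Set $x\coloneqq 1-\narrowhat{p}_n$ and $L\coloneqq\log(6/\alpha)$. Then $E_1$ gives $x\le 1-p+\sqrt{2xL/n}+3L/n\le 1-p+x/2+4L/n$, so $x\le 2(1-p)+8L/n$. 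Alternatively, use a separate Bernstein event as the paper does.

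With these two fixes your proof is complete.
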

\begin{proof}
    \textbf{Coverage:} Again, let $p\coloneqq \mathbb{P}(Z_1 \neq \star)$.  Recall the definition of the event $E_1$ from~\eqref{Eq:E1}, as well as the facts that $\mathbb{P}(E_1) \geq 1-\alpha/2$ and $\narrowhat{p}_{n,\alpha}^- \leq p$ on $E_1$. Define the event
    \begin{align*}
        E_2 \coloneqq \biggl\{\bigl|\bar{Z}_n - \mathbb{E}(Z\,|\,Z\neq\star)\bigr| \leq \sigma_{\max} \sqrt{\frac{2}{n\narrowhat{p}_np\alpha}}\biggr\}.
    \end{align*}
    Let $Y_1,\ldots,Y_n \overset{\mathrm{iid}}{\sim} \mathsf{Law}(Z_1 \,|\, Z_1 \neq\star)$. By Lemma~\ref{lemma:finite-variance-bias} and Chebyshev's inequality, we deduce that for $k \in [n]$,
    \begin{align*}
        \mathbb{P}(E_2\,|\,n\narrowhat{p}_n = k) = \mathbb{P}\biggl( \biggl|\frac{1}{k}\sum_{i=1}^k Y_i - \mathbb{E}(Y_1)\biggr| \leq \sigma_{\max} \sqrt{\frac{2}{kp\alpha}} \biggr) \geq 1-\frac{\alpha}{2}.
    \end{align*}
    Moreover, when $k=0$, our convention that $\bar{Z}_n=0$ means that this bound continues to hold. Hence, $\mathbb{P}(E_2) \geq 1-\alpha/2$
    Therefore, by Lemma~\ref{lemma:finite-variance-bias} and since $r^{(\mathrm{FV})}(\cdot)$ is decreasing, we deduce that on $E_1 \cap E_2$, we have $\theta\in \hat{\mathrm{CI}}^{(\mathrm{FV})}_{n,\sigma_{\max}, \alpha}$.

    \medskip
    \textbf{Length:} By Bernstein's inequality, the event $E_3 \coloneqq \bigl\{|\narrowhat{p}_n - p| \leq \sqrt{\frac{2p(1-p)\log(6/\alpha)}{n}} + \frac{\log(6/\alpha)}{3n}\bigr\}$ has probability at least $1-\alpha/3$. Thus, on $E_3$, we have
    \begin{align}
        p - \narrowhat{p}_{n,\alpha}^- &\leq p - \narrowhat{p}_n + \sqrt{\frac{2\narrowhat{p}_n(1-\narrowhat{p}_n)\log(6/\alpha)}{n}} + \frac{3\log(6/\alpha)}{n}\nonumber\\
        &\lesssim_{\alpha} p - \narrowhat{p}_n + \sqrt{\frac{(1-p) + (p-\narrowhat{p}_n)}{n}} + \frac{1}{n}\nonumber\\
        &\lesssim |\narrowhat{p}_n - p| + \sqrt{\frac{1-p}{n}} + \frac{1}{n} \lesssim_{\alpha} \sqrt{\frac{1-p}{n}} + \frac{1}{n}, \label{eq:p^-lb1}
    \end{align}
    where the penultimate inequality uses the AM--GM inequality to deduce that $\sqrt{\frac{|\narrowhat{p}_n-p|}{n}} \lesssim |\narrowhat{p}_n-p| + \frac{1}{n}$. Moreover, since $\epsilon_1+\epsilon_2 \leq 1 - \frac{27\log(12/\alpha)}{n}$, we have $p\geq \frac{27\log(12/\alpha)}{n}$. Thus, by a multiplicative Chernoff bound, the event $E_4 \coloneqq \bigl\{\frac{2p}{3} \leq \narrowhat{p}_n \leq \frac{4p}{3} \bigr\}$ has probability at least $1-\alpha/6$. On the event $E_4$, we have
    \begin{align}
        \narrowhat{p}_{n,\alpha}^- &\geq \frac{2p}{3} - \sqrt{\frac{2\narrowhat{p}_n(1-\narrowhat{p}_n)\log(6/\alpha)}{n}} - \frac{3\log(6/\alpha)}{n}\nonumber\\
        &\geq \frac{2p}{3} - \sqrt{\frac{3p\log(6/\alpha)}{n}} - \frac{3\log(6/\alpha)}{n} \geq \frac{2p}{3} - \frac{p}{3} - \frac{p}{9} = \frac{2p}{9}. \label{eq:p^-lb2}
    \end{align}
    Therefore, we deduce that on $E_3\cap E_4$,
    \begin{align*}
        r^{(\mathrm{FV})}(\narrowhat{p}_{n,\alpha}^-) = \sqrt{\frac{1-\narrowhat{p}_{n,\alpha}^-}{\narrowhat{p}_{n,\alpha}^-}} &\overset{(i)}{\leq} \sqrt{\frac{9(1-\narrowhat{p}_{n,\alpha}^-)}{2p}} \lesssim \sqrt{\frac{1-p}{p}} + \sqrt{\frac{p-\narrowhat{p}^-_{n,\alpha}}{p}}\\
        &\overset{(ii)}{\lesssim}_{\hspace{-0.06cm}\alpha} \sqrt{\frac{1-p}{p}} + \biggl\{\sqrt{\frac{1-p}{p}}\sqrt{\frac{1}{np}}\biggr\}^{1/2} + \frac{1}{\sqrt{np}} \lesssim \sqrt{\frac{1-p}{p}} + \frac{1}{\sqrt{np}},
    \end{align*}
    where $(i)$ follows from~\eqref{eq:p^-lb2} and $(ii)$ follows from~\eqref{eq:p^-lb1}; moreover, on $E_4$,
    \begin{align*}
        \sqrt{\frac{2}{n\narrowhat{p}_n\narrowhat{p}^-_{n,\alpha}\alpha}} \lesssim_{\alpha} \frac{1}{\sqrt{np^2}}.
    \end{align*}
    Thus, with probability at least $1-\alpha/2$, since $p\geq 1-\epsilon_1-\epsilon_2$,
    \begin{align*}
        \bigl| \hat{\mathrm{CI}}^{(\mathrm{FV})}_{n,\sigma_{\max},\alpha} \bigr| &\lesssim_{\alpha} \sigma_{\max} \Biggl\{ \frac{1}{\sqrt{n(1-\epsilon_1-\epsilon_2)^2}} + \sqrt{\frac{\epsilon_1+\epsilon_2}{1-\epsilon_1-\epsilon_2}}\Biggr\} \lesssim_{\alpha} \sigma_{\max} \Biggl\{ \frac{1}{\sqrt{n}} + \sqrt{\frac{\epsilon_1+\epsilon_2}{1-\epsilon_1-\epsilon_2}}\Biggr\},
    \end{align*}
    where the last inequality follows since $1-\epsilon_1-\epsilon_2 \geq \frac{27\log(12/\alpha)}{n}$.
\end{proof}

Similarly to~\eqref{eq:BD-CI-asym-length}, for fixed $\epsilon_+<1$, we have
\begin{align*}
    \lim_{n \rightarrow \infty} \sup_{P\in\mathcal{P}_{\mathrm{FV}}} \sup_{M\in\mathcal{M}(P,\epsilon_1,\epsilon_2)} \mathbb{P}_M\biggl(|\hat{\mathrm{CI}}_{n,\sigma_{\max},\alpha}^{(\mathrm{FV})}| > \sigma_{\max}\sqrt{\frac{\epsilon_+}{1-\epsilon_+}} \biggr) = 0.
\end{align*}

\subsubsection{Lower bound}
To prove the lower bound for Theorem~\ref{thm:nonparametric-minimax-rates}\emph{(c)}, it suffices to prove Proposition~\ref{thm:finite-variance-lb}; see Appendix~\ref{sec:proof-of-tail-adaptation} below.

\section{Proof of Proposition~\ref{thm:finite-variance-lb}} \label{sec:proof-of-tail-adaptation}

Let $\Psi$ denote the class of convex, lower semi-continuous functions $\psi:[0,\infty) \to [0,\infty)$ with $\psi(0)=0$ and $\psi(x) \to \infty$ as $x\to\infty$. The $\psi$-Orlicz norm of a random variable $X$ is defined as $\|X\|_{\psi}\coloneqq \inf\bigl\{t>0 : \mathbb{E}\psi(|X|/t) \leq 1\bigr\}$. For example, if $\mathbb{E}(X)=0$ and $\psi(x) = x^2$, then $\|X\|_{\psi}$ is the variance of $X$; if $\mathbb{E}(X)=0$ and $\psi(x) = e^{x^2}-1$, then $\|X\|_{\psi}$ is the sub-Gaussian norm of $X$.

Given $\psi_1,\psi_2 \in \Psi$, the following lemma considers adaptive confidence intervals that are valid for all distributions with either $\|X-\mathbb{E}(X)\|_{\psi_1} \leq \sigma_1$ or $\|X-\mathbb{E}(X)\|_{\psi_2} \leq \sigma_2$, and provides a lower bound for the width of the confidence interval restricted to distributions with $\|X-\mathbb{E}(X)\|_{\psi_2} \leq \sigma$, for some $\sigma\leq\sigma_2$ (and similarly for $\psi_1$).

\begin{Proposition} \label{prop:tail-adapt}
    Let $\sigma_1,\sigma_2>0$. For $\theta\in\mathbb{R}$, $\sigma>0$ and $j \in \{1,2\}$, let $\psi_j \in \Psi$ and let $\mathcal{P}_j(\theta,\sigma)$ be the set of all distributions~$P$ on $\mathbb{R}$ such that if $X\sim P$, then $\mathbb{E}(X)=\theta$ and $\|X-\theta\|_{\psi_j} \leq \sigma$. Fix $\sigma_1,\sigma_2 > 0$, and let $\hat{\mathcal{CI}}$ be the set of all confidence intervals $\hat{\mathrm{CI}}(Z_1,\ldots,Z_n)$ satisfying $\mathbb{P}_M\bigl(\theta\in \hat{\mathrm{CI}}(Z_1,\ldots,Z_n)\bigr) \geq 1-\alpha$ for all $\theta\in\mathbb{R}$, $\epsilon_1,\epsilon_2 \geq 0$ with $\epsilon_1+\epsilon_2 \leq 1$, and $Z_1,\ldots,Z_n \overset{\mathrm{iid}}{\sim} M\in\mathcal{M}(P,\epsilon_1,\epsilon_2)$ where $P\in \mathcal{P}_1(\theta,\sigma_1) \cup \mathcal{P}_2(\theta,\sigma_2)$. For $j\in\{1,2\}$ and $\sigma>0$, let
    \begin{align*}
        L_j(n,\sigma,\epsilon_1,\epsilon_2) \coloneqq \inf \biggl\{r>0 : \inf_{\hat{\mathrm{CI}} \in \hat{\mathcal{CI}}}\; \sup_{\theta\in\mathbb{R},\, P\in\mathcal{P}_j(\theta,\sigma),\, M\in\mathcal{M}(P,\epsilon_1,\epsilon_2)} \mathbb{P}_M\bigl(|\hat{\mathrm{CI}}(Z_1,\ldots,Z_n)| \geq r\bigr) \leq \alpha \biggr\}
    \end{align*}
    and let
    \begin{align*}
        a_j(\sigma_j,\epsilon_1,\epsilon_2) \coloneqq \sup \biggl\{a\geq 0 :  (1-\epsilon_1 - \epsilon_2)\psi_j\biggl(\frac{a}{\sigma_j}\biggr) + (\epsilon_1+\epsilon_2)\psi_j\biggl(\frac{1-\epsilon_1 - \epsilon_2}{\epsilon_1+\epsilon_2} \cdot \frac{a}{\sigma_j}\biggr) \leq 1\biggr\},
    \end{align*}
    where we adopt the convention that $0/0 \coloneqq 0$.
    If $\alpha\in(0,\frac{1}{4}]$, then for $j\in\{1,2\}$ and $\sigma\in(0,\sigma_j]$, we have
    \begin{align*}
        L_j(n,\sigma,\epsilon_1,\epsilon_2) \geq a_1(\sigma_1,\epsilon_1,\epsilon_2) \vee a_2(\sigma_2,\epsilon_1,\epsilon_2).
    \end{align*}
\end{Proposition}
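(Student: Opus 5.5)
We argue via a two-point testing construction in which the two hypotheses produce \emph{identical} observed distributions, as in the lower bound for bounded random variables. Fix $j\in\{1,2\}$, $\sigma\in(0,\sigma_j]$, and let $k\in\{1,2\}$ be arbitrary (so the target is $L_j\geq a_k(\sigma_k,\epsilon_1,\epsilon_2)$). Write $\epsilon_+\coloneqq\epsilon_1+\epsilon_2$. The case $\epsilon_+=0$ is trivial, since then $a_k=0$ by the $0/0$ convention and $\psi_k(0)=0$. When $\epsilon_+=1$, the set of admissible $a$ is all of $[0,\infty)$ only if the second term is read as $\psi_k(0)=0$, so $a_k=\infty$ and we must show that the length is unbounded; this case is handled by the same construction with an arbitrary $a$.

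The construction takes $P_2\coloneqq\delta_{\theta_2}\in\mathcal{P}_j(\theta_2,\sigma)$, which has zero $\psi_j$-norm. Let $a<a_k(\sigma_k,\epsilon_1,\epsilon_2)$ be such that the defining inequality holds. Define $P_1$ to be the two-point law with mass $1-\epsilon_+$ at $\theta_2$ and mass $\epsilon_+$ at $\theta_2+\frac{1-\epsilon_+}{\epsilon_+}a$. Its mean is $\theta_1=\theta_2+(1-\epsilon_+)a$. Its deviations from the mean are $-(1-\epsilon_+)a$ (with probability $1-\epsilon_+$) and $\frac{(1-\epsilon_+)^2}{\epsilon_+}a$ (with probability $\epsilon_+$).

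This gives the shift $(1-\epsilon_+)a$, which does not match the factors appearing in $a_k$. We therefore re-parametrise: put mass $1-\epsilon_+$ at $\theta_1-a$ and mass $\epsilon_+$ at $\theta_1+\frac{1-\epsilon_+}{\epsilon_+}a$. The mean is then $\theta_1$, and
\[
\mathbb{E}\,\psi_k(|X-\theta_1|/\sigma_k)=(1-\epsilon_+)\psi_k(a/\sigma_k)+\epsilon_+\psi_k\Bigl(\tfrac{1-\epsilon_+}{\epsilon_+}\tfrac{a}{\sigma_k}\Bigr)\le 1,
\]
so $P_1\in\mathcal{P}_k(\theta_1,\sigma_k)$. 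Now set $\theta_2\coloneqq\theta_1-a$ and $P_2\coloneqq\delta_{\theta_2}$. Let $M$ be the law on $\mathbb{R}_\star$ with $M(\{\theta_2\})=1-\epsilon_+$ and $M(\{\star\})=\epsilon_+$. By Lemma~\ref{lemma:realisability-characterisation}, $M\in\mathcal{M}(P_1,0,1)$, because its density with respect to $P_{1,\star}$ lies in $[0,1]$. Likewise $M\in\mathcal{M}(P_2,\epsilon_1,\epsilon_2)$, because $M(\{\theta_2\})/P_2(\{\theta_2\})=1-\epsilon_+\in[1-\epsilon_+,1-\epsilon_1]$.

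The key observation is that the coverage requirement on $\hat{\mathcal{CI}}$ ranges over all $(\epsilon_1',\epsilon_2')$ and over $\mathcal{P}_1(\cdot,\sigma_1)\cup\mathcal{P}_2(\cdot,\sigma_2)$. In particular it includes $P_1\in\mathcal{P}_k(\theta_1,\sigma_k)$ with contamination parameters $(0,1)$. Hence every $\hat{\mathrm{CI}}\in\hat{\mathcal{CI}}$ covers $\theta_1$ with $M$-probability at least $1-\alpha$. It also covers $\theta_2$ with the same $M$-probability, since $P_2\in\mathcal{P}_j(\theta_2,\sigma)\subseteq\mathcal{P}_j(\theta_2,\sigma_j)$. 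On the intersection of these two events, which has probability at least $1-2\alpha\geq 1/2>\alpha$, the interval contains both $\theta_1$ and $\theta_2$, so its length is at least $|\theta_1-\theta_2|=a$. Consequently, for every $\hat{\mathrm{CI}}$ and every $r\le a$,
\[
\sup_{\theta,P\in\mathcal{P}_j(\theta,\sigma),M'}\mathbb{P}_{M'}(|\hat{\mathrm{CI}}|\ge r)\ge\mathbb{P}_M(|\hat{\mathrm{CI}}|\ge a)\ge 1-2\alpha>\alpha.
\]
This gives $L_j\ge a$. Letting $a\uparrow a_k$, and maximising over $k$, yields the claim.

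The main point requiring care is the boundary behaviour of $a_k$. One must handle the supremum being attained or not; lower semicontinuity of $\psi_k$ lets us work with $a<a_k$. One must also check that $a_k$ may be infinite, when $\epsilon_+=1$ or when $\psi_k$ is bounded near the relevant range, in which case the argument shows $L_j=\infty$. The remaining work is routine bookkeeping, namely verifying that the Dirac and two-point laws indeed belong to the Orlicz classes under the stated convention.
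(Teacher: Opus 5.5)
Your proof is correct and is essentially the paper's argument. The paper uses the mirror image of your two-point law: mass $1-\epsilon_+$ at $a$ and mass $\epsilon_+$ at $-\frac{1-\epsilon_+}{\epsilon_+}a$, paired with $\delta_a$ and the same observed law $M$. It phrases the conclusion as a testing bound with $T=\mathbbm{1}_{\{0\notin\hat{\mathrm{CI}}\}}$, whereas you intersect the two coverage events directly to get $\mathbb{P}_M(|\hat{\mathrm{CI}}|\geq a)\geq 1-2\alpha>\alpha$. One minor correction: at $\epsilon_+=0$ the ratio is $1/0$, not $0/0$, so that case is trivial only under the intended reading $a_k=0$, which the paper also assumes tacitly.
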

\begin{proof}
    We will prove the lower bound for $L_2(n,\sigma,\epsilon_1,\epsilon_2)$; the corresponding result for $L_1(n,\sigma,\epsilon_1,\epsilon_2)$ follows by symmetry. For $a>0$, $P_1\in \mathcal{P}_1(0,\sigma_1) \cup \mathcal{P}_2(0,\sigma_2)$, $P_2\in\mathcal{P}_2(a,\sigma_2)$, $M_1\in \mathcal{M}\bigl(P_1,0,1\bigr)$ and $M_2 \in \mathcal{M}\bigl(P_2, \epsilon_1, \epsilon_2\bigr)$ to be chosen later,
    consider the testing problem $H_0: Z_1,\ldots,Z_n \overset{\mathrm{iid}}{\sim} M_1$ against $H_1: Z_1,\ldots,Z_n \overset{\mathrm{iid}}{\sim} M_2$. Let $\xi>0$ and let $\hat{\mathrm{CI}} \in \hat{\mathcal{CI}}$ be such that
    \begin{align*}
        \sup_{\theta\in\mathbb{R},\, P\in\mathcal{P}_2(\theta,\sigma),\, M\in\mathcal{M}(P,\epsilon_1,\epsilon_2)} \mathbb{P}_M\bigl(|\hat{\mathrm{CI}}(Z_1,\ldots,Z_n)| \geq (1+\xi)L_2 (n,\sigma,\epsilon_1,\epsilon_2)\bigr) \leq \alpha',
    \end{align*}
    where $\alpha'\coloneqq 9/10 - 2\alpha > \alpha$.  Define $T\coloneqq \mathbbm{1}_{\{0\notin \hat{\mathrm{CI}}(Z_1,\ldots,Z_n)\}}$.
    If $(1+\xi)L_2(n,\sigma,\epsilon_1,\epsilon_2) \leq a$, then
    \begin{align}
        \mathbb{P}_{M_1}(T=1) &+ \mathbb{P}_{M_2}(T=0)\nonumber\\
        &\leq \alpha + \mathbb{P}_{M_2}\bigl(a\notin\hat{\mathrm{CI}}(Z_1,\ldots,Z_n)\bigr) + \mathbb{P}_{M_2}\bigl(|\hat{\mathrm{CI}}(Z_1,\ldots,Z_n)| \geq (1+\xi)L_2(n,\sigma,\epsilon_1,\epsilon_2)\bigr) \nonumber\\
        &\leq 2\alpha+\alpha' = \frac{9}{10}. \label{eq:testing-lb-unkown-epsilon-psi}
    \end{align}
    We now make specific choices of $a$, $P_1$, $P_2$ $M_1$ and $M_2$ to prove that~\eqref{eq:testing-lb-unkown-epsilon-psi} cannot hold.  This will establish a contradiction, allowing us to conclude, since $\xi>0$ is arbitrary, that $L_2(n,\sigma,\epsilon_1,\epsilon_2) \geq a$.

    Assume without loss of generality that $a_1(\sigma_1,\epsilon_1,\epsilon_2) \vee a_2(\sigma_2,\epsilon_1,\epsilon_2)>0$, since otherwise the claim is trivial.
    Let $\eta>0$ be small enough that $a\coloneqq a_1(\sigma_1,\epsilon_1,\epsilon_2) \vee a_2(\sigma_2,\epsilon_1,\epsilon_2)-\eta > 0$ and let $b\coloneqq -\frac{1-\epsilon_1-\epsilon_2}{\epsilon_1+\epsilon_2} \cdot a$. We define a distribution~$P_1$ on $\mathbb{R}$ by
    \begin{align*}
        P_1(\{a\}) \coloneqq 1-\epsilon_1-\epsilon_2 \quad\text{and}\quad P_1(\{b\}) \coloneqq \epsilon_1+\epsilon_2,
    \end{align*}
    and define another distribution $P_2$ on $\mathbb{R}$ by
    \begin{align*}
        P_2(\{a\}) \coloneqq 1.
    \end{align*}
    If $X\sim P_1$, then $\mathbb{E}(X)=0$ and
    \[
    \mathbb{E} \psi_j\biggl(\frac{|X|}{\sigma_j}\biggr) = (1-\epsilon_1 - \epsilon_2)\psi_j\biggl(\frac{a}{\sigma_j}\biggr) + (\epsilon_1+\epsilon_2)\psi_j\biggl(\frac{1-\epsilon_1 - \epsilon_2}{\epsilon_1+\epsilon_2} \cdot \frac{a}{\sigma_j}\biggr),
    \]
    so by definition of $a_j(\sigma_j,\epsilon_1,\epsilon_2)$, either $\mathbb{E} \psi_1(|X|/\sigma_1) \leq 1$ or $\mathbb{E} \psi_2(|X|/\sigma_2) \leq 1$.  Hence $P_1\in \mathcal{P}_1(0,\sigma_1) \cup \mathcal{P}_2(0,\sigma_2)$. Moreover, if $Y\sim P_2$, then $\mathbb{E}(Y)=a$ and for $\sigma \in (0,\sigma_2]$, we have $\mathbb{E} \psi_2(|Y-a|/\sigma) = \psi_2(0) = 0$, so $P_2 \in \mathcal{P}_2(a,\sigma)$. We further define a distribution $M$ on $\mathbb{R}_{\star}$ by
    \begin{align*}
        M(\{a\}) \coloneqq 1-\epsilon_1-\epsilon_2 \quad\text{and}\quad M(\{\star\}) \coloneqq \epsilon_1 + \epsilon_2.
    \end{align*}
    Then $\frac{M(\{a\})}{P_1(\{a\})}, \frac{M(\{b\})}{P_1(\{b\})}, \frac{M(\{a\})}{P_2(\{a\})} \in [0, 1]$ so by Lemma~\ref{lemma:realisability-characterisation}, $M\in\mathcal{M}(P_1,0,1)$ and $M\in\mathcal{M}(P_2,\epsilon_1,\epsilon_2)$. Further, letting $M_1\coloneqq M$ and $M_2\coloneqq M$, we have
    \begin{align*}
        \mathbb{P}_{M_1}(T=1) + \mathbb{P}_{M_2}(T=0) = 1 > \frac{9}{10},
    \end{align*}
    but this contradicts~\eqref{eq:testing-lb-unkown-epsilon-psi}. Hence, $L_2(n,\sigma,\epsilon_1,\epsilon_2) \geq a = a_1(\sigma_1,\epsilon_1,\epsilon_2) \vee a_2(\sigma_2,\epsilon_1,\epsilon_2)-\eta$. Since $\eta>0$ was arbitrary, the claim follows.
\end{proof}

\begin{Lemma} \label{lemma:a1-a2}
    Define $\psi_1,\psi_2 \in \Psi$ by $\psi_1(x) \coloneqq x^2$ and $\psi_2(x) \coloneqq e^{x^2}-1$.  Then
    \begin{align*}
        a_1(\sigma_1,\epsilon_1,\epsilon_2) = \sigma_1\sqrt{\frac{\epsilon_1+\epsilon_2}{1-\epsilon_1 - \epsilon_2}},
    \end{align*}
    and
    \begin{align*}
        a_2(\sigma_2,\epsilon_1,\epsilon_2) \geq \sigma_2\Biggl\{\sqrt{\log\biggl(\frac{1}{1-\epsilon_1-\epsilon_2}\biggr)} \wedge \frac{\epsilon_1+\epsilon_2}{1-\epsilon_1-\epsilon_2}\sqrt{\log\biggl(\frac{1}{\epsilon_1+\epsilon_2}\biggr)}\Biggr\}.
    \end{align*}
\end{Lemma}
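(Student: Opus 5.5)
Write $\epsilon_+ \coloneqq \epsilon_1+\epsilon_2$ and $u \coloneqq (1-\epsilon_+)/\epsilon_+$, so that $a_j$ is the largest $a$ with $(1-\epsilon_+)\psi_j(a/\sigma_j)+\epsilon_+\psi_j(ua/\sigma_j)\le 1$. Since each $\psi_j$ is continuous and increasing, the left-hand side is nondecreasing in $a$. Hence it suffices either to solve the equation exactly (for $\psi_1$) or to exhibit one explicit $a$ satisfying the inequality (for the lower bound on $a_2$). The boundary cases $\epsilon_+\in\{0,1\}$ are handled separately using the convention $0/0\coloneqq 0$; in both, the displayed formulas are read as $0$ or $\infty$ consistently.

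\textbf{Finite variance.} With $\psi_1(x)=x^2$ the condition reads
\[
\frac{a^2}{\sigma_1^2}\bigl\{(1-\epsilon_+)+\epsilon_+u^2\bigr\}=\frac{a^2}{\sigma_1^2}\cdot\frac{1-\epsilon_+}{\epsilon_+}\le 1,
\]
because $(1-\epsilon_+)+(1-\epsilon_+)^2/\epsilon_+=(1-\epsilon_+)/\epsilon_+$. This gives $a_1=\sigma_1\sqrt{\epsilon_+/(1-\epsilon_+)}$ directly.

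\textbf{Sub-Gaussian.} Set $\sigma_2=1$ by scaling and let $A\coloneqq\sqrt{\log(1/(1-\epsilon_+))}\wedge u\sqrt{\log(1/\epsilon_+)}$. The aim is to show $a=cA$ is feasible for a universal $c$; I will check whether $c=1$ itself works and otherwise fall back on a constant $c\le 1$, which is all the later lower bound uses. I will bound the two terms separately and show each is at most $1/2$.
\begin{itemize}
\item For the first term, $a^2\le c^2\log(1/(1-\epsilon_+))$ gives $(1-\epsilon_+)(e^{a^2}-1)\le (1-\epsilon_+)^{1-c^2}-(1-\epsilon_+)$. For $c^2\le 1/2$ this is at most $\sqrt{1-\epsilon_+}-(1-\epsilon_+)\le 1/4$.
\item For the second term, $ua\le c\,u^2\sqrt{\log(1/\epsilon_+)}$ when the minimum is attained by the second piece, and $ua\le cu\sqrt{\log(1/(1-\epsilon_+))}$ in general. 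Split according to $\epsilon_+\le 1/2$ or $\epsilon_+>1/2$.
\begin{itemize}
\item If $\epsilon_+>1/2$, then $u<1$ and $u\sqrt{\log(1/(1-\epsilon_+))}\lesssim 1$, using $(1-\epsilon_+)\log\frac{1}{1-\epsilon_+}\le 1/e$ together with $\epsilon_+\ge 1/2$. So $e^{(ua)^2}-1\lesssim c^2$, and multiplying by $\epsilon_+\le 1$ makes the term small.
\item If $\epsilon_+\le 1/2$, then $u\asymp 1/\epsilon_+$ and $A\le u\sqrt{\log(1/\epsilon_+)}$. The natural bound $(ua)^2\le c^2u^2\cdot u^2\log(1/\epsilon_+)$ is too large, so here I should instead use the first piece: $A\le\sqrt{\log(1/(1-\epsilon_+))}\le\sqrt{2\epsilon_+}$. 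This gives $(ua)^2\le 2c^2/\epsilon_+$, which is still not good enough, so the argument has to be sharpened.
\end{itemize}
\end{itemize}
Rechecking the second term: what is needed is $\epsilon_+ e^{u^2a^2}\lesssim 1$, that is, $u^2a^2\le\log(1/\epsilon_+)$, that is, $a\le u^{-1}\sqrt{\log(1/\epsilon_+)}=\frac{\epsilon_+}{1-\epsilon_+}\sqrt{\log(1/\epsilon_+)}$. This is exactly the second expression in the stated minimum, because the statement's factor $\frac{\epsilon_+}{1-\epsilon_+}$ is $u^{-1}$ and not $u$. So with $a=cA$ and $c^2\le 1/2$, the second term is at most $\epsilon_+\bigl(\epsilon_+^{-c^2}-1\bigr)\le\sqrt{\epsilon_+}\,(1-\sqrt{\epsilon_+})\le 1/4$, and the first term was bounded above. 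The total is at most $1/2\le 1$, so $a_2\ge A/\sqrt2$.

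The main obstacle is getting the constant exactly $1$ as the statement claims, rather than $1/\sqrt2$. For that I would try $c=1$ directly: the two terms become $1-(1-\epsilon_+)=\epsilon_+$ and $\epsilon_+(1/\epsilon_+-1)=1-\epsilon_+$ when each bound is attained with equality, and these sum to exactly $1$. Since $a\le$ each piece, the terms are individually no larger than these values, so $a=A$ is feasible and $a_2\ge A$ holds with constant $1$.
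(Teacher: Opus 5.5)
Your final argument is correct and is the paper's proof: the identity $(1-\epsilon_+)+(1-\epsilon_+)^2/\epsilon_+=(1-\epsilon_+)/\epsilon_+$ gives $a_1$ exactly. For $a_2$, you take $a=\sigma_2 A$ and use the monotonicity of $\psi_2$ to bound the two terms by $\epsilon_+$ and $1-\epsilon_+$, which sum to exactly $1$. The earlier case split, which came from misreading the factor $\frac{\epsilon_+}{1-\epsilon_+}$ as $u$ rather than $u^{-1}$, and the $c=1/\sqrt{2}$ detour are unnecessary and should be deleted from a write-up.
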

\begin{proof}
    The first claim follows because
    \begin{align*}
    (1-\epsilon_1 - \epsilon_2)\psi_1\biggl(\frac{a}{\sigma_1}\biggr) + (\epsilon_1+\epsilon_2)\psi_1\biggl(\frac{1-\epsilon_1 - \epsilon_2}{\epsilon_1+\epsilon_2} \cdot \frac{a}{\sigma_1}\biggr) &= \frac{a^2}{\sigma_1^2}\biggl\{1-\epsilon_1 - \epsilon_2 + \frac{(1-\epsilon_1-\epsilon_2)^2}{\epsilon_1+\epsilon_2}\biggr\} \\
    &= \frac{a^2}{\sigma_1^2} \cdot \frac{1-\epsilon_1 - \epsilon_2}{\epsilon_1+\epsilon_2}.
    \end{align*}
    
    For the second claim, write $a\coloneqq \sigma_2 \Bigl\{ \log^{1/2}\bigl(\frac{1}{1-\epsilon_1-\epsilon_2}\bigr) \wedge \frac{\epsilon_1+\epsilon_2}{1-\epsilon_1-\epsilon_2}\log^{1/2}\bigl(\frac{1}{\epsilon_1+\epsilon_2}\bigr) \Bigr\}$. Then, since $\psi_2$ is an increasing function on $[0,\infty)$, we have
    \begin{align*}
        &(1-\epsilon_1 - \epsilon_2)\psi_2\biggl(\frac{a}{\sigma_2}\biggr) + (\epsilon_1+\epsilon_2)\psi_2\biggl(\frac{1-\epsilon_1 - \epsilon_2}{\epsilon_1+\epsilon_2} \cdot \frac{a}{\sigma_2}\biggr)\\
        &\qquad\leq (1-\epsilon_1 - \epsilon_2)\psi_2\biggl(\log^{1/2}\Bigl(\frac{1}{1-\epsilon_1-\epsilon_2}\Bigr)\biggr) + (\epsilon_1+\epsilon_2)\psi_2\biggl(\log^{1/2}\Bigl(\frac{1}{\epsilon_1+\epsilon_2}\Bigr)\biggr) = 1.
    \end{align*}
    The claim follows.
\end{proof}

Now, we are ready to prove Proposition~\ref{thm:finite-variance-lb}.

\begin{proof}[Proof of Proposition~\ref{thm:finite-variance-lb}]
    Since $\mathcal{M}(P,\epsilon_1+\epsilon_2,0) \subseteq \mathcal{M}(P,\epsilon_1,\epsilon_2)$, we have by the MCAR lower bound of \citet[][Proposition S33(a)]{ma2024estimation} that $L(n,\sigma,\epsilon_1,\epsilon_2) \geq c\sigma/\sqrt{n}$. The second part of the bound follows by Proposition~\ref{prop:tail-adapt} (with $\psi_1(x)=x^2$ and $\psi_2(x)=e^{x^2}-1$) and Lemma~\ref{lemma:a1-a2}, and we therefore conclude the result.
\end{proof}

\section{Proof of Theorem~\ref{thm:symmetric}}
\begin{proof}
    \textbf{Coverage:} We may write $M=(1-\epsilon_1-\epsilon_2)P + (\epsilon_1+\epsilon_2)Q$ where $Q\in\mathsf{MNAR}_{P}$. For $i\in[n]$, we may assume that $Z_1,\ldots,Z_n$ are generated via $Z_i \coloneqq L_i X_i + (1-L_i)(X_i \ostar \Omega_i)$ where $(X_1,L_1,\Omega_1),\ldots,(X_n,L_n,\Omega_n)$ are independent triples with $X_i \sim P$, $\Omega_i$ a binary random variable satisfying $X_i \ostar \Omega_i \sim Q$ and $L_i \sim \mathrm{Bern}(1-\epsilon_1-\epsilon_2)$ independent of $(X_i,\Omega_i)$. In other words, $Z_i \, | \, \{L_i=1\} = X_i \sim P$ and $Z_i \, | \, \{L_i = 0\} = X_i \ostar \Omega_i \sim Q$. Let $\mathcal{I} \coloneqq \{i\in[n] : L_i = 1\}$, so that $|\mathcal{I}| \sim \mathrm{Bin}(n, 1-\epsilon_1-\epsilon_2)$. By the multiplicative Chernoff bound, and since $1-\epsilon_1-\epsilon_2 \geq \frac{4\log_2(8/\alpha)}{n}$, we deduce that
    \begin{align*}
        \mathbb{P}\bigl(|\mathcal{I}| \leq \log_2(8/\alpha)\bigr) \leq \exp\bigl(-9\log_2(8/\alpha)/8\bigr) \leq \frac{\alpha}{8}.
    \end{align*}
    Since $X_i \sim P$ is symmetric around $\theta$, we have
    \begin{align*}
        \mathbb{P}(Z_{\max} \leq \theta) &\leq \mathbb{P}\Bigl(\Bigl\{\max_{i\in\mathcal{I}}X_i \leq \theta\Bigr\} \bigcap \Bigl\{|\mathcal{I}| > \log_2(8/\alpha)\Bigr\}\Bigr) + \mathbb{P}\Bigl(|\mathcal{I}| \leq \log_2(8/\alpha)\Bigr) \nonumber\\
        &\leq 2^{-\log_2(8/\alpha)} + \frac{\alpha}{8} \leq \frac{\alpha}{4}. 
    \end{align*}
    Thus, by Lemma \ref{lemma:empirical-quantile-property},     
    \begin{align*}
        \mathbb{P} \biggl\{ Q_n^-\biggl(\frac{1}{2}+\sqrt{\frac{\log(4/\alpha)}{2n}}\biggr) \leq \theta \biggr\} &\leq \mathbb{P} \biggl\{ \frac{1}{n} \sum_{i=1}^n \indi_{\{Z_i \leq \theta\}} \geq \frac{1}{2}+\sqrt{\frac{\log(4/\alpha)}{2n}} \biggr\} + \mathbb{P}(Z_{\max} \leq \theta)\\
        &\leq \frac{\alpha}{4} + \frac{\alpha}{4} = \frac{\alpha}{2},
    \end{align*}
    where the second inequality follows from Hoeffding's inequality since $\mathbb{P}(Z_i \leq \theta) \leq 1/2$. Similarly, 
    \begin{align*}
        \mathbb{P} \biggl\{ Q_n^+\biggl(\frac{1}{2}-\sqrt{\frac{\log(4/\alpha)}{2n}}\biggr) > \theta \biggr\} \leq \frac{\alpha}{2}.
    \end{align*}
    Thus, by a union bound, we deduce that $\mathbb{P}\bigl(\theta \in \hat{\mathrm{CI}}^{(\mathrm{Sym})}_{n,\alpha}\bigr) \geq 1-\alpha$.

    \medskip
    \textbf{Length:} Since $P$ admits a Lebesgue density $f$ that is positive and continuous at~$\theta$, there exists $c_1>0$ such that $f(x) \geq f(\theta)/2$ for all $x\in[\theta-c_1,\theta+c_1]$. Let $C_1\coloneqq 2/f(\theta)>0$ and let $\theta'\coloneqq \theta + C_1\bigl(2\sqrt{\frac{\log(4/\alpha)}{2n}} + \epsilon_1 + \epsilon_2\bigr)$. Then, by Lemma~\ref{lemma:empirical-quantile-property},
    \begin{align}
        \mathbb{P}\Bigl((\theta',\infty) \cap \hat{\mathrm{CI}}^{(\mathrm{Sym})}_{n,\alpha} = \emptyset\Bigr) &\geq \mathbb{P}\bigg\{ \theta' \geq Q_n^-\biggl(\frac{1}{2}+\sqrt{\frac{\log(4/\alpha)}{2n}}\biggr) \biggr\} \nonumber\\
        &\geq \mathbb{P}\bigg\{ \frac{1}{n} \sum_{i=1}^n \indi_{\{Z_i \leq \theta'\}} \geq \frac{1}{2}+\sqrt{\frac{\log(4/\alpha)}{2n}} \biggr\}. \label{eq:theta-notin-CI-sym}
    \end{align}
    Now, by Hoeffding's inequality, the event 
    \begin{align*}
        \mathcal{E} \coloneqq \Biggl\{\frac{1}{n} \sum_{i=1}^n \indi_{\{Z_i \leq \theta'\}} \geq \mathbb{P} ( Z_1 \leq \theta') - \sqrt{\frac{\log(4/\alpha)}{2n}} \Biggr\}
    \end{align*}
    has probability at least $1-\alpha/4$. Assume further that $n$ is large enough and $\epsilon_1+\epsilon_2$ is small enough (both depending only on $f$ and $\alpha$) that $C_1\bigl(2\sqrt{\frac{\log(4/\alpha)}{2n}} + \epsilon_1 + \epsilon_2\bigr) \leq c_1 \wedge C_1/2$. Then
    \begin{align}
        \mathbb{P} ( Z_1 \leq \theta') &\geq (1-\epsilon_1 - \epsilon_2) \mathbb{P} ( X_1 \leq \theta')\nonumber\\
        &= (1-\epsilon_1 - \epsilon_2) \biggl( \frac{1}{2} + \int_\theta^{\theta'} f(x) \,\mathrm{d}x \biggr)\nonumber\\
        &\geq (1-\epsilon_1 - \epsilon_2) \biggl\{ \frac{1}{2} + \frac{f(\theta)}{2} \cdot C_1\biggl(2\sqrt{\frac{\log(4/\alpha)}{2n}} + \epsilon_1 + \epsilon_2 \biggr) \biggr\}\nonumber\\
        &= (1-\epsilon_1 - \epsilon_2) \biggl( \frac{1}{2} + 2\sqrt{\frac{\log(4/\alpha)}{2n}} + \epsilon_1 + \epsilon_2 \biggr)\nonumber\\
        &= \frac{1}{2} + 2\sqrt{\frac{\log(4/\alpha)}{2n}} + (\epsilon_1+\epsilon_2)\biggl(\frac{1}{2} - 2\sqrt{\frac{\log(4/\alpha)}{2n}} - \epsilon_1 - \epsilon_2 \biggr) \geq \frac{1}{2} + 2\sqrt{\frac{\log(4/\alpha)}{2n}}. \label{eq:Z1-theta'}
    \end{align}
    Thus, by \eqref{eq:theta-notin-CI-sym} and ~\eqref{eq:Z1-theta'},
    \begin{align*}
        \mathbb{P}\Bigl((\theta',\infty) \cap \hat{\mathrm{CI}}^{(\mathrm{Sym})}_{n,\alpha} = \emptyset\Bigr) \geq \mathbb{P}(\mathcal{E}) \geq 1-\frac{\alpha}{4}.
    \end{align*}
    Similarly, by symmetry, letting $\theta''\coloneqq \theta -C_1 \bigl(2\sqrt{\frac{\log(4/\alpha)}{2n}} + \epsilon_1 + \epsilon_2\bigr)$, we have $\mathbb{P}\bigl((-\infty,\theta'') \cap \hat{\mathrm{CI}}^{(\mathrm{Sym})}_{n,\alpha} = \emptyset\bigr) \geq 1-\alpha/4$.  We deduce that with probability at least $1-\alpha/2$, we have $|\hat{\mathrm{CI}}^{(\mathrm{Sym})}_{n,\alpha}| \leq 2C_1 \bigl(2\sqrt{\frac{\log(4/\alpha)}{2n}} + \epsilon_1 + \epsilon_2\bigr)$.
\end{proof}

\section{Results from Section~\ref{sec:causal}}\label{sec:proofs-causal}

\begin{Proposition}
    \label{prop:ATE-upper-bound}
     Let $\mathcal{P} \subseteq \mathcal{P}(\mathbb{R}^2)$ and $\mathcal{I} \subseteq [0, 1]^2$, and consider the collection of distributions
    \[
    \mathcal{M} \coloneqq \bigcup_{P \in \mathcal{P},\,  (q,\eta) \in \mathcal{I}}\; \mathcal{M}_2(P,q, \eta).
    \]  
    Let $Z_1,\ldots,Z_n \overset{\mathrm{iid}}{\sim} M \in \mathcal{M}$ and let $Z_i(0)$ and $Z_i(1)$ denote the coordinates of $Z_i$ for $i\in[n]$.
    If $\widehat{\mathrm{CI}}_0\bigl(Z_1(0), \ldots, Z_n(0)\bigr)$ and $\widehat{\mathrm{CI}}_1\bigl(Z_1(1), \ldots, Z_n(1)\bigr)$ satisfy the coverage guarantees
        \[
        \inf_{M \in \mathcal{M}}\; \mathbb{P}\Bigl\{\theta_0 \in \widehat{\mathrm{CI}}_0\bigl(Z_1(0), \ldots, Z_n(0)\bigr)\Bigr\} \geq 1 - \frac{\alpha}{2} \; \text{ and } \;\inf_{M \in \mathcal{M}}\; \mathbb{P}\Bigl\{\theta_1 \in \widehat{\mathrm{CI}}_1\bigl(Z_1(1), \ldots, Z_n(1)\bigr)\Bigr\}
        \geq 1 - \frac{\alpha}{2},
        \]
        then the interval
        \begin{align*}
            \widehat{\mathrm{CI}}'(Z_1,\ldots,Z_n) \coloneqq \Bigl\{b-a : a\in\widehat{\mathrm{CI}}_0\bigl(Z_1(0),\ldots,Z_n(0)\bigr),\, b\in \widehat{\mathrm{CI}}_1\bigl(Z_1(1),\ldots,Z_n(1)\bigr)\Bigr\},
        \end{align*}
        satisfies the coverage guarantee
        \[
        \inf_{M \in \mathcal{M}}\mathbb{P}\Bigl\{\mathrm{ATE}_P \in \widehat{\mathrm{CI}}'(Z_1, \ldots, Z_n)\Bigr\} \geq 1 - \alpha,
        \]
        and the length guarantee $\lvert \widehat{\mathrm{CI}}' \rvert \leq \lvert \widehat{\mathrm{CI}}_0 \rvert + \lvert \widehat{\mathrm{CI}}_1 \rvert$.
\end{Proposition}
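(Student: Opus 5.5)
The plan is to derive both guarantees from elementary facts about Minkowski differences of intervals, applied to the event where both marginal intervals cover simultaneously. We need no distributional input beyond the assumed marginal coverage.

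For coverage, fix $M\in\mathcal{M}$, so $M\in\mathcal{M}_2(P,q,\eta)$ for some $P\in\mathcal{P}$ and $(q,\eta)\in\mathcal{I}$. Write $\theta_0=\mathbb{E}_{P_0}(X)$ and $\theta_1=\mathbb{E}_{P_1}(X)$, so that $\mathrm{ATE}_P=\theta_1-\theta_0$. Consider the events $A_0\coloneqq\{\theta_0\in\widehat{\mathrm{CI}}_0\}$ and $A_1\coloneqq\{\theta_1\in\widehat{\mathrm{CI}}_1\}$. By hypothesis each fails with probability at most $\alpha/2$ under $\mathbb{P}_M$. On $A_0\cap A_1$, taking $a=\theta_0$ and $b=\theta_1$ in the definition of $\widehat{\mathrm{CI}}'$ shows $\mathrm{ATE}_P=\theta_1-\theta_0\in\widehat{\mathrm{CI}}'$. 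A union bound then gives $\mathbb{P}_M(\mathrm{ATE}_P\notin\widehat{\mathrm{CI}}')\le\alpha$, and taking the infimum over $M\in\mathcal{M}$ yields the coverage claim.

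One point needs care. Each marginal interval is a function only of the corresponding coordinates $Z_i(0)$ or $Z_i(1)$. The coverage hypothesis is stated jointly over the bivariate law $M$, so no use of Lemma~\ref{lem:bivariate-univariate-reduction} is required here; the events $A_0$ and $A_1$ are both measurable with respect to the full sample, and the union bound needs no independence between them.

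For length, I will use the fact that the Minkowski difference of intervals $[\ell_1,u_1]$ and $[\ell_0,u_0]$ is the interval $[\ell_1-u_0,\,u_1-\ell_0]$. Its length is $(u_1-\ell_1)+(u_0-\ell_0)$, so $|\widehat{\mathrm{CI}}'|=|\widehat{\mathrm{CI}}_0|+|\widehat{\mathrm{CI}}_1|$ in fact holds with equality. The only mild subtlety is the degenerate cases, namely empty intervals or unbounded endpoints such as $[Z_{\min},Z_{\max}]$ with infinite values. If either interval is empty, $\widehat{\mathrm{CI}}'$ is empty and has length $0$. If either is unbounded, the right-hand side is infinite. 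In both cases the inequality holds trivially. Open or half-open endpoints do not affect lengths, so the argument is essentially a direct verification. I expect no real obstacle beyond stating these conventions cleanly.
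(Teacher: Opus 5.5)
Your proposal is correct and matches what the paper intends. The paper gives no separate proof of this proposition; it treats it as immediate from a union bound over the two marginal non-coverage events, together with the fact that the Minkowski difference $[\ell_1-u_0,\,u_1-\ell_0]$ of two intervals has length equal to the sum of their lengths, and your treatment of empty or unbounded intervals just makes explicit what the paper leaves implicit.
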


\begin{proof}[Proof of Lemma~\ref{lem:bivariate-univariate-reduction}]
    Let $Z(0) \sim M_0'$.  Then, with $\bigl(X(0),X(1)\bigr) \sim P$, we may assume that $Z(0)$ is generated as
    \begin{align*}
    Z(0) = X(0) \cdot \bigl(1 - L^{(1)}\bigr)\bigl(1-L^{(2)}\bigr) + L^{(1)}\bigl(1 - L^{(2)}\bigr)\cdot \star + \bigl\{X(0) \ostar \Omega^{\mathrm{MNAR}}(0)\bigr\}\cdot L^{(2)},
    \end{align*}
    where $L^{(1)} \sim \mathrm{Bern}(q)$ and $L^{(2)} \sim \mathrm{Bern}(\eta)$ are independent and independent of $\bigl(X(0),\Omega^{\mathrm{MNAR}}(0)\bigr)$ and $\Omega^{\mathrm{MNAR}}(0)$ is binary.  Now define 
    \[
    Z(1) \coloneqq X(1) \cdot L^{(1)}\bigl(1-L^{(2)}\bigr) + \bigl(1 - L^{(1)}\bigr)\bigl(1 - L^{(2)}\bigr)\cdot \star + \bigl\{X(1) \ostar \Omega^{\mathrm{MNAR}}(1)\bigr\}\cdot L^{(2)},
    \]
    where $\Omega^{\mathrm{MNAR}}(1) \coloneqq 1-\Omega^{\mathrm{MNAR}}(0)$.
    Then by construction the joint distribution of $\bigl(Z(0), Z(1)\bigr)$ belongs to $\mathcal{M}_2(P,\eta,q)$.  The argument when $Z(1) \sim M_1'$ is almost identical.
\end{proof}

\begin{proof}[Proof of Lemma~\ref{Lemma:MaringalSensitivity}]
    Suppose that $M\in\mathcal{M}_2(P,q,\eta)$. Then
    \begin{align*}
        M = (1-\eta) \mathrm{Law}\biggl(\begin{pmatrix}
            Y(0) \\ Y(1)
        \end{pmatrix} \ostar 
        \begin{pmatrix}
            \Omega_{\mathrm{MCAR}}(0) \\ \Omega_{\mathrm{MCAR}}(1)
        \end{pmatrix}\biggr) +
        \eta \mathrm{Law}\biggl(\begin{pmatrix}
            Y(0) \\ Y(1)
        \end{pmatrix} \ostar 
        \begin{pmatrix}
            \Omega_{\mathrm{MNAR}}(0) \\ \Omega_{\mathrm{MNAR}}(1)
        \end{pmatrix}\biggr),
    \end{align*}
    where $\Omega_{\mathrm{MCAR}}(0) \sim \mathrm{Bern}(q)$ and $\Omega_{\mathrm{MCAR}}(1)\coloneqq 1-\Omega_{\mathrm{MCAR}}(0)$ are independent of $\bigl(Y(0),Y(1)\bigr)$, $\Omega_{\mathrm{MNAR}}(0)$ is binary and $\Omega_{\mathrm{MNAR}}(1)\coloneqq 1-\Omega_{\mathrm{MNAR}}(0)$. Let $Z=\bigl(Z(0),Z(1)\bigr)\sim M$ and $A\coloneqq \indi_{\{Z(1)\neq\star\}}$. Then 
    for $j\in\{0,1\}$, we have
    \begin{align}
        (1-\eta)(1-q) \leq \mathbb{P}\bigl(A=1 \,|\, Y(j)\bigr) \leq (1-\eta)(1-q) + \eta  \label{eq:prob-bound-1}
    \end{align}
    and
    \begin{align}
        (1-\eta)q \leq \mathbb{P}\bigl(A=0 \,|\, Y(j)\bigr) \leq (1-\eta)q + \eta  \label{eq:prob-bound-2}
    \end{align}
    almost surely, so these upper and lower bounds also hold unconditionally.  Thus, by Bayes rule, for $j\in\{0,1\}$, 
    \begin{align*}
        \frac{\mathrm{d}P\bigl(Y(j) \mid A = 1\bigr)}{\mathrm{d}P\bigl(Y(j) \mid A = 0\bigr)} = \frac{\mathbb{P}\bigl(A=0\bigr)}{\mathbb{P}\bigl(A=1\bigr)} \cdot \frac{P\bigl(A = 1 \,|\, Y(j)\bigr)}{P\bigl(A = 0 \,|\, Y(j)\bigr)}.
    \end{align*}
    The claim follows by applying~\eqref{eq:prob-bound-1},~\eqref{eq:prob-bound-2} and their unconditional versions.
\end{proof}

\section{Auxiliary lemmas}

\begin{Lemma} \label{lemma:empirical-quantile-property}
    Let $a\in\mathbb{R}$ and $p\in(0,1)$. Then 
\begin{align*}
    \mathbb{P}\biggl(\frac{1}{n}\sum_{i=1}^n \indi_{\{Z_i\leq a\}} \geq p\biggr) \leq \mathbb{P}(Q_n^-(p)\leq a) \leq \mathbb{P}\biggl(\frac{1}{n}\sum_{i=1}^n \indi_{\{Z_i\leq a\}} \geq p\biggr) + \mathbb{P}(Z_{\max} \leq a), 
\end{align*}
and
\begin{align*}
    \mathbb{P}\biggl(\frac{1}{n} \sum_{i=1}^n \indi_{\{Z_i > a\}} > p\biggr) \leq \mathbb{P}(Q_n^+(p)>a) \leq \mathbb{P}\biggl(\frac{1}{n} \sum_{i=1}^n \indi_{\{Z_i > a\}} > p\biggr) + \mathbb{P}(Z_{\min}>a). 
\end{align*}
\end{Lemma}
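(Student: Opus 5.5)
The plan is a deterministic, pathwise comparison between the event $\{Q_n^-(p)\le a\}$ and the event that the empirical distribution function at $a$ is at least $p$. The only complication is the special branch in the definition of $Q_n^\pm$ when $p$ exceeds $\narrowhat{p}_n$. Write $F_n(x)\coloneqq \frac1n\sum_{i=1}^n \indi_{\{Z_i\le x\}}$. Under the convention for $\star$, $F_n$ is a right-continuous nondecreasing step function with $F_n(x)\to\narrowhat{p}_n$ as $x\to\infty$ and $F_n(x)=\narrowhat{p}_n$ for $x\ge Z_{\max}$.

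First I show the inclusion $\{F_n(a)\ge p\}\subseteq\{Q_n^-(p)\le a\}$. On $\{F_n(a)\ge p\}$ we have $p\le F_n(a)\le \narrowhat{p}_n$, so the first branch of the definition applies. Since $a$ belongs to the set over which the infimum is taken, $Q_n^-(p)\le a$.

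Next I show $\{Q_n^-(p)\le a\}\subseteq\{F_n(a)\ge p\}\cup\{Z_{\max}\le a\}$, splitting according to the branch.
\begin{itemize}
\item If $p\le\narrowhat{p}_n$, the set $\{x:F_n(x)\ge p\}$ is nonempty because it contains $Z_{\max}$; note $p>0$ forces at least one observation. By right-continuity the infimum is attained, so $F_n(Q_n^-(p))\ge p$, and monotonicity gives $F_n(a)\ge p$ whenever $Q_n^-(p)\le a$.
\item If $p>\narrowhat{p}_n$, then $Q_n^-(p)=Z_{\max}$, so $Q_n^-(p)\le a$ means exactly $Z_{\max}\le a$. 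When all entries are missing, $Z_{\max}=\infty$ and this event is empty, which is consistent.
\end{itemize}
Taking probabilities and applying a union bound gives the first display.

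The second display for $Q_n^+$ follows the same way with $G_n(x)\coloneqq\frac1n\sum_{i=1}^n\indi_{\{Z_i>x\}}$, which is right-continuous and nonincreasing, with $G_n(x)\to\narrowhat{p}_n$ as $x\to-\infty$ and $G_n(x)=\narrowhat{p}_n$ for $x<Z_{\min}$.
\begin{itemize}
\item If $G_n(a)>p$, then $\narrowhat{p}_n\ge G_n(a)>p$, so the first branch applies. For every $x\le a$ we have $G_n(x)\ge G_n(a)>p$, so no such $x$ lies in $\{x:G_n(x)\le p\}$, and hence $Q_n^+(p)\ge a$. I will need strict inequality $Q_n^+(p)>a$. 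Right-continuity of $G_n$ at $a$ gives $G_n(x)>p$ on a small interval $[a,a+\delta)$, which pushes the infimum to at least $a+\delta>a$.
\item Conversely, suppose $Q_n^+(p)>a$. In the first branch, attainment of the infimum (by right-continuity, with nonemptiness since $G_n(Z_{\max})=0\le p$) means $a$ is not in the set, so $G_n(a)>p$. In the second branch, $Q_n^+(p)=Z_{\min}>a$.
\end{itemize}
A union bound again completes the argument.

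The only delicate step is the strict inequality in the $Q_n^+$ direction, which relies on right-continuity of $G_n$. The case where every entry is missing is handled by the conventions $Z_{\max}=\infty$ and $Z_{\min}=-\infty$.
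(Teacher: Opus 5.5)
Your proof is correct and is essentially the paper's argument: a pathwise comparison split according to which branch of the definition of $Q_n^{\pm}(p)$ is active, with monotonicity and right-continuity of $x\mapsto\frac1n\sum_i\indi_{\{Z_i\le x\}}$ and $x\mapsto\frac1n\sum_i\indi_{\{Z_i> x\}}$ doing the work. The paper packages this as the identities $\{x: \frac1n\sum_i\indi_{\{Z_i\le x\}}\ge p\}=[Q_n^-(p),\infty)$ when $p\le\narrowhat{p}_n$ and $\{x:\frac1n\sum_i\indi_{\{Z_i> x\}}\le p\}=[Q_n^+(p),\infty)$ when $p<\narrowhat{p}_n$, which is exactly what your attainment-plus-monotonicity step and your $\delta$-argument establish.
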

\begin{proof}
The function $x \mapsto n^{-1}\sum_{i=1}^n \mathbbm{1}_{\{Z_i \leq x\}}$ is both right-continuous and increasing, so $\bigl\{x \in \mathbb{R}:n^{-1}\sum_{i=1}^n \mathbbm{1}_{\{Z_i \leq x\}} \geq p\} = \bigl[Q_n^-(p),\infty)$ when $p \leq \narrowhat{p}_n$.  Thus
\begin{align*}
    \mathbb{P}(Q_n^-(p)\leq a) &= \mathbb{P}\bigl(Q_n^-(p)\leq a,\, p \leq \narrowhat{p}_n \bigr) + \mathbb{P}\bigl(Z_{\max} \leq a,\, p > \narrowhat{p}_n\bigr)\\
    &\leq \mathbb{P}\biggl(\frac{1}{n}\sum_{i=1}^n \indi_{\{Z_i\leq a\}} \geq p\biggr) + \mathbb{P}(Z_{\max} \leq a).
\end{align*}
Moreover,
\begin{align*}
    \mathbb{P}(Q_n^-(p)\leq a) \geq \mathbb{P}\bigl(Q_n^-(p)\leq a,\, p \leq \narrowhat{p}_n \bigr) = \mathbb{P}\biggl(\frac{1}{n}\sum_{i=1}^n \indi_{\{Z_i\leq a\}} \geq p\biggr),
\end{align*}
where the equality follows since $\frac{1}{n}\sum_{i=1}^n \indi_{\{Z_i\leq a\}} \geq p$ implies $\narrowhat{p}_n \geq p$.

Similarly, $\bigl\{x \in \mathbb{R} : \frac{1}{n} \sum_{i=1}^n \indi_{\{Z_i > x\}} \leq p\bigr\} = \bigl[Q_n^+(p), \infty\bigr)$ when $p<\narrowhat{p}_n$. Thus
\begin{align*}
    \mathbb{P}(Q_n^+(p)>a) &= \mathbb{P}\bigl(Q_n^+(p)>a ,\, p < \narrowhat{p}_n \bigr) + \mathbb{P}\bigl(Z_{\min}>a ,\, p \geq \narrowhat{p}_n \bigr)\\
    &\leq \mathbb{P}\biggl(\frac{1}{n} \sum_{i=1}^n \indi_{\{Z_i > a\}} > p\biggr) + \mathbb{P}(Z_{\min}>a),
\end{align*}
and
\begin{align*}
    \mathbb{P}(Q_n^+(p)>a) \geq \mathbb{P}\bigl(Q_n^+(p)>a ,\, p < \narrowhat{p}_n \bigr) = \mathbb{P}\biggl(\frac{1}{n} \sum_{i=1}^n \indi_{\{Z_i > a\}} > p\biggr),
\end{align*}
where the final equality follows since $\frac{1}{n} \sum_{i=1}^n \indi_{\{Z_i > a\}} > p$ implies $\narrowhat{p}_n > p$.
\end{proof}

\begin{Lemma} \label{lemma:mills-ratio}
    For all $x\geq 0$, we have
    \begin{align*}
        \frac{\phi(x)}{x+1/x} \leq \frac{2\phi(x)}{x+\sqrt{4+x^2}} \leq 1-\Phi(x) \leq \frac{2\phi(x)}{x+\sqrt{2+x^2}} \leq \frac{\phi(x)}{x}.
    \end{align*}
\end{Lemma}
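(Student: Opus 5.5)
The statement to prove is Lemma~\ref{lemma:mills-ratio}: the chain
\[
\frac{\phi(x)}{x+1/x} \leq \frac{2\phi(x)}{x+\sqrt{4+x^2}} \leq 1-\Phi(x) \leq \frac{2\phi(x)}{x+\sqrt{2+x^2}} \leq \frac{\phi(x)}{x}.
\]
The plan is to check the two outer inequalities algebraically and to prove the two middle (Birnbaum/Sampford-type) bounds by a monotonicity argument on the Mills ratio $R(x)\coloneqq\{1-\Phi(x)\}/\phi(x)$.

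\textbf{Outer inequalities.} The first is equivalent to $x+\sqrt{4+x^2}\le 2x+2/x$, that is, $\sqrt{4+x^2}\le x+2/x$. Squaring gives $4+x^2\le x^2+4+4/x^2$, which is true for $x>0$, and at $x=0$ the left-hand side is $0$ by convention. The last inequality is immediate, since $x+\sqrt{2+x^2}\ge 2x$.

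\textbf{Upper middle bound.} Set $g(x)\coloneqq 2/(x+\sqrt{2+x^2})$ and $D(x)\coloneqq g(x)-R(x)$; we want $D\ge 0$. Since $R'(x)=xR(x)-1$, we get
\[
D'(x)=g'(x)-xR(x)+1=g'(x)+1-xg(x)+xD(x).
\]
The key algebraic fact to verify is $g'+1-xg\le 0$. Writing $s\coloneqq\sqrt{2+x^2}$, we have $g=2/(x+s)$ and $g'=-2/\{s(x+s)\}$. The required inequality then reduces to $(x+s)-2x\le 2/s$, i.e.\ $s-x\le 2/s$, i.e.\ $s^2-xs\le 2$, i.e.\ $x^2-xs\le 0$, which holds. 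Hence $D'\le xD$, so $\bigl(e^{-x^2/2}D\bigr)'\le 0$, and $e^{-x^2/2}D$ is nonincreasing. Since $D(x)\to 0$ as $x\to\infty$ (both $g$ and $R$ are $\sim 1/x$), we get $e^{-x^2/2}D(x)\ge \lim_{y\to\infty}e^{-y^2/2}D(y)=0$, so $D\ge 0$.

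\textbf{Lower middle bound.} The same argument applies with $h(x)\coloneqq 2/(x+\sqrt{4+x^2})$ and $E\coloneqq R-h$. Now $h$ satisfies $h'+1-xh\ge 0$: with $u\coloneqq\sqrt{4+x^2}$, this reduces to $u-x\ge 2/u$, i.e.\ $x^2-xu+2\ge 0$ after using $u^2=4+x^2$. This in turn is $u\le x+2/x$, which holds as in the first step. Then $E'=xE-(h'+1-xh)\le xE$, so $e^{-x^2/2}E$ is nonincreasing with limit $0$, giving $E\ge 0$.

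\textbf{Main obstacle.} The only delicate point is the limit argument: one needs $e^{-x^2/2}D(x)\to 0$ as $x\to\infty$. This is immediate because $D$ is bounded (indeed $D\to 0$), but it should be stated explicitly. The case $x=0$ should be handled by continuity.
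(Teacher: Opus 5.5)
Your proof is correct. It differs from the paper only in that the paper gives no argument: it simply cites \citet[Eq.~(3)]{duembgen2010bounding} for the whole chain.

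You instead make the lemma self-contained. You use the identity $R'(x)=xR(x)-1$ for $R\coloneqq(1-\Phi)/\phi$, together with a comparison principle. If $g'+1-xg\le 0$ on $[0,\infty)$, then $e^{-x^2/2}(g-R)$ is nonincreasing with limit $0$, hence nonnegative. If instead $g'+1-xg\ge 0$, the same holds for $e^{-x^2/2}(R-g)$. Both algebraic reductions are right: $s^2-xs\le 2$ for $s=\sqrt{2+x^2}$, and $u^2-xu\ge 2$ for $u=\sqrt{4+x^2}$. The two outer inequalities also check out.

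One simplification: for the limit you need not invoke the asymptotics $R(x)\sim 1/x$. Directly, $e^{-x^2/2}R(x)=\sqrt{2\pi}\{1-\Phi(x)\}\to 0$, which also removes any appearance of circularity.

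Your method also explains the constants in the lemma. For $g_c(x)\coloneqq 2/(x+\sqrt{x^2+c})$, the differential inequality amounts to comparing $x^2+c-x\sqrt{x^2+c}$ with $2$. That function decreases from $c$ at $x=0$ to $c/2$ at infinity, since its derivative is $-(\sqrt{x^2+c}-x)^2/\sqrt{x^2+c}$. So the inequality holds on all of $[0,\infty)$ exactly when $c\le 2$ (upper bound) or $c\ge 4$ (lower bound). Moreover, $c=4$ is sharp for lower bounds of this form, because $g_c(x)=1/x-c/(4x^3)+O(x^{-5})$ while $R(x)=1/x-1/x^3+O(x^{-5})$.

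The citation buys brevity. Your route buys a short, elementary argument that needs nothing beyond $R'=xR-1$ and can be checked line by line.
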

\begin{proof}
    See, e.g., \citet[Eq.~(3)]{duembgen2010bounding}.
\end{proof}

\begin{Lemma} \label{lemma:gaussian-cdf-ratio}
    (a) If $t\geq 0$ and $b\in[0,1]$, then
    \begin{align*}
        \frac{1-\Phi(t-b)}{1-\Phi(t)} \geq 1 + \frac{b(t+1)}{2\sqrt{e}}.
    \end{align*}
    (b) If $b \in [0,t]$, then
    \begin{align*}
        \frac{1-\Phi(t-b)}{1-\Phi(t)} \geq \frac{1}{\sqrt{2}} e^{tb/2}.
    \end{align*}
\end{Lemma}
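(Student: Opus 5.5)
Both parts reduce to the function $g_b(t) \coloneqq \log\{1-\Phi(t-b)\} - \log\{1-\Phi(t)\}$. Since $\frac{\mathrm{d}}{\mathrm{d}s}\log\{1-\Phi(s)\} = -h(s)$, where $h(s) \coloneqq \phi(s)/\{1-\Phi(s)\}$ is the Gaussian hazard rate (inverse Mills ratio), we have the identity
\[
g_b(t) = \int_{t-b}^{t} h(s)\,\mathrm{d}s .
\]
The plan is to bound $h$ from below on the interval $[t-b,t]$ using Lemma~\ref{lemma:mills-ratio}, and then exponentiate.

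\emph{Part (b).} Here $b\in[0,t]$, so $s\in[t-b,t]\subseteq[0,\infty)$. For $s \ge 0$ the upper Mills bound in Lemma~\ref{lemma:mills-ratio} gives $1-\Phi(s)\le \phi(s)/s$, hence $h(s)\ge s$. Integrating,
\[
g_b(t)\ge \int_{t-b}^{t} s\,\mathrm{d}s = tb - \tfrac{b^2}{2} \ge \tfrac{tb}{2},
\]
using $b\le t$. This already yields $e^{tb/2}$, which is stronger than the claimed $e^{tb/2}/\sqrt{2}$. The one point to check is $s$ near $0$, where $\phi(s)/s$ blows up; the bound $h(s)\ge s$ is trivially true there, so nothing breaks.

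\emph{Part (a).} Here $t\ge0$ and $b\in[0,1]$, so $s\in[t-1,t]$ may be negative. I need a lower bound on $h$ that is valid for all $s\ge -1$ and of order $t+1$. Two facts suffice:
\begin{itemize}
\item[(i)] For $s\le 0$, $h(s)=\phi(s)/\{1-\Phi(s)\}\ge \phi(s) \ge \phi(1)$ whenever $|s|\le 1$.
\item[(ii)] For $s \ge 0$, the lower Mills bound in Lemma~\ref{lemma:mills-ratio} gives $h(s) \ge \tfrac12\{s+\sqrt{2+s^2}\} \ge \max\{s,1/\sqrt{2}\}$.
\end{itemize}
Combining, $h(s)\ge c\,(t+1)$ on $[t-1,t]$ with $c = 1/(2\sqrt e)$. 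To see this, handle two regimes. When $t\ge1$, we have $s\ge t-1\ge 0$ and so $h(s)\ge \max\{t-1,\,1/\sqrt2\}\ge (t+1)/(2\sqrt e)$. When $t<1$, (i) and (ii) give $h(s)\ge \phi(1)=1/\sqrt{2\pi e} \approx 0.242$, whereas $(t+1)/(2\sqrt e)\le 1/\sqrt e \approx 0.607$. So the constant fails in this case and needs more care.

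I would refine the small-$t$ case using monotonicity: $h$ is increasing, so $h(s)\ge h(-1) = \phi(1)/\Phi(1)\approx 0.288$. That is still too small on its own. The better route is to avoid the uniform bound and use the convexity of $g$. Instead of exponentiating a uniform bound, use $e^x \ge 1+x$, so that
\[
\frac{1-\Phi(t-b)}{1-\Phi(t)} = e^{g_b(t)} \ge 1 + \int_{t-b}^{t} h(s)\,\mathrm{d}s \ge 1 + b\,h(t-b),
\]
by monotonicity of $h$. It then suffices to show $h(t-b)\ge (t+1)/(2\sqrt e)$. For $b\le1$ and $t<1$, $h(t-b)\ge h(t-1)$. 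Since $h(u)\ge \phi(u)$ for all $u$ and $\phi$ is decreasing on $[0,1]$, at $u\in[-1,0]$ we get $h(u)\ge \phi(1)\cdot\sqrt{2\pi}\cdot\ldots$, which again falls short numerically. So I expect the real obstacle to be pinning down the constant $1/(2\sqrt e)$ for $t\in[0,1)$.

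My first attempt would be to check the bound directly via the mean value theorem on $1-\Phi$: $1-\Phi(t-b) - \{1-\Phi(t)\} = b\,\phi(\xi)$ for some $\xi\in[t-1,t]$, with $\phi(\xi)\ge \phi(1)=e^{-1/2}/\sqrt{2\pi}$, and $1/\{1-\Phi(t)\}\ge 2$ for $t \ge 0$. This gives a ratio of at least $1+2b\,e^{-1/2}/\sqrt{2\pi}\approx 1+0.48b$, against a target of at most $1+b/\sqrt e\approx 1+0.607b$ when $t<1$. To close the gap, use $1/\{1-\Phi(t)\}\ge (t+\sqrt{2+t^2})/(2\phi(t))$ together with $\phi(\xi)/\phi(t)\ge e^{-(t+1)b/2}\ldots$; in particular $\phi(\xi)\ge\phi(t)\,e^{-(t^2-(t-1)^2)/2}$ when $\xi\ge t-1\ge0$, and a direct numeric check covers $t<1$. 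I would settle the exact constant by this case analysis, possibly accepting a slightly different constant if the stated $1/(2\sqrt e)$ proves tight.
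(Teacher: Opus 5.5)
Your part (b) is correct and slightly sharper than the paper's. Integrating $h(s)\ge s$ over $[t-b,t]\subseteq[0,\infty)$ gives a ratio of at least $e^{tb-b^2/2}\ge e^{tb/2}$. The paper instead applies the Mills bounds of Lemma~\ref{lemma:mills-ratio} at the two endpoints and pays the factor $\frac{t+\sqrt{2+t^2}}{t+\sqrt{4+t^2}}\ge 1/\sqrt{2}$.

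Part (a), however, is not proved. You concede that for $t\in[0,1)$ none of your routes reaches the constant $1/(2\sqrt e)$, and this is a real failure:
\begin{itemize}
\item The route $e^{g_b(t)}\ge 1+b\,h(t-b)$ cannot work at $t=0$, $b=1$, since $h(-1)=\phi(1)/\Phi(1)\approx 0.288<1/(2\sqrt e)\approx 0.303$.
\item The crude mean value bound gives only $1+0.48b$, which falls short as $t\to1$.
\item Deferring the rest to ``a direct numeric check'' over a continuum of $(t,b)$, and possibly changing the constant, is not an argument.
\item Your $t\ge1$ case is also wrong as written: $\max\{t-1,1/\sqrt2\}\ge (t+1)/(2\sqrt e)$ fails for $t\in(1.33,1.87)$. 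For example, at $t=1.5$ the left side is $0.707$ and the right side is $0.758$. You would need to keep the full bound $h(t-1)\ge\frac12\{(t-1)+\sqrt{2+(t-1)^2}\}$.
\item Your proposed bounds on $\phi(\xi)/\phi(t)$, namely $e^{-(t+1)b/2}$ and $e^{-(2t-1)/2}$, are valid but decay exponentially in $t$. They therefore destroy the growth of order $t+1$ that the Mills factor provides.
\end{itemize}

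The missing observation is one line inside your own mean value approach. Write $\xi=t-x$ with $x\in[0,b]\subseteq[0,1]$. Then $\phi(\xi)/\phi(t)=e^{tx-x^2/2}\ge e^{-x^2/2}\ge e^{-1/2}$, uniformly in $t\ge0$. Combine this with $\phi(t)/\{1-\Phi(t)\}\ge\frac12(t+\sqrt{2+t^2})\ge\frac12(t+1)$ from Lemma~\ref{lemma:mills-ratio}. This yields
\[
\frac{1-\Phi(t-b)}{1-\Phi(t)}=1+\frac{b\,\phi(\xi)}{1-\Phi(t)}\ge 1+\frac{b(t+1)}{2\sqrt e}
\]
for all $t\ge0$ and $b\in[0,1]$, with no case split. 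This is exactly the paper's proof, phrased there as $f(b)\ge f(0)+b\inf_{[0,b]}f'$ for $f(x)\coloneqq\{1-\Phi(t-x)\}/\{1-\Phi(t)\}$.

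The key is to keep the fixed denominator $1-\Phi(t)$, so the inverse Mills ratio is only evaluated at $t\ge0$. Passing to logarithms instead evaluates the hazard rate at $t-b$, which can be negative, where the hazard rate is too small.
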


\begin{proof}
    \emph{(a)} For $x \geq 0$, define $f(x) \coloneqq \frac{1-\Phi(t-x)}{1-\Phi(t)}$. By Lemma~\ref{lemma:mills-ratio}, we have that for all $x\geq 0$ and $t\geq 0$,
    \begin{align*}
        f'(x) = \frac{\phi(t-x)}{1-\Phi(t)} = \frac{\phi(t)}{1-\Phi(t)} e^{tx-x^2/2} \geq \frac{t+\sqrt{2+t^2}}{2}e^{tx-x^2/2}.
    \end{align*}
    Thus, for $b \in [0,1]$, 
    \begin{align*}
        \frac{1-\Phi(t-b)}{1-\Phi(t)} = f(b) \geq f(0) + b\cdot \inf_{x\in[0,b]}f'(x) \geq 1 + \frac{b(t+\sqrt{2+t^2})}{2\sqrt{e}} \geq 1 + \frac{b(t+1)}{2\sqrt{e}},
    \end{align*}
    as required.

    \medskip
    \emph{(b)} By Lemma~\ref{lemma:mills-ratio}, for $b \in [0,t]$,
    \begin{align*}
        \frac{1-\Phi(t-b)}{1-\Phi(t)} &\geq \frac{2\phi(t-b)}{t-b+\sqrt{4+(t-b)^2}} \cdot \frac{t+\sqrt{2+t^2}}{2\phi(t)} \geq \frac{t+\sqrt{2+t^2}}{t+\sqrt{4+t^2}} \cdot e^{tb-b^2/2} \geq \frac{1}{\sqrt{2}} e^{tb/2},
    \end{align*}
    as required.
\end{proof}

\begin{Lemma} \label{lemma:max-min}
    Let $n\in\mathbb{N}$, $\alpha\in(0,1]$, $\epsilon \in \bigl[0,1-\frac{\log(8/\alpha)}{n}\bigr]$, and $Z_1,\ldots,Z_n \overset{\mathrm{iid}}{\sim} (1-\epsilon)N(\theta,\sigma^2)+\epsilon Q$ where $Q\in\mathsf{MNAR}_{N(\theta,\sigma^2)}$. Then
    \begin{align*}
        \mathbb{P}\Bigl(Z_{\max} > \theta + \sigma\sqrt{2\log(4n/\alpha)} \Bigr) \leq \frac{\alpha}{4} \quad\text{and}\quad \mathbb{P}\Bigl(Z_{\min} < \theta - \sigma\sqrt{2\log(4n/\alpha)} \Bigr) \leq \frac{\alpha}{4}.
    \end{align*}
\end{Lemma}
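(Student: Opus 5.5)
The plan is to reduce to the fully observed clean subsample and then apply a Gaussian maximal bound plus a union bound. Treat only the upper tail of $Z_{\max}$; the bound for $Z_{\min}$ follows by applying the same argument to $-Z_i$ (reflection about $\theta$), since $Q\in\mathsf{MNAR}_{N(\theta,\sigma^2)}$ is mapped to an element of $\mathsf{MNAR}_{N(-\theta,\sigma^2)}$ under this reflection. Without loss of generality take $\theta=0$ and $\sigma=1$ by translation and scaling.

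\textbf{Step 1: coupling.} As in the proof of Proposition~\ref{prop:all-param-known}\emph{(a)}, realise $Z_i = L_iX_i + (1-L_i)(X_i\ostar\Omega_i)$ with $X_i\sim N(0,1)$. The key observation is that, whether $L_i=1$ or $0$, every observed entry satisfies $Z_i\neq\star \Rightarrow Z_i = X_i$. Hence
\[
Z_{\max}\le \max_{i\in[n]} X_i
\]
on the event that at least one entry is observed. If no entry is observed, the convention gives $Z_{\max}=+\infty$, and this case must be handled separately.

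\textbf{Step 2: the empty event.} The event $\{Z_i=\star\ \forall i\}$ is contained in $\{L_i=0\ \forall i\}$, so its probability is at most $\epsilon^n\le (1-\log(8/\alpha)/n)^n\le e^{-\log(8/\alpha)}=\alpha/8$. This is the only point where the hypothesis on $\epsilon$ enters.

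\textbf{Step 3: Gaussian maximal bound.} By a union bound and the Mills ratio bound of Lemma~\ref{lemma:mills-ratio}, with $u=\sqrt{2\log(4n/\alpha)}$,
\[
\mathbb{P}\bigl(\max_i X_i>u\bigr)\le n\,\frac{\phi(u)}{u}=\frac{\alpha}{4\sqrt{2\pi}\,u}.
\]
Since $u\ge\sqrt{2\log 4}>1$, this is at most $\alpha/(4\sqrt{2\pi})\le\alpha/8$. Adding Steps 2 and 3 gives a total of at most $\alpha/4$.

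\textbf{Main obstacle.} The only delicate points are the bookkeeping around the convention $Z_{\max}=\infty$ when everything is missing, and checking that the constants sum to at most $\alpha/4$. Both are handled by the slack in the Mills ratio factor $1/(\sqrt{2\pi}\,u)$.
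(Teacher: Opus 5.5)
Your proposal is correct and follows essentially the same route as the paper: bound the all-missing event by $\epsilon^n \le (1-\log(8/\alpha)/n)^n \le \alpha/8$, then bound the event that the maximum of the latent Gaussians $X_1,\ldots,X_n$ exceeds $\theta+\sigma\sqrt{2\log(4n/\alpha)}$ by $\alpha/8$, and handle $Z_{\min}$ by reflection. The only cosmetic difference is in that second bound: you use a union bound with the Mills ratio $\phi(u)/u$, whereas the paper writes $1-\Phi(u)^n$, applies $1-\Phi(x)\le \tfrac12 e^{-x^2/2}$, and finishes with Bernoulli's inequality.
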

\begin{proof}
    We have
    \begin{align*}
        \mathbb{P}\Bigl(Z_{\max} > \theta &+ \sigma\sqrt{2\log(4n/\alpha)} \Bigr) \leq \mathbb{P}\Bigl(Z_i = \star \;\forall\; i\in[n]\Bigr) + \mathbb{P}\Bigl(\max_{i\in[n]} X_i > \theta + \sigma\sqrt{2\log(4n/\alpha)} \Bigr)\\
        &\leq \biggl(1-\frac{\log(8/\alpha)}{n}\biggr)^n + 1 - \Phi\bigl(\sqrt{2\log(4n/\alpha)}\bigr)^n \leq \frac{\alpha}{8} + 1 - \biggl(1-\frac{\alpha}{8n}\biggr)^n \leq \frac{\alpha}{4},
    \end{align*}
    where the second inequality follows since $\mathbb{P}(Z_1=\star) \leq \epsilon\leq 1-\frac{\log(8/\alpha)}{n}$; the third inequality follows since $\bigl(1-\frac{a}{n}\bigr)^n \leq e^{-a}$ for $a\in\mathbb{R}$, $n\in\mathbb{N}$ and since $1-\Phi(x) \leq \frac{1}{2}e^{-x^2/2}$ for $x \geq 0$; and the final inequality follows from the fact that $1-a \leq (1-a/n)^n$ for $a \in [0,1]$ and $n \geq 1$. The result for~$Z_{\min}$ follows by replacing $Z_i$ with $-Z_i$ for $i\in[n]$.
\end{proof}
The next lemma presents a slightly weaker conclusion in a more general setting.

\begin{Lemma} \label{lemma:max-min-2}
    Let $n\in\mathbb{N}$, $\alpha\in(0,1]$ and $\epsilon_1,\epsilon_2\in[0,1]$ be such that $\epsilon_1+\epsilon_2 \leq 1-\frac{\log(16/\alpha)}{n}$, and $Z_1,\ldots,Z_n \overset{\mathrm{iid}}{\sim} M \in \mathcal{M}\bigl(N(\theta,\sigma^2),\epsilon_1,\epsilon_2\bigr)$. Then
    \begin{align*}
        \mathbb{P}\Bigl(Z_{\max} > \theta + \sigma\sqrt{2\log\{12n(1-\epsilon_1)/\alpha\}} \Bigr) \leq \frac{\alpha}{4} \ \ \text{and} \ \ \mathbb{P}\Bigl(Z_{\min} < \theta - \sigma\sqrt{2\log\{12n(1-\epsilon_1)/\alpha\}} \Bigr) \leq \frac{\alpha}{4}.
    \end{align*}
\end{Lemma}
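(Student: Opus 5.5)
The plan mirrors Lemma~\ref{lemma:max-min}, with one change: we can no longer bound $\mathbb{P}(Z_1=\star)$ by $\epsilon$ in a way that leads to a $\log(4n/\alpha)$ threshold. Instead we work with the number of observed entries, which here has mean of order $n(1-\epsilon_1)$. Since $M\in\mathcal{M}\bigl(N(\theta,\sigma^2),\epsilon_1,\epsilon_2\bigr)$, Lemma~\ref{lemma:realisability-characterisation} with $\mu=P$ gives
\[
(1-\epsilon_1-\epsilon_2)\,\mathrm{d}P \leq \mathrm{d}M|_{\mathbb{R}} \leq (1-\epsilon_1)\,\mathrm{d}P .
\]
The upper half of this sandwich is the key fact: for every $u$,
\[
\mathbb{P}(Z_1\neq\star,\, Z_1>u) \leq (1-\epsilon_1)\{1-\Phi_{(\theta,\sigma)}(u)\}.
\]

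First handle the event that every observation is missing. Its probability is $\mathbb{P}(Z_1=\star)^n \leq (\epsilon_1+\epsilon_2)^n \leq \bigl(1-\frac{\log(16/\alpha)}{n}\bigr)^n \leq \alpha/16$, using $(1-a/n)^n\leq e^{-a}$. On this event $Z_{\max}=\infty$, so it must be paid for separately.

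Next, on the complementary event, use a union bound over $i\in[n]$. Set $u\coloneqq \theta+\sigma\sqrt{2\log\{12n(1-\epsilon_1)/\alpha\}}$. Then
\[
\mathbb{P}\bigl(\exists\, i: Z_i\neq\star,\ Z_i>u\bigr) \leq n(1-\epsilon_1)\{1-\Phi(\sqrt{2\log\{12n(1-\epsilon_1)/\alpha\}})\} \leq n(1-\epsilon_1)\cdot\tfrac12\cdot\frac{\alpha}{12n(1-\epsilon_1)} = \frac{\alpha}{24},
\]
using $1-\Phi(x)\leq \tfrac12 e^{-x^2/2}$. This requires $12n(1-\epsilon_1)/\alpha\geq 1$ so that the square root is real and nonnegative. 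That condition holds because $1-\epsilon_1\geq 1-\epsilon_1-\epsilon_2\geq \log(16/\alpha)/n$, hence $n(1-\epsilon_1)\geq \log 16>1$ and $\alpha\leq 1$.

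Summing the two pieces gives $\alpha/16+\alpha/24<\alpha/4$. The statement for $Z_{\min}$ then follows by applying the same argument to $-Z_i$, whose law lies in $\mathcal{M}\bigl(N(-\theta,\sigma^2),\epsilon_1,\epsilon_2\bigr)$. The only delicate point is replacing $n$ by the effective sample size $n(1-\epsilon_1)$, and the upper density bound of Lemma~\ref{lemma:realisability-characterisation} makes that step immediate, so I expect no real obstacle.
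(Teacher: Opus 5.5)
Your proof is correct, but it takes a different and somewhat more economical route than the paper.

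The paper works with the latent mixture representation. It lets $\mathcal{I}$ be the indices not drawn from the $\delta_{\{\star\}}$ component, so $|\mathcal{I}|\sim\mathrm{Bin}(n,1-\epsilon_1)$. A multiplicative Chernoff bound then gives $|\mathcal{I}|\leq 3n(1-\epsilon_1)$ except with probability $\alpha/16$. On that event it bounds the maximum of at most $3n(1-\epsilon_1)$ underlying Gaussians through $1-\Phi(\cdot)^{3n(1-\epsilon_1)}$. Together with the all-missing event this yields $\alpha/16+\alpha/8+\alpha/16=\alpha/4$.

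You instead use only the marginal law $M$. The upper half of the sandwich in Lemma~\ref{lemma:realisability-characterisation} gives $M\bigl((u,\infty)\bigr)\leq(1-\epsilon_1)\{1-\Phi_{(\theta,\sigma)}(u)\}$. A union bound over $i\in[n]$, which is a first-moment bound on the number of observed exceedances, then already carries the effective sample size $n(1-\epsilon_1)$.

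What this buys you:
\begin{itemize}
\item You need no concentration argument for $|\mathcal{I}|$ at all.
\item You get a slightly smaller total, $\alpha/16+\alpha/24=5\alpha/48$.
\end{itemize}
The paper's coupling argument matches the latent representation it uses in its coverage proofs, but the marginal density bound is all this lemma requires.

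Your side checks are the right ones:
\begin{itemize}
\item $M(\{\star\})\leq\epsilon_1+\epsilon_2$ follows from the lower half of the sandwich.
\item $12n(1-\epsilon_1)/\alpha\geq 1$ holds because $n(1-\epsilon_1)\geq\log(16/\alpha)$, so the threshold is well defined.
\item The reflection $Z_i\mapsto -Z_i$ correctly handles $Z_{\min}$.
\end{itemize}
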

\begin{proof}
    We may write $M = (1-\epsilon_1-\epsilon_2)N(\theta,\sigma^2) + \epsilon_1\delta_{\{\star\}} + \epsilon_2 Q$, where $Q\in\mathsf{MNAR}_{N(\theta,\sigma^2)}$. Let $\mathcal{I}$ be the indices of $Z_i$ such that $Z_i$ is not drawn from $\delta_{\{\star\}}$. Then
    \begin{align*}
        \mathbb{P}\Bigl(&\Bigl\{Z_{\max} > \theta + \sigma\sqrt{2\log\{12n(1-\epsilon_1)/\alpha\}}\Bigr\} \bigcap \Bigl\{|\mathcal{I}| \leq 3n(1-\epsilon_1)\Bigr\} \Bigr)\\
        & \leq \mathbb{P}\bigl(Z_i = \star \;\forall\; i\in[n]\bigr) + \mathbb{P}\Bigl(\Bigl\{ \max_{i\in\mathcal{I}} X_i > \theta + \sigma\sqrt{2\log\{12n(1-\epsilon_1)/\alpha\}}\Bigr\} \bigcap \Bigl\{ |\mathcal{I}| \leq 3n(1-\epsilon_1) \Bigr\}\Bigr)\\
        &\leq \biggl(1-\frac{\log(16/\alpha)}{n}\biggr)^n + 1 - \Phi\bigl(\sqrt{2\log\{12n(1-\epsilon_1)/\alpha\}}\bigr)^{3n(1-\epsilon_1)}\\
        &\leq \frac{\alpha}{16} + 1 - \biggl(1-\frac{\alpha}{24n(1-\epsilon_1)}\biggr)^{3n(1-\epsilon_1)} \leq \frac{3\alpha}{16},
    \end{align*}
    where the second inequality follows since $\mathbb{P}(Z_1=\star) \leq 1 - \epsilon_1 - \epsilon_2 \leq 1-\frac{\log(16/\alpha)}{n}$.   
    Moreover, by a multiplicative Chernoff bound \citep[Theorem~2.3(c)]{McDiarmid1998}, we have
    \begin{align*}
        \mathbb{P} \Bigl(|\mathcal{I}| > 3n(1-\epsilon_1) \Bigr) \leq \frac{\alpha}{16}.
    \end{align*}
    Thus, 
    \begin{align*}
        &\mathbb{P}\Bigl(Z_{\max} > \theta + \sigma\sqrt{2\log\{12n(1-\epsilon_1)/\alpha\}} \Bigr)\\
        &\leq \mathbb{P}\Bigl(\Bigl\{Z_{\max} > \theta + \sigma\sqrt{2\log\{12n(1-\epsilon_1)/\alpha\}}\Bigr\} \bigcap \Bigl\{|\mathcal{I}| \leq 3n(1-\epsilon_1)\Bigr\} \Bigr) + \mathbb{P} \Bigl(|\mathcal{I}| > 3n(1-\epsilon_1) \Bigr) \leq \frac{\alpha}{4}.
    \end{align*}
    The result for $Z_{\min}$ follows by replacing $Z_i$ with $-Z_i$ for $i\in[n]$.
\end{proof}

\begin{Lemma} \label{lemma:dkw-missing}
    Let $n\in\mathbb{N}$, $\alpha\in(0,1]$, $\epsilon_1,\epsilon_2\in[0,1]$ be such that $\epsilon_1+\epsilon_2 \leq 1-\frac{\log(4/\alpha)}{n}$, and $Z_1,\ldots,Z_n \overset{\mathrm{iid}}{\sim} M \in \mathcal{M}\bigl(P,\epsilon_1,\epsilon_2\bigr)$. Let $\hat{M}_n$ denote the empirical distribution of $Z_1,\ldots,Z_n$. Then 
    \begin{align*}
        \mathbb{P}\biggl(\sup_{t\in\mathbb{R}} \Bigl|\hat{M}_n\bigl((-\infty,t]\bigr) - M\bigl((-\infty,t]\bigr)\Bigr| < 3\sqrt{\frac{(1-\epsilon_1)\log(4/\alpha)}{n}}\biggr) \geq 1-\alpha
    \end{align*}
    and
    \begin{align*}
        \mathbb{P}\biggl(\sup_{t\in\mathbb{R}} \Bigl|\hat{M}_n\bigl((t,\infty)\bigr) - M\bigl((t,\infty)\bigr)\Bigr| < 3\sqrt{\frac{(1-\epsilon_1)\log(4/\alpha)}{n}}\biggr) \geq 1-\alpha.
    \end{align*}
\end{Lemma}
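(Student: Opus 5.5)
The proof reduces the missing-data setting to a DKW statement about the sub-distribution functions of $M$, with the deviation scale set by the mass $1-\epsilon_1$ rather than by $1$. Let $p\coloneqq M(\mathbb{R}) = \mathbb{P}(Z_1\neq\star)$, and write $F(t)\coloneqq M\bigl((-\infty,t]\bigr)$ and $\hat F_n(t)\coloneqq \hat M_n\bigl((-\infty,t]\bigr)$. By Lemma~\ref{lemma:realisability-characterisation}, $M$ restricted to $\mathbb{R}$ has density at most $(1-\epsilon_1)\,\mathrm{d}P/\mathrm{d}\mu$. Hence $p\le 1-\epsilon_1$, and $F(t)\le p$ for every $t$. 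The variance of $\indi_{\{Z_i\le t\}}$ is therefore at most $F(t)\{1-F(t)\}\le p\le 1-\epsilon_1$. This is why a Bernstein-type bound should give the claimed order $\sqrt{(1-\epsilon_1)\log(4/\alpha)/n}$ in place of the usual $\sqrt{\log/n}$.

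The main route conditions on $N\coloneqq n\narrowhat{p}_n\sim\mathrm{Bin}(n,p)$. Given $N=k$, the observed values are i.i.d.\ from the normalised law $G\coloneqq M|_{\mathbb{R}}/p$, and $\hat F_n(t)=(k/n)\hat G_k(t)$. Then
\begin{align*}
\hat F_n(t)-F(t) = \frac{k}{n}\bigl\{\hat G_k(t)-G(t)\bigr\} + \Bigl(\frac{k}{n}-p\Bigr)G(t),
\end{align*}
so that
\begin{align*}
\sup_t|\hat F_n-F| \le \frac{N}{n}\sup_t|\hat G_N-G| + |\narrowhat{p}_n-p|.
\end{align*}
I bound the two terms separately.

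For the first term, the DKWMR inequality for $\hat G_k$ gives $\sup_t|\hat G_k-G|\le\sqrt{\log(4/\alpha)/(2k)}$ with probability at least $1-\alpha/2$ conditionally on $N=k$. The first term is then at most $\sqrt{N\log(4/\alpha)/(2n^2)}$. On the event $N\le 2np+\log(4/\alpha)$, this is at most $\sqrt{p\log(4/\alpha)/n}+\log(4/\alpha)/n$, up to constants.

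For the second term, Bernstein's inequality gives $|\narrowhat{p}_n-p|\le\sqrt{2p\log(4/\alpha)/n}+\log(4/\alpha)/(3n)$ with probability at least $1-\alpha/2$. The same event also controls the upper tail of $N$ needed in the first term, so I would fold both requirements into this single Bernstein event rather than spend extra probability on a separate one.

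The remaining additive terms are of order $\log(4/\alpha)/n$, and I absorb them using the hypothesis $\epsilon_1+\epsilon_2\le 1-\log(4/\alpha)/n$. This gives $p\ge 1-\epsilon_1-\epsilon_2\ge\log(4/\alpha)/n$, so $\log(4/\alpha)/n\le\sqrt{p\log(4/\alpha)/n}$. Combining, and using $p\le 1-\epsilon_1$,
\begin{align*}
\sup_t|\hat F_n-F| \le \Bigl(\tfrac{1}{\sqrt2}+\sqrt2+\tfrac13+1\Bigr)\sqrt{\frac{p\log(4/\alpha)}{n}} < 3\sqrt{\frac{(1-\epsilon_1)\log(4/\alpha)}{n}}.
\end{align*}
Here $\frac{1}{\sqrt2}+\sqrt2+\frac13+1\approx 3.45$, which exceeds the target constant $3$.

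The main obstacle is therefore the constant bookkeeping. To get below $3$, I would apply DKWMR with an exponent $\log(4/\alpha)$ and work with $N/n$ directly instead of bounding it by $2p$, writing $N/n\le p+|\narrowhat{p}_n-p|$. The cross term then becomes $\sqrt{p}\cdot\sqrt{\log(4/\alpha)/(2n)}$ plus a lower-order piece, and the leading constant is $\frac{1}{\sqrt2}+\sqrt2\approx 2.12$. The $\log/n$ pieces are absorbed via $p\ge\log(4/\alpha)/n$ as before, which should leave enough room for a final constant of $3$.

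If this still fails, the fallback is a direct route. The process $t\mapsto\hat F_n(t)-F(t)$ is a sum of indicators with variance at most $p$, and the Massart/Reeve bound can be applied to the sub-probability via an augmented-variable construction: map $\star$ to $+\infty$, so that $F$ becomes an honest distribution function on $(-\infty,t]$, with the left tail of variance at most $p$. A relative-deviation DKW bound for this augmented distribution would then give the claim.

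The second inequality, for $(t,\infty)$, is symmetric. I apply the same argument to $-Z_i$, using $\hat M_n\bigl((t,\infty)\bigr)=\narrowhat{p}_n-\hat F_n(t)$, or simply repeat the decomposition with $1-G$ in place of $G$.
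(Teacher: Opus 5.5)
Your route is the right one. You condition on $N=n\narrowhat{p}_n$, apply DKWMR at level $\alpha/2$ to the normalised observed law $G$, and control $\narrowhat{p}_n$ with the two-sided Bernstein bound at level $\alpha/2$. You also use $L\coloneqq\log(4/\alpha)/n\le 1-\epsilon_1-\epsilon_2\le p\le 1-\epsilon_1$, which is correct. The paper gives no argument of its own here: it defers to \citet[Eq.~(S31)]{ma2024estimation} under the reparametrisation $\epsilon=\epsilon_2$, $q=\frac{1-\epsilon_1-\epsilon_2}{1-\epsilon_2}$.

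What is missing is the one delicate point, the constant $3$, and the refinement you sketch does not deliver it. The pieces you call lower order are not lower order. The hypothesis allows $p=L$, and then $L$, $\sqrt{pL}$ and $\sqrt{|\narrowhat{p}_n-p|\,L}$ can all be of the same size. Suppose you split $\sqrt{(N/n)L/2}\le\sqrt{pL/2}+\sqrt{|\narrowhat{p}_n-p|\,L/2}$, as ``leading constant $\frac{1}{\sqrt2}+\sqrt2$ plus a lower-order piece'' suggests. At $p=L$ this gives $\frac{1}{\sqrt2}+\sqrt{(\sqrt2+\frac13)/2}+\sqrt2+\frac13\approx 3.39>3$, just as your first attempt gave $3.45$.

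The fix is not to split the square root. On the Bernstein event, $L\le p$ gives $N/n\le p+\sqrt{2pL}+L/3\le(1+\sqrt2+\frac13)p$. Hence, on its intersection with the conditional DKWMR event,
\[
\sup_t\bigl|\hat F_n(t)-F(t)\bigr|
\le\sqrt{\frac{N}{n}\cdot\frac{L}{2}}+\sqrt{2pL}+\frac{L}{3}
\le\biggl(\sqrt{\frac{1+\sqrt2+1/3}{2}}+\sqrt2+\frac13\biggr)\sqrt{pL}
\approx 2.92\sqrt{pL}<3\sqrt{(1-\epsilon_1)L}.
\]
This uses $L\le\sqrt{pL}$ and $p\le 1-\epsilon_1$, and the inequality is strict because $L>0$. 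The margin is thin, so the sharp Bernstein remainder $L/3$ is essential: with $2L/3$ the same computation gives about $3.32$. Once this is done, your fallback via a relative-deviation DKW bound is unnecessary.

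For the $(t,\infty)$ bound, use your second option, $\hat M_n((t,\infty))=\frac{N}{n}\{1-\hat G_N(t)\}$. This yields the identical bound on the same event. Going through $\hat M_n((t,\infty))=\narrowhat{p}_n-\hat F_n(t)$ and the triangle inequality would count $|\narrowhat{p}_n-p|$ twice and overshoot $3$.
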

\begin{proof}
    The proof follows from the proof of~\citet[Eq.~(S31)]{ma2024estimation}, with $\epsilon=\epsilon_2$ and $q=\frac{1-\epsilon_1-\epsilon_2}{1-\epsilon_2}$ therein.
\end{proof}

\end{document}